\title{\textbf{
Harnack inequality and one-endedness of UST on reversible random graphs
}}
\author{Nathana\"el Berestycki \and Diederik van Engelenburg}
  \date{University of Vienna}
\newcommand{\DE}[1]{{\color{Magenta} #1}}
\def\P{\mathbb{P}}
\def\E{\mathbb{E}}
\def\C{\mathbb{C}}
\def\R{\mathbb{R}}
\newcommand{\RR}{\mathbb{R}}
\newcommand{\NN}{\mathbb{N}}
\newcommand{\ZZ}{\mathbb{Z}}
\newcommand{\PP}{\mathbb{P}}
\newcommand{\cF}{\mathcal{F}}
\renewcommand{\bf}[1]{\mathbf{#1}}
\newcommand{\io}{\text{\;i.o.}}
\newcommand{\id}{\mathds{1}}
\newcommand{\cRW}{\widehat{X}}
\newcommand{\wh}[1]{\widehat{#1}}
\renewcommand{\c}[1]{\mathcal{#1}}
\DeclareMathOperator{\hm}{\mathrm{hm}}
\DeclareMathOperator{\cE}{\mathcal{E}}
\DeclareMathOperator{\cG}{\mathcal{G}}
\DeclareMathOperator{\cA}{\mathcal{A}}
\DeclareMathOperator{\cT}{\mathcal{T}}
\DeclareMathOperator{\LE}{\mathrm{LE}}
\DeclareMathOperator{\Gr}{\mathbf{G}}
\newcommand{\Beff}{\mathcal{B}_{\mathrm{eff}}}
\newcommand{\Beuc}{B_{\mathrm{euc}}}
\newcommand{\Reff}{\mathcal{R}_{\mathrm{eff}}}
\newtheorem{theorem}{Theorem}[section]
\newtheorem*{claim}{Claim}
\newtheorem{conjecture}[theorem]{Conjecture}
\newtheorem{corollary}[theorem]{Corollary}
\newtheorem{lemma}[theorem]{Lemma}
\newtheorem{proposition}[theorem]{Proposition}
\newtheorem{definition}[theorem]{Definition}
\newtheorem*{obs}{Observation}
\theoremstyle{remark}
\newtheorem{remark}[theorem]{Remark}
\renewenvironment{abstract}
{\small
	\begin{center}
	{\bfseries \abstractname\vspace{-.5em}\vspace{0pt}}
	\end{center}
	\list{}{%
		\setlength{\leftmargin}{20mm}
		\setlength{\rightmargin}{\leftmargin}%
	}%
	\item\relax}
{\endlist}
\begin{document}
	
	\maketitle
	
	\begin{abstract}
We prove that for recurrent, reversible graphs, the following conditions are equivalent: (a) existence and uniqueness of the potential kernel, (b) existence and uniqueness of harmonic measure from infinity, (c) a new anchored Harnack inequality, and (d) one-endedness of the wired Uniform Spanning Tree. In particular this gives a proof of the anchored (and in fact also elliptic) Harnack inequality on the UIPT.
This also complements and strengthens some results of Benjamini, Lyons, Peres and Schramm \cite{BLPS}. Furthermore, we make progress towards a conjecture of Aldous and Lyons by proving that these conditions are fulfilled for strictly subdiffusive recurrent unimodular graphs. Finally, we discuss the behaviour of the random walk conditioned to never return to the origin, which is well defined as a consequence of our results.

	\end{abstract}
	\tableofcontents
	\section{Introduction} \label{section: Introduction}

\subsection{Background and main result}

Let $(G,o)$ be a random unimodular rooted graph, which is almost surely recurrent (with $\E(\deg(o)) < \infty$).
 The \textbf{wired Uniform Spanning Tree} (UST for short) on $G$ is defined to be the unique weak limit of the uniform spanning tree on any finite exhaustion of the graph, with wired boundary conditions. The existence of this limit is well known, see e.g. \cite{LyonsPeresProbNetworks}. (In fact, since the graph is assumed to be recurrent, the wired or free boundary conditions give the same weak limit). The UST is a priori a spanning forest of the graph $G$, but since $G$ is recurrent this spanning forest consists in fact a.s. of a single spanning tree which we denote by $\c{T}$ (see e.g. \cite{Pemantle}). We say that $\c{T}$ is \textbf{one-ended} if the removal of any finite set of vertices $A$ does not disconnect $\c{T}$ into at least two infinite connected components. Intuitively, a one-ended tree consists of a unique semi-infinite path (the spine) to which finite bushes are attached.

The question of the one-endedness of the UST (or the components of the UST, when the graph is not assumed to be recurrent) has been the focus of intense research ever since the seminal work of Benjamini, Lyons, Peres and Schramm \cite{BLPS}. Among many other results, these authors proved (in Theorem 10.1) that on every vertex-transitive graph, and more generally on a network with a transitive unimodular automorphism group, that every component is a.s. one-ended unless the graph is itself roughly isometric to $\mathbb{Z}$ (in which case it and the UST are both two-ended). (This was extended by Lyons, Morris and Schramm \cite{LyonsMorrisSchramm2008} to graphs that are neither transitive nor unimodular but satisfy a certain isoperimetric condition slightly stronger than uniform transience). More generally, a conjecture attributed to Aldous and Lyons is that every unimodular one-ended graph is such that every component of the UST is a.s. one-ended. This has been proved in the planar case in the remarkable paper of Angel, Hutchcroft, Nachmias and Ray \cite{AHNR} (Theorem 5.16) and in the transient case by results of Hutchcroft \cite{Hutchcroft1, Hutchcroft2}. The conjecture therefore remains open in the recurrent case, which is the focus of this article.

\medskip Let us motivate further the question of the one-enededness of the UST. It can in some sense be seen as the analogue\footnote{We thank Tom Hutchcroft for this wonderful analogy.} of the question of percolation at the critical value. To see this, note that when the UST is one-ended, every edge can be oriented towards the unique end, so that following the edges forward from any given vertex $w$, we have a unique semi-infinite path starting from $w$ obtained by following the edges forward successively. Observe that this forward path necessarily eventually arrives at the spine and moves to infinity along it. Given a vertex $v$, we may define the past $\textbf{Past}(v)$ of $v$ to be the set of vertices $w$ for which the forward path from $w$ contains $v$; it is natural to view $\textbf{Past}(v)$ as the analogue of a connected component in percolation. From this point of view, the a.s. one-endedness of the tree is equivalent to the finiteness of the past (i.e., connected component in this analogy) of every vertex, as anticipated. We further note that on a unimodular graph, the expected value of the size of the past is however always infinite, as shown by a simple application of the mass transport principle. This confirms the view that the past displays properties expected from a critical percolation model. In fact, Hutchcroft proved in \cite{Hutchcroft3} that the two models have same critical exponents in sufficiently high dimension.

\medskip In this paper we give necessary and sufficient conditions for the one-endedness of the UST on a recurrent, unimodular graph. These are, respectively: (a) existence of the potential kernel, (b) existence of the harmonic measure from infinity, and finally (c) an anchored Harnack inequality. Although we do not solve the remaining case of the Aldous--Lyons conjecture, we illustrate our results by showing that they give straightforward proofs of the aforementioned result of Benjamini, Lyons, Peres and Schramm \cite{BLPS} in the recurrent case (which is one of the most difficult aspects of the proof of the whole theorem, and is in fact stated as Theorem 10.6). We also apply our results to some unimodular random graphs of interest such as the Uniform Infinite Planar Triangulation (UIPT) and related models of infinite planar maps, for which we deduce the Harnack inequality.

\medskip To state these results, we first recall the following definitions. Our results can be stated for reversible environments or \textbf{reversible random graphs}, i.e., random rooted graphs such that if $X_0$ is the root and $X_1$ the first step of the random walk conditionally given $G$ and $X_0$ then $(G, X_0, X_1)$ and $(G,X_1, X_0)$ have the same law. As noted by Benjamini and Curien in \cite{BenjaminiCurien2012}, any unimodular graph $(G,o)$ with $\E( \deg (o) ) < \infty$ satisfies this reversibility condition after biasing by the degree of $o$.  Conversely, any reversible random graph gives rise to a unimodular rooted random graph after unbiasing by the degree of the root. This biasing/unbiasing does not affect any of the results below since they are almost sure properties of the graph. Note also that again by results in \cite{BenjaminiCurien2012}, a rooted random graph whose law is stationary for random walk is in fact necessarily reversible. See also Hutchcroft and Peres \cite{HutchcroftPeres} for a nice discussion and Aldous and Lyons \cite{AldousLyonsUnimod2007} for a systematic treatment.

For a nonempty set $A \subset v(G)$ we define the \textbf{Green function} by setting for $x, y \in v(G)$:
	\begin{equation} \label{eqdef: green function}
		\Gr_A(x, y) = \E_x \left[\sum_{n = 0}^{T_A - 1} \id_{\{X_n = y\} }\right],
	\end{equation}
where $T_A$ denote the hitting time of $A$, and let
	\[
		g_A(x, y) := \frac{\Gr_A(x, y)}{\deg(y)}
	\]
denote the normalised Green function. (Note that due to reversibility, $g_A(x,y) = g_A(y,x)$.)

Let $A_n$ be any (sequence) of finite sets of vertices such that $d(A_n, o) \to \infty$ as $n \to \infty$. Here, by $d(A_n, o)$ we just mean the minimal distance of any vertex in $A_n$ to $o$. It is natural to construct the \textbf{potential kernel} of the infinite graph $G$ by an approximation procedure; we set
	\begin{equation} \label{eqdef: PKn}
		a_{A_n}(x, y) :=  g_{A_n}(y,y) - g_{A_n}(x,y).
	\end{equation}
In this manner, the potential kernel compares the number of visits to $y$, starting from $x$ versus $y$, until hitting the far away set $A_n$. We are interested in existence and uniqueness of limits for $a_{A_n}$ as $n \to \infty$. In this case we call the unique limit the potential kernel of the graph $G$. We will see that the existence and uniqueness of this potential kernel turns out to be equivalent to a number of very different looking properties of the graph.

We move on to harmonic measure from infinity.
Let $A$ be a fixed finite, nonempty set of vertices. Let $\mu_n (\cdot)$ denote the harmonic measure on $A$, started from $A_n$ if we wire all the vertices in $A_n$. The \textbf{harmonic measure from infinity}, if it exists, is the limit of $\mu_n$ (necessarily a probability measure on $A$).

Now let us turn to Harnack inequality. We say that $(G,o)$ satisfies an \textbf{(anchored) Harnack inequality} (AHI) if there exists an exhaustion $(V_R)_{R \ge 1}$ of the graph (i.e. $V_R$ is a finite subset of vertices and $\cup_{R\ge 1} V_R = v(G)$), and there exists a nonrandom constant $C>0$, such that the following holds. For every function $h: v(G) \to \R_+$ which is harmonic except possibly at 0, and such that $h(0) = 0$, then:
\begin{equation}\label{eq:AHI}
\max_{x \in \partial V_R} h(x) \le C \min_{x \in \partial V_R} h(x) .
\end{equation}
The word \emph{anchored} in this definition refers to the fact that the exhaustion is allowed to depend on the choice of root $o$, and the functions are not required to be harmonic there. (As we show in Remark \ref{R:AHIEHI}, a consequence of our results is that an anchored Harnack inequality automatically implies the Elliptic Harnack inequality (EHI) on a suitably defined sequence of growing sets.)

We now state the main theorem.

\begin{theorem}
  \label{T:main_equiv}
 Suppose $(G,o)$ is a recurrent reversible graph (or equivalently after unbiasing by the degree of the root, $(G, o) $ is recurrent and unimodular with $\E(\deg(o))< \infty$). The following properties are equivalent.
 \begin{itemize}
   \item[(a)] The pointwise limit of the truncated potential kernel $a_{A_n}(x,y)$ exists and does not depend on the choice of $A_n$.

   \item[(b)] The weak limit of the harmonic measure $\mu_n$ from $A_n$ exists and does not depend on $A_n$.

   \item[(c)] $(G,o)$ satisfies an anchored Harnack inequality.

   \item[(d)] The uniform spanning tree $\c{T}$ is a.s. one-ended.
 \end{itemize}
 Furthermore, if any of these conditions hold, a suitable exhaustion for the anchored inequality is provided by the sublevel sets of the potential kernel, see Sections \ref{section: Level-sets of the PK} and \ref{section: Harnack Inequaltiy}.
\end{theorem}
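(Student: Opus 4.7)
My plan is to establish a cycle of implications with (a) as the pivot, using two main tools: the duality between potential kernel and harmonic measure from infinity, and Wilson's algorithm (rooted at infinity when applicable). I would arrange the proof as (a) $\Leftrightarrow$ (b), (a) $\Leftrightarrow$ (c), and (a) $\Leftrightarrow$ (d).

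For (a) $\Leftrightarrow$ (b), the two objects are dual. If $a$ exists and is unique, then for any finite $A$ the harmonic measure from infinity $\mu$ is the unique probability measure on $A$ making $x \mapsto \sum_{y \in A} \mu(y) a(x,y)$ constant on $A$ (a discrete equilibrium-measure identity); conversely, existence of harmonic measure from infinity allows one to recover $a$ in the limit via a last-exit decomposition of $g_{A_n}(y,y) - g_{A_n}(x,y)$, interpreted as a Doob-transform by the exit measure. I would establish both identities at finite level and transfer uniqueness in each direction.

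For (a) $\Rightarrow$ (c), take $V_R := \{x : a(o,x) \le R\}$. Any nonnegative $h$ harmonic off $o$ with $h(o) = 0$ satisfies $-\Delta h \propto \delta_o$, so by uniqueness of such solutions (a consequence of (a) together with recurrence) one has $h = c \cdot a(o,\cdot)$; on $\partial V_R$ the function $h$ then lies in a band of width bounded by a single-edge jump of $a$, yielding AHI with an explicit bounded constant. For (c) $\Rightarrow$ (a), apply AHI to $h_n := a_{A_n}(\cdot,o)$ on the sublevel sets of $h_n$ to extract equicontinuity on compact sets, and use the Harnack constant to compare any two limit candidates, forcing a single limit.

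The bridge (a) $\Leftrightarrow$ (d) goes through Wilson's algorithm. For (d) $\Rightarrow$ (b): if $\c{T}$ is one-ended, then from any finite $A$ there is a unique simple path in $\c{T}$ escaping to infinity, and by time-reversing Wilson's algorithm its last vertex in $A$ is distributed as the weak limit of $\mu_n$. Conversely, given (b), one implements a ``Wilson's algorithm rooted at infinity'' by attaching each new vertex to the current tree via a loop-erased walk stopped at infinity, defined as the weak limit of LERWs stopped on $A_n$ (which is well-defined precisely by (b)); the result is a spanning tree that is one-ended by construction and has the UST law by the usual Wilson identities.

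The main obstacle I anticipate is (c) $\Rightarrow$ (a): the AHI only provides a multiplicative comparison on a single level set, whereas convergence of $a_{A_n}(x,y)$ is a quantitative statement across all scales. Turning local Harnack comparison into a global Cauchy property will likely require iterating the AHI along a nested sequence of sublevel sets and using recurrence to absorb error terms. A secondary difficulty is making the rooted-at-infinity Wilson construction rigorous in the recurrent case: since a random walk does not converge, the branch to infinity must be defined as a weak limit of LERWs, so the identification of its law with the UST-branch requires a careful continuity argument for Wilson's algorithm under exhaustion.
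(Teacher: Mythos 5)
Your overall architecture — treat (a) as the pivot, prove (a)$\Leftrightarrow$(b) via potential-kernel/harmonic-measure duality, (a)$\Leftrightarrow$(c) via sublevel sets of the potential kernel, and (a)$\Leftrightarrow$(d) via Wilson's algorithm rooted at infinity — is the same skeleton as the paper, and your sketch of (a)$\Leftrightarrow$(b) and (c)$\Rightarrow$(a) is close to what the paper does (the latter is the Ancona-style iteration, and you correctly anticipate having to iterate the AHI along nested scales). However, two of your four bridges have genuine gaps that your proposal does not close.

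First, your route (a)$\Rightarrow$(c) is circular. You assert that every nonnegative $h$ harmonic off $o$ with $h(o)=0$ must equal $c\,a(\cdot,o)$ ``by uniqueness of such solutions (a consequence of (a) together with recurrence)''. This uniqueness is \emph{not} a formal consequence of (a): condition (a) only says that the approximating sequence $a_{A_n}(\cdot,o)$ has a sequence-independent limit, not that no \emph{other} nonnegative solution of $\Delta h=\delta_o$, $h(o)=0$ exists. In the paper this uniqueness appears only as Remark~\ref{R:anchoredPK}, and it is derived \emph{from} (c) via Proposition~\ref{P: (c) implies (a)} — precisely the direction you are trying to prove. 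The paper's actual proof of (a)$\Rightarrow$(c) does not use uniqueness at all; it proceeds by last-exit decompositions and two-sided estimates on the stopped Green function $\Gr_{MR}(x,y)$ between two sublevel sets of $a$ (Lemma~\ref{sublemma harnack: green function SRW}, Propositions~\ref{prop: hitting measure of SRW converges} and~\ref{prop: conditioned exit measure mixes}), which give a uniform comparison of exit measures. You also omit the step, essential for the sublevel sets to form an exhaustion, that $\Lambda_a(R)$ is finite: this is where reversibility/unimodularity is actually used (Lemma~\ref{lemma: reversible graphs are delta-good} and Proposition~\ref{prop: PK level-sets are exhaustion}), and it does not follow from (a) alone.

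Second, your claim that Wilson's algorithm rooted at infinity produces a spanning tree that is ``one-ended by construction'' is false, and this is where the real difficulty of (a)$\Rightarrow$(d) lies. Rooting at infinity only forces the \emph{first} branch to be a single ray (the loop-erasure of the conditioned walk); all subsequent loop-erased walks stop on hitting the current tree and are finite, but nothing a priori prevents those finite branches from accumulating into a second infinite ray, i.e.\ nothing prevents a bi-infinite path from forming. The paper rules this out with a substantive argument: it proves an \emph{infinite intersection property} for two independent conditioned walks (Propositions~\ref{prop: REVERSIBLE CRW traces intersect io} and~\ref{prop: PLANAR CRW traces intersect i.o.}, relying on the recurrence-of-$\delta$-good-sets lemma~\ref{lemma: A inf delta-good then recurrent}), turns this into an estimate that a far-away walk is unlikely to hit $\LE(\wh{Y})$ precisely at $o$ (Lemma~\ref{L: UST limsup X hits LE at o}), and then combines this with stationarity, a Bonferroni/second-moment argument, and the at-most-two-ends bound from~\cite{AldousLyonsUnimod2007} to contradict $\bf{P}(\cA_2(o))>0$. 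None of this is in your proposal. Your converse (d)$\Rightarrow$(b) via the unique ray in $\cT$ is fine in spirit — the paper simply cites~\cite{BLPS} for it.
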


\subsection{Some applications}

\paragraph{Strengthening of \cite{BLPS}.} Before showing some applications of this result, let us point out that Theorem \ref{T:main_equiv} complements and strengthens some of the results of Benjamini, Lyons, Peres and Schramm \cite{BLPS}. In that paper, the (easy) implication (d) implies (b) was noted. We therefore in particular obtain a converse.
One can furthermore easily see using their results that on any recurrent planar graph with bounded face degrees (e.g., any recurrent triangulation) (d) holds, i.e., the uniform spanning tree is a.s. one-ended: indeed, for such a graph, there is a rough embedding from the planar dual to the primal, which is assumed to be recurrent, and therefore the planar dual must be recurrent too by Theorem 2.17 in \cite{LyonsPeresProbNetworks}. By Theorem 12.4 in \cite{BLPS} this implies that the uniform spanning tree (on the primal) is a.s. one-ended, and so (d) holds. (In fact, Theorem 5.16 in \cite{AHNR} shows that the bounded face degree assumption is not needed). 

\paragraph{Applications to planar maps.} Therefore, in combination with \cite{BLPS}, Theorem \ref{T:main_equiv} above applies in particular to unimodular, recurrent triangulations such as the UIPT, or similar maps such as the UIPQ. This therefore implies that these maps have a well-defined potential kernel, harmonic measure from infinity, and satisfy the anchored Harnack inequality. As shown in Remark \ref{R:AHIEHI}, this also implies the \textbf{elliptic Harnack inequality} (for sublevel sets of the potential kernel, see Theorem \ref{theorem: harnack inequality} for a precise statement). We point out that the elliptic Harnack inequality should not be expected to hold on usual metric balls, but can only be expected on growing sequences of sets which take into account the ``natural conformal embedding'' of these maps. This is exactly what the potential kernel and its sublevel sets allows us to do.

\paragraph{More general implications.} We already mention that the equivalence between (a) and (b) is valid more generally, for instance for any locally finite, recurrent graph. The implication (a) $\implies$ (c) to the Harnack inequality (c) is then valid under the additional assumption that the potential kernel grows to infinity (something which we can prove assuming unimodularity). We recall that (d) implies (b) is also true for deterministic graphs, as proved in \cite{BLPS}.

\begin{remark}
  Many of the arguments in this article are true for deterministic graphs. The unimodularity (or reversibility) of the graph with respect to random walk is only used in Lemma \ref{lemma: reversible graphs are delta-good}, whose main use is to show that the potential kernel, if it exists, diverges to infinity along any sequence going to infinity (see Lemma \ref{lemma: delta-good implies level-sets}). This property is used for instance in both directions of the relations between (c) and (d), since both go via (a). The unimodularity (or stationarity) is also used to prove that the walks conditioned not to return to the origin satisfy the infinite intersection property, a key aspect of the proof one-endedness. Finally this is also proved to show that if there is a bi-infinite path in the UST then it must essentially be almost space-filling, which is the other main argument of the proof of one-endedness. 
\end{remark}
\paragraph{Deterministic case of the Aldous--Lyons conjecture.}
As previously mentioned, Theorem \ref{T:main_equiv} can be applied to give a direct proof of the one-endedness of the UST for recurrent vertex-transitive graphs not roughly equivalent to $\mathbb{Z}$, which is Theorem 10.6 in \cite{BLPS}.

\begin{corollary}
  \label{Cor} Suppose $G$ is a recurrent, vertex-transitive graph. If $G$ is one-ended then the UST is also a.s. one-ended. Otherwise $G$ is roughly isometric to $\mathbb{Z}$. 
\end{corollary}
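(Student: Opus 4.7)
The plan is to apply Theorem \ref{T:main_equiv} after a case analysis on the number of ends of $G$, which must be $1$, $2$, or infinitely many since $G$ is an infinite vertex-transitive graph. If $G$ has infinitely many ends, then by classical results on vertex-transitive graphs, $G$ has exponential volume growth and is therefore transient, contradicting recurrence. If $G$ has exactly two ends, then a classical structure result for two-ended vertex-transitive graphs yields that $G$ is roughly isometric to $\mathbb{Z}$, which is the second clause of the corollary.

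It remains to treat the case where $G$ is one-ended. I would verify condition (c) of Theorem \ref{T:main_equiv} with the exhaustion $V_R := B(o, R)$. By a theorem of Varopoulos, a recurrent vertex-transitive graph has polynomial volume growth of degree at most $2$. Transitive graphs of polynomial growth satisfy the elliptic Harnack inequality on balls with a uniform constant: this follows from the parabolic Harnack inequality, which by Delmotte's characterisation is equivalent to volume doubling plus a uniform Poincar\'e inequality, both standard for transitive graphs of polynomial growth (see Hebisch--Saloff-Coste). Given $h: v(G) \to \R_+$ harmonic on $G \setminus \{o\}$ with $h(o) = 0$ and $x, y \in \partial V_R$, one-endedness implies that for $R$ large the annulus $B(o, 2R) \setminus B(o, R/2)$ is connected and admits a path from $x$ to $y$ of length $O(R)$. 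Covering this path by $O(1)$ overlapping balls of radius $R/10$, each contained in $G \setminus \{o\}$, and applying the elliptic Harnack inequality on each in turn gives $h(y) \leq C \cdot h(x)$ for an absolute constant $C$. This verifies \eqref{eq:AHI}, so (c) holds and Theorem \ref{T:main_equiv} yields one-endedness of $\c{T}$.

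The main obstacle is invoking the classical elliptic Harnack inequality for recurrent vertex-transitive graphs and chaining it across $\partial V_R$ with a constant independent of $R$; the chaining step relies crucially on one-endedness to remain in the punctured graph $G\setminus\{o\}$. An alternative would be to verify condition (a) directly, constructing the potential kernel via a symmetry-plus-compactness argument in the spirit of Spitzer's treatment of $\mathbb{Z}^2$, exploiting vertex-transitivity to control the diagonal differences $g_{A_n}(x,x) - g_{A_n}(y,y)$ in the limit.
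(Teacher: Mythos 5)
Your overall strategy is the same as the paper's: reduce to verifying the anchored Harnack inequality (condition (c) of Theorem \ref{T:main_equiv}) with the metric-ball exhaustion $V_R = B(o,R)$, obtain a uniform elliptic Harnack inequality from the parabolic one for recurrent transitive graphs, and then chain EHI across an annulus avoiding $o$. The route to EHI differs slightly but equivalently: the paper goes via Trofimov (rough isometry to a Cayley graph), then Varopoulos (virtually $\ZZ$ or $\ZZ^2$), then stability of PHI under rough isometry; you invoke Delmotte's characterisation (volume doubling $+$ Poincar\'e $\Leftrightarrow$ PHI) directly on the transitive graph. Both are legitimate.

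The gap is in the chaining step. You write that ``one-endedness implies that for $R$ large the annulus $B(o,2R)\setminus B(o,R/2)$ is connected and admits a path from $x$ to $y$ of length $O(R)$.'' This is false in general: a one-ended graph can have annuli that are disconnected at every scale. For example, glue two one-sided rays $\{a_n\}$, $\{b_n\}$ at their origin and add an edge $a_n \sim b_n$ only for $n$ in a very sparse set; the resulting graph is one-ended, but for generic $R$ the annulus $B(2R)\setminus B(R/2)$ has two components. What actually makes the annulus connected at scale $R/10$ with a uniformly bounded number of covering balls is the rough isometry to $\ZZ^2$ (equivalently, two-dimensionality), and this is precisely what the paper invokes: having established via Trofimov and Varopoulos that $G$ is roughly isometric to $\ZZ$ or $\ZZ^2$, the one-ended case forces $\ZZ^2$, whose coarse annulus connectivity then transports to $G$ by the rough isometry. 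You have all the ingredients to make this precise (you use Varopoulos, you split off the two-ended case), but the step must be justified by the structure theorem and rough isometry, not by one-endedness alone. Once that is fixed your chaining argument, including the requirement that each $x_i$ lie far enough from $o$ so that $h$ is harmonic on $B(x_i, 2R/10)$, goes through exactly as in the paper.

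Two further minor remarks. First, your initial case analysis on the number of ends, while correct (infinitely many ends forces non-amenability hence transience; two ends forces virtually $\ZZ$), is redundant once you appeal to Trofimov--Varopoulos, which already yields the $\ZZ$ vs.\ $\ZZ^2$ dichotomy and hence the number of ends. Second, the alternative you suggest of verifying (a) directly \`a la Spitzer is plausible but only sketched; the equivalence in Theorem \ref{T:main_equiv} means any one of (a)--(d) suffices, so once the chaining step above is repaired there is no need for it.
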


\begin{proof}
First note that the volume growth of the graph is at most polynomial (as otherwise the walk cannot be recurrent). By results of Trofimov \cite{Trofimov}, the graph is therefore roughly isometric to a Cayley graph $\Gamma$. Since it is recurrent (as recurrence is preserved under rough isometries, see Theorem 2.17 and Proposition 2.18 of \cite{LyonsPeresProbNetworks}), we deduce by a classical theorem of Varopoulos (see e.g. Theorem 1 and its corollary in \cite{Varopoulos})
that $\Gamma$ is a finite extension of $\ZZ$ or $\ZZ^2$ and is therefore (as is relatively easily checked) roughly isometric to either of these lattices. Since either of these lattices enjoy the Parabolic Harnack Inequality (PHI), which is, by a consequence of a result proved by Grigoryan \cite{Grigoryan} and Saloff-Coste \cite{SaloffCoste} independently, preserved under rough isometries (see also \cite{CoulhonSaloffCoste}), we see that $G$ itself satisfies PHI and therefore also the Elliptic Harnack Inequality (EHI): for any $R>1$, if $h$ is harmonic in the metric ball $B(2R)$ of radius $2R$ around the origin, then $\sup_{B(R)} h(x) \le C \inf_{B(R)} h(x)$. (In fact, by a deep recent result of Barlow and Murugan, EHI is now known directly to be stable under rough isometries \cite{BarlowMurugan}, but here we can appeal to the much simpler stability of PHI. {We recommend the following textbooks for related expository material: \cite{KumagaiSaintFlour}, \cite{barlow_2017} and \cite{VaropoulosSaloffCosteCoulhon}.})

Suppose that $G$ is not roughly isometric to $\ZZ$, therefore it is roughly isometric to $\ZZ^2$. Let us show that $G$ satisfies the anchored Harnack inequality \eqref{eq:AHI}, with the exhaustion sequence simply obtained by considering metric balls $V_R = B(R)$. Let $h$ be nonnegative harmonic on $G$ except at 0. Since $G$ is rough isometric to $\ZZ^2$, we can cover $\partial V_R$ with a fixed number (say $K$) of balls of radius $R/10$, such that the union of these balls is connected (here we used two-dimensionality). Let $x, y \in \partial V_R$, we can find $x = x_0, \ldots, x_K = y$ with $d (x_i, x_{i+1}) \le R/10$, and $d(x_i, o) > 2R/10$. Exploiting the EHI in each of the $K$ balls $B(x_i, 2R/10)$ inductively (since $h$ is harmonic in each of these balls), we find that $h(x) \le C^K  h(y)$. Since $x, y$ are arbitrary in $\partial V_R$, this proves the anchored Harnack inequality \eqref{eq:AHI}.
\end{proof}

\paragraph{Subdiffusivity implies one-endedness.} As an application of our results we also show that the one-endedness of the UST holds for unimodular recurrent graphs if we in addition assume that they are strictly subdiffusive; that is, we settle the Aldous--Lyons conjecture in that case.
 (This encompasses many models of random planar maps, but can of course hold on more general graphs, see in particular \cite{Lee2017}, recalled also in Remark \ref{R:lee}, for sufficient conditions guaranteeing this).

\begin{theorem} \label{T:main_existence}
  Suppose $(G,o)$ is unimodular, almost surely recurrent and strictly subdiffusive (i.e., satisfies \eqref{eqdef: subdiffusive} below) and satisfies $\bf{E}[\deg(o)] < \infty$. Then $(G,o)$ satisfies (a)--(d).
\end{theorem}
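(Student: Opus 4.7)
By Theorem~\ref{T:main_equiv}, it is enough to verify any single one of conditions (a)--(d), and I would aim at (a): the existence of a pointwise limit for the truncated potential kernel $a_{A_n}(x,y)$, independent of the exhausting sequence. Strict subdiffusivity feeds most directly into the quantitative heat-kernel and hitting-time estimates that (a) requires.

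\textbf{Step 1: candidate limit via the heat kernel.} With $q_n(x,y):=p_n(x,y)/\deg(y)$, symmetric in $x,y$ by reversibility, define formally
\[
	a(x,y) \;:=\; \sum_{n\ge 0}\bigl(q_n(y,y) - q_n(x,y)\bigr).
\]
Using the Varopoulos--Carne bound, the subdiffusive displacement estimate from \eqref{eqdef: subdiffusive}, and a spatial H\"older continuity estimate on $q_n$, one obtains a bound $|q_n(y,y)-q_n(x,y)|\le C(x,y)/n^{1+\varepsilon}$ for some $\varepsilon>0$, ensuring absolute convergence of the series and defining the candidate limit $a(x,y)$.

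\textbf{Step 2: convergence $a_{A_n}\to a$.} Starting from the identity $a_A(x,y)=g_A(y,y)\,\P_x(T_A<T_y)$, together with the strong Markov property applied at $T_{A_n}$, one can express $a_{A_n}(x,y)$ as a truncated heat-kernel sum plus a correction accounting for excursions that hit $A_n$ before time $N$. The subdiffusive hitting-time estimates force the correction to be $o(1)$ in a suitable $N,n\to\infty$ limit, and the truncated heat-kernel sum converges to $a(x,y)$ by Step~1. Since $a$ is built only from the graph, uniqueness in the choice of $A_n$ is automatic.

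\textbf{Step 3: conclusion.} Theorem~\ref{T:main_equiv} upgrades (a) to all of (a)--(d).

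\textbf{Main obstacle.} The crux is the spatial H\"older continuity of $q_n$ in Step~1. The bare Varopoulos--Carne bound only controls $q_n$ pointwise, not its oscillation in the spatial variable; such continuity is classically derived from the parabolic Harnack inequality or from a local central limit theorem, neither of which is available on a generic unimodular subdiffusive random graph. One must extract it from strict subdiffusivity combined with the mass transport principle, in the same spirit as the $\delta$-good graph lemmas referenced in the Remark. This is the one genuinely delicate analytical step; Steps~2 and 3 are then essentially bookkeeping.
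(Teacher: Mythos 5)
Your Step 1 is a genuine gap, and you have correctly identified it yourself: the entire construction hinges on a spatial regularity estimate $|q_n(y,y)-q_n(x,y)|\le C(x,y)/n^{1+\varepsilon}$ for the heat kernel, which is a local-CLT/parabolic-Harnack type statement. Nothing in the hypotheses (unimodularity, recurrence, strict subdiffusivity) is known to yield such an on-diagonal--off-diagonal comparison on a generic random graph, and neither Varopoulos--Carne nor the mass transport principle produces oscillation bounds on $q_n$ in the spatial variable. Without it the series defining $a(x,y)$ is not known to converge, and Step 2 (matching $a_{A_n}$ to the series) inherits the same problem. So as written the argument does not close; the "main obstacle" you flag is the whole theorem.

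The paper's proof avoids quantitative heat-kernel analysis entirely. It does not construct the limit; it shows \emph{uniqueness of subsequential limits}, which always exist by recurrence and diagonal extraction (Lemma \ref{lem: PK exists}). If $a_1,a_2$ are two such limits, Proposition \ref{prop: properties of the PKs} gives $a_i(x,y)=H_i(x)\Reff(x\leftrightarrow y)$ with $0\le H_i\le 1$, so $h:=a_1(\cdot,y)-a_2(\cdot,y)$ is harmonic everywhere (both have Laplacian $\delta_y$) and satisfies $|h(x)|\le 2\,\Reff(x\leftrightarrow y)\le 2\,d(x,y)$, i.e.\ $h$ has at most linear growth. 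Strict subdiffusivity enters only through the theorem of Benjamini--Duminil-Copin--Kozma--Yadin (Theorem \ref{theorem: Benjamini et al - no linear harmonics}): on a strictly subdiffusive recurrent stationary environment, harmonic functions of at most linear growth are constant. Since $h(y)=0$, $h\equiv 0$ and (a) holds; Theorem \ref{T:main_equiv} then gives (b)--(d). If you want to salvage your approach, you would need to replace your analytic Step 1 by this soft uniqueness argument, or else prove a local CLT on these graphs, which is far beyond what the hypotheses provide.
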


This applies e.g. for high-dimensional incipient infinite percolation cluster, as explained after Remark \ref{R:lee}. The proof of Theorem \ref{T:main_existence} takes as an input the results of Benjamini, Duminil--Copin, Kozma and Yadin \cite{benjamini2015} which shows that for strictly subdiffusive unimodular graphs there are no nonconstant harmonic functions of linear growth, and the trivial observation that the effective resistance between points is at most linear in the distance between these points. We believe it should be possible to use the same idea to prove the result assuming only diffusivity: to do this, it would suffice to prove that the effective resistance grows strictly sublinearly, except on graphs roughly isometric to $\mathbb{Z}$.

\paragraph{Random walk conditioned to avoid the origin.} The existence of the potential kernel allows us to define (by $h$-transform) a random walk conditioned to never touch a given point (even though this is of course a degenerate conditioning on recurrent graphs). We study some properties of the conditioned walk and show among other things that two independent conditioned walks must intersect infinitely often, a fact which plays an important role in the proof of Theorem \ref{T:main_equiv} for the equivalence between (a) and (d). We conclude the article with a finer study of this conditioned walk on CRT-mated random planar maps. In this case we are able to show that the hitting probability of a point far away from the origin by the conditioned walk remains bounded away from 1 in the limit as the point diverges to infinity (and is bounded away from 0 for ``almost all'' such points). See Theorem \ref{theorem: CRT hm bounds} for a precise statement. We also discuss a conjecture (see \eqref{Conj:hmbounds}) which, if true, would show a significant difference of behaviours with respect to the more standard case of $\ZZ^2$ (where these hitting probabilities converge to 1/2, as surprisingly shown in \cite{PopovRW}).

\paragraph{Acknowledgements.} The authors are indebted to Tom Hutchcroft for inspiring conversations, numerous comments on a draft of this paper, and additional references. We are also grateful to Nicolas Curien for some useful comments on a draft.

The work of N.B. is supported by: University of
Vienna start-up grant, and FWF grant P33083 on “Scaling limits in random conformal geometry”.

\begin{wrong-old}
	\DE{SMALL READING GUIDE.
		
		Section \ref{section: Preliminaries} just contains some definitions and small results.
		
		Sections \ref{section: well-definition of the PK's} and \ref{section: Gluing, Capacity and Harmonic measure} show that the potential kernel is well defined (that is, exists and is unique) and links the potential kernel to the harmonic measure from infinity. They also provide some `elementary' properties of the potential kernel. One can skip reading Section \ref{section: capacity of SRW}, the remainder of the text does not rely on it.
		
		Section \ref{section: Level-sets of the PK} finishes the `basic' picture of the potential kernel.
		
		Section \ref{section: Harnack Inequaltiy} provides the Harnack inequality and similar results.
	
		Section \ref{section: The CRW} provides the definition of the Random Walk conditioned to not hit the root and shows some first properties of this walk. It also links the harmonic measure from infinity and hitting probabilities of this conditioned walk.
	
		Section \ref{section: examples} provides some basic examples and concludes with an elementary proof of the fact that, on transitive locally finite, one-ended, recurrent networks the potential kernel is well defined.
	}
\end{wrong-old}
	
	%
	\section{Background and notation} \label{section: Preliminaries}
	
Before we begin with the proofs of our theorems, we need to introduce the main notations that we will use throughout this text.
	
A \textbf{graph} $G$ consists of a countable collection of vertices $v(G)$ and edges $e(G) \subset \{\{x, y\}: x, y \in v(G)\}$ and we will always assume that the vertex degrees are finite. We will work with undirected graphs, but will sometimes take the directed edges $\vec{e}(G) = \{(x, y): \{x, y\} \in e(G)\}$.
	
	
	
	The graph $G$ comes with a natural metric $d(x, y)$, which is the \textbf{graph distance}, i.e. the minimal length of a path between two vertices $x$ and $y$. For $n \in \NN$, we will denote by
	\[
		B(y, n) = \{x \in v(G): d(x, y) \leq n\},
	\]
	the \textbf{metric ball} of radius $n$. For a set $A \subset v(G)$, we will write $\partial A$ for its outer boundary in $v(G)$, that is
	\[
		\partial A = \{x \in v(G) \setminus A: \text{ there exists a } y \in A \text{ with } x \sim y\}.
	\]
We will make extensive use of the graph \textbf{Laplacian} which we normalise as follows: 
	\begin{equation}\label{D:Laplacian}
		\Delta f(x) = \sum_{y \sim x} c(x, y)(f(y) - f(x)),
	\end{equation}
	for functions $f:v(G) \to \RR$ (here $c(x,y)$ is the conductance of the edge $(x,y)$, which is typically equal to one in this paper, except in Section \ref{section: The CRW} where we consider random walk conditioned to avoid the origin forever). 
A function $h:v(G) \to \RR$ is called \textbf{harmonic} at $x$ if $(\Delta h)(x) = 0$. 

Let $X = (X_n)_{n \geq 0}$ denote the simple random walk on $G$, with its law written as $\PP$ and $\PP_x$ to mean $\PP(\cdot \mid X_0 = x)$. For a set $A \subset v(G)$, we define the \textbf{hitting time} $T_A = \inf\{n \geq 0: X_n \in A\}$ and $T_x := T_{\{x\}}$ whenever $A=\{x\}$ consists of just one element. 
We will write $T_{A}^+$ for the \textbf{first return time} to a set $A$.
	Suppose that $G$ is a connected graph. The \textbf{effective resistance} is defined through
	\[
		\Reff(x \leftrightarrow y) := \frac{\Gr_x(y, y)}{\deg(y)}
	\]
	(recall our normalisation of the Green function $\Gr$ in \eqref{eqdef: green function}). Recall the useful identity
		\begin{equation}\label{eq:efres through flow}
			\Reff(x \leftrightarrow y) = \frac{1}{\deg(y) \PP_{y}(T_x < T_{y}^+)}
		\end{equation}
		The proof is obvious from the definition of effective resistance and our normalisation of the Green function when we use the obvious identity
		\[
			\Gr_x(y, y) = \frac{1}{\PP_y(T_x < T_y^{+})},
		\]
		which can be seen by considering the number of excursions from $y$ to $y$, which is a geometric random variable by the Markov property.

	For infinite graphs $G$, we will say that a sequence of subgraphs $(G_n)_{n \geq 1}$ of $G$ is an \textbf{exhaustion} of $G$ whenever $G_n$ is finite for each $n$ and $v(G_n) \to G$ as $n \to \infty$. Fix some exhaustion $(G_n)_{n \geq 1}$ of an infinite graph $G$ and define the graph $G_n^*$ as $G_n$, together with the identification of $G_n^c$, where we have deleted all self-loops created in the process. For two vertices $x, y \in v(G)$ we recall that
	\[
		\Reff(x \leftrightarrow y) = \lim_{n \to \infty} \Reff(x \leftrightarrow y; G^*_n),
	\]
	see for instance \cite[Section 9.1]{LyonsPeresProbNetworks}. 
As is well known, the effective resistance defines a metric (see for instance exercise 2.67 in \cite{LyonsPeresProbNetworks}).

	Later, we will often work with the metric $\Reff(\cdot \leftrightarrow \cdot)$ on $v(G)$, instead of the standard graph distance. We introduce the notation
	\begin{equation} \label{eqdef: Beff}
		\Beff(x, R) = \{y \in v(G): \Reff(x \leftrightarrow y) \leq R\}
	\end{equation}
	for the closed ball with respect to the effective resistance metric. Notice that, in general, this metric space is \emph{not} a length space - making it somewhat inconvenient.
	
	Another result that we will need to use a few times is the `last exit decomposition', or rather two versions thereof 
which can be proved similarly to \cite[Proposition 4.6.4]{LawlerLimicRandomWalks}. 
	
	\begin{lemma}[Last Exit Decomposition] \label{lemma: last exit decomposition}
		Let $G$ be a graph and $A \subset B \subset v(G)$ finite. Then for all $x \in A$ and $b \in \partial B$ we have
		\[
			\PP_x(X_{T_{B^c}} = b) = \sum_{z \in A} \Gr_{B^c}(x, z)\PP_z(T_{B^c} < T_A^+, X_{T_{B^c}} = b).
		\]
		Moreover, for $x \in B$ we have
		\[
			\PP_x(T_A < T_B) = \sum_{z \in A} \Gr_{B^c}(x, z)\PP_z(T_B < T_A^+)
		\]
	\end{lemma}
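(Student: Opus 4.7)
The plan is to prove both identities by the standard last-exit decomposition applied at the last visit to $A$ before leaving $B$. Let me describe the approach for the first identity; the second is essentially a specialization.

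First, define the random time $L_A = \sup\{k < T_{B^c} : X_k \in A\}$, which is the index of the last visit to $A$ strictly before exiting $B$. Since $x \in A$, $L_A \geq 0$ is well-defined (and since $B$ is finite, $T_{B^c}<\infty$ almost surely on the event of interest). Then I would partition $\{X_{T_{B^c}} = b\}$ according to the value of $L_A$ and of $X_{L_A}$:
\[
\PP_x(X_{T_{B^c}} = b) = \sum_{z \in A} \sum_{k=0}^{\infty} \PP_x(X_k = z,\ T_{B^c} > k,\ X_j \notin A \text{ for } k<j<T_{B^c},\ X_{T_{B^c}} = b).
\]

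Next, I would apply the strong Markov property at time $k$. The event $\{X_k = z,\ T_{B^c} > k\}$ is measurable with respect to $\mathcal{F}_k$, while the event that after time $k$ the walk does not return to $A$ before exiting $B$ at $b$ is, conditionally on $X_k = z$, exactly $\{T_{B^c} < T_A^+,\ X_{T_{B^c}} = b\}$ under $\PP_z$. Hence the summand factors as
\[
\PP_x(X_k = z,\ T_{B^c} > k)\cdot \PP_z(T_{B^c} < T_A^+,\ X_{T_{B^c}} = b).
\]
Summing over $k$ and using the definition $\Gr_{B^c}(x,z) = \sum_{k \ge 0}\PP_x(X_k = z,\ T_{B^c} > k)$ (which is just \eqref{eqdef: green function} unfolded) yields the claimed formula.

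For the second identity, starting from $x \in B$, the event $\{T_A < T_{B^c}\}$ is precisely the event that $L_A$ is finite, so the same last-exit partition gives
\[
\PP_x(T_A < T_{B^c}) = \sum_{z \in A}\sum_{k=0}^{\infty} \PP_x(X_k = z,\ T_{B^c} > k)\cdot \PP_z(T_{B^c} < T_A^+),
\]
again by the strong Markov property, and summing $k$ identifies the first factor as $\Gr_{B^c}(x,z)$. There is no substantive obstacle here: the only point requiring a little care is that $L_A$ is a \emph{last} time and not a stopping time, so one must decompose by conditioning on $\{L_A = k\}$ rather than applying the strong Markov property directly at $L_A$; once that is set up, everything follows by rearranging a double sum.
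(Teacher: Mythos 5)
Your proof is correct and follows essentially the same route as the paper, which simply adapts the last-exit decomposition of Proposition 4.6.4 in Lawler--Limic by partitioning on the index $k$ and location $z$ of the last visit to $A$ before $T_{B^c}$ and applying the Markov property at the fixed time $k$. Your handling of the fact that $L_A$ is not a stopping time (decomposing over $\{L_A=k\}$ rather than stopping at $L_A$) is exactly the point the paper's reference handles the same way, so there is nothing further to add.
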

\begin{wrong-old}
	\begin{proof} We adapt the proof in \cite[Proposition 4.6.4]{LawlerLimicRandomWalks} to work in our setting, although this is rather straightforward.
		Fix $A \subset B \subset v(G)$ finite and $x \in A$, $b \in \partial B$. Denote $\sigma$ the last time the random walk started from $x$ and killed when hitting $B^c$ will visit $\partial^{in} A$. Then by the Markov Property for simple random walk, we have
		\begin{align*}
			\PP_x(X_{T_{B^c}} = b) &= \sum_{z \in A} \sum_{k = 0}^\infty \PP_x(\sigma = k; X_{\sigma} = z, X_{T_{B^c}} = b) \\
			&= \sum_{z \in A} \sum_{k=0}^{\infty} \PP_x(X_{k} = z, k < T_{B^c})\PP_z(T_A^+ < T_{B^c}; X_{T_{B^c}} = b),
		\end{align*}
		showing the first assertion. The second assertion is similar.
	\end{proof}

\end{wrong-old}

\begin{wrong-old}
	\subsection{Reversible environments}
	The third and last topic we introduce before starting our proofs, is that of reversible random graphs. A \textbf{rooted graph} $(G, o)$ is a graph $G$, together with a distinguished vertex $o \in v(G)$. Two rooted graphs $(G, o)$ and $(G', o')$ are isomorphic, written $(G, o) \simeq (G', o')$ whenever there exists a bijective map $\phi:v(G) \to v(G')$ such that $\phi(o) = o'$ and such that
	\[
		\{ \{\phi(x), \phi(y)\}: \{x, y\} \in e(G)\} = e(G').
	\]
	We can then define $\cG_{\bullet}$ to be the space of all equivalence classes of rooted, connected graphs. As usual we will simply write $(G, o)$ for a rooted graph, as well as for the equivalence class to which it belongs.
	
	Similarly, we can define the space of \textbf{doubly rooted graphs} $(G, o, \rho)$ denoted by $\cG_{\bullet \bullet}$. We equip the space of rooted graphs $\cG_{\bullet}$ with the local topology, which was first introduced in \cite{BenjaminiSchrammRecurrence} and which comes from the metric
	\[
		d_{\mathrm{loc}}((G, o), (G', o')) = \frac{1}{1 + r}
	\]
	where $r$ is the largest value for which $(B(o, r), o)$ and $(B(o', r), o')$ are isomorphic (here, the graph balls are taken on the respective graphs $G$ and $G'$). A random rooted graph is then a random variable taking values in the space $\cG_{\bullet}$ with the Borel sets as $\sigma$-algebra. We will write $\bf{P}$ and $\bf{E}$ for the law respectively expected value of random rooted graphs.
	
	A \textbf{mass transport} is a function $f: \cG_{\bullet \bullet} \to [0, \infty]$. We call a random rooted graph \textbf{unimodular}  whenever it satisfies the mass transport principle
	\[
		\bf{E} \left[\sum_{x \in v(G)} f(G, o, x)\right] = \bf{E} \left[ \sum_{x \in v(G)} f(G, x, o) \right],
	\]
	which says so much as ``expected mass in equals expected mass out''. This goes back again to \cite{BenjaminiSchrammRecurrence}, and was later developed in \cite{AldousLyonsUnimod2007}. The random graph $(G, o)$ is called \textbf{reversible}, or a reversible environment, whenever $(G, o, X_1)$ has the same law as $(G, X_1, o)$, where $(X_n)_{n \geq 0}$ is the simple random walk on $G$, started from the root $o$. The random rooted graph $(G, o)$ is called \textbf{stationary} whenever $(G, X_n)$ has the same law as $(G, o)$. Note that reversibility implies stationarity, but the converse is not true in general. In the recurrent case, however, the two notions are equivalent \cite[Theroem 4.3]{BenjaminiCurien2012}.
	
	Suppose for a second that $G$ is a finite graph, with $o$ simply being uniformly chosen over its vertices, then $(G, o)$ is unimodular. On the other hand, if we bias the law of $o$ by the degree, then the random walk is stationary. This idea generalizes to the infinite setting, with a minor regularity assumption, which is well known and called ``degree biasing''. If $(G, o)$ is a unimodular random graph with $\bf{E}[\deg(o)] < \infty$, then degree biasing it's law gives a reversible environment. Similarly, if $(G, o)$ is reversible, then biasing by $\deg(o)^{-1}$ gives a unimodular random graph.
	
\end{wrong-old}
	%
	\section{Equivalence between (a) and (b)} \label{section: well-definition of the PK's}

\subsection{Base case of equivalence}	
	We will say that a sequence of finite sets of vertices $(A_n)_{n \geq 1}$ `goes to infinty' whenever $d(A_n, o) \to \infty$ as $n \to \infty$. Here, by $d(A_n, o)$ we just mean the minimal distance of any vertex in $A_n$ to $o$. Recall the definition of $a_{A_n}$, which also satisfies
	\begin{equation} \label{eqdef: PK along An}
		a_{A_n}(x, y) = g_{A_n}(y, y) - g_{A_n}(x, y) = \frac{1}{\deg (y)} \frac{\PP_x (T_{A_n} < T_y)}{  \PP_y ( T_{A_n} < T_{y}^+) }.
	\end{equation}
	Clearly, both the numerator and the denominator tend to $0$ as $n$ tends to infinity by recurrence of the underlying graph $G$. When a sequence of subsets $A_n$ has be chosen we will write $a_n$ instead of $a_{A_n}$ with a small abuse of notations.
	
	The goal of this section is to prove the equivalence between (a) and (b) in Theorem \ref{T:main_equiv} (in the base case where the set $A$ consists of two points; this will be extended to arbitrary finite sets in Section \ref{section: Gluing, Capacity and Harmonic measure}). 
		First, we show that subsequential limits of $a_n$ always exist.
	
	\begin{lemma} \label{lem: PK exists}
		Let $(A_n)_{n \geq 1}$ be some sequence of finite sets of vertices going to infinity. There exists a subsequence $(n_k)_{k \geq 1}$ going to infinity such that for all $x, y \in v(G)$ the limit
		\[
			a(x,y): = \lim_{k \to \infty} a_{{n_k}}(x,  y)
		\]
		exists in $[0, \infty)$. Moreover, $a(x,y) > 0$ precisely when the removal of $y$ from $G$ does not disconnect $x$ from $A_{n_k}$ for all $k$ large enough.
	\end{lemma}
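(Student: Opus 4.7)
The plan is to combine a compactness argument based on electric-network bounds with the strong maximum principle, giving both the existence of subsequential limits and the characterisation of their strict positivity.

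First, I would establish a uniform two-sided pointwise bound on $a_n(x,y)$. Nonnegativity follows from the strong Markov property at the first hit of $y$: since $g_{A_n}(x,y)=\PP_x(T_y<T_{A_n})\,g_{A_n}(y,y)$, one has
\[
a_n(x,y) \;=\; g_{A_n}(y,y)\,\PP_x(T_{A_n}<T_y)\;\ge\;0.
\]
For the uniform upper bound, I would use reversibility ($g_{A_n}(x,y)=g_{A_n}(y,x)$) together with the classical identity relating the Green's function to the effective resistance in the wired graph,
\[
a_n(x,y)+a_n(y,x) \;=\; g_{A_n}(x,x)+g_{A_n}(y,y)-2g_{A_n}(x,y) \;=\; \Reff_n^{\mathrm{wired}}(x\leftrightarrow y) \;\le\; \Reff(x\leftrightarrow y)<\infty,
\]
the final inequality being Rayleigh monotonicity (wiring $A_n$ can only decrease resistance). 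Since both summands on the left are nonnegative, each is bounded by the finite free effective resistance.

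Since $v(G)\times v(G)$ is countable, a standard Cantor diagonal argument then extracts a subsequence $(n_k)$ along which $a_{n_k}(x,y)$ converges for every pair $(x,y)$ to some $a(x,y)\in[0,\Reff(x\leftrightarrow y)]$. A further diagonalisation stabilises, for every pair, the binary event ``removal of $y$ disconnects $x$ from $A_{n_k}$'', so that this event is eventually either constantly true or constantly false in $k$.

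For the positivity characterisation, the factorisation $a_{n_k}(x,y)=g_{A_{n_k}}(y,y)\,\PP_x(T_{A_{n_k}}<T_y)$ shows at each fixed $n_k$ that $a_{n_k}(x,y)=0$ iff every path from $x$ to $A_{n_k}$ meets $y$: for the non-trivial direction, the strong maximum principle applied to the nonnegative voltage $z\mapsto g_{A_{n_k}}(z,y)$ (harmonic off $\{y\}\cup A_{n_k}$, maximised at $y$, vanishing on $A_{n_k}$) would propagate its maximum value through the connected component of $x$ in $G\setminus\{y\}$ until it reached a vertex of $A_{n_k}$ at voltage zero, a contradiction. Combined with the stabilisation step, this immediately gives the ``if disconnection holds eventually then $a(x,y)=0$'' direction, and the converse follows by matching up the strict pointwise positivity with the chosen subsequence. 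The main obstacle of the whole argument is the uniform upper bound: the definition of $a_n$ does not make boundedness in $n$ transparent, and only through the electric-network identity $a_n(x,y)+a_n(y,x)=\Reff_n^{\mathrm{wired}}(x\leftrightarrow y)$ does one obtain the key bound $a_n(x,y)\le\Reff(x\leftrightarrow y)$, without which neither the compactness nor the subsequent positivity analysis would go through.
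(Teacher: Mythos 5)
Your existence argument is correct and in fact takes a genuinely different route from the paper for the key uniform bound. The paper bounds $a_n(x,y)$ above and below by forcing the walk through a well-chosen neighbour $u$ of $y$ and using recurrence; you instead observe that $a_n(x,y)+a_n(y,x)=g_{A_n}(x,x)+g_{A_n}(y,y)-2g_{A_n}(x,y)$ equals the effective resistance between $x$ and $y$ in the graph with $A_n$ wired, which by Rayleigh monotonicity is at most $\Reff(x\leftrightarrow y)<\infty$. This yields the clean two-sided bound $0\le a_n(x,y)\le \Reff(x\leftrightarrow y)$ in one stroke, after which the diagonal extraction is identical to the paper's. The extra diagonalisation stabilising the disconnection events is a reasonable (and careful) addition.

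However, there is a genuine gap in the positivity characterisation. You correctly show that $a_{n_k}(x,y)=0$ if and only if the removal of $y$ disconnects $x$ from $A_{n_k}$, and hence that eventual disconnection forces $a(x,y)=0$. But for the converse you only invoke ``strict pointwise positivity'': $a_{n_k}(x,y)>0$ for each large $k$ does \emph{not} imply $\lim_k a_{n_k}(x,y)>0$, since a sequence of strictly positive numbers can tend to $0$ (and here both factors in $a_{n_k}(x,y)=g_{A_{n_k}}(y,y)\,\PP_x(T_{A_{n_k}}<T_y)$ degenerate: the first tends to $\infty$ and the second to $0$). What is needed, and what the paper supplies, is a lower bound \emph{uniform in} $n$: writing $a_n(x,y)=\PP_x(T_{A_n}<T_y)/\sum_{u\sim y}\PP_u(T_{A_n}<T_y)$ and forcing the walk from $x$ to pass through the neighbour $u$ of $y$ maximising $\PP_u(T_{A_n}<T_y)$ gives
\[
a_n(x,y)\;\ge\;\frac{\PP_x(T_u<T_{A_n}\wedge T_y)}{\deg(y)}\;\ge\;c(x,y)>0,
\]
with $c(x,y)$ independent of $n$ (working inside the infinite component of $x$ in $G\setminus\{y\}$). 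Your resistance identity does not close this gap either: it only shows $a(x,y)+a(y,x)=\Reff(x\leftrightarrow y)>0$, i.e.\ that at least one of the two limits is positive, not the one you need. You should add a uniform lower bound of this kind to complete the proof.
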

	\begin{proof}
		Fix $y \in v(G)$ and suppose first that for $u \sim y$ we have $\PP_u(T_{A_n} < T_y) > 0$ for all $n$ large enough (i.e., $y$ does not disconnect a portion of the graph from infinity).
		
		Let $x \in v(G)$ and fix $n$ so large that $A_n$ does not contain $y, x$ or any of the neighbors of $y$. For each $u \sim y$, we can force the random walk started from $x$ to go through $u$ before touching $A_n$ or $y$ to get
		\begin{equation} \label{subeq: forcing the RW through u}
			\PP_x(T_{A_n} < T_y) \geq \PP_x(T_u < T_{A_n} \wedge T_y)\PP_u(T_{A_n} < T_y).
		\end{equation}
		Upon taking $u \sim y$ such that it maximizes $\PP_u(T_{A_n} < T_y)$ and by recurrence of $G$ we get the existence of $c(x, y) > 0$ for which
		\[
			a_{n}(x, y) = \frac{\PP_x(T_{A_n} < T_y)}{\sum_{u \sim y} \PP_u(T_{A_n} < T_y)} \geq \frac{\PP_x(T_u < T_{A_n} \wedge T_y)}{\deg(y)} \geq c(x, y) >0.
		\]
		The same reasoning as in \eqref{subeq: forcing the RW through u} but in the other direction gives
		\[
			\PP_x(T_{A_n} < T_y) \leq \frac{\PP_u(T_{A_n} < T_y)}{\PP_u(T_x < T_{A_n} \wedge T_y)}.
		\]
		Hence, using again recurrence of $G$ we get that there is some $C(x, y) < \infty$ such that (upon taking the right $u$)
		\[	
			a_{n}(x, y) \leq \frac{\PP_u(T_{A_n} < T_y)}{\PP_u(T_x < T_{A_n} \wedge T_y)\sum_{u \sim y}\PP_u(T_{A_n} < T_y)} \leq C(x, y) < \infty.
		\]
		We deduce that for fixed $x, y$, subsequential limits of $a_{n}(x, y)$ exist and the existence of subsequential limits for all $x, y$ simultaneously follows from diagonal extraction.
		
		The existence of subsequential limits in the general case is the same as we can always lower bound $a_n(x, y)$ by $0$ and the upper bound does not change.
		
		Now, if $x \in v(G)$ is such that the removal of $y$ disconnects $x$ from $A_{n_k}$, then $a_{n_k}(x, y) = 0$. Suppose thus that $x$ is such that the removal of $y$ does not disconnect $x$ from $A_{n_k}$ for all $k$ large enough. In this case, we can restrict ourselves to just the component of $G$ with $y$ removed, in which both $A_{n_k}$ and $x$ are as the hitting probabilities are the same in this case. Hence, we are back in the situation above and $a_{n_k}(x, y) \geq c(x, y) > 0$.
	\end{proof}
	
	We next present a result, which shows that any subsequential limit appearing in Lemma \ref{lem: PK exists} must satisfy a certain number of properties.
	
	\begin{proposition}\label{prop: properties of the PKs}
		Let $a(x,y)$ be any subsequential limit as in Lemma \ref{lem: PK exists}. Then $a:v(G) \to \RR_+$ satisfies
		\begin{enumerate}[(i)]
			\item for each $y \in v(G)$
			\[
				\Delta a(\cdot, y) = \delta_y(\cdot) \qquad \text{ and } \qquad a(y, y) = 0,
			\]
			where we recall that $\Delta$ is defined in \eqref{D:Laplacian} and is normalised so that $\Delta f(x) = \sum_y (f(y) - f(x))$.
			\item for all $x, y \in v(G)$ we have
			\[
				a(x, y) = \lim_{k \to \infty}\PP_{A_{n_k}}(T_x < T_y)\Reff(x \leftrightarrow y),
			\]
			where $\PP_A$ refers to the law of a random walk starting from $A$, when all of the vertices in $A$ have been wired together.
		\end{enumerate}
	\end{proposition}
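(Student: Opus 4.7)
The plan is to establish both assertions by first proving exact identities at each finite $n$ and then passing to the limit along the subsequence $(n_k)$ provided by Lemma \ref{lem: PK exists}.

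For (i), the starting point is the elementary one-step Markov decomposition $\Gr_{A_n}(x, y) = \id_{x = y} + \frac{1}{\deg(x)} \sum_{z \sim x} \Gr_{A_n}(z, y)$, valid for $x \notin A_n$. Rearranging and dividing by $\deg(y)$ gives $\Delta g_{A_n}(\cdot, y)(x) = -\id_{x = y}$ on $v(G) \setminus A_n$, and hence $\Delta a_n(\cdot, y)(x) = \id_{x = y}$ there; the identity $a_n(y, y) = 0$ is immediate from \eqref{eqdef: PK along An}. Because $d(A_n, o) \to \infty$ and $G$ is locally finite, for any fixed $x$ the set $\{x\} \cup \{z : z \sim x\}$ is eventually disjoint from $A_n$, so pointwise convergence of $a_{n_k}(\cdot, y)$ on this finite set transfers both identities to the limit $a(\cdot, y)$.

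Part (ii) requires a closed-form expression for $a_n(x, y)$ involving $\PP_{A_n}(T_x < T_y)$ and $\Reff(x \leftrightarrow y)$, which I would derive in two steps. First, the strong Markov property applied at $T_y$ yields $\Gr_{A_n}(x, y) = \PP_x(T_y < T_{A_n})\, \Gr_{A_n}(y, y)$, and hence $a_n(x, y) = g_{A_n}(y, y)\, \PP_x(T_{A_n} < T_y)$. Second, working in the wired graph $G^*_n$ with wired vertex $\partial_n$ and using reversibility of the walk killed at $y$ in the form $\deg(x)\, \Gr_y(x, \partial_n) = \deg(\partial_n)\, \Gr_y(\partial_n, x)$, together with the Markov decompositions $\Gr_y(\cdot, w) = \PP_\cdot(T_w < T_y)\, \Gr_y(w, w)$ for $w \in \{x, \partial_n\}$, one obtains the swap identity $\PP_x(T_{A_n} < T_y)\, \Reff(\partial_n \leftrightarrow y; G^*_n) = \PP_{A_n}(T_x < T_y)\, \Reff(x \leftrightarrow y; G^*_n)$. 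Since $g_{A_n}(y, y) = \Reff(y \leftrightarrow \partial_n; G^*_n)$ by \eqref{eq:efres through flow}, substituting into the previous display produces the exact identity $a_n(x, y) = \PP_{A_n}(T_x < T_y)\, \Reff(x \leftrightarrow y; G^*_n)$, valid at every finite $n$.

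To conclude, I would let $n = n_k \to \infty$. The left-hand side tends to $a(x, y)$ by assumption, while $\Reff(x \leftrightarrow y; G^*_{n_k}) \to \Reff(x \leftrightarrow y)$ by the exhaustion characterisation of effective resistance recalled in Section \ref{section: Preliminaries}; since the limiting resistance is finite and positive on a connected graph, $\PP_{A_{n_k}}(T_x < T_y)$ has a well-defined limit and (ii) follows. The degenerate case in which removing $y$ disconnects $x$ from $A_n$ is harmless, as both sides of the identity vanish for large $k$ (cf.\ Lemma \ref{lem: PK exists}). I do not anticipate any serious obstacle; the only mildly subtle point is the reversibility argument swapping the starting point in $G^*_n$, which is just detailed balance for the killed walk.
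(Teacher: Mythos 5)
Your proof is correct and follows essentially the same route as the paper. For (i) you compute the Laplacian of the truncated Green function directly from its one-step Markov decomposition, whereas the paper obtains the same thing by noting that $x \mapsto \PP_x(T_{A_n} < T_y)$ is harmonic off $\{y\} \cup A_n$ and checking the normalisation at $y$; these are just two presentations of the same computation. For (ii) you derive the identity $a_n(x,y) = \PP_{A_n}(T_x < T_y)\,\Reff(x\leftrightarrow y; G_{A_n})$ via the Markov property at $T_y$ followed by the reversibility swap $\deg(x)\,\Gr_y(x,\partial_n) = \deg(\partial_n)\,\Gr_y(\partial_n,x)$ in the graph with $A_n$ glued; the paper runs the same chain of equalities (its \eqref{eq: a_n = Reff()*Prob}--\eqref{E:pkGreen}) in a slightly different order, and the convergence $\Reff(x\leftrightarrow y; G_{A_n}) \to \Reff(x\leftrightarrow y)$ is justified the same way (recurrence, hence free $=$ wired resistance). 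One small notational point: the paper reserves $G^*_n$ for the exhaustion graph with $G_n^c$ wired and writes $G'_n$ for $G$ with $A_n$ glued; your argument uses the latter but calls it $G^*_n$, which you should rename to avoid a clash.
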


	The equivalence between (a) and (b) of Theorem \ref{T:main_equiv} (in the base case where the finite set $B$ on which we need to define harmonic measure consists of two points) is then obvious, and we collect it here:
	
	\begin{corollary} \label{cor: the harmonic measure for two points is well-defined}
		Let $G$ be a recurrent graph. Then
		\[
			\hm_{x, y}(x) := \lim_{n \to \infty} \PP_{A_n}(T_x < T_y)
		\]
		exists for all $x, y \in v(G)$ and is independent of the sequence $(A_n)_n$ if and only if the potential kernel is uniquely defined. Furthermore, in this case,
		\[
			a(x, y) = \hm_{x, y}(x)\Reff(x \leftrightarrow y).
		\]
	\end{corollary}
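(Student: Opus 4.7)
The plan is to show that the stated corollary is essentially a direct consequence of Proposition \ref{prop: properties of the PKs}(ii), which already provides the bridge between the potential kernel and the hitting probabilities $\PP_{A_{n_k}}(T_x < T_y)$, multiplied by the (fixed, strictly positive and finite) effective resistance $\Reff(x \leftrightarrow y)$. Concretely, Proposition \ref{prop: properties of the PKs}(ii) says that along any subsequence $(n_k)$ for which $a_{n_k}(x,y)$ converges to some limit $a(x,y)$, one has
\[
a(x,y) = \Reff(x \leftrightarrow y) \cdot \lim_{k \to \infty} \PP_{A_{n_k}}(T_x < T_y) .
\]
Since the graph is recurrent and connected, $\Reff(x \leftrightarrow y) \in (0, \infty)$ for every pair $x \neq y$, so dividing by $\Reff(x \leftrightarrow y)$ converts existence/uniqueness statements for the potential kernel into the corresponding statements for the harmonic measure, and vice versa.

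For the ``only if'' direction, I would assume that the potential kernel $a(x,y)$ is uniquely defined (i.e.\ the full limit $\lim_n a_n(x,y)$ exists and is independent of $(A_n)_n$). Fix $x, y$ and any sequence $(A_n)$ going to infinity; by Lemma \ref{lem: PK exists} every subsequence has a further subsequence along which $a_{n_k}(x,y)$ converges, necessarily to the same limit $a(x,y)$. By the displayed identity, the corresponding subsubsequence of $\PP_{A_{n_k}}(T_x < T_y)$ converges to $a(x,y)/\Reff(x \leftrightarrow y)$, a value independent of the subsequence chosen. The standard subsequence principle (every subsequence has a sub-subsequence with the same limit) then implies that the full sequence $\PP_{A_n}(T_x < T_y)$ converges, independently of $(A_n)$, and the limit is $a(x,y)/\Reff(x \leftrightarrow y)$, which is the claimed formula.

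For the ``if'' direction, I would assume that $\hm_{x,y}(x) := \lim_n \PP_{A_n}(T_x < T_y)$ exists and does not depend on the sequence $(A_n)$. Then for any such sequence, Lemma \ref{lem: PK exists} gives a subsequence along which $a_{n_k}(x,y)$ has a limit $a(x,y)$, and Proposition \ref{prop: properties of the PKs}(ii) forces this limit to equal $\hm_{x,y}(x) \cdot \Reff(x \leftrightarrow y)$. Since this value is independent of both the subsequence and the initial sequence $(A_n)$, the same subsequence principle as above yields convergence of the full sequence $a_n(x,y)$ to this common value, which simultaneously establishes the uniqueness of the potential kernel and the identity $a(x,y) = \hm_{x,y}(x)\,\Reff(x \leftrightarrow y)$.

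There is no real obstacle here: the content has been done in Proposition \ref{prop: properties of the PKs}, and the remaining work is the standard soft argument that if every convergent subsequence of a (necessarily bounded, by Lemma \ref{lem: PK exists}) numerical sequence has the same limit, then the sequence itself converges. The only point to double-check is that $\Reff(x \leftrightarrow y)$ is a positive finite constant under the sole assumption of recurrence and connectedness of $G$, which is immediate.
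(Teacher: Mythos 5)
Your proposal is correct and is exactly the argument the paper has in mind: the paper gives no separate proof of this corollary, stating that it is "obvious" from Proposition \ref{prop: properties of the PKs}(ii), and your write-up simply makes explicit the soft subsequence argument (via Lemma \ref{lem: PK exists} and the positivity and finiteness of $\Reff(x \leftrightarrow y)$) that justifies that claim in both directions.
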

	
	\begin{proof}[Proof iof Proposition \ref{prop: properties of the PKs}]
		The proof of item (i) is rather elementary. Fix $y \in v(G)$ and $n \ge 1$. Since $x \mapsto \PP_x(T_{A_{n}} < T_y)$ is a harmonic function outside of $y$ and $A_{n}$ by the simple Markov property, we get that $x \mapsto a_{n}(x, y)$ is harmonic outside $y$ and $A_{n}$, see \eqref{eqdef: PK along An}. It follows that $x \mapsto a(x, y)$ is harmonic at least away from $y$.
		Furthermore, note that $a_{n}(y, y) = 0$ by definition and
		\[
			\sum_{u \sim y} a_{n}(u, y)  = \frac{\sum_{u \sim y} \PP_u(T_{A_{n}} < T_y)}{\sum_{u \sim y} \PP_u(T_{A_{n}} < T_y)} = 1
		\]
		so $\Delta a_n( \cdot, y )|_{\cdot =y } = 1$. This finishes the proof of (i).
		
		For part (ii), we notice first that by properties of the electrical resistance,
		\[
			\sum_{u \sim y} \PP_u(T_{A_{n}} < T_y) = \deg(y)\PP_y( T_{A_{n}} < T_y^+) = \frac{1}{\Reff(y \leftrightarrow A_{n})},
		\]
		which allows us to write
		\begin{equation} \label{eq: a_n = Reff()*Prob}
			a_{n}(x, y) = \Reff(y \leftrightarrow A_{n}) \PP_x(T_{A_{n}} < T_y).
		\end{equation}
		Identify the vertices in $A_{n}$ and delete possible self-loops created in the process. The resulting graph $G'_{n}$ is then still recurrent. 
		Let $\Gr_y(\cdot, \cdot)$ denote the Green function on this graph when the walk is killed at $y$. We can also express the effective resistance in terms of the normalised Green function: that is,
		$$
			\Reff (y \leftrightarrow A_n) = \frac{\Gr_y (A_n, A_n)}{\deg(A_n)}
		$$
		Using the Markov property and since $G'_n$ is reversible,
		\begin{align}
			a_{n}(x, y) &= \PP_x(T_{A_n} < T_y) \frac{\Gr_y(A_{n}, A_{n})}{\deg(A_{n})} \nonumber \\
			& = \frac{\Gr_{y}(x, A_{n})}{\deg(A_{n})}\nonumber \\
			& = \frac{\Gr_y (A_n, x)}{\deg(x)}\label{E:pkGreen}\\
			& =		\PP_{A_{n}}(T_x < T_y)\Reff(x \leftrightarrow y; G'_n)\nonumber
		\end{align}
		by using the same argument in the other direction, and	where the effective resistance in the last line is calculated in $G'_n$.
		
		Since the graph $G$ is recurrent, it follows that $\Reff(x \leftrightarrow y; G'_n)$ converges to $\Reff(x \leftrightarrow y; G)$ as $n \to \infty$ (as the free and wired effective resistances agree). We deduce that
		\[
			a(x, y) = \lim_{k \to \infty} \PP_{A_{n_k}}(T_x < T_y)\Reff(x \leftrightarrow y),
		\]
		which finishes part (ii).
	\end{proof}


	\subsection{Triangle inequality for the potential kernel}

Before we start of the proof of the remaining implications, we need some preliminary estimates on the potential kernel, showing that it satisfies a form of triangle inequality. This plays a crucial role throughout the rest of this paper.
We also need a decomposition of the potential kernel in order to prove that for reversible graphs, the potential kernel (if it is well defined) satisfies the growth condition.

	We start with a simple and well known application of the optional stopping theorem:

	\begin{lemma} \label{lem: green function finite set and PK}
		Let $A$ be some finite set and suppose that $x, y \in A$. Then
		\[
			\frac{\Gr_{A^c}(x, y)}{\deg(y)} = \E_x[a(X_{T_{A^c}}, y)] - a(x, y).
		\]
	\end{lemma}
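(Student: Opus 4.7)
The plan is to apply optional stopping to a martingale built from the potential kernel, using the fact that $a(\cdot, y)$ has Laplacian equal to $\delta_y$ (Proposition 4.3(i)).

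\medskip

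First, I would verify that, with the paper's normalisation $\Delta f(x) = \sum_{u \sim x}(f(u) - f(x))$, the one-step drift of $a(X_n, y)$ under simple random walk is
\[
\E[a(X_{n+1}, y) \mid X_n = z] - a(z, y) = \frac{1}{\deg(z)} \sum_{u \sim z} \bigl( a(u, y) - a(z, y) \bigr) = \frac{\Delta a(\cdot, y)(z)}{\deg(z)} = \frac{\id_{\{z = y\}}}{\deg(y)}.
\]
Consequently the process
\[
M_n := a(X_n, y) - \frac{1}{\deg(y)} \sum_{k=0}^{n-1} \id_{\{X_k = y\}}
\]
is a martingale under $\PP_x$.

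\medskip

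Second, I would apply optional stopping at $\tau := T_{A^c}$. Because $A$ is finite and $G$ is connected, $\tau < \infty$ almost surely; moreover $X_\tau \in \partial A$, a finite set, so $a(X_\tau, y)$ is bounded. The accumulated occupation at $y$ up to $\tau$ has expectation $\Gr_{A^c}(x,y)$, which is finite by definition. Using the stopped martingale $M_{n \wedge \tau}$, bounded convergence for the first term and monotone convergence for the occupation term let me pass to the limit $n \to \infty$ and conclude $\E_x[M_\tau] = M_0 = a(x,y)$. Rearranging gives exactly
\[
\E_x\bigl[a(X_\tau, y)\bigr] - a(x,y) = \frac{1}{\deg(y)} \E_x\!\left[\sum_{k=0}^{T_{A^c} - 1} \id_{\{X_k = y\}}\right] = \frac{\Gr_{A^c}(x,y)}{\deg(y)}.
\]

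\medskip

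The only nonroutine step is the integrability check required to justify optional stopping; this is handled by the finiteness of $\partial A$ (bounding the boundary term uniformly) and by monotone convergence applied to the nonnegative occupation term, whose limit expectation is finite as it equals $\Gr_{A^c}(x, y)$. No hypothesis on uniqueness of the subsequential limit $a$ is needed, since the argument only uses the Laplacian identity from Proposition 4.3(i), which holds for every such limit.
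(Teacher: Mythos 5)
Your proposal is correct and follows essentially the same route as the paper's proof: the identical martingale $M_n = a(X_n,y) - \frac{1}{\deg(y)}\sum_{k<n}\id_{\{X_k=y\}}$, optional stopping at $n \wedge T_{A^c}$, and the same bounded/dominated convergence for the boundary term together with monotone convergence for the occupation term. The explicit drift computation verifying the martingale property is a welcome addition but does not change the argument.
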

	
	\begin{proof}
	This is Proposition 4.6.2 in \cite{LawlerLimicRandomWalks}, but we include for completeness since its proof if simple.
		Let $x, y \in A$ and notice that
		\[
		M_n := a(X_n, y) - \sum_{j=0}^{n - 1} \frac{\delta_y(X_j)}{\deg(y)}
		\]
		is a martingale. Applying the optional stopping theorem at $T_{A^c} \wedge n$, we obtain
		\[
		a(x, y) = \E_{x}[M_0] = \E_x[a(X_{n \wedge T_{A^c}}, y)] - \frac{1}{\deg(y)}\E_x \left[\sum_{j=0}^{(n \wedge T_{A^c}) - 1} \delta_y({X_j })\right],
		\]
		Taking $n \to \infty$, since $A$ is finite, we deduce from dominated (resp. monotone) convergence that
		\[
		\E_x[a(X_{n \wedge T_{A^c}}, y)] \to \E_x[a(X_{T_{A^c}}, y)], \frac{1}{\deg(y)}\E_x \left[\sum_{j=0}^{(n \wedge T_{A^c}) - 1}  \delta_y(X_j)\right] \to \frac{\Gr_{A^c}(x, y)}{\deg(y)},
		\]
		showing the result.
	\end{proof}

	\begin{proposition} \label{prop: green function and PK}
		Let $x, y, z \in v(G)$ be three vertices. We have the identity
		\[
			\frac{\Gr_z(x, y)}{\deg(y)} = a(x, z) - a(x, y) + a(z, y).
		\]
	\end{proposition}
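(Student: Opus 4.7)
The plan is to establish a prelimit version of the identity in terms of the truncated potential kernel $a_n$ and the Green function $\Gr_{\{z\}\cup A_n}$ killed on $\{z\}\cup A_n$, and then take $n\to\infty$ along the subsequence provided by Lemma~\ref{lem: PK exists}. The key mechanism is the strong Markov property at $T_z$, used twice, combined with the definition $a_n(\cdot,y) = g_n(y,y) - g_n(\cdot,y)$.

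First I would condition the walk started at $x$ on whether it visits $z$ before $A_n$ and apply the strong Markov property at $T_z$ to get
\[
g_n(x,y) \;=\; \frac{\Gr_{\{z\}\cup A_n}(x,y)}{\deg(y)} + \PP_x(T_z < T_{A_n})\, g_n(z,y).
\]
Substituting $g_n(x,y) = g_n(y,y) - a_n(x,y)$ and $g_n(z,y) = g_n(y,y) - a_n(z,y)$ and using $\PP_x(T_z < T_{A_n}) + \PP_x(T_{A_n} < T_z) = 1$, the diagonal term $g_n(y,y)$ cancels and I arrive at
\[
\frac{\Gr_{\{z\}\cup A_n}(x,y)}{\deg(y)} + a_n(x,y) - a_n(z,y) \;=\; g_n(z,y)\, \PP_x(T_{A_n} < T_z).
\]

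The right-hand side diverges naively as $n\to\infty$ (since $g_n(z,z)$, and hence $g_n(z,y)$, blow up by recurrence), so I would rewrite it using two elementary identities: \eqref{eqdef: PK along An} applied to the pair $(x,z)$ gives $\PP_x(T_{A_n} < T_z) = a_n(x,z)/g_n(z,z)$, and another Markov decomposition at $T_z$ (in the Green function from $y$) gives $g_n(z,y) = \PP_y(T_z < T_{A_n})\, g_n(z,z)$. The factor $g_n(z,z)$ then cancels, yielding the clean prelimit identity
\[
\frac{\Gr_{\{z\}\cup A_n}(x,y)}{\deg(y)} + a_n(x,y) - a_n(z,y) \;=\; \PP_y(T_z < T_{A_n})\, a_n(x,z).
\]

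Finally I would pass to the limit $n\to\infty$ along the chosen subsequence: $\Gr_{\{z\}\cup A_n}(x,y) \uparrow \Gr_z(x,y)$ by monotone convergence (as $\{z\}\cup A_n \downarrow \{z\}$ in the sense that the killing set shrinks outside a larger and larger ball), $a_n(x,y), a_n(z,y), a_n(x,z) \to a(x,y), a(z,y), a(x,z)$ by construction of the subsequence, and $\PP_y(T_z < T_{A_n}) \to 1$ by recurrence of $G$. Rearranging yields the proposition. I do not foresee any serious obstacle; the main delicate point is cancelling the divergent factor $g_n(z,z)$ before taking limits, and this is exactly what the rewriting in the previous step is designed to do.
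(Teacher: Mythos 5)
Your argument is correct, and it reaches the identity by a more direct route than the paper. Let me record the comparison. The paper proves the proposition by gluing $A_n$ to one vertex and $B(o,m)^c$ to another, applying the optional-stopping/martingale identity of Lemma \ref{lem: green function finite set and PK} to the three-point set $E_{m,n}=\{A_n,z,\partial_m\}$, and then taking a double limit $m\to\infty$ followed by $n\to\infty$; the divergence is tamed there by writing $\PP_x(T_{A_n}<T_z)\,a_n(A_n,y)=a_n(x,z)\,\Reff(y\leftrightarrow A_n)/\Reff(z\leftrightarrow A_n)$ and using that the resistance ratio tends to $1$ by recurrence. You instead decompose the killed Green function directly by the strong Markov property at $T_z$, which gives the exact prelimit identity
\[
\frac{\Gr_{\{z\}\cup A_n}(x,y)}{\deg(y)} + a_n(x,y) - a_n(z,y) \;=\; \PP_y(T_z<T_{A_n})\,a_n(x,z),
\]
and the divergent factor $g_n(z,z)$ cancels exactly, leaving only $\PP_y(T_z<T_{A_n})\to 1$. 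This avoids the second truncation at $\partial_m$ and the optional stopping theorem altogether, and is in my view cleaner; the underlying cancellation is the same phenomenon in both proofs, just realized algebraically in yours and asymptotically in the paper's. Two small points you should make explicit when writing this up: (i) the step $g_n(z,y)=\PP_y(T_z<T_{A_n})\,g_n(z,z)$ uses the symmetry $g_n(z,y)=g_n(y,z)$ of the normalised Green function (reversibility of simple random walk), since the Markov decomposition at $T_z$ natively produces $g_n(y,z)$; (ii) since the sets $A_n$ need not be nested, the convergence $\Gr_{\{z\}\cup A_n}(x,y)\to\Gr_z(x,y)$ is not literally monotone, but it follows from dominated convergence because $T_{A_n}\to\infty$ pathwise and the summands are dominated by $\sum_{k<T_z}\id_{X_k=y}$, which has finite expectation by recurrence. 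Finally, note that your proof, like the paper's, applies verbatim to any subsequential limit $a$ from Lemma \ref{lem: PK exists}, which is exactly the generality needed in Remark \ref{remark: prop Green and PK for subsequential limits}.
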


	\begin{proof}
		Fix $x, y, z \in v(G)$ and let $(A_n)_{n \geq 1}$ be some sequence of finite sets of vertices going to infinity\footnote{Although the simplest proof of this fact is by picking a `nice' sequence $(A_n)_{n \geq 1}$ (since we already know that the potential kernel does not depend on this choice), we present a proof that works for all sequences. See also Remark \ref{remark: prop Green and PK for subsequential limits}}. Glue together $A_n$ on the one hand, and the vertices of $B(o, m)^c$ on the other hand.  Delete all self-loops created in the process and write $\partial_m$ for the vertex corresponding to $B(o, m)^c$. Let $\tilde X_k$ be the simple random walk on the graph obtained from gluing $A_n$ and $\partial_m$. We define for $w, w' \in B(o, m) \cup \{\partial_m\}$ the function
		\[
			a_{n, m}(w, w') = \Reff(\{\partial_m, w'\} \leftrightarrow A_n)\PP_{w}(T_{A_n} < T_{w'} \wedge T_{\partial_m}).
		\]
		By recurrence and \eqref{eq: a_n = Reff()*Prob}, we have that $a_{n, m}(w, w') \to a_n(w, w')$ as $m \to \infty$, for all $w, w'$.
		
		Fix $n$ so large that $x, y$ and $z$ are not in $A_n$. Let $m$ be so large that $x, y, z$ and $A_n$ are in $B(o, m)$. Define $E_{n, m} = \{A_n, z, \partial_m\}$. Then, as in Lemma \ref{lem: green function finite set and PK},
		\begin{equation} \label{subeq: green function and PK 1}
			a_{m, n}(x, y)  
= \E_x[a_{m, n}(\tilde X_{T_{E_{m, n}}}, y)] - \frac{\Gr_{E_{m,n}}(x,y)}{\deg(y)}
		\end{equation}
On the other hand, by definition of $E_{m,n}$ we have
		\begin{align*}
			\E_{x}[a_{m, n}(X_{T_{E_{m, n}}}, y)] &= \PP_x(T_z < T_{A_n} \wedge T_{\partial_m})a_{m, n}(z, y) \\
			&\quad + \PP_x(T_{A_n} < T_z \wedge T_{\partial_m})a_{m, n}(A_n, y) \\
			&\quad + \PP_x(T_{\partial_m} < T_{A_n} \wedge T_z)a_{m, n}(\partial_m, y),
		\end{align*}
		where a priori the hitting probabilities are calculated on the graph where $A_n$ and $\partial_m$ are glued. However, as we are only interested in the first hitting time of either of these sets, it does not matter and we can calculate the probabilities also for the random walk on the graph $G$. Notice that, by definition, $a_{m, n}(\partial_m, y) = 0$. Plugging this back into \eqref{subeq: green function and PK 1} we obtain
		\[
			a_{m, n}(x, y) = \PP_x(T_z < T_{A_n} \wedge T_{\partial_m})a_{m, n}(z, y) + \PP_x(T_{A_n} < T_z \wedge T_{\partial_m})a_{m, n}(A_n, y) - \frac{\Gr_{E_{m, n}}(x, y)}{\deg(y)}.
		\]
		We have already observed that $a_{m, n}(w, y) \to a_n(w, y)$ for each $w$ as $m \to \infty$. Then, by recurrence of $G$ and monotone convergence, we get
		\begin{equation} \label{subeq: green function and PK 2}
			a_n(x, y) = \PP_x(T_z < T_{A_n})a_{n}(z, y) + \PP_x(T_{A_n} < T_z)a_{n}(A_n, y) - \frac{\Gr_{\{A_n, z\}}(x, y)}{\deg(y)}.
		\end{equation}
		Next, we wish to take $n \to \infty$. The left-hand side converges to $a(x, y)$ as $n \to \infty$, by definition of the potential kernel. The first term on the right-hand side converges to $a(z, y)$ by the same argument and recurrence of the graph $G$. Using once more monotone convergence, we find
		\begin{equation} \label{subeq: green function and PK 3}
			\frac{\Gr_{\{A_n, z\}}(x, y)}{\deg(y)} \to \frac{\Gr_z(x, y)}{\deg(y)}
		\end{equation}
		as $n$ goes to infinity. We are left to deal with the term $\PP_x(T_{A_n} < T_z)a_{n}(A_n, y)$, which we claim converges to $a(x, z)$.
		
		From the definition of $a_n$, together with the representation in \eqref{eq: a_n = Reff()*Prob}, we find
		\[
			a_{n}(A_n, y) = \Reff(y \leftrightarrow A_n)\PP_{A_n}(T_{A_n} < T_y) = \Reff(y \leftrightarrow A_n).
		\]
		Thus, using again the same representation of $a_n(x, z)$, we see that
		\begin{align*}
			a_n(A_n, y) \PP_x(T_{A_n} < T_z) &= \Reff(y \leftrightarrow A_n)\PP_x(T_{A_n} < T_z) \\
			&= a_{n}(x, z)\frac{\Reff(y \leftrightarrow A_n)}{\Reff(z \leftrightarrow A_n)}.
		\end{align*}
		Using recurrence of $G$, we notice that
		\[
			\frac{\Reff(y \leftrightarrow A_n)}{\Reff(z \leftrightarrow A_n)} \to 1
		\]
		as $n \to \infty$. In particular, we deduce that
		\[
			a_n(A_n, y)\PP_x(T_{A_n} < T_z) \to a(x, z)
		\]
		as $n \to \infty$. Plugging this, together with \eqref{subeq: green function and PK 3} back into \eqref{subeq: green function and PK 2} we conclude:
		\[
			a(x, y) = a(x, z) + a(z, y) - \frac{\Gr_z(x, y)}{\deg(y)}
		\]
as desired.		
	\end{proof}

	\begin{remark}
		Proposition \ref{prop: green function and PK} is an extensions of results known for the lattice $\ZZ^2$, see Proposition 4.6.3 in \cite{LawlerLimicRandomWalks} and the discussion thereafter. As far as we know, these proofs are based on precise asymptotic behavior of the potential kernel, a tool we do not seem to have.
	\end{remark}

	\begin{remark} \label{remark: prop Green and PK for subsequential limits}
		The statement of Proposition $\ref{prop: green function and PK}$ is also valid for an arbitrary subsequential limit $a(\cdot, \cdot)$ of $a_n (\cdot, \cdot)$, even when a proper limit is not known to exist. In particular, it shows that given such a subsequential limit $a( \cdot, y)$ there is a unique way to coherently define $a(\cdot, z)$. For this reason, if $\lim_{n \to \infty} a_n(x, y)$ is shown to exist for a fixed $y$ and \emph{all} $x \in v(G)$, it follows that this limit exists for \emph{all} $x, y \in v(G)$ simultaneously. This will be used in Theorem \ref{theorem: gluing expression}.
	\end{remark}

	\begin{corollary} \label{cor: a(x, z) - a(y, z) goes to 0 as z to infinity}
		For each $x, z \in v(G)$ and all $\epsilon > 0$ there exists an $N = N(\epsilon, x, z)$ such that for all $y$ with $d(x, y) \geq N$ we have
		\[
			|a(x, y) - a(z, y)| \leq \epsilon
		\]
		and in particular $\lim_{n \to \infty} a(x, y_n) - a(z, y_n) = 0$ for any sequence $(y_n)_{n \geq 1}$ going to infinity.
	\end{corollary}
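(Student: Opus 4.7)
The plan is to combine Proposition \ref{prop: green function and PK} with reversibility in order to identify $a(x,y)-a(z,y)$ with the difference of two truncated potential kernels.

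By Proposition \ref{prop: green function and PK} (which, by Remark \ref{remark: prop Green and PK for subsequential limits}, applies to any subsequential limit), we have the identity
\[
a(x,y) - a(z,y) \;=\; a(x,z) - \frac{\Gr_z(x,y)}{\deg(y)},
\]
so the corollary reduces to showing that $\frac{\Gr_z(x,y)}{\deg(y)} \to a(x,z)$ as $y$ goes to infinity. Next, using reversibility one writes $\frac{\Gr_z(x,y)}{\deg(y)} = \frac{\Gr_z(y,x)}{\deg(x)}$, and the strong Markov property applied at $T_x$ on the walk killed at $z$ gives $\Gr_z(y,x) = \PP_y(T_x < T_z)\,\Gr_z(x,x) = \PP_y(T_x<T_z)\deg(x)\Reff(x \leftrightarrow z)$. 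Hence
\[
\frac{\Gr_z(x,y)}{\deg(y)} \;=\; \PP_y(T_x<T_z)\,\Reff(x\leftrightarrow z),
\]
which, comparing with \eqref{eq: a_n = Reff()*Prob} for the singleton $A=\{y\}$, is exactly $a_{\{y\}}(x,z)$. Thus
\[
a(x,y) - a(z,y) \;=\; a(x,z) - a_{\{y\}}(x,z),
\]
and it suffices to prove that $a_{\{y\}}(x,z) \to a(x,z)$ as $y$ goes to infinity.

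At this point I would invoke that $a$ is a pointwise limit of $a_{A_n}$ along some sequence of finite sets going to infinity, and that the singletons $\{y_n\}$ with $y_n \to \infty$ constitute such a sequence. If $a$ is the (uniquely defined) potential kernel, as in condition (a) of Theorem \ref{T:main_equiv}, then $a_{\{y_n\}}(x,z)\to a(x,z)$ is by definition of the potential kernel. More generally, when $a$ is only some subsequential limit as in Lemma \ref{lem: PK exists}, I would apply diagonal extraction to $a_{\{y_n\}}$ and use the coherence of subsequential limits (Remark \ref{remark: prop Green and PK for subsequential limits}): any subsequential limit of $a_{\{y_n\}}$ is determined at all pairs by its restriction to $\{(\cdot,y_n)\}_n$, which collapses to $0$ in the limit and must therefore be compatible with $a$ at $(x,z)$.

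The main obstacle is precisely this last coherence step: pinning down the limit of $a_{\{y_n\}}(x,z)$ to equal $a(x,z)$, rather than merely bounding the difference. Once this is done, the ``in particular'' claim about sequences $(y_n)$ is immediate, and the uniform statement with $N=N(\epsilon,x,z)$ follows by a straightforward contradiction/compactness argument against the convergence $a_{\{y\}}(x,z)\to a(x,z)$.
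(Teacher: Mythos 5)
Your proposal is correct and follows essentially the same route as the paper: Proposition \ref{prop: green function and PK} plus reversibility identifies $a(x,y_n)-a(z,y_n)$ with $a(x,z)-a_{\{y_n\}}(x,z)$ (the paper cites \eqref{E:pkGreen} for the last identification, which is the identity you rederive via the strong Markov property), and then one lets $n\to\infty$. The ``coherence'' obstacle you flag at the end is not an issue in the paper's setting, since the corollary is stated under the standing assumption that the potential kernel is well defined, i.e.\ $\lim_n a_{A_n}(x,z)$ exists and is independent of the sequence $A_n$, so taking $A_n=\{y_n\}$ gives $a_{\{y_n\}}(x,z)\to a(x,z)$ by definition.
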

	Notice that Corollary \ref{cor: a(x, z) - a(y, z) goes to 0 as z to infinity} does \emph{not} say that $a(y_n, x) - a(y_n, z) \to 0$ as $z \to \infty$ in general! Indeed, a similar argument shows that $a(y_n, x) - a(y_n, z) \to a(z, x) - a(x, z)$, which is nonzero in general.
	\begin{proof}
		Fix $x, z \in v(G)$ and suppose by contradiction that there is some $\epsilon > 0$, such that for infinitely many $n \geq 1$ (but in fact we can with a small abuse of notation assume for all $n\ge 1$ after taking a subsequence), there is some $y_n$ with $d(x, y_n) \geq n$ for which
		\[
			|a(z, y_n) - a(x, y_n)| > \epsilon.
		\]
		By Proposition \ref{prop: green function and PK} and $\deg(\cdot)$-reversibility of the Simple Random Walk we have
$$
a(x, y_n) - a(z,y_n) = a(x, z) - \frac{\Gr_z (x, y_n)}{\deg (y_n)} =  a(x,z) - \frac{\Gr_z (y_n,x)}{\deg(x)}.
$$
		Take $A_n = \{y_n\}$ and recall (see e.g. \eqref{E:pkGreen}) that
		\[
			a_n(x, z) = \frac{\Gr_z(y_n, x)}{\deg(x)}.
		\]
Therefore
$$
a(x, y_n) - a(z, y_n) = a(x,z) - a_n(x, z).
$$
Since this converges to zero as $n \to \infty$, we get the desired contradiction.
	\end{proof}

We immediately deduce that the harmonic measures from infinity of $\{x, y\}$ and $\{z, y\}$ are very similar if $y$ is far away from $x$ and $z$.

	\begin{corollary} \label{cor: harmonic measure are local, not global}
		Fix $x, z \in v(G)$. For every $\epsilon > 0$, there exists an $N = N(x, z, \epsilon)$ such that for all $y$ with $d(x, y) \geq N$ we have
		\[
			|\hm_{x, y}(x) - \hm_{z,y}(z)| \leq \frac{\epsilon + \Reff(z \leftrightarrow x)}{\Reff(x \leftrightarrow y)}.
		\]
	\end{corollary}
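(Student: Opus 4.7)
The plan is to use the formula from Corollary \ref{cor: the harmonic measure for two points is well-defined} which gives
\[
\hm_{x,y}(x) = \frac{a(x,y)}{\Reff(x \leftrightarrow y)}, \qquad \hm_{z,y}(z) = \frac{a(z,y)}{\Reff(z \leftrightarrow y)},
\]
and then to compare the two ratios by splitting the difference into a numerator contribution and a denominator contribution, each of which we can control.

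More precisely, I would write
\[
\hm_{x,y}(x) - \hm_{z,y}(z) = \frac{a(x,y) - a(z,y)}{\Reff(x \leftrightarrow y)} + \frac{a(z,y)}{\Reff(z \leftrightarrow y)}\cdot\frac{\Reff(z \leftrightarrow y) - \Reff(x \leftrightarrow y)}{\Reff(x \leftrightarrow y)}.
\]
For the first term, Corollary \ref{cor: a(x, z) - a(y, z) goes to 0 as z to infinity} tells us that $|a(x,y) - a(z,y)| \le \epsilon$ once $d(x,y) \ge N(x,z,\epsilon)$, so this piece is at most $\epsilon/\Reff(x \leftrightarrow y)$. For the second term, I would first note that $a(z,y)/\Reff(z \leftrightarrow y) = \hm_{z,y}(z) \in [0,1]$ and then invoke the fact that effective resistance is a metric (recalled in Section \ref{section: Preliminaries}) so that
\[
|\Reff(z \leftrightarrow y) - \Reff(x \leftrightarrow y)| \le \Reff(x \leftrightarrow z).
\]
Combining these two bounds and applying the triangle inequality for $|\hm_{x,y}(x) - \hm_{z,y}(z)|$ delivers exactly
\[
|\hm_{x,y}(x) - \hm_{z,y}(z)| \le \frac{\epsilon}{\Reff(x \leftrightarrow y)} + \frac{\Reff(x \leftrightarrow z)}{\Reff(x \leftrightarrow y)},
\]
which is the stated estimate.

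I do not expect any real obstacle here: all the heavy lifting has already been done in Corollary \ref{cor: a(x, z) - a(y, z) goes to 0 as z to infinity} and in the identification $a(x,y) = \hm_{x,y}(x)\Reff(x\leftrightarrow y)$. The only thing to be a little careful about is that one must use $\hm_{z,y}(z) \le 1$ (rather than trying to bound $a(z,y)$ directly) in order to avoid any uncontrolled growth of $a(z,y)$ in $y$; this is what allows the $\Reff(x\leftrightarrow z)$ term to appear with denominator $\Reff(x \leftrightarrow y)$ rather than something worse.
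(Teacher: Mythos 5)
Your proposal is correct and follows exactly the route the paper intends: the paper's proof is a one-line reference to Corollary \ref{cor: a(x, z) - a(y, z) goes to 0 as z to infinity} together with the identity $a(x,y)=\hm_{x,y}(x)\Reff(x\leftrightarrow y)$, and your decomposition (with the bound $\hm_{z,y}(z)\le 1$ and the triangle inequality for the effective resistance metric) is precisely the computation being left to the reader.
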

	\begin{proof}
		This follows from Corollary \ref{cor: a(x, z) - a(y, z) goes to 0 as z to infinity} and using the expression
		\[
			a(x, z) = \hm_{x, z}(z)\Reff(x \leftrightarrow z)
		\]
		of Corollary \ref{cor: the harmonic measure for two points is well-defined}.
	\end{proof}

	\subsection{Gluing and harmonic measure} \label{section: Gluing, Capacity and Harmonic measure}
	We suppose throughout this section that the potential kernel is well defined in the sense that the subsequential limits appearing in Lemma \ref{lem: PK exists} are all equal. By Corollary \ref{cor: the harmonic measure for two points is well-defined}, this implies that the harmonic measure from infinity is well defined for two-point sets.
	
	Let $B \subset v(G)$ be a set. Glue together all vertices in $B$ and delete all self-loops that were created in the process. We denote the graph induced by the gluing $G_B$. Note that $G_B$ need not be a simple graph, even when $G$ was.
	
	We will prove in this section that, if the potential kernel is well defined on $G$, it is also well defined on $G_B$, whenever $B$ is a finite set. Furthermore, we will prove an explicit expression of the potential kernel on the graph $G_B$ in the case where $B$ is a finite set. These results are an extension of results on the lattice $\ZZ^2$, see for instance \cite[Chapter 6]{LawlerLimicRandomWalks}, but we will use different arguments, following from the expression for the potential kernel in terms of harmonic measure from infinity as in Corollary \ref{cor: the harmonic measure for two points is well-defined}.
	
	\begin{theorem}[Gluing Theorem] \label{theorem: gluing expression}
		Suppose $a(x,y) = \lim_{n \to \infty} a_n(x,y)$ exists for all $x,y \in v(G)$ and does not depend on the choice of the sequence of sets $A_n $ going to infinity. Let $B \subset v(G)$ be a finite set, whose removal does not disconnect $G$, and suppose $x \in B$. Then
\begin{equation}\label{D:qB}
  q_B(w) := \lim_{n\to \infty} \Reff(B\leftrightarrow A_n)\PP_w( T_{A_n} < T_B)
\end{equation}
exists and is given by
		\begin{equation}\label{qB}
			q_B(w) = a(w, x) - \E_w[a(X_{T_B}, x)]; \ \ w \in v(G_B) \setminus \{B\}; q_B(B) =0.
		\end{equation}
 		Extending $q_B$ to $v(G)$ in the natural way (i.e., using \eqref{qB} with $w \in v(G)$), we have
		\begin{equation}\label{qBext}
			(\Delta q_B)(w) = \hm_B(w) := \lim_{z \to \infty} \PP_z(X_{T_B} = w); \ \ w \in B
		\end{equation}
where the Laplacian $\Delta$ is calculated on $G$ via \eqref{D:Laplacian}.
	\end{theorem}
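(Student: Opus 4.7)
The plan is to reduce both assertions to the established behaviour of the two--point potential kernel, specifically the identity $a_n(u,v) = \Reff(v \leftrightarrow A_n)\,\PP_u(T_{A_n} < T_v)$ from Proposition~\ref{prop: properties of the PKs}(ii), combined with the strong Markov property. The argument is then entirely algebraic and avoids the delicate optional-stopping subtleties that recurrence otherwise creates.

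For the existence and formula of $q_B$, I would decompose hitting probabilities at the first visit to $B$. For $w \notin B$, the event $\{T_{A_n} < T_x\}$ either coincides with $\{T_{A_n} < T_B\}$ or requires the walk to first enter $B$ at some $v \neq x$ (since $v = x$ would give $T_x = T_B < T_{A_n}$) and then continue freshly to $A_n$ before $x$. The strong Markov property yields
\begin{equation*}
\PP_w(T_{A_n} < T_x) = \PP_w(T_{A_n} < T_B) + \sum_{v \in B \setminus \{x\}}\PP_w(X_{T_B}=v,\, T_B < T_{A_n})\,\PP_v(T_{A_n} < T_x).
\end{equation*}
Multiplying by $\Reff(x \leftrightarrow A_n)$ and invoking Proposition~\ref{prop: properties of the PKs}(ii) to recognise the products as $a_n(\cdot,x)$ terms recasts this as
\begin{equation*}
a_n(w,x) = \Reff(x \leftrightarrow A_n)\,\PP_w(T_{A_n} < T_B) + \sum_{v \in B \setminus \{x\}}\PP_w(X_{T_B}=v,\, T_B < T_{A_n})\,a_n(v,x).
\end{equation*}
Passing to the limit: $a_n \to a$ pointwise by assumption, and recurrence combined with $|B| < \infty$ gives $\PP_w(X_{T_B} = v,\, T_B < T_{A_n}) \to \PP_w(X_{T_B} = v)$ by bounded convergence. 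Using $a(x,x) = 0$ to extend the sum to all of $B$ yields
\begin{equation*}
\lim_{n\to \infty}\Reff(x \leftrightarrow A_n)\,\PP_w(T_{A_n} < T_B) = a(w,x) - \E_w[a(X_{T_B},x)].
\end{equation*}
To convert this into the claimed expression, one shows $\Reff(B \leftrightarrow A_n)/\Reff(x \leftrightarrow A_n) \to 1$: substituting $\equiv 1$ on $B$ into the extremal potential $\phi_n := \PP_\cdot(T_x < T_{A_n})$ for $\Reff(x \leftrightarrow A_n)$ and estimating the Dirichlet energy change via the formula $1 - \phi_n(b) = a_n(b,x)/\Reff(x \leftrightarrow A_n) = O(1/\Reff(x \leftrightarrow A_n))$ gives $\Reff(x \leftrightarrow A_n) - \Reff(B \leftrightarrow A_n) = O(1)$, whence the ratio tends to $1$ since both resistances diverge.

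Next, I would treat the Laplacian assertion. First, $q_B$ extends consistently via the formula: for $w \in B$, $X_{T_B} = w$ under $\PP_w$, giving $q_B(w) = 0$. For $w \notin B$, $q_B$ is harmonic because $\Delta a(\cdot, x)(w) = \delta_x(w) = 0$ (as $x \in B$ forces $w \neq x$) and $w \mapsto \E_w[a(X_{T_B},x)]$ is harmonic on $v(G) \setminus B$ by the Markov property. For $w \in B$, since $q_B \equiv 0$ on $B$,
\begin{equation*}
\Delta q_B(w) = \sum_{v \sim w,\, v \notin B} q_B(v) = \lim_{n \to \infty} \Reff(B \leftrightarrow A_n) \sum_{v \sim w,\, v \notin B}\PP_v(T_{A_n} < T_B),
\end{equation*}
interchanging the finite sum with the limit. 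On the other hand, a last-exit representation (Lemma~\ref{lemma: last exit decomposition}) together with reversibility of the Green function on the graph with $A_n$ glued to a single vertex~$*$ yields
\begin{equation*}
\PP_*(X_{T_B} = w) = \sum_{v \sim w,\, v \notin B}\frac{\Gr_B(*, v)}{\deg(v)} = \Reff(A_n \leftrightarrow B)\sum_{v \sim w,\, v \notin B}\PP_v(T_{A_n} < T_B),
\end{equation*}
which matches the right-hand side above. Specialising to singleton sequences $A_n = \{z_n\}$ with $z_n \to \infty$ (a valid choice in the definition of $q_B$), the glued vertex is just $z_n$, so $\PP_{z_n}(X_{T_B} = w) \to \Delta q_B(w)$; this simultaneously establishes the existence of the limit $\hm_B(w)$ from infinity and the identity $\Delta q_B(w) = \hm_B(w)$ on $B$.

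The main obstacle is the ratio $\Reff(B \leftrightarrow A_n)/\Reff(x \leftrightarrow A_n) \to 1$. Because the effective-resistance metric fails the relevant triangle inequality for sets (gluing strictly decreases resistance), one cannot simply deduce boundedness of the difference from metric considerations alone. Instead the argument must exploit the assumed well-definedness of the potential kernel in the quantitative form $1 - \phi_n(b) = a_n(b,x)/\Reff(x \leftrightarrow A_n)$, which controls the Dirichlet-energy cost of forcing $\phi_n$ to be constant on $B$. Once this is secured, everything else is a clean consequence of the explicit identity for $a_n$ and Green-function reversibility.
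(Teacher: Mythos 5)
Your proof is correct, but it takes a genuinely different route from the paper's. The paper establishes \eqref{D:qB}--\eqref{qB} by induction on $|B|$, peeling off one vertex $y$ with $B=B'\cup\{y\}$ (after a case distinction on whether $x$ disconnects $B\setminus\{x\}$ from infinity) and carrying along the well-definedness of the potential kernel on the intermediate glued graphs $G_{B'}$; your single first-entrance decomposition of $\PP_w(T_{A_n}<T_x)$ at time $T_B$ collapses the induction into one identity whose $n\to\infty$ limit is exactly \eqref{qB}. This is shorter and avoids the case analysis, and the extra information the induction produces (well-definedness of $a^{G_B}$ everywhere) is recovered afterwards from Remark \ref{remark: prop Green and PK for subsequential limits}. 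For \eqref{qBext} the paper computes $\Delta q_B$ at a point of $B$ through the two-point identity $a^{G_{B'}}(x,B')=\hm_{x,B'}(x)\Reff(x\leftrightarrow B')$ on $G_{B'}$, whereas your first-step-into-$B$ identity $\PP_{A_n}(X_{T_B}=w)=\Reff(A_n\leftrightarrow B)\sum_{v\sim w,\,v\notin B}\PP_v(T_{A_n}<T_B)$, obtained from reversibility of the killed Green function, yields existence of $\hm_B(w)$ and the formula simultaneously for every $w\in B$; specialising to singleton sequences $A_n=\{z_n\}$ correctly identifies the limit with the vertex-wise harmonic measure from infinity. Finally, you are right to isolate $\Reff(B\leftrightarrow A_n)/\Reff(x\leftrightarrow A_n)\to 1$ as the step needing care: the paper uses the same fact but merely asserts it ``by recurrence'', and your Dirichlet-principle argument is a valid justification --- just note that to push the energy change down to $O\bigl(\Reff(x\leftrightarrow A_n)^{-2}\bigr)$ (rather than $O\bigl(\Reff(x\leftrightarrow A_n)^{-1}\bigr)$, which would only give a constant-factor comparison) you must apply the estimate $1-\phi_n(u)=a_n(u,x)/\Reff(x\leftrightarrow A_n)=O\bigl(\Reff(x\leftrightarrow A_n)^{-1}\bigr)$ not only at the vertices of $B$ but also at their finitely many neighbours, so that both the old and the new contributions of every edge incident to $B$ are of order $\Reff(x\leftrightarrow A_n)^{-2}$.
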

	
		Note in particular, that in the expression \eqref{qB} for $q_B$, any choice of $x \in B$ gives the same value and so is irrelevant. 	We will prove this theorem in the two subsequent subsections, proving first \eqref{D:qB} and \eqref{qB} in Section \ref{S:qB}, and then \eqref{qBext} in Section \ref{S:qBext}.

Before we give the proof, we first state some corollaries. The first one is that the harmonic measure from infinity is well defined for the arbitrary finite set $B$ (subject to the assumption that the removal of $B$ does not disconnect $G$).

\begin{corollary}
  \label{Cor:hminfinity}
  Fix a finite set $B \subset v(G)$ as in Theorem \ref{theorem: gluing expression}. Let $A_n$ be a set of vertices tending to infinity. Then for any $x \in B$,
  \begin{equation}\label{hB}
\hm_B (x) =  \lim_{n \to \infty} \PP_{A_n} (X_{T_B}  = x)
  \end{equation}
  exists and is positive for all $x \in B$ such that the removal of $B \setminus \{x\}$ does not disconnect $x$ from infinity.
\end{corollary}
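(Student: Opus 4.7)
The plan is to rewrite $\PP_{A_n}(X_{T_B}=x)$ in a form where Theorem~\ref{theorem: gluing expression} applies term by term, and then to derive positivity from the harmonicity of $q_B$. Consider the walk on $G$ with the vertices of $A_n$ wired into a single vertex and started there, and decompose a trajectory at its \emph{last} visit to $A_n$ before hitting $B$. The strong Markov property gives
\[
\PP_{A_n}(X_{T_B} = x) \;=\; \frac{\PP_{A_n}(T_B < T_{A_n}^+,\, X_{T_B} = x)}{\PP_{A_n}(T_B < T_{A_n}^+)}.
\]
Applying path reversal (reversibility with respect to $\deg$) to the numerator yields $\deg(A_n)\PP_{A_n}(T_B<T_{A_n}^+,X_{T_B}=x)=\deg(x)\PP_x(T_{A_n}<T_B^+)$, while \eqref{eq:efres through flow} identifies the denominator as $1/(\deg(A_n)\Reff(A_n\leftrightarrow B))$. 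Conditioning on the first step out of $x$ (only neighbors $y \sim x$ with $y \notin B$ contribute, the others being killed immediately in $B$) produces the key identity
\[
\PP_{A_n}(X_{T_B} = x) \;=\; \Reff(A_n \leftrightarrow B) \sum_{y \sim x,\, y \notin B} \PP_y(T_{A_n} < T_B).
\]

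Each term on the right converges to $q_B(y)$ by \eqref{D:qB}, so summing the finite number of contributions yields
\[
\lim_{n\to\infty}\PP_{A_n}(X_{T_B}=x) \;=\; \sum_{y\sim x,\, y\notin B} q_B(y).
\]
Since $q_B$ vanishes identically on $B$ (immediate from \eqref{qB} because $T_B=0$ when $w\in B$, so $X_{T_B}=w$), this sum equals $\Delta q_B(x)$, which in turn equals $\hm_B(x)$ by \eqref{qBext}. That $\hm_B$ is a probability measure on $B$ then follows from $\sum_{x\in B}\PP_{A_n}(X_{T_B}=x)=1$ (by recurrence) and bounded convergence over the finite set $B$.

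For positivity, the strategy is to exploit that $q_B$ is harmonic on $v(G)\setminus B$, vanishes on $B$, and is nonnegative as a limit of nonnegative quantities. On every finite connected component of $G\setminus B$, the maximum principle forces $q_B\equiv 0$, so $q_B$ can only be positive on infinite components. Since $\hm_B$ is a probability measure, $q_B$ is not identically zero; by the strong minimum principle on each infinite component (any nonnegative harmonic function vanishing at one interior vertex is identically $0$), one deduces that $q_B>0$ throughout every infinite component of $G\setminus B$ on which it is not identically zero. The hard part is to ensure that, for $x\in B$ whose removal of $B\setminus\{x\}$ leaves it connected to infinity, at least one neighbor $y\sim x$ with $y\notin B$ lies in a component on which $q_B$ is not identically zero. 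This is where the uniqueness hypothesis on the potential kernel enters crucially: if $q_B$ vanished along the infinite direction reachable from $x$, one could take a sequence $A_n$ along that direction to produce a different subsequential limit in Theorem \ref{theorem: gluing expression}, contradicting the assumption. Granting this, $\hm_B(x)\ge q_B(y)>0$, completing the proof.
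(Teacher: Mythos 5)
Your proof is correct, and it takes a mildly different route from the paper's. The paper observes that $\PP_{A_n}(X_{T_B}=x)=\PP_{A_n}(T_x<T_{B'})$ with $B'=B\setminus\{x\}$, applies the Green-function reversal of \eqref{E:pkGreen} to the \emph{pair} $\{B',x\}$, and obtains $\hm_B(x)=q_{B'}(x)/\Reff(x\leftrightarrow B')$, with positivity read off from Lemma \ref{lem: PK exists} applied on $G_{B'}$. You instead keep $B$ intact, reverse the excursion from $A_n$ to $B$, and land on the identity $\PP_{A_n}(X_{T_B}=x)=\Reff(A_n\leftrightarrow B)\sum_{y\sim x,\,y\notin B}\PP_y(T_{A_n}<T_B)$, whose termwise limit is $\sum_{y\sim x,\,y\notin B}q_B(y)=(\Delta q_B)(x)=\hm_B(x)$ by \eqref{D:qB} and \eqref{qBext}. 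Your route makes the identity $\hm_B=\Delta q_B$ on $B$ the visible mechanism and only needs Theorem \ref{theorem: gluing expression} for the set $B$ itself, whereas the paper must also apply it to $B'$ (a point it returns to in Corollary \ref{cor: pk with all poles after gluing}); the paper's route yields the alternative closed formula $q_{B\setminus\{x\}}(x)/\Reff(x\leftrightarrow B\setminus\{x\})$, which is reused later. One remark on your positivity step: since Theorem \ref{theorem: gluing expression} already assumes that $v(G)\setminus B$ is connected, there is exactly one (infinite) component of $G\setminus B$, so the case analysis over components and the contingency argument via alternative sequences $A_n$ are unnecessary. All that remains is to note that the hypothesis on $x$ produces a neighbor $y\sim x$ with $y\notin B$ (the second vertex of a path from $x$ to infinity avoiding $B\setminus\{x\}$), and the strong minimum principle together with $\sum_{x\in B}(\Delta q_B)(x)=1$ (so $q_B\not\equiv 0$) gives $q_B(y)>0$.
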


\begin{proof}
Fix $w \notin B$, then arguing as in \eqref{eq: a_n = Reff()*Prob} and \eqref{E:pkGreen} we get
$$
\P_{A_n} (T_w < T_B ) = \frac{ \Reff (A_n \leftrightarrow B) \P_w ( T_{A_n} < T_B)}{ \Reff (w \leftrightarrow B; G_{A_n})} \to \frac{q_B(w) }{\Reff (w \leftrightarrow B)}
 $$
 as $n \to \infty$. This limit is by definition the desired value of $\hm_{B \cup\{w\}} (w)$. Note furthermore that $q_B(w)$ is strictly positive by Lemma \ref{lem: PK exists}.

 Applying the same reasoning but with $B$ changed into $B' = B \setminus\{x\}$ (with $x \in B$) and $w  =x$, shows that the limit in \eqref{hB} exists. Furthermore, if the removal of $B'$ does not disconnect $x$ from $\infty$, we see that $q_{B'}(w) >0$ again, and so $\hm_B(x) >0$.
\end{proof}


Next, we show  that the potential kernel can only be well defined if the graph $G$ is one-ended.
	
	\begin{corollary} \label{corollary: not one-ended inplies that potential kernels is not well-defined}
		If the potential kernel is well defined, $G$ is one-ended.
	\end{corollary}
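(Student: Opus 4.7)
The plan is to argue by contrapositive: suppose $G$ is not one-ended; I then produce two sequences $A_n, \tilde A_n$ going to infinity along which the truncated potential kernel $a_n$ admits different subsequential limits, contradicting the assumed uniqueness.

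Since $G$ is not one-ended, there is a finite set $B \subset v(G)$ whose removal leaves at least two infinite connected components $C_1, C_2$ of $G \setminus B$. The cleanest case is when $B$ can be taken to be a single cut vertex $\{b\}$. In that case I pick any $x \in C_1$ and compare sequences $A_n \to \infty$ in $C_1$ with $\tilde A_n \to \infty$ in $C_2$. By the characterization in Lemma \ref{lem: PK exists}, the subsequential limit $a(x,b)$ along the first sequence is strictly positive (removing $b$ does not disconnect $x$ from $A_n$), while along the second it equals $0$ (removing $b$ does disconnect $x$ from $\tilde A_n$). These cannot coincide, contradicting well-definedness.

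For the general case, where the minimal cut set has $|B| \geq 2$ (e.g.\ the ladder $\mathbb{Z} \times \{0,1\}$), I would use the representation $a(x,y) = \hm_{x,y}(x)\Reff(x \leftrightarrow y)$ from Proposition \ref{prop: properties of the PKs}(ii), so that well-definedness of $a$ reduces to well-definedness of $\lim_n \PP_{A_n}(T_x < T_y)$. For $x \in C_1$ and $y \in C_2$, decomposing at $T_B$ via the strong Markov property yields
\begin{align*}
\PP_{A_n}(T_x < T_y) &= \PP_{A_n}(T_x < T_B) + \sum_{b \in B}\PP_{A_n}(X_{T_B}=b,\,T_B < T_x)\,\PP_b(T_x < T_y), \\
\PP_{\tilde A_n}(T_x < T_y) &= \sum_{b \in B}\PP_{\tilde A_n}(X_{T_B}=b,\,T_B < T_y)\,\PP_b(T_x < T_y),
\end{align*}
using that a walk from $A_n \in C_1$ must traverse $B$ to reach $y \in C_2$, while a walk from $\tilde A_n \in C_2$ hitting $y$ before $B$ cannot reach $x \in C_1$ first. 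The first decomposition has an extra term $\PP_{A_n}(T_x < T_B)$ absent in the second. Picking $x \in C_1$ adjacent to some vertex of $B$, the identity $\sum_{x' \in C_1,\, x' \sim B}\PP_{A_n}(T_{x'} < T_B) \geq 1$ (since any walk that enters $B$ must do so through a $C_1$-neighbor of $B$, a finite sum of terms at least one summand of which must be bounded away from zero) forces at least one such $x$ to satisfy $\alpha := \lim_k \PP_{A_{n_k}}(T_x < T_B) > 0$ along a subsequence, which would obstruct equality of the two limits.

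The main obstacle is ensuring that this extra mass $\alpha > 0$ is not cancelled by the two hitting distributions $\mu^{C_1}, \mu^{C_2}$ on $B$ weighing $b \mapsto \PP_b(T_x < T_y)$ so as to produce equal weighted averages. I expect this to require choosing $x$ and $y$ adjacent to distinct vertices of $B$ lying on opposite sides, so that $\PP_b(T_x < T_y)$ varies enough across $B$ to preclude such a coincidence.
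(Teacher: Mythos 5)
Your single-cut-vertex case is correct and is essentially the paper's argument in miniature: two sequences $A_n \subset C_1$, $\tilde A_n \subset C_2$ give, via Lemma \ref{lem: PK exists}, a strictly positive subsequential limit for $a_{A_n}(x,b)$ and the identically zero limit for $a_{\tilde A_n}(x,b)$, contradicting (a). The problem is that you have not actually proved the general case $|B|\ge 2$, and you say so yourself: the possibility that the extra mass $\alpha=\lim_k \PP_{A_{n_k}}(T_x<T_B)$ is cancelled by a discrepancy between the two limiting hitting distributions $\mu^{C_1},\mu^{C_2}$ on $B$ (weighted by $b\mapsto \PP_b(T_x<T_y)$) is a real obstruction, not a technicality. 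Writing the first limit as $\alpha+(1-\alpha)\bar p_1$ and the second as $m\,\bar p_2$ with $m\le 1$ and $\bar p_i$ weighted averages of $\PP_b(T_x<T_y)$, equality is only excluded outright when $\bar p_1\ge \bar p_2$; ruling out $\bar p_1<\bar p_2$ by "choosing $x$ and $y$ adjacent to distinct vertices of $B$" is a heuristic, not an argument, and nothing in the lemmas you cite controls how the entrance distribution into $B$ from the two ends correlates with $\PP_b(T_x<T_y)$. As it stands the proof is incomplete.

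The paper closes exactly this gap with the Gluing Theorem (Theorem \ref{theorem: gluing expression} and Corollary \ref{Cor:hminfinity}): one takes $B_n=B(o,n)$ containing the cut set, glues it to a single vertex, and uses the (nontrivial) fact that well-definedness of the potential kernel survives gluing to reduce to your one-vertex situation on $G_{B_n}$. Concretely, Corollary \ref{Cor:hminfinity} gives $\lim_i \PP_{w_i}(T_z<T_{B_n})=\hm_{B_n\cup\{z\}}(z)>0$ independently of the sequence $w_i\to\infty$, whereas choosing $w_i$ in an infinite component not containing $z$ makes every term of that sequence equal to $0$. If you want to keep your contrapositive structure, the fix is to invoke that machinery (which is available at this point in the paper, since the corollary is stated after Theorem \ref{theorem: gluing expression}) rather than to fight the cancellation by hand; your two-component decomposition at $T_B$ does not obviously lead anywhere without it.
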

	\begin{proof}
	Intuitively, on multiple-ended graphs there isn't a single harmonic measure from infinity since there are several ways of converging to infinity.	Suppose $G$ has more than one end. Let $x_1, x_2, \ldots, x_M$ be some finite number of vertices, such that removing then from $v(G)$ and looking at the induced graph, we have (at least) two infinite components. Write $B_n = B(o, n)$ and choose $n$ large enough that $x_1, \ldots, x_M \in B_n$. Consider the graph $G_{B_n}$ resulting from gluing $B_n$ together as in the theorem. Clearly, the removal of $B_n$ creates at least two infinite components. Pick a vertex $z$ of $B_n^c$ and suppose it is in one infinite component. Let $(\{w_{i}\})_{i \geq 1}$ be any sequence of vertices going to infinity in an infinite component that does not contain $z$. Then $\PP_{w_i}(T_{z} < T_{B_n}) = 0$ (for each $i$), yet this converges by Corollary \ref{Cor:hminfinity} to $\hm_{B_n \cup{z}} (z)>0$ since the removal of $B_n$ does not disconnect $z$ from infinity. This is the desired contradiction.
	\end{proof}

	Theorem \ref{theorem: gluing expression} a priori only shows that the potential kernel with `pole' $B$ is well defined when $B$ does not disconnect $G$. We can, however, extend it to arbitrary finite sets $B$ and to an arbitrary second variable $y$.

	\begin{corollary} \label{cor: pk with all poles after gluing}
		Let $B \subset v(G)$ be \emph{any} finite set. The potential kernel $a_B:v(G_B)^2 \to \RR_+$ is well defined in the sense that the limit
		\[
			a^{G_B}(w, y) = \lim_{n\to \infty} \PP_w(T_{A_n} < T_y; G_B)\Reff(w \leftrightarrow y; G_B),
		\]
		exist for all $w, y \in v(G_B)$ and does not depend on the choice of sequence of sets $A_n$. Here, the probability and effective resistance are calculated on the graph $G_B$.
	\end{corollary}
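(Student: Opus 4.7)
The strategy is to reduce the assertion to the two-point equivalence between (a) and (b) already established in Corollary \ref{cor: the harmonic measure for two points is well-defined}. Applied to the (still recurrent) graph $G_B$, that corollary reduces the existence and uniqueness of $a^{G_B}(w, y)$ for a pair $w, y \in v(G_B)$ to the same properties of the harmonic measure from infinity on $G_B$ for the two-point set $\{w, y\}$. Hence the entire task boils down to proving the existence and independence from the sequence $(A_n)$ of the limit $\lim_{n \to \infty} \PP_{A_n}(T_w < T_y; G_B)$.

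The key observation is that simple random walks on $G$ and on $G_B$ agree in law until the first hit of $B$. Taking $w, y \in v(G_B) \setminus \{B\}$ and conditioning on whether the walk from $A_n$ reaches $\{w, y\}$ or the gluing vertex first, the strong Markov property yields
\begin{equation*}
  \PP_{A_n}\!\left(T_w < T_y; G_B\right)
  = \PP_{A_n}\!\left(T_w < T_{\{y\} \cup B}; G\right)
  + \PP_{A_n}\!\left(T_B < T_{\{w, y\}}; G\right) \cdot \PP_B\!\left(T_w < T_y; G_B\right).
\end{equation*}
The last factor is a fixed constant, while the two $n$-dependent terms are probabilities that the walk on $G$ from $A_n$ hits the finite set $\{w, y\} \cup B$ at a prescribed subset of vertices. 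These are governed by a harmonic-measure-from-infinity on $G$ for the set $\{w, y\} \cup B$, so convergence should follow from (a suitable extension of) Corollary \ref{Cor:hminfinity}. The degenerate case in which $y$ is the gluing vertex is immediate: the two walks agree until first hitting $B$, giving $\PP_{A_n}(T_w < T_B; G_B) = \PP_{A_n}(T_w < T_B; G)$, and Corollary \ref{Cor:hminfinity} applies directly to $B \cup \{w\}$.

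The only step not entirely routine is extending Corollary \ref{Cor:hminfinity} to the finite set $\{w, y\} \cup B$, which may well disconnect $G$ even if $B$ itself does not. This is where one-endedness, which holds by Corollary \ref{corollary: not one-ended inplies that potential kernels is not well-defined}, enters: any finite subset of $v(G)$ can cut off only finite pockets from the unique infinite component. Absorbing those pockets into the set does not affect the $n \to \infty$ limit of hitting probabilities from $A_n$ (since $A_n$ eventually lies outside the pockets), but it makes the set non-disconnecting; Theorem \ref{theorem: gluing expression} and Corollary \ref{Cor:hminfinity} then apply. I expect this to be the only step requiring genuine care, with the rest of the argument reducing to the decomposition above and the earlier two-point result.
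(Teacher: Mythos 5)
Your argument is correct, but it reaches the conclusion by a genuinely different route than the paper for the key step of passing from a single pole to arbitrary poles. The paper forms the hull $\bar B$ of $B$ (the same one-endedness observation you make), applies Theorem \ref{theorem: gluing expression} to obtain the potential kernel $a^{G_B}(\cdot, B)$ with pole at the glued vertex only, and then invokes the triangle identity of Proposition \ref{prop: green function and PK} (via Remark \ref{remark: prop Green and PK for subsequential limits}) to conclude that existence for the single pole $B$ forces existence for every pole $y \in v(G_B)$. You bypass the triangle identity entirely: Corollary \ref{cor: the harmonic measure for two points is well-defined} applied to the recurrent graph $G_B$ reduces the claim to the two-point harmonic measure from infinity on $G_B$, and your strong-Markov decomposition expresses that quantity through the harmonic measure from infinity of the finite set $S=\{w,y\}\cup B$ on $G$ itself together with the $n$-independent constant $\PP_B(T_w<T_y;G_B)$. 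The price is the extension of Corollary \ref{Cor:hminfinity} to sets that may disconnect $G$, but your hull argument there is sound: for $n$ large, $A_n$ lies in the unique infinite component of $v(G)\setminus S$, and a walk started there cannot enter a finite pocket without first hitting $S$, so $X_{T_{\bar S}}=X_{T_S}$ and the harmonic measures of $S$ and of its non-disconnecting hull $\bar S$ coincide. Both proofs thus rest on the same one-endedness/hull idea; yours trades the potential-kernel triangle identity for an explicit last-step decomposition, which is arguably more elementary and has the side benefit of expressing the two-point harmonic measure on $G_B$ directly in terms of hitting data on $G$.
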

	\begin{proof}
		We start with taking $\bar{B}$ as the hull (in the sense of complex analysis, meaning we ``fill it in'' with respect to the point at infinity) of $B$, defined by adding to $B$ all the points in $v(G_B)$ that belong to \emph{finite} connected components of $v(G_B) \setminus B$. Since $G$ is one-ended by Corollary \ref{corollary: not one-ended inplies that potential kernels is not well-defined}, $\bar{B}$ does not disconnect $G$. By Theorem \ref{theorem: gluing expression}, we have that for any sequence of sets $(A_n)_{n \geq 1}$ going to infinity, the limit
		\[
			a^{G_{\bar{B}}}(w, \bar{B}) := q_{\bar{B}}(w) = \lim_{n \to \infty} \PP_{w}(T_{A_n} < T_{\bar{B}})\Reff( \bar{B} \leftrightarrow A_n)
		\]
		exists for each $w \in v(G_{\bar{B}})$ and does not depend on the choice of sequence of vertices $A_n$ going to infinity. Moreover, this limit also trivially exists (and is zero) if $w$ is in one of the finite components of $v(G) \setminus B$.
		
Hence we deduce that actually for all $w \in v(G_B)$ we have that the limit
		\[
			a^{G_B}(w, B) = \lim_{n\to \infty} \PP_w(T_{A_n} < T_B)\Reff(A_n \leftrightarrow B)
		\]
		exists and does not depend on the choice of sequence of sets going to infinity $A_n$. Now, by Proposition \ref{prop: green function and PK} (see Remark \ref{remark: prop Green and PK for subsequential limits}) we get that for \emph{all} $w, y \in v(G_B)$ the limit
		\[
			a^{G_B}(w, y) = \lim_{n\to \infty} \PP_w(T_{A_n} < T_y; G_B)\Reff(A_n \leftrightarrow y; G_B)
		\]
		exists and does not depend on the choice of the sequence $A_n$. This is the desired result.
	\end{proof}
	
	\subsubsection{Proof of \eqref{D:qB} and \eqref{qB} }
\label{S:qB}

	\begin{proof}
		Fix $(A_n)_{n \geq 1}$ a sequence of finite sets of vertices going to infinity. For a finite set $B \subset v(G)$ and $x \in B$, we will define the function $q_B:v(G_B) \to \RR_+$ through
		\[
			q_B(w) = a(w, x) - \E_w[a(X_{T_B}, x)],
		\]
		and $q_B(B) = 0$, whenever the potential kernel on $G$ is well defined. We will prove \eqref{D:qB} using induction on the number of vertices $m$ in $B$. To be more precise, we will show that for any recurrent graph $G$ for which the potential kernel is well defined (in other words, $\lim_{n\to \infty} a_{A_n}(x, y) = a(x, y)$ and does not depend on the sequence $A_n$) for any set $B \subset v(G)$ with $|B| = m$ and $v(G) \setminus B$ connected, we have that \eqref{D:qB} holds. The base case $m = 1$ holds trivially.
		
		Let $m \in \NN$ and suppose that for any recurrent graph $G$ on which the potential kernel is well defined and for any subset $B \subset v(G)$ with $|B| = m$ and $v(G) \setminus B$ connected we have that \eqref{D:qB} and \eqref{qB} are satisfied for each $x \in B$.
		
		In this case,
		\[
			q_B(w) = \lim_{n\to \infty} \PP_{w}(T_{A_n} < T_B)\Reff(B \leftrightarrow A_n)
		\]
		by assumption exists and does not depend on the sequence $(A_n)_{n \geq 1}$, so we also have that $a^{G_B}(\cdot, B) = q_B(\cdot)$ by \eqref{eq: a_n = Reff()*Prob}. Remark \ref{remark: prop Green and PK for subsequential limits} then shows us that $a^{G_B}(\cdot, y)$ is well defined for \emph{any} $y \in v(G_B)$ and hence we know that the potential kernel is well defined on $G_B$ too.
		
		\noindent
		\textbf{Induction.} Let $G$ be a recurrent graph for which the potential kernel is well defined and let $B \subset v(G)$ be a finite set such that $|B| = m + 1$ and $v(G) \setminus B$ is connected. Fix $x \in B$. We split into two cases, depending on $x$:
		\begin{enumerate}[(i)]
			\item the removal of $x$ from $G$ disconnects $B \setminus \{x\}$ from infinity in $G$ or
			\item it does not.
		\end{enumerate}
		We begin with the easy case. Suppose we are in situation (i). We have that for all $w \notin B$ (for $n$ large enough)
		\[
			\PP_{A_n}(T_w < T_x) = \PP_{A_n}(T_w < T_{B}) \quad \text{and} \quad \Reff(w \leftrightarrow B) = \Reff(w \leftrightarrow x).
		\]
		The limit on the left-hand side exists as the potential kernel is well defined, see Corollary \ref{cor: the harmonic measure for two points is well-defined}, and hence $\lim_{n \to \infty} \PP_{A_n}(T_w < T_{B}) \Reff(w \leftrightarrow B)$ exists and equals $a(w, x)$. Moreover, we also have
		\[
			q_B(w) = a(w, x) - \E_{w}[a(X_{T_B}, x)] = a(w, x) - a(x, x) = a(w, x)
		\]
		which proves the result for this choice of $x$.
		
		We move on to the more interesting case (ii). Since we are not in case (i), we can find a set $B' \subset B$ with $|B'| = m$ and $v(G) \setminus B'$ connected (indeed, since we are not in case (i), there is at least a path going from some vertex in $B$ to infinity, without touching $x$, and removing from $B$ the last vertex in $B$ visited by this path provides such a set $B'$). Take $y$ to be the vertex such that $\{y\} = B \setminus B'$.
		
		Since $|B'| = m$, we have by the induction hypothesis that the potential kernel $a^{G_{B'}}(\cdot, \cdot)$ is well defined. Pick $w \in v(G)$ such that $w \notin B$, which we can view also as a vertex in $G_B$ and $G_{B'}$. Fix $n$ so large that both $B$ and $w$ are not in $A_n$. Using \eqref{eq: a_n = Reff()*Prob} we have that
		\[
			a_{A_n}^{G_{B'}}(w, B') = \Reff(B' \leftrightarrow A_n)\PP_w(T_{A_n} < T_{B'}).
		\]
		We focus on the probability appearing on the right-hand side. By the law of total probability and the strong Markov property of the simple random walk, we have
		\begin{align*}
			\PP_w(T_{A_n} < T_{B'}) &= \PP_w(T_{y} < T_{A_n} < T_{B'}) + \PP_w(T_{A_n} < T_{y} \wedge T_{B'}) \\
			&= \PP_w(T_{y} < T_{A_n} \wedge T_{B'})\PP_{y}(T_{A_n} < T_{B'}) + \PP_w(T_{A_n} < T_{B'} \wedge T_{y}).
		\end{align*}
		Since $G$ (and hence $G_{B'}$) is recurrent, we have that $\Reff(B' \leftrightarrow A_n) \sim \Reff(x \leftrightarrow A_n) \sim \Reff(B \leftrightarrow A_n)$ where $a_n \sim b_n$ means $a_n / b_n \to 1$ as $n \to \infty$. Taking $n \to \infty$ in the above identity after multiplying by $\Reff(B' \leftrightarrow A_n)$ and using once more recurrence, we deduce that
		\[
			a^{G_{B'}}(w, B') = \PP_w(T_{y} < T_{B'})a^{G_{B'}}(y, B') + \lim_{n\to \infty} \PP_w(T_{A_n} < T_{B}) \Reff(A_n \leftrightarrow B),
		\]
		because the potential kernel on $G_{B'}$ is well defined by assumption. This implies in particular that
		\[
			\lim_{n\to \infty} \PP_w(T_{A_n} < T_B) \Reff(B \leftrightarrow A_n) = a^{G_{B'}}(w, B') - \PP_w(T_y < T_{B'})a^{G_{B'}}(y, B')
		\]
		exists and does not depend on the sequence $A_n$ and, thus, we deduce that $a^{G_{B}}(w, B)$ is well defined and satisfies
		\begin{equation} \label{eq: aGB in aGB'}
			a^{G_B}(w, B) = a^{G_{B'}}(w, B') - \PP_w(X_{T_B}  = y)a^{G_{B'}}(y, B').
		\end{equation}
		We are left to prove that $q_B(w) = a^{G_B}(w, B)$. By the induction hypothesis (because $x \in B'$) we know that
		\[
			a^{G_{B'}}(w, B') = q_{B'}(w) = a(w, x) - \E_w[a(X_{T_{B'}}, x)].
		\]
		Using this in \eqref{eq: aGB in aGB'} we get
		\begin{align*}
			a^{G_B}(w, B) &= a(w, x) - \E_w[a(X_{T_{B'}}, x)] - \PP_w(X_{T_B} = y) \Big( a(y, x) - \E_{y}[a(X_{T_{B'}}, x)]\Big) \\
			&= a(w, x) - \PP_{w}(X_{T_B} = y)a(y, x) - \sum_{z \in B' } \PP_w(X_{T_{B'}} = z)a(z, x) \\
			&\quad + \sum_{z \in B'}\PP_w(X_{T_B} = y) \PP_{y}(X_{T_{B'}} = z) a(z, x) \\
			&= a(w, x) - \sum_{z \in B} \PP_w(X_{T_B} = z)a(z, x),
		\end{align*}
		where in the last line we used for $z \in B'$ the equality
		\[
			\PP_w(X_{T_{B}} = z) = \PP_w(X_{T_{B'}} = z) - \PP_w(X_{T_{B}} = y)\PP_{y}(X_{T_{B'}} = z),
		\]
		which holds due to the strong Markov property for the random walk. But of course, this is the same as
		\[
			a^{G_B}(w, B) = a(w, x) - \E_w[a(X_{T_B}, x)],
		\]
		so indeed we have that $a^{G_B}(w, B) = q_B(w)$, which finishes the induction argument.
	\end{proof}

	\begin{wrong-old}
	\begin{proof}
		Fix $(A_n)_{n \geq 1}$ some sequence of (finite) sets of vertices going to infinity. We will prove that for all $B \subset v(G)$ finite, for which the removal of $B$ does not disconnect the graph, we have
		\begin{equation} \label{eq: gluing induc hypothesis}
			a(w, x) - \E_w[a(X_{T_B}, x)] = q_B(w) = \lim_{n\to \infty} \PP_w(T_{A_n} < T_B) \Reff(A_n \leftrightarrow B),
		\end{equation}
		which proves \eqref{D:qB} and \eqref{qB} as the left-hand side is independent of the sequence $(A_n)_{n \geq 1}$ going to infinity. We will do an induction on the number of vertices $m$ in $B$, showing that for every recurrent graph $G$ such that the potential kernel $a(x,y) = \lim_{n \to \infty} a_n(x,y) = \lim_{n \to \infty} a_{A_n}(x,y)$ (defined in \eqref{eqdef: PK along An}) is well defined on $G$ and is independent of the sequence $A_n$, then \eqref{D:qB} and \eqref{qB} are satisfied on $G$ for any set $B \subset v(G)$ with $|B|= m$ and $v(G) \setminus B$ connected. The main idea will be to view $q_B(\cdot) $ as the two-point function $a(\cdot, B)$ on the graph $G_B$ where $B$ is glued to form a single vertex.
		The case $m =1$ of this induction is actually trivial, but for this argument we will also need to treat the case $m=2$ first. 	

		\noindent
		\textbf{Base case $m =2$.} Let $G$ be a graph satisfying the above assumptions. Assume that $B\subset v(G)$ is of the form $\{x_1, x_2\}$, with $x_1 \not = x_2$. Pick $w \in v(G)$ such that $w \notin B$, and which we can also view as a vertex of $G_B$. Fix $n$ so large that $x_1, x_2, w \not \in A_n$. We have
		\[
			a_n(w, x_1) = \Reff(x_1 \leftrightarrow A_n) \PP_w(T_{A_n} < T_{x_1})
		\]
		as in \eqref{eq: a_n = Reff()*Prob}. We focus on the probability appearing on the right-hand side. By law of total probability and the strong Markov property of the simple random walk, we get
		\begin{align*}
			\PP_w(T_{A_n} < T_{x_1}) &= \PP_w(T_{x_2} < T_{A_n} < T_{x_1}) + \PP_w(T_{A_n} < T_{x_1} \wedge T_{x_2}) \\
			&= \PP_w(T_{x_2} < T_{A_n} \wedge T_{x_1})\PP_{x_2}(T_{A_n} < T_{x_1}) + \PP_w(T_{A_n} < T_{x_1} \wedge T_{x_2}).
		\end{align*}
		By recurrence of $G$, we have that $\Reff(x_1 \leftrightarrow A_n) \sim \Reff(x_2 \leftrightarrow A_n) \sim \Reff(\{x_1, x_2\} \leftrightarrow A_n)$ where by $a_n \sim b_n$ we mean $a_n / b_n \to 1$ as $n \to \infty$. Taking $n \to \infty$ in the above identity after multiplying by $\Reff (x_1 \leftrightarrow A_n)$, and using once more recurrence, we deduce that
		\[
			a(w, x_1) = \PP_w(T_{x_2} < T_{x_1})a(x_2, x_1) + \lim_{n \to \infty}\PP_w(T_{A_n} < T_{B})\Reff(A_n \leftrightarrow B),
		\]
		since the potential kernel is well defined on $G$ by assumption. This implies in particular that
		\begin{equation}\label{twopointGB}
			 a(w, x_1) - \PP_{w}(T_{x_2} < T_{x_1})a(x_2, x_1) = \lim_{n \to \infty} \PP_w(T_{A_n} < T_B)\Reff(A_n \leftrightarrow B)
		\end{equation}
		exists and does not depend on the choice of sequence of vertices $A_n$ going to infinity. We can then define $q_B(w)$ to be either the left hand side or the right hand side, and this proves \eqref{eq: gluing induc hypothesis} for the case $|B| = 2$.

Note also in particular that with this definition, the two-point function on $G_B$, $a^{G_B} (\cdot,B)$, is well defined and is equal to $q_B(\cdot)$ by \eqref{twopointGB}. By Remark \ref{remark: prop Green and PK for subsequential limits}, it follows that this two-point function then exists everywhere on $G_B$, a fact we will use in the inductive part of the argument. 		\\
		
		\noindent
		\textbf{Induction.} Fix $m \in \NN$ and assume that for all $B$ with $|B| \leq m$, such that removing $B$ does not disconnect $G$, we have that \eqref{eq: gluing induc hypothesis} holds.
		
		Let $B \subset v(G)$ be a set with $|B| = m + 1$ such that removing $B$ does not disconnect $G$. Fix $x \in B$ and write $B = \{x_1, \ldots, x_{m + 1}\}$. Fix $x \in B$. We split into two cases:
		\begin{enumerate}[(i)]
			\item the removal of $x$ from $G$ disconnects $B \setminus \{x\}$ from infinity in $G$ or
			\item the removal of $x$ from $G$ does not disconnect $B \setminus \{x\}$ from infinity.
		\end{enumerate}
		We start with case (i). In this case, we have that for all $w \notin B$ (for $n$ large enough)
		\[
			\PP_{A_n}(T_w < T_x) = \PP_{A_n}(T_w < T_{B}) \quad \text{and} \quad \Reff(w \leftrightarrow B) = \Reff(w \leftrightarrow x).
		\]
		The limit on the left-hand side exists as the potential kernel is well defined, see Corollary \ref{cor: the harmonic measure for two points is well-defined}, and hence $q_B(w) = \lim_{n \to \infty} \PP_{A_n}(T_w < T_{B}) \Reff(w \leftrightarrow B)$ exists and equals $a(w, x)$. Moreover, we also have
		\[
			a(w, x) - \E_{w}[a(X_{T_B}, x)] = a(w, x) - a(x, x) = q_B(w),
		\]
		which proves the result \eqref{eq: gluing induc hypothesis} for $x$.
		
		Case (ii) is a bit more elaborate. Since we are not in case (i), we can find a set $B' \subset B$ with $|B'| = m, x \in B' $ and the removal of $B'$ does not disconnect $G$. (Indeed, since we are not in case (i), there is at least a path going from some vertex in $B$ to infinity without touching $x$, and removing from $B$ the last vertex in $B$ visited by this path provides the desired subset $B'$).

Note that by the case $m =2$, applied iteratively, we know that the two-point function $a^{G_{B'}} (\cdot, \cdot)$ is well defined on $G_{B'}$, and $G_{B'}$ is of course also recurrent. We can therefore apply the induction hypothesis to $G_{B'}$.
By the induction hypothesis that \eqref{eq: gluing induc hypothesis} holds for $B_m$. In particular, we can apply the \emph{base case} $m = 2$ to the graph $G_{B'}$, where we glue together the vertex $B' $ and $y$, where $B = B' \cup \{y\}$. Therefore let $\tilde q_{B}$ denote the potential kernel, defined on $G_B$, defined in \eqref{twopointGB}. Then
		\begin{equation} \label{eq: potential kernel after ungluing two vertices}
			\tilde{q}_{B}(w) := a^{G_{B'}} (w, B')  - \PP_w(X_{T_B} = y)a^{G_{B'}}(y, B') = \lim_{n \to \infty} \PP_w(T_{A_n} < T_B) \Reff(A_n \leftrightarrow B).
		\end{equation}
It remains to prove that $\tilde{q}_B$ satisfies \eqref{eq: gluing induc hypothesis}. Notice however that, by the induction hypothesis, we have
		\[
			a^{G_B'} (w, B') = q_{B' }(w) = a(w, x) - \E_w[a(X_{T_{B'}}, x)].
		\]
		Plugging this in \eqref{eq: potential kernel after ungluing two vertices} we get
		\begin{align*}
			\tilde{q}_{B}(w) &= a(w, x) - \E_w[a(X_{T_{B'}}, x)] - \PP_w(X_{T_B} = y) \Big( a(y, x) - \E_{y}[a(X_{T_{B'}}, x)]\Big) \\
			&= a(w, x) - \PP_{w}(X_{T_B} = y)a(y, x) - \sum_{z \in B' } \PP_w(X_{T_{B'}} = z)a(z, x) \\
			&\quad + \sum_{z \in B'}\PP_w(X_{T_B} = y) \PP_{y}(X_{T_{B'}} = z) a(z, x) \\
			&= a(w, x) - \sum_{z \in B} \PP_w(X_{T_B} = z)a(z, x),
		\end{align*}
		where in the last line we used for $z \in B_m$ the equality
		\[
			\PP_w(X_{T_{B}} = z) = \PP_w(X_{T_{B'}} = z) - \PP_w(X_{T_{B}} = y)\PP_{y}(X_{T_{B'}} = z),
		\]
		which holds due to the strong Markov property for the random walk. We deduce that
		\[
			\tilde{q}_{B}(w) = a(w, x) - \E_w[a(X_{T_{B}}, x)] = q_B(w),
		\]
		which finishes the induction argument.
	\end{proof}
	\end{wrong-old}

	\subsubsection{Proof of \eqref{qBext}}
\label{S:qBext}
	Let $B \subset v(G)$ be a finite set, such that its removal does not disconnect $G$. So far, we have shown that the potential kernel is well defined on the graph $G_B$ and hence that the harmonic measure from infinity is well defined, see Corollary \ref{Cor:hminfinity}. In this section, we will prove \eqref{qBext}; the third statement of Theorem \ref{theorem: gluing expression}. First, let us introduce some notation that will only be used here. If $G$ is a graph and $B \subset v(G)$ a (finite) set, then we will write $\Delta$ for the Laplacian on $G$ and $\Delta^{G_B}$ for the Laplacian on $G_B$.
	
	\begin{proof}[Proof of \eqref{qBext}]
		Let $G$ be a recurrent graph on which the potential kernel is well defined, and suppose that $B \subset v(G)$ is a finite set such that $v(G) \setminus B$ is connected. Fix $x \in B$. We split into two cases:
		\begin{enumerate}[(i)]
			\item the removal of $x$ disconnects $B \setminus \{x\}$ from infinity in $G$ or
			\item is does not.
		\end{enumerate}
		In the first case, we have that $\hm_{B}(x) = 1$ and also that $q_B(w) = a(w, x)$ for all $w \in B$ (indeed, for $w \notin B$ this follows immediately from \eqref{qB} and for $w \in B \setminus \{x\}$ we have that $q_B(w) = 0 = a(w, x)$ in this case). Hence, we deduce
		\[
			\delta_{x}(\cdot) = \Delta(a(\cdot, x)) = \Delta(q_B(\cdot)),
		\]
		which shows the result in case (i).
		
		In case (ii), take $B' = B \setminus \{x\}$. We will show that
		\begin{equation} \label{eq: DeltaGB' = hmB}
			\Delta_u^{G_{B'}}\big(a^{G_{B'}}(w, B') - \PP_w(T_{x} < T_{B'})a^{G_{B'}}(x, B') \big) \mid_{w = x} = \hm_{B}(x),
		\end{equation}
		where $\Delta^{G_{B'}}_u$ is the Laplacian acting on the function with variable $u$. Let us first explain how this shows the final result. As in \eqref{eq: aGB in aGB'} and \eqref{qB} we know that (when $q_B$ is viewed as a function on $v(G_{B'})$)
		\[	
			q_B(w) = a^{G_{B'}}(w, B') - \PP_w(T_{x} < T_{B'})a^{G_{B'}}(x, B').
		\]
		Moreover, when $w \in B$, we have
		\[
			q_B(w) = a(w, x) - \E_w[a(X_{T_B}, x)] = a(w, x) - a(w, x) = 0.
		\]
		Hence, actually,
		\[
			(\Delta^{G_{B'}} q_{B})(x) = \sum_{\substack{w \sim x \\ w \in v(G_{B'})}} q_B(w) = \sum_{\substack{w \sim x \\ w \in v(G)}} q_B(w) = (\Delta q_B)(x),
		\]
		so that \eqref{eq: DeltaGB' = hmB} implies the final result.
		
		To prove \eqref{eq: DeltaGB' = hmB}, recall from \eqref{eq:efres through flow} that\footnote{Of course, to be precise we would need to calculate the probabilities and effective resistances on the graph $G_{B'}$, but since this makes no difference in the current setting, we skip the extra notation.}
		\[
			\sum_{\substack{u \sim x \\ u \in v(G_{B'})}} \PP_u(T_x < T_{B'}) = \frac{1}{\Reff(x \leftrightarrow B')},
		\]
		and that $\Delta^{G_{B'}} (a^{G_{B'}}(\cdot, B')) = \delta_{B'}(\cdot)$ by Proposition \ref{prop: properties of the PKs}. Using these two facts, we get
		\begin{align*}
			\Delta^{G_{B'}}_w(a^{G_{B'}}(w, x_2) - \PP_w(T_{x} < T_{B'})a^{G_{B'}}(x, B'))\Big|_{w = x_1} &= -a^{G_{B'}}(x, B')\sum_{\substack{u \sim x \\ u \in v(G_{B'})}} (\PP_u(T_{x} < T_{B'}) - 1) \\
			& = a^{G_{B'}}(x, B') \sum_{\substack{u \sim x \\ u \in v(G_{B'})}} \PP_u(T_{B'} < T_{x}) \\
			&= \frac{a^{G_{B'}}(x, B')}{\Reff(x \leftrightarrow B')} = \hm_{B', x}(x).
		\end{align*}
		The last equality follows from Corollary \ref{cor: the harmonic measure for two points is well-defined}, which allows us to write
		\[
			a^{G_{B'}}(x, B') = \hm_{x, B'}(x)\Reff(x \leftrightarrow B').
		\]
		This shows  \eqref{qBext} and therefore concludes the proof of Theorem \ref{theorem: gluing expression}. In turn this finishes the proof that (a) is equivalent to (b) in Theorem \ref{T:main_equiv} (see e.g. Corollary \ref{Cor:hminfinity}).
 	\end{proof}

	\begin{wrong-old}
	\begin{claim} \label{claim: the harmonic measure for two-point sets}
		Let $B$ be of the form $B = \{x_1, x_2\}$. For all $w \in B$ we have
		\begin{equation} \label{eq: the harmonic measure for two-point sets}
			\Delta_u (a(u, x_2) - \PP_{u}(T_{x_1} < T_{x_2})a(x_1, x_2)) \mid_{u = w} = \hm_{\{x_1, x_2\}}(w),
		\end{equation}
		where $\Delta_u$ is the graph Laplacian (recall \eqref{D:Laplacian}) on $G$, acting on the function with $u$ as a variable.
	\end{claim}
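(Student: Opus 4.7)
The plan is to set $q_B(u) := a(u, x_2) - \PP_u(T_{x_1} < T_{x_2})\, a(x_1, x_2)$ and exploit the key observation that $q_B$ vanishes identically on $B = \{x_1, x_2\}$: indeed, $q_B(x_1) = a(x_1, x_2) - a(x_1, x_2) = 0$ and $q_B(x_2) = 0 - 0 = 0$. Consequently, for $w \in B$ the Laplacian collapses to $\Delta q_B(w) = \sum_{u \sim w} q_B(u)$, which splits into a potential-kernel sum and a hitting-probability sum that I handle separately.

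For the first sum, Proposition \ref{prop: properties of the PKs}(i) gives $\Delta a(\cdot, x_2) = \delta_{x_2}$, so
\[
  \sum_{u \sim w} a(u, x_2) = \deg(w)\, a(w, x_2) + \id_{\{w = x_2\}},
\]
which equals $\deg(x_1) a(x_1, x_2)$ if $w = x_1$ and equals $1$ if $w = x_2$. For the second sum, applying the Markov property at time $1$ under $\PP_w$ converts $\sum_{u \sim w} \PP_u(T_{x_1} < T_{x_2})$ into a two-point escape probability: when $w = x_1$ one obtains $\deg(x_1)(1 - \PP_{x_1}(T_{x_2} < T_{x_1}^+))$, and when $w = x_2$ one obtains $\deg(x_2)\, \PP_{x_2}(T_{x_1} < T_{x_2}^+)$. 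The effective-resistance identity \eqref{eq:efres through flow} evaluates both in closed form: $\deg(x_i)\, \PP_{x_i}(T_{x_j} < T_{x_i}^+) = 1/\Reff(x_1 \leftrightarrow x_2)$ for $\{i,j\} = \{1,2\}$.

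Substituting back, the $\deg(w) a(w, x_2)$-type contributions cancel between the two pieces, leaving
\[
  \Delta q_B(x_1) = \frac{a(x_1, x_2)}{\Reff(x_1 \leftrightarrow x_2)}, \qquad \Delta q_B(x_2) = 1 - \frac{a(x_1, x_2)}{\Reff(x_1 \leftrightarrow x_2)}.
\]
Corollary \ref{cor: the harmonic measure for two points is well-defined} then identifies $a(x_1, x_2)/\Reff(x_1 \leftrightarrow x_2) = \hm_{x_1, x_2}(x_1)$, giving the right-hand sides $\hm_B(x_1)$ and $\hm_B(x_2) = 1 - \hm_B(x_1)$ respectively. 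This yields \eqref{eq: the harmonic measure for two-point sets} in both cases. There is no real obstacle here; once one has adopted the trick of defining $q_B$ so that it vanishes on $B$, the computation is essentially algebraic, and the only subtlety is keeping track of the source term $\id_{\{w = x_2\}}$ arising from the $\delta_{x_2}$ on the right-hand side of $\Delta a(\cdot, x_2) = \delta_{x_2}$, which is precisely what produces the $1$ needed to make the two harmonic-measure weights sum to one.
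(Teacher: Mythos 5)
Your proof is correct and takes essentially the same route as the paper's: both evaluate $\Delta q_B$ at $x_1$ and at $x_2$ using $\Delta a(\cdot, x_2) = \delta_{x_2}$ from Proposition \ref{prop: properties of the PKs}, the effective-resistance identity \eqref{eq:efres through flow}, and Corollary \ref{cor: the harmonic measure for two points is well-defined} to recognise $a(x_1,x_2)/\Reff(x_1\leftrightarrow x_2)=\hm_{x_1,x_2}(x_1)$. Your upfront observation that $q_B$ vanishes identically on $B$ is a slightly cleaner way to organise the same algebra, but the substance and the lemmas invoked are identical.
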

	\begin{proof}
		Fix $x_1, x_2 \in v(G)$. Recall that
		\[
			\sum_{u \sim x_1} \PP_u (T_{x_1} < T_{x_2}) = \frac{1}{\Reff(x_1 \leftrightarrow x_2)},
		\]
		for instance by \eqref{eq:efres through flow}. Also recall that $\Delta a(\cdot, x_2) = \delta_{x_2}(\cdot)$ by Proposition \ref{prop: properties of the PKs}. Using these two facts, we find
		\begin{align*}
			\Delta_u(a(u, x_2) - \PP_u(T_{x_1} < T_{x_2})a(x_1, x_2)\Big|_{u = x_1} &= -a(x_1, x_2)\sum_{u \sim x_1} (\PP_u(T_{x_1} < T_{x_2}) - 1) \\
			& = a(x_1, x_2) \sum_{u \sim x_1} \PP_u(T_{x_2} < T_{x_1}) \\
			&= \frac{a(x_1, x_2)}{\Reff(x_1 \leftrightarrow {x_2})} = \hm_{x_1, x_2}(x_1).
		\end{align*}
		The last equality follows from Corollary \ref{cor: the harmonic measure for two points is well-defined}, which allows us to write
		\[
			a(x_1, x_2) = \hm_{x_1, x_2}(x_1)\Reff(x_1 \leftrightarrow x_2).
		\]
		Using the same arguments, we also get
		\begin{align*}
			\Delta_u(a(u, x_2) - \PP_u(T_{x_1} < T_{x_2})a(x_1, {x_2})\Big|_{u = x_2} &= 1 - a(x_1, x_2)\sum_{u \sim x_2} (\PP_u(T_{x_1} < T_{x_2})- 0) \\
			&= 1 - \frac{a(x_1, x_2)}{\Reff(x_1 \leftrightarrow x_2)} \\
			&= \hm_{x_1, x_2}(x_2),
		\end{align*}
		showing the desired result.
	\end{proof}

	We recall \eqref{qBext} in the next lemma; which finishes the proof of Theorem \ref{theorem: gluing expression}. Recall also that, for a finite set $B \subset v(G)$ and $x \in B$, for $w \in v(G)$, we can write
	\[
		q_B(w) = a(w, x) - \E_w[a(X_{T_B}, x)].
	\]
	
	\begin{lemma} \label{lem: harmonic measure of finite sets}
		Let $B$ be a finite subset of $v(G)$ such that its removal does not disconnect $G$. For all $w \in v(G)$ we have
		\[
			(\Delta q_B)(w) = \hm_{B}(w),
		\]
		where $\Delta$ is the graph Laplacian acting on $G$.
	\end{lemma}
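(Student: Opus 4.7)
The plan is to handle the cases $w \notin B$ and $w \in B$ separately, and in the latter to leverage the two-point result (Claim~\ref{claim: the harmonic measure for two-point sets}) via a reduction to the graph $G_{B \setminus \{w\}}$.

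For $w \notin B$, I would observe that $q_B$ is harmonic off $B$. The first summand $u \mapsto a(u, x)$ is harmonic on $v(G) \setminus \{x\}$ by Proposition~\ref{prop: properties of the PKs}, while $u \mapsto \E_u[a(X_{T_B}, x)]$ is harmonic on $v(G) \setminus B$ by the strong Markov property, so the difference is harmonic at $w$. Hence $(\Delta q_B)(w) = 0 = \hm_B(w)$, since $\hm_B$ is supported on $B$.

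For $w \in B$, I would split according to whether (i) removing $w$ disconnects $B \setminus \{w\}$ from infinity in $G$, or (ii) it does not. In case (i), choosing $x = w$ in the definition of $q_B$, every walk from $u \notin B$ must first hit $w$ among the vertices of $B$, so $q_B(u) = a(u, w) - a(w, w) = a(u, w)$; and for $u \in B \setminus \{w\}$, Corollary~\ref{cor: the harmonic measure for two points is well-defined} gives $a(u, w) = \hm_{\{u, w\}}(u)\Reff(u \leftrightarrow w) = 0$ since $w$ disconnects $u$ from infinity, matching $q_B(u) = 0$. Hence $q_B \equiv a(\cdot, w)$ on all of $v(G)$, and Proposition~\ref{prop: properties of the PKs} gives $(\Delta q_B)(w) = 1 = \hm_B(w)$.

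In case (ii), set $B' = B \setminus \{w\}$ and use the identity \eqref{eq: aGB in aGB'}, which in this non-disconnecting situation yields, viewed as functions on $v(G_{B'})$,
\[
q_B(u) = a^{G_{B'}}(u, B') - \PP_u(T_w < T_{B'}) \, a^{G_{B'}}(w, B').
\]
Since $w \notin B'$, its neighbours in $G$ and in $G_{B'}$ are in bijection (with multiplicity when several neighbours of $w$ lie in $B'$), and because $q_B$ takes the value $0$ on every vertex of $B'$ and on the collapsed vertex, we obtain $(\Delta q_B)(w) = (\Delta^{G_{B'}} q_B)(w)$. The right-hand side is exactly what Claim~\ref{claim: the harmonic measure for two-point sets}, applied on $G_{B'}$ to the two-point set $\{w, B'\}$, computes: it equals $\hm^{G_{B'}}_{\{w, B'\}}(w)$. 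Finally, this coincides with $\hm_B(w)$ because, by Corollary~\ref{Cor:hminfinity}, the walk from $A_n$ in $G$ hits $w$ first among the vertices of $B$ if and only if the projected walk on $G_{B'}$ hits $w$ before the glued vertex $B'$, and these two probabilities have the same limit.

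The main obstacle is case (ii): one must verify that gluing $B'$ leaves both the Laplacian of $q_B$ at $w$ and the relevant harmonic measure unchanged, so that the two-point Claim applied on $G_{B'}$ transports cleanly back to $G$. Once the explicit formula \eqref{eq: aGB in aGB'} is in hand from the previous subsection, the remainder is essentially bookkeeping about the gluing.
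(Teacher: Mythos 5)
Your proof is correct and follows essentially the same route as the paper's proof of \eqref{qBext}: the same case split on whether $w$ disconnects $B\setminus\{w\}$ from infinity, the same reduction to the glued graph $G_{B'}$ via the identity $q_B(u)=a^{G_{B'}}(u,B')-\PP_u(T_w<T_{B'})\,a^{G_{B'}}(w,B')$, the same observation that $q_B$ vanishes on $B$ so the Laplacians on $G$ and $G_{B'}$ agree at $w$, and the same two-point computation identifying the result with $\hm_{\{w,B'\}}(w)=\hm_B(w)$. The only (harmless) addition is your explicit treatment of $w\notin B$, which the paper leaves implicit.
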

	
	\begin{proof}
		By definition of $q_B(w)$, we have that $\Delta q_B(w) = 0$ for all $w \not \in B$, as $\Delta a(\cdot, x) = \delta_x(\cdot)$ and the map $w \mapsto \E_w[a(X_{T_B}, x)]$ is clearly harmonic for $w \not \in B$.
		
		Assume henceforth that $w \in B$. Define $B_w = B \setminus \{w\}$ as the removal of $w$ from $B$. Note that $q_{B_w}(\cdot)$ is then the potential kernel on $G_{B_w}$ with pole $B_w$. Using Claim \ref{claim: the harmonic measure for two-point sets} for the two-point sets, we find
		\begin{equation} \label{subeq: gluing harmonic measure}
			\Delta_u \big[q_{B_w}(u)- \PP_{u}(T_w < T_{B_w})q_{B_w}(w) \big]_{u = w} = \hm_{B}(w).
		\end{equation}
		Now, by definition of $q_B$ we also have that
		\[
			q_B(w) = a(w, x) - \E_{w}[a(X_{T_B}, x)] = 0
		\]
		whenever $w \in B$ (and similarly, $q_{B_w}(u) = 0$ when $u \in B_w$). By \eqref{qB} we have that for $u \not \in B$,
		\[
			q_B(u) = q_{B_{w}}(u) - \E_w[q_{B_w}(X_{T_B})] = q_{B_w}(u) - \PP_u(T_w < T_{B_w})q_{B_w}(w),
		\]
		Hence, we actually have that \eqref{subeq: gluing harmonic measure} is equivalent to $(\Delta q_B)(w) = \hm_{B}(w)$, showing the desired result.
	\end{proof}
	\end{wrong-old}

	\section{Proof of Theorem \ref{T:main_existence} }
	Before proceeding with the remaining equivalences we give a proof that (a) holds under the assumption of Theorem \ref{T:main_existence}. Recall that a random graph $(G, o)$ is \textbf{strictly subdiffusive} whenever there exits a $\beta > 2$ such that
	\begin{equation} \label{eqdef: subdiffusive} \tag{SD}
		\bf{E}[d(o, X_n)^\beta] \leq Cn.
	\end{equation}
	
	%
	
	We collect the following theorem of \cite{benjamini2015}. The main theorem from that paper shows that, assuming subdiffusivity, strictly sublinear harmonic functions must be constant. In fact, as already mentioned in that paper (see Example 2.10), the arguments in that paper also show that assuming \emph{strict} subdiffusivity, even harmonic functions of at most linear growth must be constant. It is this extension which we use here, and which we quote below.
	
	\begin{theorem}[Theorem 3 in \cite{benjamini2015}] \label{theorem: Benjamini et al - no linear harmonics}
		Let $(G, o, X_1)$ be a strictly subdiffusive \eqref{eqdef: subdiffusive}, recurrent, stationary environment. A.s., every harmonic function on $G$ that is of at most linear growth is constant.
	\end{theorem}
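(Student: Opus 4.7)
The plan is a second-moment martingale computation, combining stationarity of the environment seen from the walker with strict subdiffusivity. Let $h$ be a harmonic function on $G$ with linear growth, i.e., $|h(x)| \leq C(1 + d(o, x))$ for some $C < \infty$, and consider the martingale $M_n := h(X_n)$ under the annealed measure.

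First I would obtain the upper bound $\bf{E}[M_n^2] = o(n)$. From the growth condition, $\bf{E}[M_n^2] \leq 2C^2 \bigl(1 + \bf{E}[d(o, X_n)^2]\bigr)$; Jensen applied to the concave function $t \mapsto t^{2/\beta}$ (valid since $\beta > 2$), combined with \eqref{eqdef: subdiffusive}, yields
\[
\bf{E}[d(o, X_n)^2] \leq \bigl(\bf{E}[d(o, X_n)^\beta]\bigr)^{2/\beta} \leq (Cn)^{2/\beta} = o(n).
\]
On the other hand, by orthogonality of martingale increments together with stationarity of $(G, X_{k-1}, X_k)$,
\[
\bf{E}[M_n^2] - \bf{E}[M_0^2] = \sum_{k=1}^n \bf{E}[(M_k - M_{k-1})^2] = n \cdot \bf{E}[(h(X_1) - h(X_0))^2].
\]
Combining the two estimates and letting $n \to \infty$ forces $\bf{E}[(h(X_1) - h(X_0))^2] = 0$, so $h(X_1) = h(X_0)$ almost surely. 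Iterating via stationarity then gives $h(X_n) = h(X_0)$ for all $n \geq 0$ a.s., and recurrence plus irreducibility of the random walk imply that $h$ is a.s.\ constant on $G$.

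The main technical obstacle is the universal quantifier ``every harmonic function'' in the conclusion: the argument above only handles one fixed $h$ (measurable in $G$) at a time, whereas the theorem asks that simultaneously every such $h$ be constant. To upgrade, I would observe that on a locally finite graph, harmonic functions satisfying a uniform growth bound $|h(x)| \leq C(1+d(o,x))$ form a normal family (by a diagonal extraction argument applied to pointwise convergence on finite balls), and hence admit a countable dense subfamily on which the single-$h$ argument can be applied. Taking a countable union of null sets, and then letting $C \to \infty$ through integers, one concludes the claim.
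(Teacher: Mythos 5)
There is a genuine gap, and it sits exactly where the real difficulty of the theorem lies. Your identity $\bf{E}[M_n^2]-\bf{E}[M_0^2]=n\,\bf{E}[(h(X_1)-h(X_0))^2]$ uses stationarity of the increments $(h(X_{k-1}),h(X_k))$ in $k$. Stationarity of the environment gives $(G,X_{k-1},X_k)\overset{d}{=}(G,X_0,X_1)$ as (isomorphism classes of) doubly rooted graphs, but to deduce $\bf{E}[(h(X_k)-h(X_{k-1}))^2]=\bf{E}[(h(X_1)-h(X_0))^2]$ you need $h$ to be an \emph{equivariant} measurable function of the environment, i.e.\ $h(x)=f(G,x)$ with $f$ defined on rooted isomorphism classes and not depending on the original root $o$. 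A harmonic function on a single realization of $G$ is not of this form and in general cannot be made so: on a vertex-transitive graph every equivariant $f$ is constant, while nonconstant linear-growth harmonic functions may exist as functions on $v(G)$ (think of $h(x)=x$ on $\ZZ$). Your normal-family patch does not repair this: it reduces ``all $h$'' to ``countably many $h$'', but the single-$h$ step is what fails --- for an $h$ living on one realization the annealed quantity $\bf{E}[(h(X_1)-h(X_0))^2]$ is not even defined, and for a measurably selected family $h((G,o),\cdot)$ the dependence on $o$ destroys stationarity of the increments. (A secondary issue: the growth constant $C=C(G,h)$ is environment-dependent, so the bound $\bf{E}[M_n^2]\le 2C^2(1+\bf{E}[d(o,X_n)^2])$ with a deterministic $C$ is not available as stated; that part is fixable, the equivariance problem is not.)

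The proof in \cite{benjamini2015}, which the present paper follows, is built precisely to avoid this. One first controls an $h$-independent, environment-only quantity: the chi-square-type distance $\Delta_n(o,X_1)$ between the law of $X_n$ started from $o$ and of $X_{n-1}$ started from $X_1$. Their Theorem 7 gives the annealed bound $\bf{E}[\Delta_n(o,X_1)^2]\le 2(\mathbf{H}_n-\mathbf{H}_{n-1})$ in terms of entropy increments; since $\mathbf{H}_n$ grows at most logarithmically, $\mathbf{H}_n-\mathbf{H}_{n-1}\le c/n$ for infinitely many $n$, and Fatou upgrades this to a quenched bound $\E_o[n\,\Delta_n(o,X_1)^2]\le c_1$ along a subsequence, for a.e.\ environment. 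Only then does one take an \emph{arbitrary} harmonic $h$ on that fixed environment and apply the deterministic Cauchy--Schwarz inequality $|h(o)-h(X_1)|\le \Delta_n(o,X_1)\bigl(\E_o[h^2(X_n)]+\E_{X_1}[h^2(X_{n-1})]\bigr)^{1/2}$, using strict subdiffusivity to control the second factor. Because the probabilistic input is $h$-free, the conclusion holds simultaneously for every linear-growth harmonic function on almost every realization. If you want to keep a martingale-flavoured argument you must likewise extract an $h$-independent quenched estimate; the direct increment-variance computation does not provide one.
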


	We now give the proof of Theorem \ref{T:main_existence} using this result.

	\begin{proof}[Proof of Theorem \ref{T:main_existence}.]
	Let $(G, o)$ be a unimodular graph that is almost surely strictly subdiffusive \eqref{eqdef: subdiffusive} and recurrent, satisfying $\E[\deg(o)] < \infty$. Then degree biasing $(G, o)$ gives a reversible environment and hence, almost surely, all harmonic functions on $(G, o, X_1)$ that are at most linear are constant due to Theorem \ref{theorem: Benjamini et al - no linear harmonics}. After degree unbiasing, the same statement is true for $(G, o)$.
	
	Let $a_1, a_2:v(G)^2 \to \RR_+$ be two potential kernels arising as subsequential limits in the sense of Lemma \ref{lem: PK exists}. Fix $y \in v(G)$. By Proposition \ref{prop: properties of the PKs} we have that $a_i(\cdot, y)$ is of the form
	\[
		a_i(x, y) = \Reff(x \leftrightarrow y)H_i(x),
	\]
	with $0 \leq H_i(x) \leq 1$ for each $x$ and $i = 1, 2$. Define next the map $h:v(G) \to \RR$ through
	\[
		h(x) = a_1(x, y) - a_2(x, y).
	\]
	Clearly, $h$ is harmonic everywhere outside $y$ by choice of the $a_i$'s and linearity of the Laplacian. Since $\Delta a_1(\cdot, y) = \Delta a_2(\cdot, y)$ by Proposition \ref{prop: properties of the PKs}, we also get that $\Delta h(y) = 0$ and we deduce that $h$ is harmonic everywhere.
	
	Next, we notice
	\[
		|h(x)| \leq |H_1(x) - H_2(x)|\Reff(y \leftrightarrow x) \leq 2 d(y, x),
	\]
	implying that $h$ is (at most) linear. Thus $h$ must be constant. Since $h(y) = a_1(y, y) - a_2(y, y) = 0$, it follows that $h(x) = 0$ and hence we finally obtain $a_1(x, y) = a_2(x, y)$ for all $x \in v(G)$. Since $y \in v(G)$ was arbitrary, we deduce the desired result.
	\end{proof}

	\begin{remark}\label{R:lee}
Strict subdiffusivity on the UIPT was obtained by Benajmini and Curien in the beautiful paper \cite{BenjCurienSubdiffusivity}. A result of Lee \cite[Theorem 1.10]{Lee2017} gives a more general condition which guarantees strict subdiffusivity (essentially, the graph needs to be planar with at least cubic volume growth). 
	\end{remark}

As a prominent example of application of Theorem \ref{T:main_existence} consider the Incipient Infinite percolation Cluster (IIC) of $\ZZ^d$ for sufficiently large $d$. By a combination of Theorem 1.2 in \cite{KozmaNachmias} and Theorem 1.1 in \cite{Lee_scaling}, one can check that the strict subdiffusivity \eqref{eqdef: subdiffusive} is satisfied in all sufficiently high dimensions.  The recurrence is easier to check. (Note that a weaker form of subdiffusivity can be deduced by combining \cite{KozmaNachmias} with \cite{BJKS}). In fact, it was already checked earlier that in high dimensions the backbone of the IIC is one-ended (\cite{vdHJarai}), implying also the UST is one-ended in this case.

We point out that the result should apply in dimension two (even for non-nearest neighbour walk), or for the IIC of spread-out percolation, although we do not know if strict subdiffusivity has been checked in that case.

\begin{wrong-old}	
	This gives the following list of examples to which Theorem \ref{T:main_equiv} applies and hence for which (a) -- (d) in Theorem \ref{T:main_equiv} hold.
	\begin{description}
		\item[UIPT / UIPQ] It is known that both graphs are recurrent (\cite{GurelNachmias2013}) and have volume growth of dimension $d = 4$ (\cite{Angel2003}), hence Theorem \ref{theorem: lee2017} holds. (Although in both cases, better bounds on the diffusivity exponent are known, see \cite{GwynneMiller2020} and \cite{GwynneHutchcroft2020} for the UIPT.)
			
		\item[IIC in two-dimensions] This graph is clearly recurrent as a subgraph of $\ZZ^2$, but sub-diffusivity is non-obvious and does not follow from Theorem \ref{theorem: lee2017}. However, this was proved in \cite{GangulyLee2020}.
			
		\item[CRT-mated map] Although from Theorem \ref{theorem: lee2017} it would only follow from all $\gamma \geq \gamma_0$, where $\gamma_0$ is large enough so that the volume growth assumption is satisfies. However, it is known that the CRT-mated-map is strictly sub diffusive for all $\gamma \in (0, 2)$, so that Theorem \ref{T:main_existence} also applies. We note that there is a more elementary proof of this fact, using the Harnack inequalities obtained in \cite{NathanaelEwainCRT} and applying directly Theorem \ref{T:main_equiv}.
	\end{description}
\end{wrong-old}

\begin{wrong-old}
	\section{(d) implies (b)}
	In this short section, we recall the result (d) implies (b)\footnote{The careful reader might notice that the result in \cite{BLPS} concerns the limit $\PP_{u}(T_x < T_y)$ as $u \to \infty$ along any sequence. However, it is not hard to see that this also implies the result for arbitrary sequences of sets $(A_n)$ going to infinity.} due to \cite[Theorem ...]{BLPS} and that for planar graphs with uniformly bounded face-degrees, the uniform spanning tree is one-ended almost surely:
	
	\begin{theorem} \label{T: G recurrent bdd faces}
		Let $G$ be a recurrent, planar graph such that its faces have uniform bounded degree, then the UST is one-ended almost surely.
	\end{theorem}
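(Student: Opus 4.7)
The plan is to follow the sketch already provided in the introduction, which reduces the claim to classical results from \cite{BLPS} and \cite{LyonsPeresProbNetworks}. The proof splits naturally into two steps: (1) show the planar dual $G^{*}$ is recurrent, and (2) invoke the BLPS theorem on recurrent planar graphs with recurrent dual.

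For step (1), I would construct an explicit rough embedding (in the sense of Definition 2.14 of \cite{LyonsPeresProbNetworks}) from $G^{*}$ into $G$. Fix a bound $D$ on the face degrees. For each face $f$ of $G$, pick a corner vertex $\phi(f) \in v(G)$ incident to $f$; this defines the vertex part of the map. For an edge $f_{1} \sim f_{2}$ in $G^{*}$, the faces $f_{1}, f_{2}$ share a primal edge, so $\phi(f_{1})$ and $\phi(f_{2})$ can be joined in $G$ by a path of length at most $D$ that walks around the boundaries of $f_{1}$ and $f_{2}$. Bounded face degree guarantees that the multiplicity with which any primal edge is used by these dual-edge paths is uniformly bounded, and likewise that $\phi$ has bounded preimages. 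This gives a rough embedding $G^{*} \hookrightarrow G$ with constants depending only on $D$. Since $G$ is recurrent by assumption, Theorem 2.17 in \cite{LyonsPeresProbNetworks} (monotonicity of recurrence under rough embeddings) yields that $G^{*}$ is recurrent as well.

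For step (2), I would apply Theorem 12.4 of \cite{BLPS}: on a connected planar graph in which both $G$ and its dual $G^{*}$ admit no nonconstant harmonic Dirichlet function and whose wired USFs and free USFs coincide (both of which follow from recurrence of $G$ and $G^{*}$), the wired UST of $G$ is almost surely a single one-ended tree. Since we have already established recurrence of both $G$ and $G^{*}$, the hypotheses apply directly, and we conclude that $\mathcal{T}$ is a.s. one-ended.

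The one genuinely technical point is verifying that the map $\phi$ above is a bona fide rough embedding rather than merely a Lipschitz-type map; in particular that the preimage sizes and path multiplicities are controlled by a function of $D$ alone, independently of the local geometry of $G$. This is where the bounded face-degree hypothesis is essential: unbounded face degrees would destroy the uniform bound on path length and on edge-multiplicity and force us to appeal instead to the substantially harder Theorem 5.16 of \cite{AHNR}, which removes the bounded face degree hypothesis altogether. Modulo this combinatorial verification, however, the argument is entirely soft and does not require any of the machinery developed elsewhere in this paper.
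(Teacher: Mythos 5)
Your proof follows the same route as the paper's: construct a rough embedding of the planar dual into the (recurrent) primal using the uniform bound on face degrees, deduce recurrence of the dual from Theorem 2.17 of \cite{LyonsPeresProbNetworks}, and conclude by the duality result of \cite{BLPS} (Theorem 12.4, or equivalently Theorem 10.36 of \cite{LyonsPeresProbNetworks}). The only quibble is your claim that $\phi$ has bounded preimages, which need not hold (a primal vertex of large degree is incident to many faces); fortunately this is not required by the definition of a rough embedding --- only the bounded path resistance and the bounded edge multiplicity are, and you verify both --- so the argument stands.
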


	We stress that this result is not new - see for example \cite{LyonsPeresProbNetworks}. Denote by $\c{T}$ the uniform spanning tree on $G$.
	
	\begin{proof}[Sketch of proof of Theorem \ref{T: G recurrent bdd faces}]
	Let $G$ be a planar graph from now on and let $G^{\dagger}$ be its planar dual. If $G$ is a recurrent planar graph and $G^{\dagger}$ is locally finite and recurrent, then the uniform spanning tree on $G$ is almost surely one-ended, see \cite{BLPS} or \cite[Theorem 10.36]{LyonsPeresProbNetworks}. Thus, we need to prove that $G^{\dagger}$ is locally finite and recurrent. This holds whenever $G$ is recurrent and $G^{\dagger}$ has (uniform) bounded degree - i.e. when the degrees of the faces of $G$ are uniformly bounded, as recurrence ``survives'' rough embeddings, see \cite{Kanai1986}, or also \cite[Theorem 2.17]{LyonsPeresProbNetworks}. This proves theorem \ref{T: G recurrent bdd faces}.
	\end{proof}

\end{wrong-old}

	
	\section{The sublevel set of the potential kernel} \label{section: Level-sets of the PK}
	Let $(G, o)$ be some recurrent, rooted, graph for which the potential kernel is well defined in the sense that $a_n(x, y)$ obtains a limit and this does not depend on the choice of the sequence $(A_n)_{n \geq 1}$ of finite sets of vertices going to infinity.
	
	Fix $z \in v(G)$ and $R \in \RR_+$. Recall the notation in \eqref{eqdef: Beff} for the ball with respect to the effective resistance metric:
	\begin{equation*}
		\Beff(z, R) = \{x \in v(G): \Reff(z \leftrightarrow x) \leq R\}
	\end{equation*}
	We also introduce the notation for the sublevel set of $a(\cdot, z)$ through
	\[
		\Lambda_a(z, R) = \{x \in v(G): a(x, z) \leq R\}.
	\]
	In case $z = o$, we will drop the notation for $z$ and write $\Beff(R)$, $\Lambda_a(R)$ for $\Beff(o, R)$, $\Lambda_a(o, R)$ respectively. Although $a(\cdot, \cdot)$ fails to be a distance as it lacks to be symmetric, it \emph{is} what we call a quasi-distance as it does satisfy the triangle inequality due to Proposition \ref{prop: green function and PK}. On $2$-connected graphs (where the removal of any single vertex does not disconnect the graph), we have that $a(x, y) = 0$ precisely when $x = y$. In particular, this is true for triangulations.

	Let us first explain \emph{why} we care about the sublevel sets of the potential kernel and why we will prefer it over the effective resistance balls. We will call a set $A \subset v(G)$ \textbf{simply connected} whenever it is connected (that is, for any two vertices $x, y$ in $A$, there exists a path connecting $x$ and $y$, using only vertices inside $A$) and when removing $A$ from the graph does not disconnect a part of the graph from infinity. We make the following observation, which holds because $x \mapsto a(x, o)$ is harmonic outside of $o$.
	
	\begin{obs}
		The set $\Lambda_a(R)$ is simply connected.
	\end{obs}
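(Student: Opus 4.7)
The plan is to establish the two properties in the definition of \emph{simply connected}: (ii) every connected component of $v(G) \setminus \Lambda_a(R)$ is infinite, and (i) $\Lambda_a(R)$ itself is connected. Both rely on the fact that $h(x) := a(x,o)$ is harmonic on $v(G) \setminus \{o\}$ with $h(o) = 0$, so in particular $o \in \Lambda_a(R)$.

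First I would prove (ii) via the discrete maximum principle. Suppose $C$ were a finite connected component of $v(G) \setminus \Lambda_a(R) = \{h > R\}$. Since $o \in \Lambda_a(R)$, we have $o \notin C$, so $h$ is harmonic on $C$. By the definition of a connected component of the strict superlevel set, every vertex in the outer boundary $\partial C$ satisfies $h \le R$. The discrete maximum principle on the finite set $C$ then yields
\[
\max_C h \;\le\; \max_{\partial C} h \;\le\; R,
\]
contradicting $h > R$ on $C$. Hence every component of the complement is infinite.

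Next I would prove (i). Suppose for contradiction that $C$ is a connected component of $\Lambda_a(R)$ not containing $o$. Since $o \notin C$, the function $h$ is again harmonic on $C$, with $h \le R$ on $C$ and $h > R$ on the outer boundary $\partial C$ (by definition of $C$ as a component of the sublevel set). If $C$ were finite, the discrete minimum principle would give $\min_C h \ge \min_{\partial C} h > R$, contradicting $C \subset \Lambda_a(R)$. To rule out $C$ being infinite, I would invoke the fact (established in the forthcoming Lemma \ref{lemma: delta-good implies level-sets}) that $a(x, o) \to \infty$ as $x \to \infty$ in $G$, which forces every sublevel set $\Lambda_a(R)$ to be finite, and in particular forces $C$ to be finite.

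The main obstacle in this argument is the divergence $a(\cdot, o) \to \infty$, which is the content of the separate Lemma \ref{lemma: delta-good implies level-sets} and relies on reversibility/unimodularity of $(G,o)$. Granting that input, the rest is an immediate application of the discrete max/min principles for harmonic functions on finite sets. Note also that the identical argument applies with $o$ replaced by any $z \in v(G)$, yielding the analogous statement for $\Lambda_a(z,R)$.
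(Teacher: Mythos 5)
Your overall strategy (max/min principles applied to $h=a(\cdot,o)$, which is harmonic off $o$ and vanishes at $o$) is exactly the intended one, and your treatment of part (ii) --- finite components of the complement cannot exist by the maximum principle --- is correct and complete. The gap is in part (i). The Observation appears at the start of Section \ref{section: Level-sets of the PK}, where the only standing hypotheses are recurrence and well-definedness of the potential kernel; it is meant as a deterministic fact, and indeed it is invoked to motivate preferring $\Lambda_a(R)$ over $\Beff(R)$ \emph{before} finiteness of the sublevel sets is established. Finiteness of $\Lambda_a(R)$ is precisely what Proposition \ref{prop: PK level-sets are exhaustion} is about, and it requires $\delta$-goodness, hence reversibility (Lemmas \ref{lemma: reversible graphs are delta-good} and \ref{lemma: delta-good implies level-sets}). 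So ruling out an infinite component $C$ of $\Lambda_a(R)$ with $o\notin C$ by appealing to finiteness of $\Lambda_a(R)$ imports an assumption that is not available here and reduces the Observation to a weaker statement than the one being made.

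The infinite case can be handled directly, with no extra hypotheses, because $h$ restricted to such a component $C$ is \emph{bounded}: $0\le h\le R$ on $C$. For $x\in C$, the walk is recurrent and must reach $o\notin C$, so it exits $C$ almost surely, and its first exit point lies in $\partial C$, where $h>R$ by maximality of the component. The stopped process $h(X_{n\wedge T_{\partial C}})$ is a nonnegative martingale; splitting $h(x)=\E_x[h(X_n)\id_{n<T_{\partial C}}]+\E_x[h(X_{T_{\partial C}})\id_{T_{\partial C}\le n}]$ and letting $n\to\infty$ (the first term is at most $R\,\PP_x(T_{\partial C}>n)\to 0$, the second converges by monotone convergence) gives $h(x)=\E_x[h(X_{T_{\partial C}})]>R$, contradicting $x\in\Lambda_a(R)$. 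This argument covers finite and infinite $C$ simultaneously and keeps the Observation at the stated level of generality; with it, your write-up is complete.
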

	
	\noindent
	This is not true, in general, for $\Beff(R)$. Introduce the \textbf{hull} $\overline{\Beff(z, R)}$ of $\Beff(z, R)$ as the set $\Beff(z, R)$ together with the \emph{finite} components of $v(G) \setminus \Beff(z, R)$. Even though $\overline{\Beff(z, R)}$ does not have any more ``holes'', we notice that still, it is not evident (or true in general) that $\overline{\Beff(z, R)}$ is \emph{connected}.
	
	We do notice that $\overline{\Beff(z, R)} \subset \Lambda_a(z, R)$ as
	\[
		a(x, z) = \hm_{x, z}(x)\Reff(x \leftrightarrow z) \leq \Reff(x \leftrightarrow z),
	\]
	by Corollary \ref{cor: the harmonic measure for two points is well-defined}. See also Figure \ref{fig: level-sets} for a schematic picture.
	
	\begin{figure}[h]
		\begin{center}
			\includegraphics[width=0.5\linewidth]{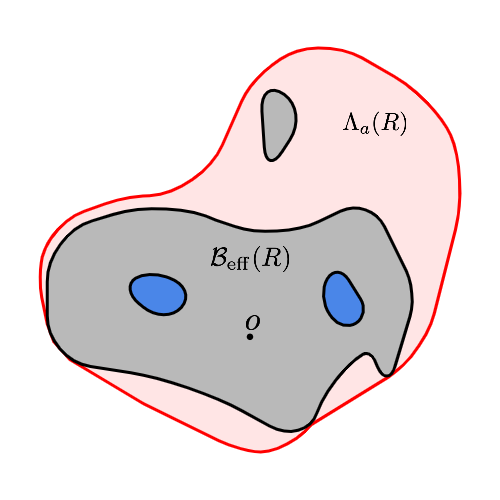}
			\caption{A schematic drawing. In dark gray, we see the set $\Beff(R)$. The blue parts are $\overline{\Beff(R)} \setminus \Beff(R)$. The red area (and everything inside) is then the sublevel set $\Lambda_a(R)$. }
			\label{fig: level-sets}
		\end{center}
	\end{figure}
	
	We thus get that the sets $\Lambda_a(R)$ are more regular than the sets $\Beff(R)$ and if $G$ is planar, they correspond to euclidean simply connected sets.
	
	In this section, we are interested in some properties of $\Lambda_a(R)$, that we will need to prove our Harnack inequalities. We now state the main result, which shows that $\lim_{z \to \infty} a(z, x) = \infty$, under the additional assumption that the underlying rooted graph is random and (stationary) reversible.
	
	\begin{proposition} \label{prop: PK level-sets are exhaustion}
		Suppose $(G, o)$ is a reversible random graph, that is a.s. recurrent and for which the potential kernel is a.s. well defined. Almost surely, the sets $\Lambda_a(z, R)$ are finite for each $R \geq 1$ and all $z \in v(G)$, and hence $(\Lambda_a(z, R))_{R \geq 1}$ defines an exhaustion of $G$.
	\end{proposition}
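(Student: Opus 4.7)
My plan is to exploit reversibility in two different forms. The first use comes from the symmetry of the killed Green function on a reversible graph, $\Gr_y(x,z)/\deg(z) = \Gr_y(z,x)/\deg(x)$. Applying Proposition \ref{prop: green function and PK} and the same proposition after interchanging $x$ and $z$, and combining the two, yields the clean identity
\[
a(x,z) + a(z,x) = \Reff(x,z), \qquad x,z \in v(G).
\]
Because $(G,o)$ is recurrent, Rayleigh monotonicity combined with the divergence of the escape resistance $\Reff(z\leftrightarrow \partial B(z,n))$ (a glued lower bound) shows $\Reff(x,z)\to \infty$ as $d(x,z)\to \infty$. In particular $\max(a(x,z),a(z,x))\to \infty$, so the remaining task is to show that the divergence is always carried by $a(x,z)$ rather than by its asymmetric partner.

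I would then argue by contradiction. Assuming $\Lambda_a(z,R)$ is infinite on a positive-probability event, pick $x_n\to\infty$ with $a(x_n,z)\le R$; the identity above gives $a(z,x_n)\ge \Reff(z,x_n)-R\to\infty$, and Corollary \ref{cor: a(x, z) - a(y, z) goes to 0 as z to infinity} propagates this to $a(y,x_n)\to \infty$ for every fixed $y$. This shows that the dual sublevel set $\{v : a(z,v)\le R\}$ is disjoint from $\Lambda_a(z,R)$ outside the finite effective-resistance ball $\Beff(z,2R)$. The second, deeper, use of reversibility now enters through mass transport: after unbiasing $(G,o)$ by the degree of the root, the transport $f(G,u,v)=\id\{v\in \Lambda_a(u,R)\}$ together with unimodularity yields
\[
\bf{E}\!\left[\deg(o)\,|\Lambda_a(o,R)|\right] = \bf{E}\!\left[\deg(o)\,|\{v : a(o,v)\le R\}|\right],
\]
so the two sublevel sets have the same expected (degree-weighted) cardinality. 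Combined with their near-disjointness and with the walk-level reversibility identity $\PP(X_n \in \Lambda_a(o,R))=\PP(X_n\in \{v:a(o,v)\le R\})$, together with the pointwise decay $p_n(o,\cdot)\to 0$ from recurrence, one should close the contradiction via a visit-frequency argument on the infinitely-often-visited set $\Lambda_a(o,R)$.

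The main obstacle I foresee is in this last step: the bare reversibility equality plus disjointness only yields an upper bound of the form $\PP(X_n\in \Lambda_a(o,R))\le 1/2 + o(1)$, which is not by itself a contradiction with infinitude. Bridging this gap requires the finer ``$\delta$-goodness'' input for reversible graphs alluded to in the remark preceding this proposition---a direct unimodularity argument producing a lower bound $\hm_{x,z}(x)\ge \delta$ valid for a set of $x$ of positive density at each scale of effective resistance. Once available, the identity $a(x,z) = \hm_{x,z}(x)\Reff(x,z)$ upgrades to $a(x,z)\ge \delta\Reff(x,z)\to \infty$ for all but a vanishing fraction of $x$'s, which combined with Corollary \ref{cor: a(x, z) - a(y, z) goes to 0 as z to infinity} forces $\Lambda_a(z,R)$ to be contained in a finite effective-resistance ball, and hence finite, completing the proof of the exhaustion property.
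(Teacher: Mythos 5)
Your proposal correctly identifies the two structural ingredients the paper uses---reversibility through a degree-weighted or mass-transport symmetry, and the $\delta$-goodness of the graph---but the way you glue them together at the end has a genuine gap, and the paper's actual closing step is quite different from what you sketch.

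The paper's route is: first, Lemma \ref{lemma: reversible graphs are delta-good} shows that a.s. the graph is $\delta$-good for every $\delta < 1/2$, meaning for every $\epsilon>0$ there are \emph{infinitely many} $x$ with $\hm_{o,x}(x)\ge \delta-\epsilon$; this uses reversibility in the form $\mathbf{E}[\hm_{o,X_n}(X_n)] = \mathbf{E}[\hm_{X_n,o}(o)] = \tfrac12$, the re-rooting invariance of the $\delta$-goodness event, and a Fatou argument. Then Lemma \ref{lemma: delta-good implies level-sets} upgrades ``infinitely many good points'' to ``$\Lambda_a(z,R)$ finite'' via the Gluing Theorem: given one far-away $\delta$-good point $x_R \notin \Beff(o,R)$ with $a(x_R,o)\ge \tfrac{\delta R}{2}$, the decomposition $a(x,o) = q_{\{o,x_R\}}(x) + \E_x[a(X_{T_{\{o,x_R\}}},o)]$ forces $a(x,o)\gtrsim \delta^2 R$ for \emph{all} sufficiently far $x$, because such a walk is likely to hit $x_R$ before $o$.

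Your proposed closing step misstates what $\delta$-goodness gives. The lemma only yields infinitely many $\delta$-good points, not ``all but a vanishing fraction,'' and even if it did, Corollary \ref{cor: a(x, z) - a(y, z) goes to 0 as z to infinity} controls how $a(\cdot, y)$ changes as the \emph{pole} $y$ moves to infinity, not how $a(x, z)$ varies over far-away $x$; it cannot be used to promote a lower bound from most $x$ to all $x$ outside a fixed ball. The step that actually does this promotion in the paper is the Gluing Theorem / last-exit argument of Lemma \ref{lemma: delta-good implies level-sets}, which is absent from your sketch. The mass-transport identity $\mathbf{E}[|\Lambda_a(o,R)|]=\mathbf{E}[|\{v:a(o,v)\le R\}|]$ is correct but, as you note, not conclusive on its own (it could be $\infty=\infty$); the paper avoids it entirely. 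Your opening observation $a(x,z)+a(z,x)=\Reff(x\leftrightarrow z)$ is valid and a nice shortcut, but it likewise only shows that one of the two terms diverges, and that ambiguity is exactly what the gluing argument is designed to break.
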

	
	Although we expect this proposition to hold for all graphs where the potential kernel is well defined, we do not manage to prove the general case. In addition, the proof actually yields something slightly stronger which may not necessarily hold in full generality.
	
	Note also that for all $R \geq 0$ we have $v(G) \setminus \Lambda_a(R) $ is non-empty because $x \mapsto a(x, o)$ is unbounded (to see this, assume it is bounded and use recurrence and the optional stopping theorem to deduce that $a(x,o)$ would be identically zero, which is not possible since the Laplacian is nonzero at $o$). We introduce the following definition, that we will use throughout the remaining document.
	
	\begin{definition} \label{def: delta-good}
		Let $\delta \in [0, 1]$ and $x \in v(G)$.
		\begin{itemize}
			\item We call $x$ $(\delta, o)$-good if $\hm_{o, x}(x) \geq \delta$. We will omit the notation for the root if it is clear from the context.
			\item We call the rooted graph $(G, o)$ $\delta$-good if for all $\epsilon > 0$, there exist infinitely many $(\delta - \epsilon, o)$-good points.
			\item We call the rooted graph $(G, o)$ uniformly $\delta$-good if all vertices are $(\delta, o)$-good.
		\end{itemize}
	\end{definition}
	Note that if the graph $(G, o)$ is uniformly $\delta$-good for some $\delta > 0$, then actually $\Lambda_a(\delta R) \subset \Beff(R)$, so that the sets $\Lambda_a(\delta R)$ are finite for each $R$. It turns out that the graph $(G, o)$ being $\delta$-good is also enough, which is the content of Lemma \ref{lemma: delta-good implies level-sets} below.
	
	Although the definition of $\delta$-goodness is given in terms of rooted graphs $(G, o)$, the next (deterministic) lemma shows that the definition is actually invariant under the choice of the root, and hence we can can omit the root and say ``$G$ is $\delta$-good'' instead.
	
	\begin{lemma} \label{lemma: delta-good is indep of root}
		Suppose $\delta > 0$ is such that $(G, o)$ is $\delta$-good, then also $(G, z)$ is $\delta$-good for each $z \in v(G)$.
	\end{lemma}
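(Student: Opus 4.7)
The plan is to translate the goodness condition into a statement about the potential kernel using the identity $\hm_{o,x}(x) = a(o,x)/\Reff(o\leftrightarrow x)$ from Corollary \ref{cor: the harmonic measure for two points is well-defined}, and then argue that both the numerator and the denominator on the right-hand side are, for $x$ far from $o$ and $z$, almost the same as the corresponding quantities with $o$ replaced by $z$. This will let us transfer goodness from base point $o$ to base point $z$ at the cost of an arbitrarily small loss in the constant $\delta$.

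First I will fix $z \in v(G)$ and $\epsilon > 0$, and rewrite the condition of being $(\delta, z)$-good at a vertex $x$ as $a(z,x) \ge \delta \Reff(z \leftrightarrow x)$. The two key comparison ingredients are (i) Corollary \ref{cor: a(x, z) - a(y, z) goes to 0 as z to infinity}, which gives that $|a(o,x) - a(z,x)| \to 0$ as $x \to \infty$ (note it is crucial that the varying point sits in the second argument, which is precisely the situation here), and (ii) the triangle inequality for the effective resistance metric, yielding $|\Reff(o\leftrightarrow x) - \Reff(z\leftrightarrow x)| \le \Reff(o\leftrightarrow z)$. Combined with the recurrence of $G$, which forces $\Reff(o\leftrightarrow x) \to \infty$ as $x \to \infty$, the additive error $\Reff(o\leftrightarrow z)$ becomes negligible.

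Putting this together, for any $x$ that is $(\delta - \epsilon/2, o)$-good one has
\[
  \hm_{z,x}(x) \;=\; \frac{a(z,x)}{\Reff(z\leftrightarrow x)} \;\ge\; \frac{a(o,x) - \eta(x)}{\Reff(o\leftrightarrow x) + \Reff(o\leftrightarrow z)} \;\ge\; \frac{(\delta - \epsilon/2)\Reff(o\leftrightarrow x) - \eta(x)}{\Reff(o\leftrightarrow x) + \Reff(o\leftrightarrow z)},
\]
where $\eta(x) \to 0$ as $x\to \infty$. Since the right-hand side converges to $\delta - \epsilon/2$ as $x \to \infty$, it exceeds $\delta - \epsilon$ as soon as $x$ is taken far enough from $o$. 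By hypothesis there are infinitely many $(\delta - \epsilon/2, o)$-good vertices, and since the graph is locally finite every ball is finite, so all but finitely many of these vertices lie outside any prescribed finite set. Consequently infinitely many of them will satisfy the bound above, producing infinitely many $(\delta - \epsilon, z)$-good points.

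I expect no real obstacle in this argument; it is essentially a one-line asymptotic calculation once the two comparison estimates above are in place. The only mild point requiring care is that Corollary \ref{cor: a(x, z) - a(y, z) goes to 0 as z to infinity} controls $a(\cdot, \cdot)$ only in the second coordinate (the remark following it warns that the analogous statement in the first coordinate fails), but this matches exactly the coordinate in which we vary $x$ here.
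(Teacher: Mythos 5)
Your overall strategy -- translate goodness into a statement about the potential kernel via Corollary~\ref{cor: the harmonic measure for two points is well-defined}, compare the numerator and denominator as $x\to\infty$ using the triangle inequality and recurrence -- is the right one, and in fact close in spirit to the paper's proof, which instead plugs directly into Corollary~\ref{cor: harmonic measure are local, not global}. However, your argument contains a genuine coordinate error that invalidates the way you've written it.

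The identity from Corollary~\ref{cor: the harmonic measure for two points is well-defined} is $a(x,y)=\hm_{x,y}(x)\,\Reff(x\leftrightarrow y)$. Since by definition $\hm_{o,x}(x)$ is the limiting probability of hitting $x$ before $o$ (i.e.\ $\hm_{o,x}(x)=\hm_{x,o}(x)$), the correct reformulation is
\[
\hm_{o,x}(x)=\frac{a(x,o)}{\Reff(x\leftrightarrow o)}, \qquad \hm_{z,x}(x)=\frac{a(x,z)}{\Reff(x\leftrightarrow z)},
\]
\emph{not} $\hm_{o,x}(x)=a(o,x)/\Reff(o\leftrightarrow x)$ as you wrote -- the latter is $\hm_{o,x}(o)=1-\hm_{o,x}(x)$. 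So the varying vertex $x$ sits in the \emph{first} argument of $a$, not the second. This matters because you then appeal to Corollary~\ref{cor: a(x, z) - a(y, z) goes to 0 as z to infinity}, which controls exactly the other coordinate, and you explicitly cite the remark after it as reassurance that you are in the good case -- you are not. The remark warns that $a(x,o)-a(x,z)$ does \emph{not} tend to zero; rather it converges to the constant $a(z,o)-a(o,z)$, which is nonzero in general. So your assertion that $\eta(x)\to 0$ is false.

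The argument is nevertheless salvageable with a small change: since $a(x,o)-a(x,z)\to a(z,o)-a(o,z)$, the difference is \emph{bounded}, say by some $C<\infty$ for $x$ far from $o$. Combined with $\Reff(x\leftrightarrow o)\to\infty$ (which follows from recurrence, since $\Reff(x\leftrightarrow o)\geq\Reff(o\leftrightarrow B(o,d(o,x))^{c})\to\infty$), one gets for $(\delta-\epsilon/2,o)$-good $x$ far from $o$
\[
\hm_{z,x}(x)=\frac{a(x,z)}{\Reff(x\leftrightarrow z)} \;\geq\; \frac{(\delta-\epsilon/2)\Reff(x\leftrightarrow o)-C}{\Reff(x\leftrightarrow o)+\Reff(o\leftrightarrow z)} \;\xrightarrow[x\to\infty]{}\; \delta-\tfrac{\epsilon}{2},
\]
and the conclusion follows as you intended. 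The paper's proof sidesteps all of this by using Corollary~\ref{cor: harmonic measure are local, not global} in one step, which already packages the bounded numerator over a divergent resistance into the bound $(\epsilon+\Reff(z\leftrightarrow o))/\Reff(o\leftrightarrow x)$; you may find it cleaner to just invoke that.
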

	\begin{proof}
		Fix $z \in v(G)$ and let $\delta > 0$ be such that $(G, o)$ is $\delta$-good. Fix $0 < \epsilon < \delta$ and denote by $G_{\alpha, o}$ the set of $(\alpha, o)$-good vertices. Take $\epsilon_1, \epsilon_2 > 0$ such that $\epsilon_1 + \epsilon_2 = \epsilon$. Then $G_{\delta - \epsilon_1, o}$ has infinitely many points by assumption.
		
		By Corollary \ref{cor: harmonic measure are local, not global}, we can take $R_0 := R_0(z, o, \epsilon_2)$ so large that for all $x \notin B(o, R_0)$ we have
		\[
			|\hm_{x, z}(x) - \hm_{x, o}(x)| < \epsilon_2.
		\]
		This implies that any vertex $x \in G_{\delta - \epsilon_1, o} \cap B(o, R_0)^c$ must in fact be $(\delta - \epsilon, z)$-good since $\epsilon = \epsilon_1 + \epsilon_2$. This shows the desired result as $\epsilon$ was arbitrary.
	\end{proof}

	The next lemma shows the somewhat interesting result that reversible environments are always $\delta$-good, with $\delta$ arbitrary close to $\frac{1}{2}$.
	
	\begin{lemma} \label{lemma: reversible graphs are delta-good}
		Suppose that $(G, o, X_1)$ is a reversible environment on which the potential kernel is a.s. well defined. Then for each $\delta < \frac{1}{2}$, a.s., $(G, o)$ is $\delta$-good.
	\end{lemma}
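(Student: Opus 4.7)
My plan is to combine reversibility, which forces the distribution of $\hm_{o,X_n}(X_n)$ to be symmetric about $1/2$, with the null-recurrence of random walk on an infinite recurrent graph, in order to show that for every $\eta<1/2$ the set $\mathcal{G}_\eta := \{x\in v(G): \hm_{o,x}(x)>\eta\}$ is infinite with probability at least $1/2$. The full a.s.\ statement then follows from a 0--1 argument based on the root-invariance of $\delta$-goodness established in Lemma \ref{lemma: delta-good is indep of root}.

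The first step is to upgrade the one-step reversibility to the identity $(G,X_0,X_n)\stackrel{d}{=}(G,X_n,X_0)$ for every $n\ge 1$, which is the standard time-reversal consequence of detailed balance together with stationarity. Applying the measurable functional $(G,u,v)\mapsto \hm_{u,v}(v)$ (which is well defined by Corollary \ref{cor: the harmonic measure for two points is well-defined}) and using $\hm_{X_n,o}(o)=1-\hm_{o,X_n}(X_n)$ then shows $\hm_{o,X_n}(X_n)\stackrel{d}{=}1-\hm_{o,X_n}(X_n)$. In particular $\PP(X_n\in \mathcal{G}_\eta)\ge 1/2$ for every $\eta<1/2$ and $n\ge 1$, so writing $N_n:=\#\{k\le n : X_k\in \mathcal{G}_\eta\}$ one gets $\E[N_n/n]\ge 1/2$. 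On the other hand, on the event $\{|\mathcal{G}_\eta|<\infty\}$, null-recurrence (valid since $G$ is a.s. infinite and recurrent, so each local time satisfies $L_n(y)/n\to 0$) forces $N_n/n\to 0$. Since $N_n/n\in[0,1]$, reverse Fatou yields
\[
	\tfrac{1}{2} \le \limsup_n \E[N_n/n] \le \E\bigl[\limsup_n N_n/n\bigr] \le \PP(|\mathcal{G}_\eta|=\infty).
\]
Letting $\eta\uparrow \delta$ along the decreasing family $\{|\mathcal{G}_{\delta-\epsilon}|=\infty\}$ and invoking continuity of probability then gives $\PP((G,o)\text{ is }\delta\text{-good})\ge 1/2$.

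The last step, which I expect to be the main obstacle, is promoting this bound to probability $1$. The event $\{(G,o)\text{ is }\delta\text{-good}\}$ is root-invariant by Lemma \ref{lemma: delta-good is indep of root} (itself a consequence of Corollary \ref{cor: harmonic measure are local, not global}, which ensures that the difference $\hm_{o,x}(x)-\hm_{z,x}(x)$ vanishes as $x\to\infty$), hence shift-invariant along the stationary sequence $(G,X_k)_{k\ge 0}$. Passing to the ergodic decomposition of reversible random graphs, the event has probability in $\{0,1\}$ on each ergodic component, and since the universal lower bound $\ge 1/2$ from the previous paragraph applies on every component (reversibility is preserved under the decomposition), this forces probability $1$ on each component, and therefore $1$ overall. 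The conceptual point is that reversibility plus null-recurrence on their own only deliver the symmetric $\ge 1/2$ bound, and one genuinely needs the root-invariance of $\delta$-goodness to turn this into an almost sure statement.
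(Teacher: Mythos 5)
Your proof is correct and follows essentially the same route as the paper: reversibility gives the symmetry of $\hm_{o,X_n}(X_n)$ about $\tfrac12$, null-recurrence means any finite set of $\eta$-good vertices is visited with vanishing frequency, reverse Fatou combines these into a positive lower bound on $\mathbf{P}\big((G,o)\text{ is }\delta\text{-good}\big)$, and root-invariance of $\delta$-goodness (Lemma \ref{lemma: delta-good is indep of root}) upgrades that bound to probability one. The only real difference is cosmetic: for that last upgrade you invoke the ergodic decomposition of reversible laws, whereas the paper deliberately sidesteps it by conditioning the reversible law on the complement $\mathcal{A}_\delta^c$ (again reversible by root-invariance) and applying the same positivity bound to the conditioned law to reach a contradiction --- both routes work, and the paper explicitly flags the ergodic-decomposition variant as the natural but unused alternative.
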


	\begin{proof}
		In this proof we will write $\mathbf{P}, \mathbf{E}$ to denote probability respectively expectation with respect to the law of the random rooted graph $(G, o)$. In compliance with the rest of the document, we will write $\PP, \E$ to denote the probability respectively expectation w.r.t. the law of the simple random walk, conditional on $(G, o)$.
		
		By Lemma \ref{lemma: delta-good is indep of root}, we note that $(G, o)$ being $\delta$-good is independent of the root and hence for each $\delta > 0$, the event
		\[
			\mathcal{A}_{\delta} = \{(G, o) \text{ is } \delta\text{-good}\}
		\]
		is invariant under re-rooting, that is
		\[
			(G, o) \in \mathcal{A}_{\delta} \iff (G, x) \in \mathcal{A}_{\delta} \text{ for all } x \in v(G).
		\]
		A natural approach to go forward, would be to use that any unimodular law is a mixture of ergodic laws \cite[Theorem 4.7]{AldousLyonsUnimod2007}. We will not use this, as there is an even simpler argument in this case.
		
		Nevertheless, we will use the invariance under re-rooting to prove that $\mathcal{A}_{\delta}$ has probability one. Suppose, to the contrary, that the event $\mathcal{A}_{\delta}$ does not occur with probability one, so that $\bf{P}(\mathcal{A}_{\delta}) \in [0, 1)$. Then we can condition the law $\bf{P}$ on $\mathcal{A}_{\delta}^c$ to obtain again a reversible law $\bf{P}(\cdot \mid \mathcal{A}_{\delta}^c)$ (it is here that we use the invariance under re-rooting of $\mathcal{A}_{\delta}$), under which $\mathcal{A}_{\delta}$ has probability zero. However, we will show that $\bf{P}(\mathcal{A}_\delta) > 0$ always holds when $\delta < \frac{1}{2}$, independent of what the exact underlying reversible law $\bf{P}$ is - as long as the potential kernel is a.s. well defined and the graph is a.s. recurrent. Now, this implies that we actually need to have $\bf{P}(\mathcal{A}_{\delta}) = 1$, which is the desired result.
		
		Fix $\delta < \frac{1}{2}$. We thus still need to prove that $\bf{P}(\mathcal{A}_\delta) > 0$, which we do by contradiction. Assume henceforth that $\bf{P}(\mathcal{A}_{\delta}) = 0$. By reversibility, we get for each $n \in \NN$ the equality
		\[
			\mathbf{E}[\hm_{o, X_n}(X_n)] = \mathbf{E}[\hm_{X_n, o}(o)] = \frac{1}{2},
		\]
		due to the fact that $(G, o, X_n)$ has the same law as $(G, X_n, o)$, which is reversibility (here, the expectation is both with respect to the environment and the walk).
		
		As $\bf{P}(\mathcal{A}_{\delta}) = 0$, we can assume that a.s. there exists a (random) $N = N(G, o) \in \NN$, such that for all $x \not \in B(o, N)$ we have
		\[
			\hm_{x, o}(x) \leq \delta
		\]
		Also, as the environment is a.s. null-recurrent, we have that
		\[
			\PP(X_n \text{ in } B(o, N)) \to 0,
		\]
		whenever $n \to \infty$.  Moreover, notice that for each $n$ we have
		\[
			\E[\hm_{o, X_n}(X_n)] \leq \PP(X_n \text{ not in } B(o, N))\delta + \PP(X_n \text{ in } B(o, N)).
		\]
		Since $\hm_{o, X_n}(X_n) \in [0, 1]$, we can apply Fatou's lemma (applied to \emph{just} the expectation with respect to the law of $(G, o)$, so that we can use the just found inequality) from which we deduce that
		\[
			\frac{1}{2} = \limsup_{n \to \infty} \bf{E}[\hm_{o, X_n}(X_n)] \leq \delta,
		\]
		which is a contradiction as $\delta < \frac{1}{2}$.
	\end{proof}
		
	We next show that for any $\delta$-good (rooted) graph, the set $\Lambda_a(R)$ is finite for each $R \geq 1$. Combined with Lemma \ref{lemma: reversible graphs are delta-good}, this implies Proposition \ref{prop: PK level-sets are exhaustion} in case of reversible environments. However, Lemma \ref{lemma: reversible graphs are delta-good} shows more than just this fact. Indeed, $\Lambda_a(R)$ being finite need not imply that $(G, o)$ is $\delta$-good for some $\delta > 0$.
	
	\begin{lemma} \label{lemma: delta-good implies level-sets}
		If $(G, o)$ is $\delta$-good for some $\delta > 0$, then $\Lambda_a(o, R)$ is finite for each $R \geq 1$.
	\end{lemma}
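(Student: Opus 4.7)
I will argue by contradiction. Suppose $\Lambda_a(o,R)$ is infinite; since $G$ is locally finite, this yields a sequence $y_n\in\Lambda_a(o,R)$ with $d(o,y_n)\to\infty$. Recurrence of $G$ then forces $\Reff(o\leftrightarrow y_n)\to\infty$: otherwise some infinite subset would have bounded effective resistance to $o$, contradicting $\Reff(o\leftrightarrow V_N^c)\nearrow\infty$ along any finite exhaustion. Combined with
\[
a(y_n,o)=\hm_{o,y_n}(y_n)\,\Reff(o\leftrightarrow y_n)\le R,
\]
this gives $\hm_{o,y_n}(y_n)\to 0$, so the $y_n$'s become asymptotically ``invisible from infinity'' in the two-point sense.

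Next I would exploit $\delta$-goodness. For small $\epsilon>0$, pick a $(\delta-\epsilon,o)$-good vertex $x$ with $\Reff(o\leftrightarrow x)$ arbitrarily large, so that $a(x,o)\ge(\delta-\epsilon)\Reff(o\leftrightarrow x)$ by Corollary~\ref{cor: the harmonic measure for two points is well-defined}. The key identity is Proposition~\ref{prop: green function and PK} with $z=o$, rearranged as
\[
a(y_n,x)-a(y_n,o)=a(o,x)-\frac{\Gr_o(y_n,x)}{\deg(x)}.
\]
The second term equals $a_{\{y_n\}}(x,o)$ and converges to $a(x,o)$ as $n\to\infty$ by our standing hypothesis that the potential kernel is well-defined along any sequence going to infinity, applied to $A_n=\{y_n\}$. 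Passing to a subsequence on which $a(y_n,o)\to L\in[0,R]$, we obtain $\lim_n a(y_n,x)=L+a(o,x)-a(x,o)$. Combined with $a(y_n,x)\ge 0$, this forces the rigidity
\[
a(x,o)-a(o,x)\le L\le R,\quad\text{i.e.}\quad \bigl(2\hm_{o,x}(x)-1\bigr)\Reff(o\leftrightarrow x)\le R.
\]

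The main obstacle, and the hard part of the proof, is that in this form the argument only yields a direct contradiction with $(\delta-\epsilon)$-goodness (for $\Reff(o\leftrightarrow x)$ large enough) when $\delta>\tfrac12$, whereas the lemma is needed for arbitrary $\delta>0$ - in particular for the application to reversible environments, where Lemma~\ref{lemma: reversible graphs are delta-good} only delivers $\delta<\tfrac12$. To close the general case the plan is to bootstrap: either iterate the argument by re-rooting at $x$, using the re-rooting invariance of $\delta$-goodness from Lemma~\ref{lemma: delta-good is indep of root} to compound the rigidity inequality over a sequence of good reference vertices; or, more cleanly, apply the Gluing Theorem~\ref{theorem: gluing expression} on the finite set $B=\{o,x_1,\ldots,x_K,y_{n_1},\ldots,y_{n_M}\}$ and pigeonhole the unit-mass constraint $\sum_{z\in B}\hm_B(z)=1$ against the upper bounds $\hm_B(y_{n_i})\le\hm_{\{o,y_{n_i}\}}(y_{n_i})\le R/\Reff(o\leftrightarrow y_{n_i})$ (summable along a subsequence diverging fast enough) and the lower bounds $\hm_B(x_k)\gtrsim\delta-\epsilon$ that should persist once the other vertices in $B$ are pushed far from $x_k$ (via Corollary~\ref{cor: harmonic measure are local, not global}). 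Either route would preclude the existence of infinitely many $y_n\in\Lambda_a(o,R)$ and thereby establish the lemma.
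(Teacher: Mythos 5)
Your first computation is correct as far as it goes: from Proposition \ref{prop: green function and PK} with $z=o$ and the identification of $\Gr_o(y_n,x)/\deg(x)$ with $a_{\{y_n\}}(x,o)$, you do obtain $\bigl(2\hm_{o,x}(x)-1\bigr)\Reff(o\leftrightarrow x)\le R$ whenever $\Lambda_a(o,R)$ is infinite. But, as you yourself note, this is vacuous for $\delta\le\tfrac12$, and that is exactly the regime the lemma must cover (Lemma \ref{lemma: reversible graphs are delta-good} only supplies $\delta<\tfrac12$). The two bootstraps you propose do not close this gap. The pigeonhole route cannot work even in principle: if one could guarantee $\hm_B(x_k)\gtrsim\delta-\epsilon$ simultaneously for $K$ well-separated good points inside one finite set $B$, then $\sum_{z\in B}\hm_B(z)=1$ would force $K\le 1/(\delta-\epsilon)$ \emph{with no reference to the $y_n$'s at all}, i.e.\ it would contradict $\delta$-goodness itself. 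The resolution is that $\hm_B(x_k)\le\hm_{\{o,x_k\}}(x_k)$ and the harmonic measure of $x_k$ is genuinely diluted when more points are adjoined to $B$; Corollary \ref{cor: harmonic measure are local, not global} only compares two-point harmonic measures and gives no such persistence. The re-rooting route is not specified enough to assess, and re-rooting at a good point $x$ gives you information about sublevel sets of $a(\cdot,x)$, which you do not know to be infinite. So the lemma is not proved for general $\delta>0$.

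For comparison, the paper's argument does not seek a contradiction between the $y_n$'s and the good points; it uses a single good point as a \emph{target}. By $\delta$-goodness choose $x_R\notin\Beff(o,R)$ with $\hm_{x_R,o}(x_R)\ge\delta/2$, so $a(x_R,o)\ge\delta R/2$. Set $B_R=\{o,x_R\}$. The Gluing Theorem gives $a(x,o)=q_{B_R}(x)+\E_x[a(X_{T_{B_R}},o)]\ge\PP_x(X_{T_{B_R}}=x_R)\,a(x_R,o)$ since $q_{B_R}\ge0$ and $a(o,o)=0$. Because hitting probabilities converge to harmonic measure from infinity, for all $x$ outside a large ball $B(o,M)$ one has $\PP_x(X_{T_{B_R}}=x_R)\ge\hm_{B_R}(x_R)-\delta/4\ge\delta/4$, whence $a(x,o)\ge\delta^2R/8$ outside $B(o,M)$. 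This shows directly that every sublevel set is finite, for every $\delta>0$. Your opening observation ($\Reff(o\leftrightarrow y_n)\to\infty$, hence $\hm_{o,y_n}(y_n)\to0$ on an infinite sublevel set) is correct but is not needed for this route.
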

		
	\begin{proof}
		Let $\delta > 0$ and suppose that $G$ is $\delta$-good. We will show that for each $R \geq 1$, there exists an $M \geq 1$ such that for all $x \notin B(o, M)$ we have
		\[
			a(x, o) \geq \frac{(\delta)^2 R}{8}.
		\]
		This implies the final result.
		
		By assumption on $\delta$-goodness, for each $R \geq 1$ there exists a vertex $x_R \notin \Beff(o, R)$ such that
		\[
			\hm_{x_R, o}(x_R) \geq \frac{\delta}{2}
		\]
		This implies by Corollary \ref{cor: the harmonic measure for two points is well-defined} that $a(x_{R}, o) \geq \frac{\delta}{2} \Reff(o \leftrightarrow x_R) \geq \frac{\delta R}{2}$.
		
		Fix $R \geq 1$ and define the set $B_R = \{o, x_{R}\}$. By Theorem \ref{theorem: gluing expression}, we get for all $x$ the decomposition
		\[
			a(x, o) = q_{B_R}(x) + \E_x[a(X_{T_{B_R}}, o)],
		\]
		where $q_{B_R}(\cdot)$ is the potential kernel on the graph $G_{B_R}$, which we recall is the graph $G$, with $B_R$ glued together. Since potential kernels are non-negative, we can focus our attention to the right-most term.
		
		Take $M = M(o, x_R, \delta)$ so large that for all $x \notin B(o, M)$
		\[
			|\hm_{B_R}(x_R) - \PP_x(X_{T_{B_R}} = x_R)| \leq \frac{\delta}{4},
		\]
		which is possible as the potential kernel is well defined, see Proposition \ref{prop: properties of the PKs} and Corollary \ref{cor: the harmonic measure for two points is well-defined}. We deduce that for all $x \notin B(o, M)$
		\[
			a(x, o) \geq \E_x[a(X_{T_{B_R}}, o)] \geq \frac{(\delta)^2 R}{8},
		\]
		as desired.
	\end{proof}

	\section{Two Harnack inequalities} \label{section: Harnack Inequaltiy}
	We are now ready to prove the equivalence between (c) and (a). The first part of this section deals with a classical Harnack inequality, whereas the second part of this section provides a variation thereof, where the functions might have a single pole. The first Harnack inequality (Theorem \ref{theorem: harnack inequality} below) does not involve Theorem \ref{T:main_equiv}.
	
	Recall that $\Lambda_a(z, R)$ is the sublevel set $\{x \in v(G): a(x, z) \leq R\}$ (for $R$ not necessarily integer valued) and that $a(z, x)$ defines a quasi distance on $G$. Also recall the notation $\Beff(z, R) = \{x: \Reff(z \leftrightarrow x) \leq R\}$, for the (closed) ball with respect to the effective resistance distance.
	
	\subsection{The standing assumptions}
	Throughout this section we will work with deterministic graphs $G$, which satisfy a certain number of assumptions. To be precise, we will say that $G$ satisfies the \textbf{standing assumptions} whenever it is recurrent, the potential kernel is well defined and the level sets $(\Lambda_a(z, R))_{R \geq 1}$ are finite for some (hence all) $z \in v(G)$.
	
	\begin{remark}
		Proposition \ref{prop: PK level-sets are exhaustion} implies that any unimodular random graph with $\bf{E}[\deg(o)] < \infty$ that is a.s. recurrent and for which the potential kernel is a.s. well defined, the level sets $\Lambda_a(z, R)$ are finite for all $R$ and $z \in v(G)$ (and so satisfies the standing assumptions). Note for instance that the UIPT therefore satisfies the standing assumptions.
	\end{remark}
	
	\subsection{Elliptic Harnack Inequality}
	
We first show that under the standing assumptions, a version of
the elliptic Harnack inequality holds, where the constants are uniform over all graphs that satisfy the standing assumptions. Recall the definition of the ``hull'' $\overline{\Beff(z, R)}$ introduced in Section \ref{section: Level-sets of the PK}.
	
	\begin{theorem}[Harnack Inequality] \label{theorem: harnack inequality}
		There exist $M, C > 1$ such that the following holds. Let $G$ be a graph satisfying the standing assumptions. For all $z \in v(G)$, all $R \geq 1$ and all $h: \Lambda_a(z, MR) \cup \partial \Lambda_a(z, MR) \to \RR_+$ that are harmonic on $\Lambda_a(z, MR)$ we have
		\begin{equation} \label{eq: harnack inequality} \tag{H}
			\max_{x \in \overline{\Beff(z, R)}} h(x) \leq C \min_{x \in \overline{\Beff(z, R)}} h(x)
		\end{equation}
	\end{theorem}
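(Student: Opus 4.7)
The plan is to prove the inequality with $M = 5$ and $C = 2$ (without attempting to optimise), by comparing the exit distributions on $\partial B$, where $B := \Lambda_a(z, MR)$, starting from different points $x \in \overline{\Beff(z, R)}$. Since $h$ is harmonic and nonnegative on $B$, optional stopping gives $h(x) = \E_x[h(X_{T_{B^c}})]$ for every $x \in B$, so it suffices to show that the exit measures $\mu_x(\cdot) := \PP_x(X_{T_{B^c}} = \cdot)$ on $\partial B$ are, up to a multiplicative constant independent of the graph, equivalent as $x$ ranges over $\overline{\Beff(z, R)}$. Note that $\overline{\Beff(z, R)} \subset \Lambda_a(z, R) \subset B$ since $\Lambda_a(z, R)$ is simply connected.

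The heart of the argument is the following hitting-probability estimate: for every $x \in \overline{\Beff(z, R)}$ and every $v \in \Beff(z, R)$, one has $\PP_x(T_v < T_{B^c}) \geq 1/2$. To prove this, apply the optional stopping theorem to the martingale $M_n := a(X_n, v) - L_n^v / \deg(v)$ (where $L_n^v$ is the number of visits to $v$ before time $n$, cf.\ Lemma \ref{lem: green function finite set and PK}) at the stopping time $T := T_v \wedge T_{B^c}$. Since $L_T^v = 0$ almost surely, we obtain
\[
a(x, v) = \E_x[a(X_{T_{B^c}}, v) ; T_{B^c} < T_v].
\]
The triangle inequality $a(y, v) \geq a(y, z) - a(v, z)$ (a direct consequence of Proposition \ref{prop: green function and PK}) combined with $a(X_{T_{B^c}}, z) > MR$ and $a(v, z) \leq \Reff(v, z) \leq R$ yields $a(X_{T_{B^c}}, v) \geq (M-1)R$, while the triangle inequality in the other direction, together with $x \in \overline{\Beff(z, R)} \subset \Lambda_a(z, R)$ and $v \in \Beff(z, R)$, gives $a(x, v) \leq a(x, z) + a(z, v) \leq 2R$. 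Combining, $\PP_x(T_{B^c} < T_v) \leq 2/(M-1) \leq 1/2$ as soon as $M \geq 5$, proving the claim.

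With this estimate in hand, apply the last-exit decomposition of Lemma \ref{lemma: last exit decomposition} with $A := \Beff(z, R) \subset B$: for every $x \in A$ and $b \in \partial B$,
\[
\mu_x(b) = \sum_{w \in A} \Gr_{B^c}(x, w) \beta_w(b), \qquad \beta_w(b) := \PP_w(T_{B^c} < T_A^+, X_{T_{B^c}} = b) \geq 0.
\]
Writing $\Gr_{B^c}(x, w) = \PP_x(T_w < T_{B^c}) \cdot \Gr_{B^c}(w, w)$ by strong Markov and using the previous estimate, we get $\Gr_{B^c}(x, w) / \Gr_{B^c}(w, w) \in [1/2, 1]$ uniformly in $x, w \in A$. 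Hence $\mu_x(b) \leq 2 \mu_{x'}(b)$ for every $x, x' \in A$ and every $b \in \partial B$, and integrating against $h$ yields $h(x) \leq 2 h(x')$ on $\Beff(z, R)$.

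To conclude on the full hull, note that any vertex $y \in \overline{\Beff(z, R)} \setminus \Beff(z, R)$ lies in a finite component of $v(G) \setminus \Beff(z, R)$ that is itself contained in $\Lambda_a(z, R) \subset B$, so the walk started at $y$ hits $\Beff(z, R)$ before $B^c$ almost surely. Optional stopping for $h(X_{n \wedge T_{\Beff(z, R)}})$ then gives $h(y) \in [\min_{\Beff(z, R)} h, \max_{\Beff(z, R)} h]$, and the desired inequality on $\overline{\Beff(z, R)}$ with $C = 2$ follows at once. I expect the most delicate conceptual point to be the inherent asymmetry of the potential kernel: the argument genuinely uses both directions of the triangle inequality, and the control $a(z, v) \leq R$ only holds on the effective resistance ball $\Beff(z, R)$, not on $\Lambda_a(z, R) \supset \Beff(z, R)$, which is precisely why the conclusion is stated over $\overline{\Beff(z, R)}$ rather than over $\Lambda_a(z, R)$.
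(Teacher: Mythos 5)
Your proof is correct, and while it follows the same overall strategy as the paper --- reduce the Harnack inequality to comparability of the exit measures from $\Lambda_a(z,MR)$ via the last-exit decomposition plus a Green-function comparison --- the way you establish that comparison is genuinely different and, I think, cleaner. The paper introduces an intermediate scale: it proves two-sided bounds $\Gr_{MR}(x,y)/\deg(y) \asymp R$ for $x \in \partial\Lambda_a(z,M_0R)$ and $y \in \partial\overline{\Beff(z,R)}$ (via $\Gr_{MR}(x,y)/\deg(y) = \E_x[a(X_{T_{MR}},y)] - a(x,y)$ and the triangle inequality), then applies the last-exit decomposition with $A = \Lambda_a(z,M_0R)$ together with the reversibility identity $\Gr_{MR}(x,z)/\deg(z) = \Gr_{MR}(z,x)/\deg(x)$. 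You instead work at the single scale $\Beff(z,R)$ and prove the uniform hitting estimate $\PP_x(T_v < T_{B^c}) \ge 1/2$, which via the strong Markov identity $\Gr_{B^c}(x,w) = \PP_x(T_w < T_{B^c})\,\Gr_{B^c}(w,w)$ gives the needed comparability of the Green-function weights in one line, with the explicit constants $M=5$, $C=2$. The inputs are the same (the triangle inequality of Proposition \ref{prop: green function and PK}, the identity $a = \hm\cdot\Reff$, the last-exit decomposition, and the inclusion $\overline{\Beff(z,R)}\subset\Lambda_a(z,R)$), and every step checks out: the martingale argument for $a(x,v) = \E_x[a(X_{T_{B^c}},v);\,T_{B^c}<T_v]$ is legitimate because $\Lambda_a(z,MR)$ is finite under the standing assumptions, both directions of the (asymmetric) triangle inequality are used correctly, and your extension to the hull via the maximum principle on the finite complementary components is exactly what is needed since those components sit inside $\Lambda_a(z,R)$. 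One cosmetic remark: the inclusion $\overline{\Beff(z,R)}\subset\Lambda_a(z,R)$ does not follow merely from $\Lambda_a(z,R)$ being simply connected as you phrase it; it follows from $\Beff(z,R)\subset\Lambda_a(z,R)$ together with the maximum principle for $a(\cdot,z)$ on each finite component of $v(G)\setminus\Beff(z,R)$ --- but this is precisely the inclusion asserted in Section \ref{section: Level-sets of the PK}, so citing it is fine.
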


	\begin{remark}
		In case the rooted graph $(G, o)$ is in addition uniformly $\delta$-good for some $\delta$ (that is, $\hm_{x, o}(x) \geq \delta$ for each $x$, see Definition \ref{def: delta-good}), then we have that
		\[
			\Lambda_a(\delta R) \subset \Beff(R) \subset \Lambda_a(R),
		\]
		and hence the Harnack inequality above becomes a standard ``elliptic Harnack inequality'' for the graph equipped with the effective resistance distance. (As will be discussed below, we conjecture that many infinite models of random planar maps, including the UIPT, satisfy the property of being $\delta$-good for some nonrandom $\delta>0$.)
	\end{remark}
	
	\subsubsection*{The harmonic exit measure.}
	In the proof, we fix the root $o \in v(G)$, but it plays no special role. Define for $k \in \NN$, $x \in \Lambda_a(k)$ and $b \in \partial \Lambda_a(k)$ the ``harmonic exit measure''
	\[
		\mu_k(x, b) = \PP_x(X_{T_{k}} = b),
	\]
	where $T_k$ is the first hitting time of $\partial \Lambda_a(k)$. We will write
	\begin{equation} 
		\Gr_k(x, y) := \Gr_{\Lambda_a(k)^c}(x, y)
	\end{equation}
	where we recall the definition of the Green function in \eqref{eqdef: green function}.
	The following proposition shows that changing the starting points $x, y \in \Beff(R)$, does not significantly change the exit measure $\mu_k(\cdot, b)$. The Harnack inequality will follow easily from this proposition (in fact, it is equivalent).
	
	\begin{proposition} \label{prop: hitting measure of SRW converges}
		There exist constants $\tilde{C}, M > 1$ such that for all $G$ satisfying the standing assumptions, all $R \geq 1$ and all $x, y \in \partial \overline{\Beff(R)}$ we have
		\[
			\frac{1}{\tilde{C}}\mu_{MR}(y, b) \leq \mu_{MR}(x, b) \leq \tilde{C}\mu_{MR}(y, b)
		\]
		for each $b \in \partial \Lambda_a(MR)$.
	\end{proposition}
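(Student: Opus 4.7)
The strategy is a last-exit decomposition combined with a uniform comparison of killed Green's functions. Applying Lemma \ref{lemma: last exit decomposition} with $A := \overline{\Beff(R)} \cup \partial \overline{\Beff(R)}$ and $B := \Lambda_a(MR)$, for any $x \in \partial \overline{\Beff(R)} \subset A$ and $b \in \partial \Lambda_a(MR)$ one obtains the representation
$$\mu_{MR}(x, b) = \sum_{z \in A} \Gr_{MR}(x, z)\, F(z, b),$$
where $F(z, b) := \PP_z(T_{\Lambda_a(MR)^c} < T_A^+, X_{T_{\Lambda_a(MR)^c}} = b)$ is independent of $x$. It therefore suffices to produce a universal constant $C_0 = C_0(M)$ for which $C_0^{-1}\Gr_{MR}(y, z) \leq \Gr_{MR}(x, z) \leq C_0\,\Gr_{MR}(y, z)$ holds for all $x, y \in \partial \overline{\Beff(R)}$ and $z \in A$. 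The claim then follows by summing against the $x$-free kernel $F(z, b)$.

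For the Green-function comparison I would invoke Lemma \ref{lem: green function finite set and PK}, which is applicable since $\Lambda_a(MR)$ is finite by the standing assumptions, giving
$$\frac{\Gr_{MR}(x, z)}{\deg(z)} = \E_x\bigl[a(X_{T_{MR}}, z)\bigr] - a(x, z).$$
The next step is to feed in the ``cocycle'' identity from Proposition \ref{prop: green function and PK}, namely $a(w, z) = a(w, o) - a(z, o) + \Gr_z(w, o)/\deg(o)$, to reveal a leading $x$- and $z$-independent term. Taking $\E_x$ and substituting yields
$$\frac{\Gr_{MR}(x, z)}{\deg(z)} = \E_x\bigl[a(X_{T_{MR}}, o)\bigr] - a(z, o) + \E_x\!\left[\frac{\Gr_z(X_{T_{MR}}, o)}{\deg(o)}\right] - a(x, z).$$
Since $a(w, o) \geq MR$ for every $w \in \partial \Lambda_a(MR)$, the first term is at least $MR$, independent of $x$. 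The plan is to bound each of the three remaining terms uniformly by $O(R)$: $a(z, o) \leq R$ from $z \in A \subset \Lambda_a(R)$; $a(x, z) \leq \Reff(x, z) = O(R)$ via Corollary \ref{cor: the harmonic measure for two points is well-defined}; and $\Gr_z(w, o)/\deg(o) \leq \Gr_z(o, o)/\deg(o) = \Reff(o, z) = O(R)$ by the standard maximum principle for killed Green's functions. Choosing $M = M_0$ large enough that $MR$ dominates these $O(R)$ errors, both $\Gr_{MR}(x, z)/\deg(z)$ and $\Gr_{MR}(y, z)/\deg(z)$ lie in an interval of the form $[MR - C_1 R,\, MR + C_1 R]$, so their ratio is at most $(M + C_1)/(M - C_1)$. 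This provides $C_0$, and hence the constant $\tilde C$.

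The main technical obstacle is that $\overline{\Beff(R)}$ is the \emph{hull} of the resistance ball rather than the ball itself, so may contain vertices whose effective resistance from $o$ substantially exceeds $R$. Consequently the claimed bounds ``$a(x, z) = O(R)$'' and ``$\Reff(o, z) = O(R)$'' are not automatic for $x \in \partial \overline{\Beff(R)}$ and $z \in A$. I would handle this using the containment $\overline{\Beff(R)} \subset \Lambda_a(R)$ (where a sharp sublevel bound on $a(\cdot, o)$ is available) combined with the harmonicity of $a(\cdot, o)$ away from $o$: intuitively, the hull is ``shielded'' by $\Beff(R)$, so any path from a hull point to $o$ must first cross $\partial \Beff(R)$, yielding the needed $O(R)$ bound on effective resistance. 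Once these uniform estimates are in place, the Green-function ratio bound — and hence the Harnack inequality of Theorem \ref{theorem: harnack inequality} — follow mechanically.
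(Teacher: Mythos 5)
Your overall strategy --- a last-exit decomposition followed by a uniform two-sided comparison of the killed Green function, itself obtained from Lemma \ref{lem: green function finite set and PK} and the cocycle identity of Proposition \ref{prop: green function and PK} --- is exactly the paper's strategy. The difference, and the gap, lies in your choice of decomposition set $A = \overline{\Beff(R)} \cup \partial\overline{\Beff(R)}$. With that choice the sum runs over every $z$ in the hull, including vertices in the filled-in components of $\Beff(R)^c$, and for such $z$ the estimates you need are simply false: a hull point can sit at the end of an arbitrarily long dangling path inside a hole, so $\Reff(o\leftrightarrow z)$ (hence $\Gr_z(\cdot,o)/\deg(o)$ and $a(x,z)$, via $a(o,z)+a(z,o)=\Reff(o\leftrightarrow z)$) can exceed $R$ by an arbitrary factor. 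Only the \emph{sublevel} bound $a(z,o)\le R+O(1)$ survives on the hull, by the maximum principle; your proposed ``shielding'' fix confuses this with a bound on effective resistance, which does not hold --- crossing $\Beff(R)$ says nothing about how much additional resistance lies beyond it inside the hole. So the step ``bound each of the three remaining terms uniformly by $O(R)$'' breaks down for precisely the vertices your acknowledged obstacle points at, and the remedy you sketch does not repair it. (The individual large terms may in fact cancel, but your argument does not show this, and controlling the cancellation is essentially as hard as the original problem.)

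The paper sidesteps the hull entirely: it decomposes through an \emph{intermediate level set} $\partial\Lambda_a(M_0R)$ with $1<M_0<M-2$, so that every $z$ in the sum satisfies $a(z,o)\approx M_0R$, and the ``target'' of each Green function is a point $x\in\partial\overline{\Beff(R)}$, which is necessarily adjacent to $\Beff(R)$ itself and therefore has $\Reff(o\leftrightarrow x)\le R+1$. With the roles arranged this way, Lemma \ref{sublemma harnack: green function SRW} gives the clean two-sided bound $R/C\le \Gr_{MR}(z,x)/\deg(x)\le CR$ (upper bound from $a(\cdot,x)\le(M+1)R$ on $\partial\Lambda_a(MR)$, lower bound from the gluing identity of Theorem \ref{theorem: gluing expression} plus the separation $a(z,o)\le M_0R$), and the proposition follows by reversibility with $\tilde C=C^2$. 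If you replace your $A$ by $\Lambda_a(M_0R)\cup\partial\Lambda_a(M_0R)$ and keep the rest of your computation, you recover the paper's proof.
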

	
	We first prove the following lemma, giving an estimate on the number of times the simple random walk started from $x$ visits $y$, before exiting the set $\Lambda_a(MR)$.
	
	\begin{lemma} \label{sublemma harnack: green function SRW}
		For all $M_0 > 1$, there exists an $M > M_0$ and $C = C(M, M_0) > 1$ such that for all $G$ satisfying the standing assumptions and for all $R \geq 1$ we have
		\[
			\frac{R}{C} \leq \frac{\Gr_{MR}(x, y)}{\deg(y)} \leq CR
		\]
		for all $x \in \partial \Lambda_a(M_0R)$ and $y \in \partial \overline{\Beff(R)}$.
	\end{lemma}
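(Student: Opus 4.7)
My plan is to start from the identity of Lemma \ref{lem: green function finite set and PK} applied with $A = \Lambda_a(MR)$. Writing $T = T_{\Lambda_a(MR)^c}$ for the exit time, this gives, for $x, y \in \Lambda_a(MR)$,
\[
\frac{\Gr_{MR}(x, y)}{\deg(y)} = \E_x[a(X_T, y)] - a(x, y),
\]
and I will bound the two terms on the right using an ``overshoot'' estimate for $a(\cdot, o)$ together with the triangle-type identity of Proposition \ref{prop: green function and PK}. Since the conductance on every edge is $1$, for neighbors $v \sim w$ we have $a(v, w) \leq \Reff(v, w) \leq 1$, and combined with $a(v_1, v_3) \leq a(v_1, v_2) + a(v_2, v_3)$ (which follows from Proposition \ref{prop: green function and PK} because $\Gr \geq 0$), this yields $a(v, o) \leq a(w, o) + 1$ whenever $v \sim w$. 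In particular $a(x, o) \leq M_0 R + 1$ for $x \in \partial \Lambda_a(M_0 R)$ and $a(X_T, o) \leq MR + 1$ at the exit point. This also ensures $x, y \in \Lambda_a(MR)$ as soon as $M$ is taken a bit larger than $M_0$.

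Before estimating, I need the analogous bound for $y \in \partial \overline{\Beff(R)}$. The key observation (a mild topological fact about the hull) is that any such $y$ must have a neighbor in $\Beff(R)$ itself, not just in $\overline{\Beff(R)}$: otherwise any neighbor $y' \in \overline{\Beff(R)} \setminus \Beff(R)$ would sit in a finite component of $v(G) \setminus \Beff(R)$, but then $y \sim y'$ would put $y$ in that same finite component, contradicting $y \notin \overline{\Beff(R)}$. Since $\Beff(R)$ is a ball in a metric, this yields $\Reff(y, o) \leq R + 1$, and hence $a(o, y) \leq R + 1$ and $a(y, o) \leq R + 1$ as well.

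For the upper bound, the triangle inequality gives $a(X_T, y) \leq a(X_T, o) + a(o, y) \leq (MR + 1) + (R + 1)$, so
\[
\frac{\Gr_{MR}(x, y)}{\deg(y)} \leq \E_x[a(X_T, y)] \leq (M + 1) R + 2 \leq (M + 3) R
\]
for $R \geq 1$. For the lower bound I will use the exact identity from Proposition \ref{prop: green function and PK} with $z = o$, namely $a(X_T, y) = a(X_T, o) + a(o, y) - \Gr_o(X_T, y)/\deg(y)$, together with the standard last-exit formula
\[
\frac{\Gr_o(X_T, y)}{\deg(y)} = \PP_{X_T}(T_y < T_o) \, \Reff(y, o) \leq R + 1 .
\]
This yields $a(X_T, y) \geq a(X_T, o) - (R + 1) \geq MR - R - 1$, and combined with $a(x, y) \leq a(x, o) + a(o, y) \leq (M_0 + 1) R + 2$ gives
\[
\frac{\Gr_{MR}(x, y)}{\deg(y)} \geq (M - M_0 - 2) R - 3 .
\]
Choosing $M = M_0 + 6$ makes this $\geq 4R - 3 \geq R$ for $R \geq 1$, so the lemma holds with $M = M_0 + 6$ and $C = M_0 + 9$.

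The main conceptual obstacle is not in any single estimate but in setting up the right geometric framework: one must realize that the potential kernel satisfies both a triangle inequality and an overshoot bound (via edge resistance $\leq 1$), and that the hull $\overline{\Beff(R)}$ has the convenient property above. Once these are in place the computation is a short plug-and-chug into Lemma \ref{lem: green function finite set and PK} and Proposition \ref{prop: green function and PK}.
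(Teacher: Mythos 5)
Your proof is correct, and it follows the same skeleton as the paper's: start from the optional-stopping identity of Lemma \ref{lem: green function finite set and PK} with $A = \Lambda_a(MR)$, get the upper bound from the triangle inequality $a(z,y) \le a(z,o)+a(o,y)$, and get the lower bound by showing $a(z,y) \gtrsim a(z,o) - R$ for $z$ on the exit boundary. The one place you genuinely diverge is that lower bound: the paper invokes the Gluing Theorem \ref{theorem: gluing expression} (base-point independence of $q_B$, i.e.\ $a(z,y)-\E_z[a(X_{T_R},y)] = a(z,o)-\E_z[a(X_{T_R},o)]$) and then drops the nonnegative term, whereas you use the three-point identity of Proposition \ref{prop: green function and PK} directly with pole $o$ and bound the correction term via $\Gr_o(z,y)/\deg(y) = \PP_z(T_y<T_o)\,\Reff(y\leftrightarrow o) \le R+1$. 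Your route is a bit more elementary, since it bypasses the inductive Gluing Theorem entirely and uses only the two-point machinery of Section \ref{section: well-definition of the PK's}. You are also more careful than the paper on two boundary points it glosses over: the overshoot bounds $a(x,o)\le M_0R+1$, $a(X_T,o)\le MR+1$ coming from $a(v,w)\le \Reff(v\leftrightarrow w)\le 1$ for neighbors, and the observation that a vertex of $\partial\overline{\Beff(R)}$ has a neighbor in $\Beff(R)$ itself (not merely in the hull), so that $\Reff(o\leftrightarrow y)\le R+1$. Both of these are needed to make the constants honest, and your topological argument for the hull is right.
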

	
	\begin{proof}
		Fix $M_0 > 1$ let $M > M_0 + 2$ for now. Let $G$ be any graph satisfying the standing assumptions. Let $R \geq 1$, take $x \in \partial \Lambda_a(M_0R)$ and $y \in \partial \overline{\Beff(R)}$.
		Notice that, by Lemma \ref{lem: green function finite set and PK}, we can write
		\begin{equation} \label{eq: expression of stopped green function in PK}
			\frac{\Gr_{MR}(x, y)}{\deg(y)} = \E_x[a(X_{T_{MR}}, y)] - a(x, y).
		\end{equation}
		Let $z \in \Lambda_a(MR)$. Recalling that $a(\cdot, \cdot)$ is a quasi metric that satisfies the triangle inequality due to Proposition \ref{prop: green function and PK}, we have, by assumption on $x$ and $y$ and the expression for the potential kernel in terms of harmonic measure and effective resistance (Corollary \ref{cor: the harmonic measure for two points is well-defined}), that
		\begin{equation} \label{eq: upper bound a(y, z)}
			a(z, y) \leq a(z, o) + a(o, y) \leq MR + \Reff(o \leftrightarrow y) = (M + 1)R.
		\end{equation}
		Going back to \eqref{eq: expression of stopped green function in PK} and upper-bounding $-a(x, y) \leq 0$, we find the desired upper bound:
		\[
			\frac{\Gr_{MR}(x, y)}{\deg(y)} \leq (M + 1)R.
		\]
		
		For the lower bound, fix again $z \in \partial \Lambda_a(MR)$. From Theorem \ref{theorem: gluing expression} (and the fact that $\Beff(R) \subset \Lambda_a(R)$) we obtain the equality
		\[
			a(z, y) - \E_z[a(X_{T_{R}}, y)] = a(z, o) - \E_z[a(X_{T_{R}}, o)].
		\]
		It follows that
		\begin{equation} \label{eq: lower bound a(z, y)}
			a(z, y) \geq a(z, o) - \E_z[a(X_{T_R}, o)] = (M - 1)R.
		\end{equation}
		On the other hand, invoking the triangle inequality (as in \eqref{eq: upper bound a(y, z)}), we have
		\[
			a(x, y) \leq a(x, o) + a(o, y) \leq (M_0 + 1)R.
		\]
		The lower-bound now follows from \eqref{eq: expression of stopped green function in PK} and \eqref{eq: lower bound a(z, y)} as
		\[
			\frac{\Gr_{MR}(x, y)}{\deg(y)} \geq (M - 1)R - (M_0 + 1)R = (M - M_0 - 2)R.
		\]
		Since $M > M_0 + 2$, we can take $C = C(M, M_0)$ such that we get the result.
	\end{proof}
	
	\begin{proof}[Proof of Proposition \ref{prop: hitting measure of SRW converges}]
		Take $M_0 > 1, M = M(M_0)$ and $C > 1$ as in Lemma \ref{sublemma harnack: green function SRW}. Let $G$ be a graph satisfying the standing assumptions. Fix $R \geq 1$ and let $x, y \in \partial \overline{\Beff(R)}$. For $b \in \Lambda_a(MR)$ we use the last-exit decomposition (Lemma \ref{lemma: last exit decomposition}) to see
		\begin{align*}
			\mu_{MR}(x, b) = \sum_{z \in \partial \Lambda_a(M_0R)} \frac{\Gr_{MR}(x, z)}{\deg(z)} \deg(z)\PP_z(X_{T_{MR}} = b; T_{M_0R}^+ < T_{MR}).
		\end{align*}
		By Lemma \ref{sublemma harnack: green function SRW}, we have for each $z \in \partial \Lambda_a(M_0R)$
		\[
			\frac{\Gr_{MR}(z, x)}{\deg(x)} \leq CR \leq C^2 \frac{\Gr_{MR}(z, y)}{\deg(y)}.
		\]
		We thus get, defining $\tilde{C} = C^2$, and using $\deg(\cdot)$-reversibility of the simple random walk that
		\begin{align*}
			\mu_{MR}(x, b) &\leq \tilde{C} \sum_{z \in \partial \Lambda_a(M_0R)} \frac{\Gr_{MR}(y, z)}{\deg(z)} \deg(z)\PP_z(X_{T_{MR}} = b; T_{M_0R}^+ < T_{MR})\\
			&= \tilde{C}\mu_{MR}(y, b),
		\end{align*}
		showing the final result.
	\end{proof}
	
	\begin{proof}[Proof of Theorem \ref{theorem: harnack inequality}]
	The proof of Theorem \ref{theorem: harnack inequality} is easy now. Indeed, let $C, M > 1$ large enough, as in Proposition \ref{prop: hitting measure of SRW converges} and take any graph $G$ satisfying the standing assumptions and $R \geq 1$. Take $h : \Lambda_a(MR) \cup \partial \Lambda_a(MR) \to \RR_+$ a function harmonic on $\Lambda_a(MR)$. Using the maximum principle for harmonic functions, we deduce that it is enough to prove
	\[
		\max_{x \in \partial \overline{\Beff(R)}} h(x) \leq C \min_{x \in \partial \overline{\Beff(R)}} h(x).
	\]
	Take $x, y \in \partial \overline{\Beff(R)}$. By optional stopping and Proposition \ref{prop: hitting measure of SRW converges} we have
	\begin{align*}
		h(x)  = \E_x[h(X_{T_{MR}})] &= \sum_{b \in \partial \Lambda_a(MR)} h(b) \mu_{MR}(x, b) \\
		&\leq \tilde{C} \sum_{b \in \partial \Lambda_a(MR)} h(b) \mu_{MR}(y, b) = \tilde{C}h(y),
	\end{align*}
	showing the result.
	\end{proof}
	
	\subsection{(a) implies (c): anchored Harnack inequality} \label{sec: HI implies PK}
	Sometimes, one wants to apply a version of the Harnack inequality to functions that are harmonic on a big ball, but not in some vertex inside this ball (the pole). Clearly, we can only hope to compare the value of harmonic function in points that are ``far away'' from the pole, say on the boundary of a ball centered at the pole.
	
	This ``anchored'' inequality does not always follow from the Harnack inequality as stated in Theorem \ref{theorem: harnack inequality}. As an example, think of the graph $\ZZ$ with nearest neighbor connections. Pick any two positive real numbers $\alpha, \beta$ satisfying $\alpha + \beta = 1$. Then the function $h$ that maps $x$ to $\alpha(-x)$ when $x$ is negative and to $\beta x$ when $x$ is positive, is harmonic everywhere outside of $0$, with $\Delta h(0) = 1$. This implies that no form of ``anchored Harnack inequality'' can hold.
	
	We next present a reformulation of (a) implies (c) in Theorem \ref{T:main_equiv}. We will use it to prove results for the ``conditioned random walk'' as introduced in Section \ref{section: The CRW}.
	
	\begin{theorem}[Anchored Harnack Inequality] \label{theorem: harnack with a pole}
		There exists a $C < \infty$ such that the following holds. Let $G$ be a graph satisfying the standing assumptions. For $z \in v(G)$, $R \geq 1$ and all $h: v(G) \to \RR_+$ that are harmonic outside of $z$ and satisfy $h(z) = 0$, we have
		\begin{equation} \label{eqdef: harnack inequality with pole. } \tag{aH}
			\max_{x \in \partial \Lambda_a(z, R)} h(x) \leq C \min_{x \in \partial \Lambda_a(z, R)} h(x).
		\end{equation}
	\end{theorem}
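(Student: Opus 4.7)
The strategy is to parallel the proof of Theorem \ref{theorem: harnack inequality}, but for the random walk killed on hitting $z$. Fix $y_1, y_2 \in \partial \Lambda_a(z, R)$; since $h(z) = 0$ and $h$ is harmonic away from $z$, optional stopping at $\tau_L \wedge T_z$ (with $\tau_L := T_{\partial \Lambda_a(z, L)}$ and $L$ large) yields
\[
h(y_i) \;=\; \sum_{b \in \partial \Lambda_a(z,L)} h(b)\,\nu_L(y_i, b), \qquad \nu_L(y, b) := \PP_y\bigl(X_{\tau_L} = b,\,\tau_L < T_z\bigr).
\]
It therefore suffices to show that $\nu_L(y_1,\cdot)$ and $\nu_L(y_2,\cdot)$ are comparable up to a universal multiplicative constant, uniformly in $L$ and in $b \in \partial \Lambda_a(z, L)$.

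To do this, I would copy the scheme of Proposition \ref{prop: hitting measure of SRW converges}. Introduce an intermediate level $\partial \Lambda_a(z, M_0 R)$ for a fixed constant $M_0 > 1$, and apply the last-exit decomposition of Lemma \ref{lemma: last exit decomposition} to the killed random walk, expressing $\nu_L(y_i, b)$ as a sum over $u \in \partial \Lambda_a(z, M_0 R)$ weighted by the killed Green function $\Gr_z(y_i, u)/\deg(u)$. Comparability of $\nu_L(y_1, b)$ and $\nu_L(y_2, b)$ then reduces, by $\deg(\cdot)$-reversibility of the killed walk, to comparability of $\Gr_z(y_1, u)/\deg(u)$ and $\Gr_z(y_2, u)/\deg(u)$ up to a universal constant for $y_1, y_2 \in \partial \Lambda_a(z, R)$ and $u \in \partial \Lambda_a(z, M_0 R)$. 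The observation that on $\partial \Lambda_a(z, R)$ the potential kernel $a(\cdot, z)$ takes values in $(R, R+1]$ -- a consequence of the triangle inequality of Proposition \ref{prop: green function and PK} together with $\Reff(y, y') \leq 1$ for adjacent vertices -- is precisely what makes the target value $R$ universal and independent of $y_i$.

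The main work will be proving the needed Green function bound, an analogue of Lemma \ref{sublemma harnack: green function SRW} for $\Gr_z$. Using the identity $\Gr_z(y, u)/\deg(u) = a(y, z) - a(y, u) + a(z, u)$ from Proposition \ref{prop: green function and PK}, the term $a(y, z) \approx R$ contributes a clean piece of order $R$ for $y \in \partial \Lambda_a(z, R)$, while the combination $a(z, u) - a(y, u)$ is controlled, for $u$ far from both $y$ and $z$ (as when $u \in \partial \Lambda_a(z, M_0 R)$), by Corollary \ref{cor: a(x, z) - a(y, z) goes to 0 as z to infinity}. The hard part -- and where I expect the main obstacle -- is handling the asymmetric term $a(z, u)$, which did not appear in Lemma \ref{sublemma harnack: green function SRW}. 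To make the bounds quantitative and uniform in $R$ I would use the Gluing Theorem (Theorem \ref{theorem: gluing expression}) applied with $B = \{z\}$, together with the well-definedness of the harmonic measure from infinity (Corollary \ref{Cor:hminfinity}), to replace the pointwise asymptotic of Corollary \ref{cor: a(x, z) - a(y, z) goes to 0 as z to infinity} with an effective bound valid for $u$ on the level set $\partial \Lambda_a(z, M_0 R)$.
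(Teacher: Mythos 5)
Your architecture coincides with the paper's: a last-exit decomposition of the killed exit measure through an intermediate level set $\partial\Lambda_a(z,M')$, reduction of the Harnack inequality to two-sided comparability of the Green-function weights, and the observation that $a(\cdot,z)\in(R,R+1]$ on $\partial\Lambda_a(z,R)$ (the paper phrases this by comparing the normalized measures $\nu(x,\cdot)/a(x,z)$, which for boundary points is the same thing). The one genuine gap is your identification of the last-exit weight with $\Gr_z(y_i,u)$. What Lemma \ref{lemma: last exit decomposition} actually produces is the Green function of the walk killed at $z$ \emph{and} on the outer boundary, i.e. $\Gr_{\{z\}\cup\Lambda_a(z,L)^c}(y_i,u)$, and the identity $\Gr_z(y,u)/\deg(u)=a(y,z)-a(y,u)+a(z,u)$ of Proposition \ref{prop: green function and PK} holds only for the singly-killed Green function. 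Monotonicity gives $\Gr_{\{z\}\cup\Lambda_a(z,L)^c}\le\Gr_z$, so your upper bound survives, but the matching lower bound --- which is what keeps the weights from degenerating --- requires showing that the outer killing removes only a bounded fraction of the mass. In the paper this is the content of the lemma preceding Proposition \ref{prop: conditioned exit measure mixes}: one writes $\Gr_{\{z\}\cup\Lambda_a(z,L)^c}(u,y)=\Gr_z(u,y)-\E_u\bigl[\Gr_z(X_{\tau_L},y);\tau_L<T_z\bigr]$, bounds $\PP_u(\tau_L<T_z)\le (M'+1)/L$ by optional stopping (Lemma \ref{lemma: TM < To bounds}), and controls $\Gr_z(b,y)/(\deg(y)a(y,z))=\PP_b(T_y<T_z)/\hm_{y,z}(y)$ for $b$ on the outer boundary by a second application of the convergence of hitting probabilities to harmonic measure; taking $L$ a definite large multiple of $M'$ then makes the correction a small fraction of the main term. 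This step is one of the two real estimates in the argument and must be added to your sketch; it uses only tools you already invoke, so the gap is fillable.

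Your closing concern about making the choice of levels quantitative and uniform in $R$ is a red herring, and the proposed remedy (the Gluing Theorem with $B=\{z\}$) is vacuous, since gluing a singleton changes nothing. The theorem only asserts a universal constant $C$; the outer level at which the walk is stopped is allowed to depend on $R$, $z$ and the graph in a completely non-effective way (see the remark following the theorem statement, where it is called $\Psi_z(R)$), and since $h$ is harmonic on all of $v(G)\setminus\{z\}$ one may stop at any level. Because $\Lambda_a(z,R)$ is finite under the standing assumptions, applying Corollary \ref{cor: a(x, z) - a(y, z) goes to 0 as z to infinity} to each of the finitely many pairs $(y,z)$ with $y\in\partial\Lambda_a(z,R)$ and taking a maximum already yields an admissible intermediate level $M'$; no effective version is required.
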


	\begin{remark}
		Actually, we will prove that for each $z \in v(G)$ and $R \geq 1$, there exists $\Psi_z(R) \geq R$ such that for all harmonic functions $h:\Lambda_a(z, \Psi_z(R)) \cup \partial \Lambda_a(z, \Psi_z(R)) \to \RR_+$ that are harmonic on $\Lambda_a(z, \Psi_z(R)) \setminus \{z\}$ and $h(z) = 0$, we have
		\[
			\max_{x \in \partial \Lambda_a(z, R)}h(x) \leq  C \min_{x \in \partial \Lambda_a(z, R)} h(x).
		\]
		As before, if the graph is uniformly $\delta$-good for some $\delta > 0$, we can actually take $\Psi_z(R) = MR$ for some $M = M(\delta)$ depending \emph{only} on $\delta$.
	\end{remark}

	\subsubsection*{Proof of Theorem \ref{theorem: harnack with a pole}}
	The proof will be somewhat similar to the proof of Theorem \ref{theorem: harnack inequality}. Again, we will prove it for the vertex $o$ to simplify our writing, but it will not matter which vertex we choose. For $k \in \NN$, we will write again $T_k = T_{\Lambda_a(k)^c}$ for the first time the random walk exists the sublevel-set $\Lambda_a(k)$. Fix $k \in \NN$ and $x \in \Lambda_a(k)$. Define the exit measure
	\[
		\nu_k(x, b) = \PP_x(X_{T_k} = b, T_k < T_o),
	\]
	for $b \in \partial \Lambda_a(k)$. We begin by showing that, taking $x, y$ in $\Lambda_a(R)$, the exit measures $\nu_k(x, \cdot)$ and $\nu_k(y, \cdot)$ are similar up to division by $a(x, o), a(y, o)$ respectively, when $k$ is large enough. Although it might seem at first slightly counterintuitive that that we need to divide by $a(x, o)$, this actually means that the \emph{conditional} exit measures $\PP_w(X_{T_k} = b \mid T_{k} < T_o)$ for $w = x, y$ are comparable.
	
	\begin{proposition} \label{prop: conditioned exit measure mixes}
		There exists a $C < \infty$ such that for each $R \geq 1$, there exists a constant $\Psi(R) \geq R$ such that for all $x, y \in \Lambda_a(R) \setminus \Lambda_a(1)$ and all $b \in \partial \Lambda_a(\Psi(R))$ we have
		\[
			\frac{\nu_{\Psi(R)}(x, b)}{a(x, o)} \leq C \frac{\nu_{\Psi(R)}(y, b)}{a(y, o)}.
		\]
	\end{proposition}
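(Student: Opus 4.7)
The plan is to adapt the strategy of the proof of Proposition~\ref{prop: hitting measure of SRW converges}, with the key new ingredient being that the candidate ``harmonic-like'' function $w \mapsto \nu_{k}(w, b)$ is \emph{not} harmonic at $o$: this defect will be compensated for by dividing through by $a(\cdot, o)$, which also vanishes at $o$ and is harmonic everywhere else. Concretely, I would set $\Psi(R) := M_0 R$ for a constant $M_0 = M_0(R, G)$ to be chosen large enough, write $A := \Lambda_a(M_0 R)$, and denote by $G'(\cdot,\cdot)$ the Green function of simple random walk killed on $\partial A \cup \{o\}$. The last-exit decomposition (Lemma~\ref{lemma: last exit decomposition}) combined with $\deg$-reversibility yields
\[
\nu_{M_0 R}(x, b) \;=\; \frac{1}{\deg(x)} \sum_{u \sim b,\, u \in A \setminus \{o\}} G'(u, x), \qquad b \in \partial A,
\]
so the proposition is reduced to the following claim: for $M_0$ large enough, the quotient $G'(u, x)/(\deg(x)\, a(x, o))$ depends on $x \in \Lambda_a(R) \setminus \Lambda_a(1)$ only up to a universal multiplicative constant, uniformly in $u$ ranging over the inner neighbors of $\partial A$.

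To establish this claim I would apply the strong Markov property at the first hitting time of $\partial A$ to write
\[
G'(u, x) \;=\; G_o(u, x) \;-\; \E_u\bigl[G_o(X_{T_{\partial A}}, x);\, T_{\partial A} < T_o\bigr],
\]
and invoke the potential-kernel identity of Proposition~\ref{prop: green function and PK}, combined with $\deg$-reversibility $G_o(w, x)/\deg(x) = G_o(x, w)/\deg(w)$, to obtain
\[
\frac{G_o(w, x)}{\deg(x)} \;=\; a(x, o) \;+\; \bigl( a(o, w) - a(x, w) \bigr).
\]
Corollary~\ref{cor: a(x, z) - a(y, z) goes to 0 as z to infinity}, used in the \emph{allowed} direction (first variable fixed, second variable sent to infinity), gives $a(o, w) - a(x, w) \to 0$ as $d(x, w) \to \infty$. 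Since $\Lambda_a(R) \setminus \Lambda_a(1)$ is finite under the standing assumptions, and since $d(o, \partial \Lambda_a(M_0 R)) \to \infty$ as $M_0 \to \infty$ (the sublevel sets form an exhaustion), this convergence is uniform in $x$ over the annulus and in $w$ over $\partial A$ or its inner neighbors $u$. Choosing $M_0$ large enough therefore forces
\[
G'(u, x)/\deg(x) \;=\; a(x, o)\, \PP_u(T_o < T_{\partial A})\, \bigl(1 + \varepsilon(u, x)\bigr),
\]
with $|\varepsilon(u, x)| \le 1/2$ say, uniformly in the relevant range of $u$ and $x$.

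Substituting back, the factors of $\deg(x)$ cancel and one obtains
\[
\frac{\nu_{M_0 R}(x, b)}{a(x, o)} \;=\; \bigl(1 + O(\varepsilon)\bigr) \sum_{u \sim b,\, u \in A \setminus \{o\}} \PP_u(T_o < T_{\partial A}),
\]
and the right-hand side is independent of $x$. This yields Proposition~\ref{prop: conditioned exit measure mixes} with a universal constant $C$. The main technical obstacle is precisely the \emph{uniformity} of the approximation $G_o(w, x)/\deg(x) \approx a(x, o)$, both in $x$ over the (finite) annulus $\Lambda_a(R) \setminus \Lambda_a(1)$ and in $w$ over the far shell $\partial \Lambda_a(M_0 R)$; this is what forces $\Psi(R)$ to be chosen potentially much larger than $R$ (depending on the graph), and it is precisely the point at which the standing assumption that the sublevel sets of $a(\cdot,o)$ form an exhaustion enters the argument.
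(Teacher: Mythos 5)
Your reduction of the proposition to a two-sided comparison of a killed Green function against $a(\cdot,o)$, via the identity $\Gr_o(w,x)/\deg(x)=a(x,o)+\bigl(a(o,w)-a(x,w)\bigr)$ from Proposition \ref{prop: green function and PK} and the decay of $a(o,w)-a(x,w)$ from Corollary \ref{cor: a(x, z) - a(y, z) goes to 0 as z to infinity}, is the right idea, but there is a genuine gap in \emph{where} you perform the last-exit decomposition. You decompose at the outer boundary, writing $\nu_{\Psi(R)}(x,b)=\deg(x)^{-1}\sum_{u\sim b}\Gr_{B}(u,x)$ with $B=\{o\}\cup\Lambda_a(\Psi(R))^c$ and $u$ an inner neighbour of $b\in\partial\Lambda_a(\Psi(R))$. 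For such $u$ one has $a(u,o)\ge \Psi(R)-1$, so by optional stopping (as in Lemma \ref{lemma: TM < To bounds}) $\PP_u(T_{\partial A}<T_o)\ge (\Psi(R)-1)/(\Psi(R)+1)$, hence the main term in your expansion satisfies
\[
a(x,o)\,\PP_u(T_o<T_{\partial A})\;\le\;\frac{2\,a(x,o)}{\Psi(R)+1},
\]
which tends to $0$ as $M_0\to\infty$. Your error term, on the other hand, is controlled only by $\sup_w|a(o,w)-a(x,w)|$ over $w$ in the far shell, and Corollary \ref{cor: a(x, z) - a(y, z) goes to 0 as z to infinity} is purely qualitative: it provides no rate. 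So the claimed estimate $\Gr_{B}(u,x)/\deg(x)=a(x,o)\PP_u(T_o<T_{\partial A})(1+\varepsilon(u,x))$ with $|\varepsilon|\le 1/2$ does not follow; you are comparing an unquantified $o(1)$ additive error against a main term that is itself $O(1/\Psi(R))$, and this race is uncontrolled. If the error decays more slowly than $1/(M_0R)$, the ratio of $\nu(x,b)/a(x,o)$ to $\nu(y,b)/a(y,o)$ can blow up.

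The paper avoids this by performing the last-exit decomposition through an \emph{intermediate} shell $\partial\Lambda_a(M_0)$ sitting well inside the domain $\Lambda_a(M)$ with $M=5M_0$:
\[
\nu_M(x,b)=\sum_{z\in\partial\Lambda_a(M_0)}\frac{\Gr_{B_M}(x,z)}{\deg(z)}\,\deg(z)\,\PP_z\bigl(X_{T_M}=b;\,T_M<T^+_{M_0}\bigr),
\]
so that only the first factor depends on $x$. Because $z\in\Lambda_a(M_0)$ is far from $\partial\Lambda_a(M)$, one has $\PP_z(T_M<T_o)\le 1/5$, so the subtracted term in the strong-Markov identity for $\Gr_{B_M}(z,x)$ removes only a bounded fraction of the main contribution, yielding the two-sided bound $\tfrac1{10}\le \Gr_{B_M}(z,x)/(\deg(x)\,a(x,o))\le 2$ with absolute constants; here the normalized Green function is of order one, so the qualitative convergence in Corollary \ref{cor: a(x, z) - a(y, z) goes to 0 as z to infinity} suffices. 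Your argument would be repaired by inserting such an intermediate scale (or by proving a quantitative rate for $a(o,w)-a(x,w)$, which is not available in this generality).
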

	
	In order to prove this proposition, we will first prove a few preliminary lemma's. The next result offers bounds on the probability that the random walk goes ``far away'' before hitting $o$ in terms of the potential kernel.
	\begin{lemma} \label{lemma: TM < To bounds}
		For each $z \in v(G) \setminus \Lambda_a(1)$ and all $M > a(z, o)$, we have
		\[
			\frac{a(z, o)}{M + 1} \leq \PP_z(T_M < T_o) \leq \frac{a(z, o)}{M}.
		\]
	\end{lemma}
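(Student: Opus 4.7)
The plan is to combine the optional stopping theorem for the martingale $(a(X_n,o))_n$ (a martingale while the walk avoids $o$, by the harmonicity of $a(\cdot,o)$ off $o$ established in Proposition~\ref{prop: properties of the PKs}) with a uniform edge-gradient estimate $|a(x,o)-a(y,o)|\le 1$ for every $\{x,y\}\in e(G)$. The OST step produces both bounds after identifying $a(z,o)$ with an expected value of $a$ at the exit point; the gradient estimate controls the one-step overshoot of $a$ across $\partial\Lambda_a(M)$ that is needed for the lower bound.

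First I would set $\tau:=T_M\wedge T_o$. Since $M>a(z,o)$ we have $z\in\Lambda_a(M)$, which is finite by the standing assumptions, so recurrence yields $\tau<\infty$ almost surely. Applying OST and using $a(o,o)=0$,
\[
a(z,o)=\E_z[a(X_\tau,o)]=\E_z\bigl[a(X_{T_M},o);\,T_M<T_o\bigr].
\]
The upper bound follows immediately: on $\{T_M<T_o\}$ one has $X_{T_M}\notin\Lambda_a(M)$, hence $a(X_{T_M},o)>M$, so that $a(z,o)\ge M\,\PP_z(T_M<T_o)$. For the lower bound, I would apply the gradient estimate to the edge $\{X_{T_M-1},X_{T_M}\}$: since $X_{T_M-1}\in\Lambda_a(M)$ satisfies $a(X_{T_M-1},o)\le M$, the estimate yields $a(X_{T_M},o)\le M+1$ on the event, and plugging this into the OST identity gives $a(z,o)\le(M+1)\,\PP_z(T_M<T_o)$.

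The main obstacle is the gradient estimate, which I would prove by an electrical network argument together with a limiting procedure. On the finite wired graph $G_n^*$, the function $a_n(\cdot,o)$ is, up to the additive constant $\Reff(o\leftrightarrow A_n)$, the voltage function grounded at $o$ for the unit current flow from the wired source $A_n$ to $o$; consequently the current on a directed edge $(x,y)$ equals $a_n(x,o)-a_n(y,o)$ (unit conductances). By Thomson's principle this flow minimizes Dirichlet energy, so its support contains no directed cycle (any such cycle could be subtracted at no divergence cost, strictly decreasing the energy). An acyclic unit $A_n$-to-$o$ flow decomposes greedily as a nonnegative combination of simple directed $A_n$-to-$o$ paths whose weights sum to $1$, so the current across any edge has absolute value at most $1$. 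Passing to the pointwise limit $a_n(x,o)\to a(x,o)$ transfers the bound $|a(x,o)-a(y,o)|\le 1$ to $a$ itself.
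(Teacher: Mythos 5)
Your proof is correct and the core argument is exactly the paper's: apply optional stopping to $a(X_{T_M\wedge T_o},o)$ and use $M\le a(w,o)\le M+1$ for $w\in\partial\Lambda_a(M)$. The only difference is how you obtain the one-step gradient bound: the paper gets $|a(x,o)-a(y,o)|\le 1$ for neighbours essentially for free from the triangle inequality of Proposition~\ref{prop: green function and PK} together with $a(x,y)\le\Reff(x\leftrightarrow y)\le 1$ (this is \eqref{eq: growth of a bound}), whereas your Thomson's-principle/flow-decomposition derivation is a valid but longer route to the same estimate.
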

	\begin{proof}
		This is a straightforward consequence of the optional stopping theorem. Indeed,
		\[
			a(z, o) = \E_z[a(T_{M} \wedge T_o, o)]
		\]
		and since $M \leq a(w, o) \leq M + 1$ for each $w \in \partial \Lambda_a(M)$ and $a(o, o) = 0$, we find
		\[
			\frac{a(z, o)}{M + 1} \leq \PP_z(T_M < T_o) \leq \frac{a(z, o)}{M},
		\]
		which are the desired bounds.
	\end{proof}

	\begin{lemma}
		For each $R \geq 1$, there exist $M, M_0 > R$ such that for all $x \in \Lambda_a(R)$ and $z \in \Lambda_a(M_0)$,
		\[
			\frac{1}{10} \leq \frac{\Gr_{B_M}(z, x)}{\deg(x)a(x, o)} \leq 2,
		\]
		where $B_M = \{o\} \cup \Lambda_a(M)^c$.
	\end{lemma}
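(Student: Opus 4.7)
The plan is to apply Lemma \ref{lem: green function finite set and PK} with the finite set $A = \Lambda_a(M) \setminus \{o\}$, so that $A^c = B_M$. This immediately gives
\[
\frac{\Gr_{B_M}(z, x)}{\deg(x)} = \E_z[a(X_{T_{B_M}}, x)] - a(z, x).
\]
I would then split the expectation according to whether the walk exits $A$ through $o$ or through $\partial \Lambda_a(M)$. Using Proposition \ref{prop: green function and PK} to write $a(X_{T_M}, x) = a(X_{T_M}, o) + a(o, x) - \Gr_o(X_{T_M}, x)/\deg(x)$, combined with the optional stopping identity $\E_z[a(X_{T_M}, o) \mathbf{1}_{T_M < T_o}] = a(z, o)$ (which follows from applying Lemma \ref{lemma: TM < To bounds} or just optional stopping to the martingale $a(X_n, o)$, using $a(o,o) = 0$), and then a second use of Proposition \ref{prop: green function and PK} in the form $a(z, o) - a(z, x) + a(o, x) = \Gr_o(z, x)/\deg(x)$, the right-hand side collapses to the clean identity
\[
\frac{\Gr_{B_M}(z, x)}{\deg(x)} = \frac{\Gr_o(z, x)}{\deg(x)} - \E_z\!\left[\frac{\Gr_o(X_{T_M}, x)}{\deg(x)} \mathbf{1}_{T_M < T_o}\right].
\]

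The key analytic input is the claim that for each fixed $x \in v(G)$,
\[
\lim_{w \to \infty} \frac{\Gr_o(w, x)}{\deg(x)} = a(x, o).
\]
This is where the assumption that $\Lambda_a(R)$ is finite (and hence that sequences leaving $\Lambda_a(M_0)$ genuinely go to infinity) comes in. To prove it, I would write $\Gr_o(w,x)/\deg(x) = a(w, o) - a(w, x) + a(o, x)$ by Proposition \ref{prop: green function and PK}, and then use (the proof of) Corollary \ref{cor: a(x, z) - a(y, z) goes to 0 as z to infinity}, together with the observation immediately following its statement, that $a(w, o) - a(w, x) \to a(x, o) - a(o, x)$ as $w \to \infty$. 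The main subtlety is precisely this: while Corollary \ref{cor: a(x, z) - a(y, z) goes to 0 as z to infinity} itself says $a(x, y_n) - a(z, y_n) \to 0$, what we need is the reversed statement for the other variable, which is computed via the identity $a(y_n, x) - a(y_n, z) = \Gr_z(y_n, x)/\deg(x) - a(z, x) \to -a(x, z) + a(z, x)$ (cf.\ the remark after that corollary).

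Given this limit and the finiteness of $\Lambda_a(R)$, I would choose $M_0 = M_0(R)$ so large that $|\Gr_o(w, x)/(\deg(x) a(x, o)) - 1| \leq 1/10$ simultaneously for all $x \in \Lambda_a(R)$ and all $w \notin \Lambda_a(M_0)$ (uniformity comes for free from the finiteness of $\Lambda_a(R)$). This bounds the first term. For the second term, assuming $z$ has $a(z, o) \leq M_0 + 1$ (I suspect the statement intends $z \in \partial \Lambda_a(M_0)$ or $z$ just outside $\Lambda_a(M_0)$, matching the role of such $z$ in the subsequent last-exit decomposition, cf.\ Lemma \ref{sublemma harnack: green function SRW}), Lemma \ref{lemma: TM < To bounds} gives $\PP_z(T_M < T_o) \leq (M_0 + 1)/M$. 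Since $X_{T_M} \in \partial \Lambda_a(M)$ also lies outside $\Lambda_a(M_0)$ once $M > M_0$, the pointwise bound from above yields $\Gr_o(X_{T_M}, x)/\deg(x) \leq (11/10)\, a(x, o)$, so the second term is at most $(11/10)\, a(x, o) \cdot (M_0 + 1)/M$. Choosing $M$ sufficiently large relative to $M_0$ makes this at most $(1/10) \, a(x, o)$. Dividing by $a(x, o)$ then gives the upper bound $11/10 \leq 2$ and the lower bound $9/10 - 1/10 = 8/10 \geq 1/10$, as required. The main obstacle is really just the bookkeeping around the correct limit direction in the argument of Corollary \ref{cor: a(x, z) - a(y, z) goes to 0 as z to infinity}; the rest is a direct but careful combination of Lemmas \ref{lem: green function finite set and PK} and \ref{lemma: TM < To bounds} with Proposition \ref{prop: green function and PK}.
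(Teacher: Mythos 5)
Your proof is correct and follows essentially the same route as the paper: your identity $\Gr_{B_M}(z,x)/\deg(x) = \Gr_o(z,x)/\deg(x) - \E_z\big[\Gr_o(X_{T_M},x)\id_{\{T_M<T_o\}}\big]/\deg(x)$ is exactly the paper's decomposition (obtained there directly by the strong Markov property rather than via Lemma \ref{lem: green function finite set and PK} and Proposition \ref{prop: green function and PK}), and your key limit $\Gr_o(w,x)/\deg(x)\to a(x,o)$ is the paper's estimate on $\PP_w(T_x<T_o)/\hm_{x,o}(x)$ in disguise, since $\Gr_o(w,x)/(\deg(x)a(x,o))=\PP_w(T_x<T_o)/\hm_{x,o}(x)$. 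Your reading of the hypothesis on $z$ (that it should be $z\in\partial\Lambda_a(M_0)$, or at least $a(z,o)\le M_0+1$) matches how the lemma is actually invoked in the proof of Proposition \ref{prop: conditioned exit measure mixes}.
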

	\begin{proof}
		Fix $R \geq 1$ and $x, y \in \Lambda_a(R)$. Take $M_0 = M_0(R)$ at least so large that for \emph{all} $w \notin \Lambda_a(M_0)$ we have
		\begin{equation} \label{eq: bound P-hit / hm}
			\frac{1}{2} \leq \frac{\PP_w(T_x < T_o)}{\hm_{x, o}(x)} \leq 2,
		\end{equation}
		which is possible due to Corollary \ref{cor: a(x, z) - a(y, z) goes to 0 as z to infinity}. Fix then $M = 5M_0$ and $B_M = \{o\} \cup \Lambda_a(M)^c$.
		
		Take $z \in \Lambda_a(M_0)$. By choice of $M$ and Lemma \ref{lemma: TM < To bounds}, we have
		\begin{equation} \label{subeq: Pz(TM < To) bound}
			\PP_z(T_M < T_o) \leq \frac{M_0}{M} \leq \frac{1}{5}.
		\end{equation}
		Using the strong Markov property of the walk we get
		\begin{equation} \label{eq: Go_m bound 1}
			\begin{aligned}
			&\Gr_{B_M}(z, x) =  \Gr_o(z, x) - \PP_z(T_M < T_o)\sum_{b \in \partial \Lambda_a(M)} \PP_z(X_{T_{M}} = b \mid T_{M} < T_o)\Gr_o(b, x).
			\end{aligned}
		\end{equation}
		The definition of the Green function and Corollary \ref{cor: the harmonic measure for two points is well-defined} allow us to write
		\[
			\frac{\Gr_o(z, x)}{\deg(x)} = \PP_z(T_x < T_o) \Reff(x \leftrightarrow o) \quad \text{and} \quad a(x, o) = \hm_{x, o}(x)\Reff(x \leftrightarrow o),
		\]
		which implies that
		\[
			\frac{\Gr_{o}(z, x)}{\deg(x)a(x, o)} = \frac{\PP_z(T_x < T_o)}{\hm_{x, o}(x)} \quad \text{and} \quad \frac{\Gr_o(b, x)}{\deg(x)a(x, o)} = \frac{\PP_b(T_x < T_o)}{\hm_{x, o}(x)},
		\]
		for each $b \in \Lambda_a(M)$. Thus \eqref{eq: Go_m bound 1} is equivalent to
		\begin{equation} \label{eq: Go_m bound 2}
			\begin{aligned}
			\frac{\Gr_{B_M}(z, x)}{\deg(x)a(x, o)} = \frac{\PP_z(T_x < T_o)}{\hm_{x, o}(x)} - \PP_z(T_M < T_o) \sum_{b \in \partial \Lambda_a(M)} \PP_z(X_{T_M} = b \mid T_M < T_o) \frac{\PP_b(T_x < T_o)}{\hm_{o, x}(x)}.
			\end{aligned}
		\end{equation}
		Hence, by \eqref{subeq: Pz(TM < To) bound} and using \eqref{eq: bound P-hit / hm} twice with $w = z$ and $w = b$ respectively in \eqref{eq: Go_m bound 2} we get
		\[
			\frac{1}{2} - \frac{2}{5} \leq \frac{G_{o, \partial_M}(z, x)}{\deg(x)a(x, o)} \leq 2,
		\]
		which is the desired result.
	\end{proof}
	\begin{proof}[Proof of Proposition \ref{prop: conditioned exit measure mixes}.]
		Just as in the proof of Proposition \ref{prop: hitting measure of SRW converges}, we use the last-exit decomposition to see
		\[
			\frac{\nu_M(x, b)}{a(x, o)} = \sum_{z \in \partial \Lambda_a(k)} \frac{\Gr_{o, \partial_M}(x, z)}{a(x, o)\deg(z)} \deg(z) \PP_z(X_{T_M} = b; T_M < T_k^+).
		\]
		This implies that
		\[
			\frac{\nu_M(x, b)}{a(x, o)} \leq 20 \frac{\nu_M(y, b)}{a(y, o)}.
		\]
		We are left to define $\Psi(R) = M$ and $C = 20$ to obtain the desired result.
	\end{proof}

	\begin{wrong-old}	
	\begin{lemma}
		Let $R \geq 1$. For each $x \in \Beff(R)$ we have that for all $z \notin \Lambda_a(2R)$
		\[
		|\PP_z(T_x < T_o) - \hm_{o, x}(x)| \leq 4\hm_{o, x}(x).
		\]
	\end{lemma}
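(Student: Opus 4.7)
The plan is to express $\PP_z(T_x < T_o) - \hm_{o, x}(x)$ exactly in terms of the potential kernel $a$ and then control the resulting expression using the triangle inequality coming from Proposition \ref{prop: green function and PK}. First, combining $\Gr_o(z, x) = \PP_z(T_x < T_o)\,\Gr_o(x, x)$ with $\Gr_o(x, x) = \deg(x)\Reff(x \leftrightarrow o)$ and Proposition \ref{prop: green function and PK} (applied to the triple $(z, x, o)$), I would obtain
\[
\PP_z(T_x < T_o)\,\Reff(x \leftrightarrow o) = a(z, o) - a(z, x) + a(o, x).
\]
Subtracting $\hm_{o, x}(x)\,\Reff(x \leftrightarrow o) = a(x, o)$, which is Corollary \ref{cor: the harmonic measure for two points is well-defined}, yields the clean identity
\[
\bigl[\PP_z(T_x < T_o) - \hm_{o, x}(x)\bigr]\,\Reff(x \leftrightarrow o) = \bigl(a(z, o) - a(z, x)\bigr) - \bigl(a(x, o) - a(o, x)\bigr).
\]

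Next, I would apply Proposition \ref{prop: green function and PK} twice more, to the orderings $(z, x, o)$ and $(z, o, x)$, using non-negativity of the Green function to derive the bidirectional triangle inequality
\[
-a(o, x) \leq a(z, o) - a(z, x) \leq a(x, o).
\]
Substituting the endpoints of this interval into the previous display and using $\Reff(x \leftrightarrow o) = a(o, x) + a(x, o)$ would then give
\[
|\PP_z(T_x < T_o) - \hm_{o, x}(x)| \leq \frac{\max(a(o, x), a(x, o))}{\Reff(x \leftrightarrow o)} = \max\bigl(\hm_{o, x}(o),\, \hm_{o, x}(x)\bigr).
\]

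The main obstacle will be extracting the particular constant $4\hm_{o, x}(x)$: since $\hm_{o, x}(o) = 1 - \hm_{o, x}(x)$, the inequality $\max(\hm_{o, x}(o), \hm_{o, x}(x)) \leq 4\hm_{o, x}(x)$ is equivalent to the quantitative lower bound $\hm_{o, x}(x) \geq 1/5$. The assumption $z \notin \Lambda_a(2R)$ with $x \in \Beff(R)$ is used to guarantee that the potential kernel identities above are meaningful (in particular, that $\Gr_o(z, x) > 0$ and that $z$ is far from $\{o, x\}$ in the $a$-quasi-distance), but it does not by itself force $\hm_{o, x}(x)$ to be bounded below. Closing the gap will therefore either require adding a $\delta$-goodness hypothesis on $x$ (which, by Lemma \ref{lemma: reversible graphs are delta-good}, is available for plenty of vertices in the reversible setting), or a more refined argument exploiting the explicit gap between $a(z, o) > 2R$ and $\Reff(x \leftrightarrow o) \leq R$ via, e.g., an excursion decomposition of the walk between $\partial \Lambda_a(R)$ and $\partial \Lambda_a(2R)$.
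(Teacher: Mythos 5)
Your identities are all correct: combining $\Gr_o(z,x)=\PP_z(T_x<T_o)\,\Gr_o(x,x)$ with Proposition \ref{prop: green function and PK} and Corollary \ref{cor: the harmonic measure for two points is well-defined} does give
\begin{equation*}
\bigl[\PP_z(T_x<T_o)-\hm_{o,x}(x)\bigr]\,\Reff(x\leftrightarrow o)=a(z,o)-a(z,x)+a(o,x)-a(x,o),
\end{equation*}
and the two-sided triangle inequality $-a(o,x)\le a(z,o)-a(z,x)\le a(x,o)$ is valid. But, as you partly acknowledge, what comes out is $|\PP_z(T_x<T_o)-\hm_{o,x}(x)|\le\max\bigl(\hm_{o,x}(x),\,1-\hm_{o,x}(x)\bigr)$, and this is \emph{exactly} the trivial bound that follows from $\PP_z(T_x<T_o)\in[0,1]$: the upper deviation being at most $\hm_{o,x}(o)$ is the statement $\PP_z(T_x<T_o)\le 1$, and the lower deviation being at most $\hm_{o,x}(x)$ is the statement $\PP_z(T_x<T_o)\ge 0$. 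The hypothesis $z\notin\Lambda_a(2R)$ never enters your argument, while the claimed inequality is strictly stronger than what you obtain precisely when $\hm_{o,x}(x)<1/5$ --- which is the only regime in which the lemma has any content. So there is a genuine gap, and the gap you flag at the end is in fact the entire substance of the statement. Adding a $\delta$-goodness hypothesis on $x$ would not be a repair but a change of statement.

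For comparison, the paper's own proof takes a different route, modelled on \cite[Theorem 3.17]{PopovRW}: it starts from a last-exit-type identity expressing $\PP_z(T_x<T_o)-\hm_{x,o}(x)$ through the quantities $\PP_w(X_{T^+_{\{o,x\}}}=w)a(w,z)$ for $w\in\{o,x\}$, and then applies optional stopping to the martingale $a(X_\cdot,z)$ at the time $T^+_{\{o,x\}}\wedge T_V$ with $V=\partial\Lambda_a(2R)$. The separation hypothesis is consumed there: for $v\in V$ one has $a(v,o)-a(x,o)\ge R-2$ (so the walk must ``pay'' at least $R$ in the $a$-quasi-distance to escape from $\{o,x\}$ to $V$), while the triangle inequality bounds $a(v,z)-a(w,z)$ above by a constant multiple of $R$; comparing the two controls the error between $\PP_z(T_x<T_o)$ and $\hm_{o,x}(x)$ multiplicatively. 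If you want to complete your argument, this excursion decomposition between $\{o,x\}$ and $\partial\Lambda_a(2R)$ is the missing ingredient. You should be aware, though, that in the source this lemma lives in discarded material, its proof there also breaks off unfinished, and the statement is not invoked anywhere in the final argument --- so treat the constant $4$ and even the truth of the statement as unverified by the paper itself.
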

	\begin{proof}
		We follow roughly the lines of \cite[Theorem 3.17]{PopovRW}, albeit somewhat simplified to our setting and yielding much weaker results. We begin by noticing that
		\[
		\PP_z(T_x < T_o) - \hm_{x, o}(x) = \sum_{w \in \{o, x\}} \PP_w(T^+_{\{o, x\}} = w)a(w, z) - a(x, z).
		\]
		
		Fix $R \geq 1$ and $x \in \Beff(R) \subset \Lambda_a(R)$, so that also $a(o, x) = \hm_{x, o}(x)\Reff(o \leftrightarrow x) \leq R$. Consider the set $V = \partial \Lambda_a(2R)$, from which we deduce that for all $v \in V$ we have
		\begin{equation} \label{eq: lowerbound pk diff}
		a(v, o) - a(x, o) \geq 2R - 1 - (R + 1) = R - 2,
		\end{equation}
		where we used the triangle inequality together with the fact that $a(z, w) \leq 1$ whenever $z \sim w$.
		
		Next, let $z \notin \Lambda_a(3R)$ say. Then, by the triangle inequality we obtain for each $v \in V$ and $w \in \{o, x\}$ that
		\[
		a(w, z) \leq a(w, o) + a(o, v) + a(v, z),
		\]
		and hence, by choice of $V$ and $x$,
		\begin{equation} \label{eq: upperbound pk diff}
		a(v, z) - a(w, z) \leq a(v, o) + a(o, w) \leq 3R + 2.
		\end{equation}
		
		Write $A = \{o, x\}$ and apply optional stopping (which holds as  $T_V < T_z$) to see that
		\begin{align*}
		a(w, z) &= \E_w[a(X_{T_{A^+} \wedge T_{V}}, z)] \\
		&=
		\end{align*}
		
	\end{proof}

	\begin{proof}
		Let $R \geq 1$ Fix $x, y \in \Lambda_a(R) \setminus \Lambda_a(1)$. For $M \in \RR_+$ we will write $\partial_M$ for the identification of $\partial \Lambda_a(M)$ in $G$. Using Corollaries \ref{cor: a(x, z) - a(y, z) goes to 0 as z to infinity} and \ref{cor: the harmonic measure for two points is well-defined}, we can pick $k = k(R)$ so large that
		\[
			\left|\frac{\Gr_o(z, w)}{\deg(w)} - a(w, o)\right| = |\PP_z(T_w < T_o) - \hm_{w, o}(w)|\Reff(o \leftrightarrow w) \leq \frac{1}{4},
		\]
		for all $z \in \partial \Lambda_a(k)$ and all $w \in \Lambda_a(R)$. Also notice that for each $M > k$, we can write, using the strong Markov property,
		\[
			\Gr_o(z, w) = \Gr_{o, \partial_M}(z,w) + \PP_z(T_{\partial_M} < T_o) \PP_{\partial_M}(T_w < T_o)\Gr_o(w, w)
		\]
		for all $w \in \Lambda_a(R)$ and $z \in \partial \Lambda_a(k)$. Thus, we are allowed to take $M = M(k)$ so large that
		\[
			\left|\frac{\Gr_o(z, w)}{\deg(w)} - \frac{\Gr_{o, \partial_M}(z,w)}{\deg(w)}\right| \leq \frac{1}{4}a(w, o),
		\]
		holds for all $z \in \Lambda_a(k)$ and $w \in \Lambda_a(R)$. In particular, using $\deg(\cdot)$-reversibility of the walk and the triangle inequality, we deduce
		\begin{equation} \label{eq: GMo to a(x, o) bound}
			\left|\frac{\Gr_{o, \partial_M}(x, z)}{\deg(z)} - a(x, o)\right| \leq \frac{1}{2}a(x, o).
		\end{equation}
		Let $b \in \partial \Lambda_a(M)$. By the last-exit decomposition (Lemma \ref{lemma: last exit decomposition}) we have
		\[
			\nu_M(x, b) = \sum_{z \in \partial \Lambda_a(k)} \frac{\Gr_{o, \partial_M}(x, z)}{\deg(z)} \deg(z) \PP_z(X_{T_M} = b; T_M < T_k^+).
		\]
		Combining this with \eqref{eq: GMo to a(x, o) bound} we obtain
		\begin{align*}
			\nu_M(x, b) &= \sum_{z \in \partial \Lambda_a(k)} \left[\frac{\Gr_{o, \partial_M}(x, z)}{\deg(z)} - a(x, o) + a(x, o)\right]\deg(z)\PP_z(X_{T_M} = b; T_M < T_k^+),
		\end{align*}
		from which we deduce
		\[
			\frac{1}{2}\sum_{z \in \partial \Lambda_A(k)}\deg(z)\PP_z(X_{T_M} = b; T_M < T_k^+) \leq \frac{\mu_{M}(x, b)}{a(x, o)} \leq 2\sum_{z \in \partial \Lambda_A(k)}\deg(z)\PP_z(X_{T_M} = b; T_M < T_k^+)
		\]
		Since the upper and the lower bound are independent of $x \in \Lambda_a(R)$, we can take $C = 4$ to find
		\[
			\frac{\nu_M(x, b)}{a(x, o)} \leq C \frac{\nu_M(y, b)}{a(y, o)}.
		\]
		Setting $\Psi(R) = M$, we get the result.
	\end{proof}
	\end{wrong-old}

	Finishing the proof of Theorem \ref{theorem: harnack with a pole} is now straightforward. Indeed, we fix $C > 1$ and $\Psi$ as in Proposition \ref{prop: conditioned exit measure mixes}. Let $R \geq 1$ and $h: \Lambda_a(\Psi(R)) \to \RR_+$ harmonic outside $o$; with $h(o) = 0$. Fix $x, y \in \Lambda_a(R)$. By optional stopping, which holds as $\Lambda_a(\Psi(R))$ is finite,
	\begin{align*}
		h(x) &= \int_{\partial \Lambda_a(\Psi(R))} h(b) \nu_{\Psi(R)}(x, b) \\
		&\leq C\frac{a(x, o)}{a(y, o)} \int_{\partial \Lambda_a(\Psi(R))} h(b) \nu_{\Psi(R)}(y, b) = C \frac{a(x, o)}{a(y, o)}h(y).
	\end{align*}
	This shows the desired result when $x, y \in \partial \Lambda_a(R)$. \qed

	\subsection{(c) implies (a)}
	Let $(V_R)_R$ be any sequence of connected subsets of $v(G)$ satisfying $o \in V_{R} \subset V_{R + 1}$, $|V_R| < \infty$ for all $R$ and $\cup_{R \geq 1} V_R = v(G)$.
	
	\begin{proposition} \label{P: (c) implies (a)}
		Suppose that the (rooted) graph $(G, o)$ satisfies the anchored Harnack inequality with respect to the sequence $(V_R)_{R \geq 1}$ and some (non-random) constant $C$: for all $h: v(G) \to \RR_+$ harmonic outside possibly $o$ and such that $h(o) = 0$,
		\[
			\max_{x \in \partial V_R} h(x) \leq C \min_{x \in \partial V_R} h(x).
		\]
		In this case, the potential kernel $a(x, o)$ is well defined.
	\end{proposition}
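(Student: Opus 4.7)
The plan is to establish uniqueness of subsequential limits; their existence is Lemma~\ref{lem: PK exists}. Let $a^{(1)}, a^{(2)}$ be two such limits along sequences going to infinity. By Proposition~\ref{prop: properties of the PKs}, each $a^{(i)}(\cdot, o)$ is nonnegative, harmonic on $v(G) \setminus \{o\}$, vanishes at $o$, and satisfies the Laplacian identity $\Delta a^{(i)}(\cdot, o)|_{x=o} = 1$. I will show $a^{(1)} \equiv a^{(2)}$, which forces the full sequence $a_n(\cdot, o)$ to converge to a limit independent of the choice of $A_n$.

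Define $\rho(x) := a^{(1)}(x, o)/a^{(2)}(x, o)$ on the infinite component of $G \setminus \{o\}$ (on any finite components disconnected by $o$, both $a^{(i)}$ vanish by Lemma~\ref{lem: PK exists}, so one can ignore them), and set $M_R := \max_{\partial V_R} \rho$, $m_R := \min_{\partial V_R} \rho$. For each $x \in V_{R'} \setminus \{o\}$, optional stopping at $T := T_{\partial V_{R'}} \wedge T_o$ yields
\[
a^{(i)}(x, o) = \E_x\big[a^{(i)}(X_{T_{\partial V_{R'}}}, o);\, T_{\partial V_{R'}} < T_o\big],
\]
so $\rho(x)$ is a convex combination of $\{\rho(b) : b \in \partial V_{R'}\}$ with positive weights proportional to $\PP_x(X_{T_{\partial V_{R'}}} = b, T_{\partial V_{R'}} < T_o)\,a^{(2)}(b, o)$. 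In particular $\rho(x) \in [m_{R'}, M_{R'}]$, and specializing to $x \in \partial V_R$ with $R'$ large enough that $\partial V_R \subset V_{R'}$ yields the monotonicity $m_R \ge m_{R'}$, $M_R \le M_{R'}$. Combined with the AHI applied to $a^{(1)}$ and $a^{(2)}$ separately, which gives $M_R \le C^2 m_R$, we conclude that $M_R \uparrow M$ and $m_R \downarrow m$ for some $0 < m \le M$ with $M \le C^2 m$, and $\rho(x) \in [m, M]$ for every $x \ne o$.

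The crux is to prove $M = m$. Set $f := M\, a^{(2)}(\cdot, o) - a^{(1)}(\cdot, o) = a^{(2)}(\cdot, o)(M - \rho)$. Since $\rho \le M$ everywhere, $f \ge 0$; moreover $f(o) = 0$ and $f$ is harmonic outside $o$ as a linear combination, so AHI applies: $\max_{\partial V_R} f \le C \min_{\partial V_R} f$. Picking $x^*, x^{**} \in \partial V_R$ with $\rho(x^*) = m_R$ and $\rho(x^{**}) = M_R$, one obtains
\[
a^{(2)}(x^*, o)(M - m_R) = f(x^*) \le C f(x^{**}) = C\, a^{(2)}(x^{**}, o)(M - M_R),
\]
and AHI applied to $a^{(2)}$ gives $a^{(2)}(x^*, o) \ge a^{(2)}(x^{**}, o)/C$. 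Combining, $M - m_R \le C^2(M - M_R)$; letting $R \to \infty$ the right-hand side vanishes, forcing $M \le m$ and hence $M = m$, so $\rho$ is constant equal to $M$. Finally, comparing Laplacians at $o$ yields $1 = \Delta a^{(1)}(\cdot,o)|_o = M\, \Delta a^{(2)}(\cdot,o)|_o = M$, so $a^{(1)} = a^{(2)}$.

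The main difficulty is identifying the correct auxiliary function: $f$ must be a nonnegative linear combination of $a^{(1)}$ and $a^{(2)}$ that vanishes at $o$ and remains harmonic on $G \setminus \{o\}$, so that AHI can be applied to it; this is precisely why we normalize by the (a priori unknown) constant $M = \lim_R M_R$. Once $f$ is in place, the rest of the argument is an elementary two-line use of AHI (once on $f$, once on $a^{(2)}$) that collapses the oscillation of $\rho$ over $\partial V_R$ to zero as $R \to \infty$.
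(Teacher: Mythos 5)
Your proof is correct, and it takes a genuinely different route from the paper's. The paper follows the Ancona-style strategy (via \cite{HarnackUniquenessPotential2015}): Lemma~\ref{lemma: harnack comparing on the boundary} gives the two-sided bound $M^{-2} \le h_2/h_1 \le M^2$, and then the iterated construction $h_i = h_{i-1} + \tfrac{1}{M^2-1}(h_{i-1}-h_1)$ stays nonnegative and harmonic vanishing at $o$, hence uniformly bounded by AHI, yet grows geometrically unless $h_1 = h_2$. You replace this blow-up argument with a monotonicity argument: the maximum principle (optional stopping for the martingale $a^{(i)}(X_{n\wedge T},o)$ on the finite set $V_{R'}$) shows $M_R \uparrow M$, $m_R \downarrow m$, and then a single application of AHI to the auxiliary nonnegative harmonic function $f = M\,a^{(2)} - a^{(1)}$ (plus one to $a^{(2)}$ itself) collapses the oscillation, giving $M - m_R \le C^2(M - M_R) \to 0$. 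This avoids the iteration entirely and is more elementary; the paper's iterative method is in exchange somewhat more robust, as it works directly from the comparison lemma without needing the monotone structure. Your closing normalization via $\Delta a^{(i)}(\cdot,o)|_o = 1$ is slightly cleaner than the paper's rescaling at the point $e_1$, but equivalent. One small point to make explicit: you implicitly use that $G\setminus\{o\}$ has a unique infinite component when defining $\rho$. This is in fact forced by AHI: if there were two infinite components $I_1, I_2$, the subsequential limit from $A_n \subset I_1$ is strictly positive on $I_1$ (by Lemma~\ref{lem: PK exists}) and identically zero on $I_2$, so for $R$ large $\max_{\partial V_R} a^{(1)} > 0 = \min_{\partial V_R} a^{(1)}$, contradicting AHI since $a^{(1)}\not\equiv 0$ (its Laplacian at $o$ is $1$). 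Stating that would make the reduction to a single ratio $\rho$ airtight.
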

	
	We take some inspiration from \cite{HarnackUniquenessPotential2015}, although the strategy goes back in fact to a paper of Ancona \cite{Ancona01}. Pick some sequence $e = (e_{R})_{R \geq 1}$ on $v(G)$ satisfying $e_R \in \partial V_R$.
	
	\begin{lemma} \label{lemma: harnack comparing on the boundary}
		Let $R \geq 1$ and suppose that $h, g$ are two positive, harmonic functions on $\Lambda_{a}(\Psi(R)) \setminus \{o\}$ vanishing at $o$. We have
		\[
			\max_{x \in V_R \setminus \{o\}} \frac{h(x)}{g(x)} \leq C^2 \frac{h(e_R)}{g(e_R)}.
		\]
	\end{lemma}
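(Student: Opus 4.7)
The plan is to deduce the lemma from two applications of the anchored Harnack inequality (AHI) on the boundary $\partial V_R$, combined with the minimum principle to propagate the bound inward to $V_R\setminus\{o\}$.

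First I would apply the AHI to $h$ and $g$ separately. Both functions are nonnegative, harmonic on $\Lambda_a(\Psi(R))\setminus\{o\}$ (which I may assume contains $V_R$, since the AHI is only meaningful if $V_R$ lies inside the domain of harmonicity), and both vanish at $o$. Since $e_R \in \partial V_R$, the AHI gives
\[
h(x)\le \max_{\partial V_R} h \le C\min_{\partial V_R} h \le C\,h(e_R) \quad \text{and} \quad g(e_R)\le \max_{\partial V_R} g \le C\min_{\partial V_R} g \le C\,g(x)
\]
for every $x\in \partial V_R$. Multiplying the two bounds produces
\[
\frac{h(x)}{g(x)} \le C^{2}\,\frac{h(e_R)}{g(e_R)}, \qquad x\in \partial V_R,
\]
which is the desired inequality restricted to the boundary.

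Next I would extend this from $\partial V_R$ to all of $V_R\setminus\{o\}$. Set $M := C^{2}\,h(e_R)/g(e_R)$ and consider the function $f := Mg - h$. Since $h$ and $g$ are harmonic on $V_R\setminus\{o\}$, so is $f$. At the root, $f(o)=M\cdot 0 - 0 = 0$, and on $\partial V_R$ we just showed $f\ge 0$. Because $V_R$ is finite and connected, and the boundary of $V_R\setminus\{o\}$ (inside $G$) is $\partial V_R\cup\{o\}$, the discrete minimum principle for harmonic functions gives $f\ge 0$ throughout $V_R\setminus\{o\}$, i.e.\ $h(x)\le M g(x)$. Dividing by $g(x)$, which is strictly positive on $V_R\setminus\{o\}$ by the positivity hypothesis on $g$, yields the claimed bound.

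The only mildly delicate point in the argument is making sure we may divide by $g(x)$ throughout $V_R\setminus\{o\}$, but this is immediate from the stated positivity of $g$ on $\Lambda_a(\Psi(R))\setminus\{o\}\supseteq V_R\setminus\{o\}$. There is no real obstacle: the proof is a textbook ``two-sided Harnack plus maximum principle'' argument, whose only non-trivial input is the anchored Harnack inequality that we have assumed. The role of this lemma in the sequel will be to turn ratios of harmonic-with-pole functions into genuinely bounded quantities on all of $V_R$, which is the standard first step of Ancona-type uniqueness arguments for the Martin boundary at $o$, eventually producing the potential kernel.
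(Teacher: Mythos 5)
Your proof is correct and follows essentially the same two-step strategy as the paper: the anchored Harnack inequality pins down $h$ and $g$ on $\partial V_R$ up to the constant $C$ relative to their values at $e_R$, and harmonicity plus the vanishing at $o$ transfers the bound to the interior of $V_R$. The paper performs this second step by optional stopping at $T_{\partial V_R}\wedge T_o$, writing $h(x)=\PP_x(T_{\partial V_R}<T_o)\,\E_x[h(X_{T_{\partial V_R}})\mid T_{\partial V_R}<T_o]\le C\,h(e_R)\,\PP_x(T_{\partial V_R}<T_o)$ together with the reverse bound for $g$, which is just the probabilistic form of your comparison-function/minimum-principle argument.
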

	\begin{proof}
		Fix $R \geq 1$ and let $h, g$ be as above. Write $T_R = T_{\partial V_R}$. By optional stopping, $h(o) = 0$ and the Harnack inequality, we get
		\[
			h(x) = \PP_x(T_R < T_o)\E_{x}[h(X_{T_R}) \mid T_R < T_o] \leq C h(e_R) \PP_x(T_R < T_o)
		\]
		for all $x \in V_R \setminus \{o\}$. Similarly, we obtain
		\[
			g(x) \geq \frac{1}{C}g(e_R)\PP_x(T_R < T_o)
		\]
		for $x \in V_R \setminus \{o\}$. Combining this, we find
		\[
			\frac{1}{C}\frac{h(x)}{h(e_R)} \leq \PP_x(T_R < T_o) \leq C\frac{g(x)}{g(e_R)},
		\]
		showing the final result.
	\end{proof}	
	
	\begin{proof}[Proof of Proposition \ref{P: (c) implies (a)}]
		We follow closely Section 3.2 in \cite{HarnackUniquenessPotential2015}. We will show that whenever $h_1, h_2:v(G) \to [0, \infty)$ are harmonic functions on $v(G) \setminus \{o\}$, vanishing at $o$, such that $h_1(e_1) = h_2(e_1)$, we have $h_1 = h_2$. The result then follows as we can pick $h_1(\cdot)$ and $h_2(\cdot)$ to be two subsequential limits of $a_{A_n}(\cdot, o)$ (for possibly different sequences $(A_n)$ going to infinity), and rescaling so that they are equal at $e_1$.
		
		Consider $h_1, h_2: v(G) \to [0, \infty)$ harmonic functions on $v(G) \setminus \{o\}$, vanishing at $o$. Assume without loss of generality that $h_1(e_1) = h_2(e_1) = 1$. By Lemma \ref{lemma: harnack comparing on the boundary} we get that there is some appropriate (large) $M$ which does not depend on $h_1, h_2$, for which
		\begin{equation} \label{eq: two harmonics devided by their boundary value}
			\frac{1}{M}\frac{h_1(x)}{h_1(e_R)} \leq \frac{h_2(x)}{h_2(e_R)} \leq M \frac{h_1(x)}{h_1(e_R)},
		\end{equation}
		for all $x \in V_R$ and $R \geq 1$. It follows that (setting $x = e_1$)
		\[
			\frac{1}{M} h_1(e_R) \leq h_2(e_R) \leq M h_1(e_R).
		\]
		Using this in \eqref{eq: two harmonics devided by their boundary value} and letting $R \to \infty$, we obtain
		\begin{equation}\label{boundh1h2}
			\frac{1}{M^2} \leq \frac{h_2(x)}{h_1(x)} \leq M^2,
		\end{equation}
		for all $x \in v(G) \setminus \{o\}$. Define recursively, for $i \geq 3$,
		\begin{equation}\label{E:hi}
			h_i(x) = h_{i - 1}(x) + \frac{1}{M^2 - 1}(h_{i - 1}(x) - h_1(x)).
		\end{equation}
		It is straightforward to check that $h_i$ is non-negative (as follows from an iterated version of \eqref{boundh1h2}) and harmonic outside $o$. Since $M$ did not depend on $h_1, h_2$, and because $h_i(e_1) = 1$ also, we obtain that
		\begin{equation}\label{E:boundhi}
			\frac{1}{M^2} \leq \frac{h_i(x)}{h_1(x)} \leq M^2.
		\end{equation}
		On the other hand, it is straightforward to check that the recursion \eqref{E:hi} can be solved explicitly to get:
		\[
			h_i(x) = \left( \frac{M^2}{M^2 - 1} \right)^{i - 2}(h_2(x) - h_1(x)) + h_1(x).
		\]
		Unless $h_1(x) = h_2(x)$, this grows exponentially, which is incompatible with \eqref{E:boundhi}.
		Therefore $h_1(x) = h_2(x)$.
	\end{proof}
	\begin{remark}\label{R:anchoredPK}
	The proof above makes it clear that if the potential kernel is uniquely defined (i.e. if (a) holds), then any function $h:v(G) \to \RR_+$ satisfying $\Delta h(x) = 0$ for all $x \in v(G) \setminus \{o\}$ and for which $h(o) = 0$, is of the form $\alpha a(x, o)$ for some $\alpha \geq 0$.
	\end{remark}

\begin{remark}\label{R:AHIEHI}
  If $G$ is reversible, and satisfies the anchored Harnack inequality, then it satisfies (a) as a consequence of the above. It therefore satisfies the standing assumptions: in particular, by Theorem \ref{theorem: harnack inequality} holds so it also satisfies the Elliptic Harnack Inequality (EHI). We have therefore proved that anchored Harnack inequality (AHI) $\implies$ (EHI) at least for reversible random graphs, which is not a priori obvious.
\end{remark}

	%
	\section{Random Walk conditioned to not hit the root} \label{section: The CRW}
	Let $(G, o)$ satisfy the \emph{standing assumptions}, i.e.,
it is recurrent, the potential kernel is well defined and the potential kernel tends to infinity.
In this section, we will define what we call the conditioned random walk (CRW), which is the simple random walk on $G$, conditioned to never hit the root $o$ (or any other vertex). Of course, a priori this does not make sense as the event that the simple random walk $X$ will never hit $o$ has probability zero. However, we can take the Doob $a(\cdot, o)$-transform and use this to define the CRW. We make this precise below.
	
	We apply some of the results derived earlier to answer some basic questions about CRW. For example: is there a connection between the harmonic measure from infinity and the hitting probability of points (and sets)? What is the probability that the CRW will ever hit a given vertex? Do the traces of two independent random walks intersection infinitely often? Does the random walk satisfy a Harnack inequality? Does is satisfy the Liouville property? The answers will turn out to be yes for all of the above, and the majority of this section is devoted to proving such statements.

	In a series of papers studying the conditioned random walk (\cite{PopovEtal2015, PopovEtal2019, PopovEtal2020}, see also the lecture notes by Popov \cite{PopovRW}), the following remarkable observation about the CRW $(\hat X_t, t\ge 0)$ on $\mathbb{Z}^2$ was made. Let
	\[
		\wh q(y) = \P( \wh X_t = y \text{ for some $t \ge 0$}) = \P( \wh T_y < \infty),
	\]
	then $\lim_{y \to \infty} \wh q(y) = 1/2$, even though asymptotically the conditioned walk $\hat X$ looks very similar to the unconditioned walk.

	One may wonder if such a fact holds in the generality of stationary random graphs for which the potential kernel is well defined. This question was in fact an inspiration for the rest of the paper. Unfortunately, we are not able to answer this question in generality, but believe it should not be true in general. In fact, on most natural models of random planar maps, we expect
	\begin{equation}\label{E:conj}
		0 < \liminf_{y \to \infty} \wh q(y) < 1/2 < \limsup_{y \to \infty} \wh q(y) <1,
	\end{equation}
	with every possible value in the interval between $ \liminf_{y \to \infty} \wh q(y)$ and $\limsup_{y \to \infty} \wh q(y)$ a possible subsequential limit. We will prove the upper-bound of \eqref{E:conj} and a form of the lower bound on CRT-mated maps in Theorem \ref{theorem: CRT hm bounds}. The fact that every possibly value between $\liminf_{y \to \infty} \wh{q}(y)$ and $\limsup_{y \to \infty} \wh q(y)$ will have a subsequential limit converging to it, holds in general and will be proved in Proposition \ref{prop: q is interval}.

	
	\subsection{Definition and first estimates}
	Instead of the graph distance or effective resistance distance, we will work with the quasi distance $a(x, y)$. Recall the definition $\Lambda_a(y, R) := \{x \in v(G): a(x, y) \leq R\}$ and $\Lambda_a(R) = \Lambda_a(o, R)$. We will fix $y = o$, but we note that in the random setting, it is of no importance that we perform our actions on the root (in that setting, everything here is conditional on some realization $(G, o)$).
	
	We can thus define the \textbf{conditioned random walk} (CRW), denoted by $\wh{X}$, as the so called Doob $h$-transform of the simple random walk, with $h(x) = a(x, o)$. To avoid unnecessarily loaded notations, we will in fact denote $a(x) = a(x,o)$ in the rest of this section.

To be precise, let $p(x, y)$ denote the transition kernel of the simple random walk on $G$. Then the transition kernel of the CRW is defined as
	\[
		\wh{p}(x, y) = \begin{cases}
			\frac{a(y)}{a(x)} p(x, y), & x \not = 0\\
			0, &\text{else}
		\end{cases}.
	\]
	It is a standard exercise to show that $\wh{p}$ indeed defines a transition kernel. To include the root $o$ as a possible starting point for the CRW, we will let $\wh{X}_1$ have the law $\PP_o(\wh{X_1} = x) = a(x)$, and then take the law of the CRW afterwards. In this case, we can think of the CRW as the walk conditioned to never return to $o$.

	We now collect some preliminary results, starting with transience, and showing that the walk conditioned to hit a far away region before returning to the origin converges to the conditioned walk, as expected.

	We will write $\wh{T}_A$ for the first hitting time of a set $A \subset v(G)$ by the conditioned random walk, and $\wh{T}_x$ when $A = \{x\}$. We will also denote $\wh{T}_R = \wh{T}_{v(G) \setminus \Lambda_a(R)}$. We recall that $a(\cdot, \cdot)$ satisfies a triangle inequality (see Proposition \ref{prop: green function and PK}) and hence we have the growth condition
	\begin{equation} \label{eq: growth of a bound}
		a(x) \leq a(y) + 1
	\end{equation}
	for two neighboring sites $x, y$ since $a(x, y) \leq 1$ in this case.

	\begin{proposition} \label{prop: properties of the CRW}
		Let $x \in v(G)$ and $\wh{X}$ the CRW avoiding the root $o$. Then
		\begin{enumerate}[(i)]
			\item The walk $\wh{X}$ is transient.
			\item The process $n \mapsto 1/a(\wh{X}_{n \wedge \wh{T}_N})$ is a martingale, where $ N = \{y: y \sim o\}$
		\end{enumerate}
	\end{proposition}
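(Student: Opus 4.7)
Both parts come out of the standard Doob $h$-transform machinery applied to $h = a(\cdot, o)$, which is harmonic on $v(G) \setminus \{o\}$ by Proposition \ref{prop: properties of the PKs}(i). The plan for both parts is to translate statements about $\wh{X}$ into statements about the simple random walk killed upon hitting $o$.

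For (i), the plan is to use the telescoping identity for $h$-transforms: for any nearest-neighbour path $(x = x_0, \ldots, x_n = y)$ in $v(G) \setminus \{o\}$, since $\wh p(x_i, x_{i+1}) = (a(x_{i+1})/a(x_i))\,p(x_i, x_{i+1})$, the consecutive $a$-factors telescope to give
\[
\wh{\PP}_x(\wh X_k = x_k,\, 0 \le k \le n) \;=\; \frac{a(y)}{a(x)}\, \PP_x(X_k = x_k,\, 0 \le k \le n).
\]
Summing over all such paths and all lengths yields the Green function identity
\[
\wh{\Gr}(x, y) \;=\; \frac{a(y)}{a(x)}\, \Gr_{\{o\}}(x, y),
\]
where $\Gr_{\{o\}}$ is the Green function of simple random walk killed at $o$. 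Since $G$ is recurrent, $\Gr_{\{o\}}(x, y) < \infty$ for every $x, y \neq o$: starting from $y$, the number of visits to $y$ before hitting $o$ is geometric with positive killing probability $\PP_y(T_o < T_y^+) = 1/(\deg(y)\Reff(y \leftrightarrow o))$. Hence $\wh{\Gr}(x,y) < \infty$, which is transience of $\wh X$.

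For (ii), the plan is a one-step computation. Fix $x \notin N \cup \{o\}$: no neighbour of $x$ equals $o$, so $a(y) > 0$ and $\wh p(x, y) = p(x, y)\, a(y)/a(x)$ for every $y \sim x$, whence
\[
\wh{\E}_x\!\left[\frac{1}{a(\wh X_1)}\right] \;=\; \sum_{y \sim x} \frac{p(x, y)\, a(y)}{a(x)} \cdot \frac{1}{a(y)} \;=\; \frac{1}{a(x)}.
\]
So $1/a$ is $\wh p$-harmonic off $N \cup \{o\}$; combined with the strong Markov property this gives the stated martingale property for any starting vertex $\wh X_0 \neq o$ (and hence, by conditioning on the first step, also for $\wh X_0 = o$ if one interprets $M_0 = +\infty$ and restricts to $n \ge 1$). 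The stopping at $\wh T_N$ is essential: at $x \in N$ the forbidden transition to $o$ (where $\wh p(x, o) = 0$ since $a(o) = 0$) breaks the cancellation and only yields $\wh{\E}_x[1/a(\wh X_1)] = (1 - p(x, o))/a(x) < 1/a(x)$.

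No significant obstacle is expected: both parts are essentially textbook consequences of the $h$-transform construction, and the facts needed about $a$ (harmonicity off $o$, strict positivity off $o$ under the standing assumptions via one-endedness and Lemma \ref{lem: PK exists}) are already in hand. The only bookkeeping care is to distinguish the set $\{o\}$, where $h$ vanishes and the $h$-transform is degenerate, from the set $N$, where $h$ is positive and harmonic but the underlying walk may step to $\{o\}$ with positive probability.
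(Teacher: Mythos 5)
Your proposal is correct, and for part (i) it takes a genuinely different route from the paper. For (ii) the content is the same: the paper simply remarks that $1/a(\wh{X}_{n \wedge \wh{T}_N})$ is the Radon--Nikodym derivative of the killed simple random walk with respect to the conditioned walk, and your one-step computation (including the correct observation that at $x \in N$ one only gets $\wh{\E}_x[1/a(\wh{X}_1)] = (1-p(x,o))/a(x)$, so that $1/a(\wh X_n)$ is a strict supermartingale there and the stopping at $\wh T_N$ is genuinely needed) is just the explicit verification of that remark. For (i), however, the paper deduces transience \emph{from} the martingale of (ii): applying optional stopping between a sublevel set $\Lambda_a(r)$ and a large set $\Lambda_a(R)$ and letting $R \to \infty$, it obtains the quantitative escape estimate $\PP_y(\wh T_{\Lambda_a(r)} < \infty) \le (r+1)/a(y) < 1$; this argument uses the standing assumption that $a(x) \to \infty$ (so that the sets $\Lambda_a(R)$ exhaust the graph). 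You instead telescope the $h$-transform weights along paths avoiding $o$ to get $\wh{\Gr}(x,y) = \frac{a(y)}{a(x)}\Gr_{\{o\}}(x,y)$ and conclude from finiteness of the killed Green function, which needs only connectivity and positivity of $a$ off $o$, not the divergence of $a$ --- so your argument is slightly more economical in its hypotheses, while the paper's yields the quantitative bound \eqref{eq:hitsmall} that is reused later (e.g.\ in Lemma \ref{lemma: A inf delta-good then recurrent}). Your Green function identity is in fact exactly Proposition \ref{prop: CGreen in PK}, which the paper proves separately afterwards, so you are effectively front-loading that result.
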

	
	\begin{proof}
		The proof of (ii) is straightforward since $1/ a( \wh{X}_{n \wedge \wh{T}_N})$ is the Radon--Nikdoym derivative of the usual simple random walk with respect to the conditioned walk. (i) then follows from the fact that $a(y, o) \to \infty$ along at least a sequence of vertices. Indeed, fix $2 < r <R$ large and $y \in v(G) \setminus \Lambda_a(o, r)$. By optional stopping (since $1/ a(y)$ is bounded)
		\[	
			\frac{1}{a(y)} = \E_y\left[\frac{1}{a(\wh{X}_{\wh{T}_R \wedge \wh{T}_r})}\right]	\geq \frac{1}{r + 1}\PP_y(\wh{T}_r < \wh{T}_R) + \frac{1}{R + 1}\PP_y(\wh{T}_R \leq \wh{T}_r).
		\]
		Rearranging gives
		\begin{equation}\label{eq:hitsmall}
			\PP_y(\wh{T}_r < \wh{T}_R) \leq \frac{\frac{1}{a(y)} - \frac{1}{R + 1}}{\frac{1}{r + 1} - \frac{1}{R + 1}}.
		\end{equation}
		Taking $R \to \infty$, we see that $\PP_y(\wh{T}_r < \infty) \le (r+1) / (1+ a(y)) < 1$, showing that the chain is transient.
	\end{proof}

	We now check (as claimed earlier) that the conditioned walk $\hat X$ can be viewed as a limit of simple random walk conditioned on an appropriate event of positive (but vanishingly small) probability.
	\def\ph{\varphi}

	\begin{lemma} \label{lemma: cRW does what it should}
Uniformly over all paths $\ph = (\ph_0, \ldots, \ph_m) \subset \Lambda_a(R)$, as $R \to \infty$,
		\[
			\PP_{x}((X_0, \ldots, X_m) = (\ph_0, \ldots, \ph_m) \mid T_{R} < T_o) = \PP_x( (\wh X_0, \ldots, \wh X_m) = (\ph_0, \ldots, \ph_m) )(1 + o(1)).
		\]
	\end{lemma}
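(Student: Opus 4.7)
The plan is to compute both sides of the claimed identity explicitly using the strong Markov property and the definition of the $h$-transform, and then reduce the problem to a uniform two-sided estimate on the quantity $\PP_y(T_R < T_o)/a(y)$ as $R \to \infty$.

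First, I would fix a path $\varphi = (\varphi_0, \ldots, \varphi_m) \subset \Lambda_a(R)$ with $\varphi_0 = x$, noting that we may assume $\varphi_i \neq o$ for $i \geq 1$ since otherwise both sides of the claimed equality vanish (the conditioned walk cannot visit $o$ after time zero). Since the entire path lies inside $\Lambda_a(R)$, we have $T_R > m$ on the event that $X$ follows $\varphi$. Applying the Markov property at time $m$ gives
\[
\PP_x\!\left((X_0, \ldots, X_m) = \varphi,\ T_R < T_o\right) = \PP_{\varphi_m}(T_R < T_o) \prod_{i=0}^{m-1} p(\varphi_i, \varphi_{i+1}),
\]
so that after dividing by $\PP_x(T_R < T_o)$ the claim reduces to showing
\[
\frac{\PP_{\varphi_m}(T_R < T_o)}{\PP_x(T_R < T_o)} = \frac{a(\varphi_m)}{a(x)}(1 + o(1))
\]
uniformly in $x, \varphi_m \in \Lambda_a(R) \setminus \{o\}$, since by the definition of the $h$-transform the right-hand side is exactly $\PP_x(\wh X_0 = \varphi_0, \ldots, \wh X_m = \varphi_m)$ divided by the common product of $p(\varphi_i, \varphi_{i+1})$.

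The key step is a clean identity coming from optional stopping. Since $a(\cdot, o)$ is harmonic off of $o$ by Proposition \ref{prop: properties of the PKs}, the process $a(X_{n \wedge T_o \wedge T_R})$ is a bounded martingale (boundedness holds because $\Lambda_a(R)$ is finite under the standing assumptions). Applying optional stopping at $T_o \wedge T_R$ and using $a(o)=0$ yields
\[
a(y) = \PP_y(T_R < T_o)\,\E_y\!\left[a(X_{T_R}) \mid T_R < T_o\right].
\]
Now the crucial observation is that $X_{T_R} \in \partial \Lambda_a(R)$, so by the growth bound \eqref{eq: growth of a bound} one has $R < a(X_{T_R}) \leq R+1$ deterministically. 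Therefore
\[
\PP_y(T_R < T_o) = \frac{a(y)}{R + \theta_y}, \quad \theta_y \in (0,1],
\]
uniformly in $y$.

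Plugging this into the ratio gives $\frac{\PP_{\varphi_m}(T_R<T_o)}{\PP_x(T_R<T_o)} = \frac{a(\varphi_m)}{a(x)} \cdot \frac{R + \theta_x}{R + \theta_{\varphi_m}}$, and the second factor is $1 + O(1/R)$ uniformly. This is precisely the claimed $1 + o(1)$, with uniformity over \emph{all} paths staying in $\Lambda_a(R)$ (regardless of length or proximity to $\partial \Lambda_a(R)$). The only mild obstacle is handling vertices $y$ with $a(y) = 0$, but these are exactly the vertices disconnected from infinity by removing $o$; on such $y$ both $\PP_y(T_R < T_o) = 0$ and the $h$-transform weight vanish, so the identity holds trivially. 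Overall there is no real difficulty — once the two formulas are written down, the growth bound on $a$ supplies the required uniform control essentially for free.
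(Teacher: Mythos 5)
Your proposal is correct and follows essentially the same route as the paper: both arguments apply the Markov property at time $m$ to factor out $\PP_{\varphi_m}(T_R<T_o)$, and both use the optional stopping identity $a(y)=\PP_y(T_R<T_o)\,\E_y[a(X_{T_R})\mid T_R<T_o]$ together with the bound $a(X_{T_R})\in(R,R+1]$ to get $\PP_y(T_R<T_o)=a(y)/R\,(1+o(1))$ uniformly. The only difference is organizational (the paper first treats paths ending on $\partial\Lambda_a(R)$ and then reduces, while you directly estimate the ratio of escape probabilities), and your explicit treatment of the degenerate vertices with $a(y)=0$ is a welcome extra detail.
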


	\begin{proof}
		The proof is similar to \cite[Lemma 4.4]{PopovRW}. Assume here that $x \not = o$ for simplicity. The proof for $x = o$ follows after splitting into first taking one step and, comparing this, and then do the remainder. Let us first assume that the end point $\ph_m$ of $\ph$ lies in $\partial \Lambda_a(R)$. Then
		\[
			\PP_x((\cRW_0, \ldots, \cRW_m) = \varphi) = \frac{a(\varphi_m)}{a(\varphi_0)} \PP_{x}((X_0, \ldots, X_m) = \varphi).
		\]
		Since $\ph_m \in \partial \Lambda_a(R)$, we know that $a(\varphi_m) \in (R, R + 1]$ due to \eqref{eq: growth of a bound}. By optional stopping, we see
		\[
			a(x) = \PP_x(T_R < T_o) \E_x[a(X_{T_R}) \mid T_R < T_o], 		
		\]
		and also $a(X_{T_R}) \in (R, R+1]$. We thus find that
		\begin{equation} \label{eq: expression for escape prob in terms of a(x)}
			\PP_x(T_R < T_o) = \frac{a(x)}{R}(1 + o_R(1)).
		\end{equation}
		Combining this, we get
		\begin{align*}
			\PP_x((X_0, \ldots, X_m) = \varphi \mid T_R < T_o) &= 	\frac{\PP_x((X_0, \ldots, X_m) = \varphi)}{a(x,o)} R(1 + o(1)).  			
		\end{align*}
		Now let $\ph$ be an arbitrary path in $\Lambda_a(R)$ starting from $x$, then by the Markov property,
		\begin{align*}
			\P_x( (X_0, \ldots, X_m) = \ph \mid  T_R < T_o) & = \P_x ((X_0,\ldots,  X_m) = \ph) \P_{\ph_m} (T_R < T_o)/ \P_x( T_R< T_o) \\
			& =  \P_x ((X_0,\ldots, X_m) = \ph)  a(\ph_m)/ a(x) (1+ o(1))\\
			& = \P_x ( (\wh X_0, \ldots, \wh X_m) = \ph) (1+ o(1)),
		\end{align*}
		as desired.
	\end{proof}

	\subsubsection*{The Green Function}
	We can find an explicit expression for the Green function associated to $\wh{X}$. To that end, we define for $x, y \in v(G) \setminus \{o\}$
	\[
		\wh{\Gr}(x, y) = \E_x \left[\sum_{n=0}^\infty \id_{\wh{X}_n = y}\right],
	\]
	which is well defined as $\wh{X}$ is transient (also, the well-definition would follow from the proof below, which provides yet another way to see that the CRW is transient).
	
	\begin{proposition} \label{prop: CGreen in PK}
		Let $x, y \in v(G) \setminus \{o\}$. Then
		\[
			\frac{\wh{\Gr}(x, y)}{\deg(y)} = \frac{a(y, o)}{a(x, o)} \frac{\Gr_o(x, y)}{\deg(y)} = \frac{a(y, o)}{a(x, o)} \big(a(x, o) - a(x, y) + a(o, y)\big).
		\]
	\end{proposition}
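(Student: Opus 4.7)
The plan is to use the standard formula for the Green function under a Doob $h$-transform and then apply Proposition \ref{prop: green function and PK} to replace the killed Green function by its expression in terms of the potential kernel.

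First I would compute the $n$-step transition kernel of $\wh X$ explicitly. For a path $\gamma = (x_0, x_1, \ldots, x_n)$ with $x_0 = x$, $x_n = y$ and avoiding $o$ (which is automatic since $\wh p(x, o) = 0$), telescoping gives
\[
\prod_{i=0}^{n-1} \wh p(x_i, x_{i+1}) = \prod_{i=0}^{n-1} \frac{a(x_{i+1})}{a(x_i)} p(x_i, x_{i+1}) = \frac{a(y)}{a(x)} \prod_{i=0}^{n-1} p(x_i, x_{i+1}).
\]
Summing over all such paths and writing $p_o^n(x, y) := \PP_x(X_n = y,\, n < T_o)$ for the $n$-step transition kernel of the simple random walk killed upon hitting $o$, one obtains the identity $\wh p^n(x, y) = \frac{a(y)}{a(x)} p_o^n(x, y)$ for all $x, y \neq o$ and $n \geq 0$.

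Next I would sum this over $n \geq 0$. Since $\wh X$ is transient by Proposition \ref{prop: properties of the CRW}(i), the series on the left converges, and on the right one recognises the Green function of the simple random walk killed at $o$:
\[
\wh \Gr(x, y) = \sum_{n=0}^{\infty} \wh p^n(x, y) = \frac{a(y)}{a(x)} \sum_{n=0}^{\infty} p_o^n(x, y) = \frac{a(y)}{a(x)} \Gr_o(x, y).
\]
Dividing by $\deg(y)$ establishes the first equality in the proposition.

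The second equality is then immediate: applying Proposition \ref{prop: green function and PK} with the choice $z = o$ gives $\Gr_o(x,y)/\deg(y) = a(x,o) - a(x,y) + a(o,y)$, and substituting this into the previous display yields the claim. There is no real obstacle here beyond checking that the Doob transform formalism applies, since $h(\cdot) = a(\cdot, o)$ is by Proposition \ref{prop: properties of the PKs}(i) harmonic on $v(G) \setminus \{o\}$ and strictly positive there (by Lemma \ref{lem: PK exists} combined with one-endedness from Corollary \ref{corollary: not one-ended inplies that potential kernels is not well-defined}), so that $\wh p$ is genuinely a substochastic kernel, and in fact stochastic on $v(G) \setminus \{o\}$ by the harmonicity of $a(\cdot, o)$ there.
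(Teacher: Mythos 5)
Your proof is correct and rests on the same key idea as the paper's, namely the telescoping of the Doob $h$-transform weights along a path to get $\wh p^n(x,y) = \frac{a(y)}{a(x)} p_o^n(x,y)$. The paper organizes this slightly differently, by first truncating with the exhaustion $\Lambda_a(R)$ and expressing the truncated Green function through hitting and return probabilities before passing to the limit, whereas you sum the $n$-step kernels directly; this is a cleaner bookkeeping of the same argument, and your remark that $\Gr_o(x,y) < \infty$ by recurrence of $G$ makes the convergence immediate without even needing to invoke transience of $\wh X$.
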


	\begin{proof}
		Fix $x, y \in v(G) \setminus \{o\}$. For definiteness we take the exhaustion $\Lambda_a(R)$ of $G$ here, but we need not to, any exhaustion would work. Define for $R \geq 1$ the truncated Green function:
		\[
			\wh{\Gr}_R(x, y) := \E_x \left[ \sum_{n = 0}^{\wh{T}_R - 1} \id_{\cRW_n = y} \right].
		\]
		We denote $A_R = (\Lambda_a(R))^c \cup \{o\}$ and will show that
		\begin{equation} \label{eq: hatGR and G_AR}
			\wh{\Gr}_R(x, y) = \frac{a(y)}{a(x)} \Gr_{A_R}(x, y),
		\end{equation}
		from which the result follows when $R$ goes to infinity. Fix $R \geq 1$ and notice the following standard equality, which follows from the Markov property of the CRW:
		\begin{equation*}
			\wh{\Gr}_R(x, y) = \frac{\PP_x(\wh{T}_y < \wh{T}_R)}{\PP_y(\wh{T}_y^+ < \wh{T}_R)}
		\end{equation*}
		We first deal with the numerator. From Proposition the definition of the CRW we get
		\begin{equation} \label{eq: CRW hits y before R direct formula}
			\PP_x(\wh{T}_y < \wh{T}_R) = \frac{a(y)}{a(x)} \PP_x(T_y < T_R \wedge T_o).
		\end{equation}
		Indeed, just sum over all paths $\varphi$ taking $x$ to $y$, and which stay inside $\Lambda_a(R) \setminus \{o\}$. Then each path has as endpoint $y$, and the probability that the simple random walk will take any of these paths is nothing but $\PP_x(T_y < T_R \wedge T_o)$.
		
		We can deal with the denominator in a similar fashion, only this time we note that the beginning and end point are the same. Hence, the $a(y)$-terms cancel and we get
		\[
			\wh{\Gr}_R(x, y) = \frac{a(y)}{a(x)} \frac{\PP_x(T_y < T_{o} \wedge T_R)}{\PP_y(T_y^+ < T_o \wedge T_R)} = \frac{a(y)}{a(x)} \Gr_{A_R}(x, y).
		\]
		This shows the first equality appearing in Proposition \ref{prop: CGreen in PK} upon taking $R \to \infty$. The second statement follows from Proposition \ref{prop: green function and PK}.
	\end{proof}

	\begin{wrong-old}
	\begin{proof}
		Fix $x, y \in v(G) \setminus \{o\}$. Define for $R \geq 1$ the truncated Green function:
		\[
			\wh{\Gr}_R(x, y) := \E_x \left[ \sum_{n = 0}^{\wh{T}_R - 1} \id_{\cRW_n = y} \right].
		\]
		Recall from Proposition \ref{prop: green function and PK} that
		\[
			\frac{\Gr_o(x, y)}{\deg(y)} = a(x, o) - a(x, y) + a(y, o).
		\]
		Set $A_R = (v(G) \setminus \Lambda_R) \cup \{o\}$. It is thus enough to prove
		\begin{equation} \label{eq: relation green and cgreen}
			\wh{\Gr}_R(x, y) = \frac{a(y, o)}{a(x, o)} \Gr_{A_R}(x, y)(1 + o(1)),
		\end{equation}
		and this is what we will do.
		
		For a random walk $Y$ on $v(G)$ and a point $y$ we will use the notation
		\[
			L^Y_y(k) = \sum_{n=0}^{k - 1} \id_{Y_n = y}
		\]
		for the local time spent at $y$ before time $k$. Let $x, y \in \Lambda_R \setminus \{o\}$. Then for each $n \geq 1$ we have
		\begin{align*}
			&\PP_x(L^{X}_y(T_{A_R}) = n, T_R < T_o) \\
			&\qquad = \PP_x(L_y^X(T_{A_R}) \geq n) \PP_y(T_R < T_o, T_{A_R} < T_{y}^+) \\
			&\qquad = \PP_x(L_y^X(T_{A_R}) \geq n) \PP_y(T_R < T_o \mid T_{A_R} < T_y^+) \PP_y(T_{A_R} < T_{y}^+) \\
			&\qquad = \PP_x(L_y^X(T_{A_R}) \geq n) \PP_y(T_R < T_o) \PP_y(T_{A_R} < T_{y}^+) \\
			&\qquad = \PP_x(L_y^X(T_{A_R}) = n) \PP_y(T_R < T_o),
		\end{align*}
		which follows using standard arguments. Now, we use \eqref{eq: expression for escape prob in terms of a(x)} to obtain
		\begin{equation} \label{eq: local time probability 1}
			\PP_x(L^{X}_y(T_{A_R}) = n, T_R < T_o) = \PP_x(L_y^X(T_{A_R}) = n) \frac{a(y)}{R}(1 + o(1)).
		\end{equation}
		Treating the left-hand side of this equation and using again \eqref{eq: expression for escape prob in terms of a(x)} and then Lemma \ref{lemma: cRW does what it should} we find
		\begin{align*}
			\PP_x(L^{X}_y(T_{A_R}) = n, T_R < T_o) &= \PP_x(L^{X}_y(T_{A_R}) = n \mid T_R < T_o) \frac{a(x)}{R}(1 + o(1)) \\
			&= \PP_x(L^{\wh{X}}_y(T_{A_R}) = n)\frac{a(x)}{R}(1 + o(1)).
		\end{align*}
		Combining this with \eqref{eq: local time probability 1} we thus get
		\[
			a(x)\PP_x(L_y^X(T_{A_R}) = n) = a(y)\PP_x(L_y^{\wh{X}}(T_{A_R}) = n)(1 + o(1)).
		\]
		Summing over $n$, we obtain the desired identity.
	\end{proof}
	\end{wrong-old}

	\subsection{Intersection and hitting probabilities}
	Suppose $\wh{X}$ and $\wh{Y}$ are two independent CRW's. We will begin by describing hitting probabilities of points and sets and use this to prove that the traces of $\wh{X}$ and $\wh{Y}$ intersect infinitely often a.s.
	
	We begin giving a description of the hitting probability of a vertex $y$ by the CRW started from $x$. Although it is a rather straightforward consequence of the expression for the Green function of the CRW, it is still remarkably clean.
	\begin{lemma} \label{lem: expression for CRW hitting probability}
		Let $x, y \in v(G) \setminus \{o\}$, then
		\[
			\PP_{x}(\wh{T}_y < \infty) = \frac{\hm_{y, o}(y)\PP_y(T_x < T_o)}{\hm_{x, o}(x)}.
		\]
	\end{lemma}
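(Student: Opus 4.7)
The plan is to combine the Doob $h$-transform construction of $\wh{X}$ with reversibility of the simple random walk killed at $o$.

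First, by the definition of $\wh{X}$ as the $a$-transform, every path $\varphi = (\varphi_0, \ldots, \varphi_n)$ from $x$ to $y$ that avoids $o$ has $\wh{\PP}_x$-weight equal to $(a(y)/a(x))$ times its $\PP_x$-weight. Summing over all first-passage paths to $y$ that stay in $\Lambda_a(R)\setminus\{o\}$ yields exactly the identity \eqref{eq: CRW hits y before R direct formula}:
\[
\PP_x(\wh{T}_y < \wh{T}_R) \;=\; \frac{a(y)}{a(x)}\,\PP_x(T_y < T_R \wedge T_o).
\]
I would then let $R \to \infty$. On the left, since $\wh{X}$ is transient (Proposition \ref{prop: properties of the CRW}) and $\Lambda_a(R) \uparrow v(G)$ by the standing assumptions, $\wh{T}_R \to \infty$ almost surely, so the left-hand side increases to $\PP_x(\wh{T}_y < \infty)$. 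On the right, recurrence of the base walk forces $T_o \wedge T_y < \infty$ almost surely, so $\{T_y < T_R \wedge T_o\} \uparrow \{T_y < T_o\}$ and monotone convergence gives the limit $(a(y)/a(x))\,\PP_x(T_y < T_o)$. Hence
\[
\PP_x(\wh{T}_y < \infty) \;=\; \frac{a(y)}{a(x)}\,\PP_x(T_y < T_o).
\]

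Next, I would invoke reversibility for the walk killed at $o$. The normalized Green function is symmetric, $\Gr_o(x,y)/\deg(y) = \Gr_o(y,x)/\deg(x)$, and the factorisation $\Gr_o(x,y) = \PP_x(T_y < T_o)\Gr_o(y,y)$ together with $\Gr_o(y,y)/\deg(y) = \Reff(y \leftrightarrow o)$ from \eqref{eq:efres through flow} gives
\[
\PP_x(T_y < T_o)\,\Reff(y \leftrightarrow o) \;=\; \PP_y(T_x < T_o)\,\Reff(x \leftrightarrow o).
\]
Substituting the identity $a(z) = \hm_{z,o}(z)\,\Reff(z \leftrightarrow o)$ from Corollary \ref{cor: the harmonic measure for two points is well-defined} into the previous display produces exactly the stated formula.

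The only point requiring genuine care is justifying the passage to the limit $R \to \infty$; once that is done the rest is a direct algebraic manipulation combining reversibility of the killed walk with the decomposition of $a$ into harmonic measure times effective resistance. No new analytic input beyond results already established in Sections \ref{section: well-definition of the PK's} and the current section is needed.
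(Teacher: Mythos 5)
Your proof is correct. It reaches the key intermediate identity
\[
\PP_x(\wh{T}_y < \infty) = \frac{a(y,o)}{a(x,o)}\,\PP_x(T_y < T_o)
\]
by a slightly different route than the paper: you work directly with the $h$-transform path weights via \eqref{eq: CRW hits y before R direct formula} and pass to the limit $R \to \infty$ monotonically, whereas the paper writes $\PP_x(\wh{T}_y<\infty) = \wh{\Gr}(x,y)/\wh{\Gr}(y,y)$ and plugs in the already-established Green function formula of Proposition \ref{prop: CGreen in PK} (which encodes the same information, since $\wh{\Gr}(x,y)/\wh{\Gr}(y,y) = \tfrac{a(y,o)}{a(x,o)}\Gr_o(x,y)/\Gr_o(y,y)$). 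Your version re-derives the needed piece from scratch rather than citing the proposition, at the cost of having to justify the limit; your justification is fine, though note that what you actually need is only that $\wh{T}_R \to \infty$, which follows from finiteness of the sets $\Lambda_a(R)$ under the standing assumptions — transience of $\wh{X}$ is not required for the monotone convergence on the left-hand side. The endgame (symmetry of the normalised Green function killed at $o$, i.e.\ $\PP_x(T_y<T_o)\Reff(y\leftrightarrow o)=\PP_y(T_x<T_o)\Reff(x\leftrightarrow o)$, combined with $a(z,o)=\hm_{z,o}(z)\Reff(z\leftrightarrow o)$) is the same algebra as in the paper. Either route is acceptable; the paper's is marginally shorter because the limit has already been taken once and for all in Proposition \ref{prop: CGreen in PK}.
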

	\begin{proof}
		Note that for $x \not = y$ we have
		\[
			\wh{\Gr}(x, y) = \PP_x(\wh{T}_y < \infty)\wh{\Gr}(y, y),
		\]
		so that by Proposition \ref{prop: CGreen in PK} and Corollary \ref{cor: the harmonic measure for two points is well-defined} we find
		\begin{align}
			\PP_x(\wh{T}_y < \infty) &= \frac{\wh{\Gr}(x, y)}{\wh{\Gr}(y, y)} = \frac{a(y,o)}{a(x, o)}\frac{\frac{\Gr_o(x, y)}{\deg(y)}}{a(y, o) + a(o, y)} \nonumber \\
			&= \frac{\hm_{y, o}(y) \Reff(o \leftrightarrow y) \frac{\Gr_o(x, y)}{\deg(y)}}{\hm_{x, o}(x) \Reff(o \leftrightarrow x)\Reff(o \leftrightarrow y)} \nonumber \\
			&= \frac{\hm_{y, o}(y) \PP_y(T_x < T_o)}{\hm_{x, o}(x)}, \nonumber
		\end{align}
		as desired.
	\end{proof}

	Since the potential kernel is assumed to be well defined, we also have that $\PP_y(T_x < T_o) \to \hm_{o, x}(x)$ as $y \to \infty$ due to Corollary \ref{cor: the harmonic measure for two points is well-defined}, and hence we deduce immediately the next result.
	
	\begin{corollary} \label{cor: hm and q are equivalent}
		We have that
		\[
			\liminf_{y \to \infty} \wh q(y) = \liminf_{y \to \infty} \hm_{o, y}(y)
		\]
		and the same with `limsup' instead of `liminf'.
	\end{corollary}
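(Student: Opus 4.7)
The plan is to derive the corollary by a direct combination of the explicit formula for CRW hitting probabilities obtained in Lemma \ref{lem: expression for CRW hitting probability} with the defining property of harmonic measure from infinity from Corollary \ref{cor: the harmonic measure for two points is well-defined}.

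First I would fix a starting vertex $x \in v(G) \setminus \{o\}$ for the CRW (if we instead wish to start from $o$ as in the displayed definition of $\wh q$, we condition on the first step $\wh X_1$: since $\deg(o) < \infty$ and, by Proposition \ref{prop: properties of the PKs}, $\sum_{x \sim o} a(x, o) = 1$, this only averages finitely many vertices and will not affect the asymptotics). Applying Lemma \ref{lem: expression for CRW hitting probability} then yields
\[
\wh q(y) = \P_x(\wh T_y < \infty) = \frac{\hm_{y, o}(y) \, \P_y(T_x < T_o)}{\hm_{x, o}(x)}.
\]

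Next, taking $A_n = \{y_n\}$ for any sequence $y_n \to \infty$, Corollary \ref{cor: the harmonic measure for two points is well-defined} (the equivalence between (a) and (b) for two-point sets) tells us that $\P_{y_n}(T_x < T_o) \to \hm_{x, o}(x)$, and crucially this limit does not depend on the choice of sequence. Consequently $\P_y(T_x < T_o)/\hm_{x, o}(x) \to 1$ as $y \to \infty$, which when plugged into the display above gives
\[
\wh q(y) = \hm_{y, o}(y)\bigl(1 + o(1)\bigr) \quad \text{as } y \to \infty.
\]

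Finally, since $\hm_{y, o}(y) \in [0, 1]$, the multiplicative $o(1)$ translates into an additive $o(1)$, so that $|\wh q(y) - \hm_{o, y}(y)| \to 0$ as $y \to \infty$. Passing to $\liminf$ (resp.\ $\limsup$) along an arbitrary sequence of vertices going to infinity finishes the argument. There is no real obstacle here: the work has already been done in proving the equivalence (a)$\Leftrightarrow$(b) and Lemma \ref{lem: expression for CRW hitting probability}; the only minor care needed is uniformity in the finite sum over neighbors of $o$ when starting from the root itself, and this is immediate because $\deg(o) < \infty$.
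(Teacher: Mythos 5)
Your argument is correct and is exactly the paper's: combine the formula $\wh q(y) = \hm_{y,o}(y)\,\PP_y(T_x<T_o)/\hm_{x,o}(x)$ from Lemma \ref{lem: expression for CRW hitting probability} with the convergence $\PP_y(T_x<T_o)\to\hm_{x,o}(x)$ from Corollary \ref{cor: the harmonic measure for two points is well-defined}, then use boundedness of $\hm_{y,o}(y)$ to turn the multiplicative error into an additive one. Your extra remark about averaging over the first step when starting from $o$ (using $\sum_{x\sim o}a(x,o)=1$) is a harmless and correct tidying-up of a point the paper leaves implicit.
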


	In particular, it is true that on \emph{transitive} graphs that are recurrent and for which the potential kernel is well defined, by symmetry one always has $\wh q(y) \to \frac{1}{2}$. This gives another proof to a result of \cite{PopovRW} on the square lattice.
	
	We can now prove that the subsequential limits of the hitting probabilities $\wh{q}(y)$ define an interval, as promised before. Note that this proposition is fairly general: it does not require the underlying graphs to be unimodular, only for the graph to satisfy the standing assumption (recurrence, existence of potential kernel and convergence to infinity of the potential kernel).
	
	\begin{proposition} \label{prop: q is interval}
		For each $q \in [\liminf_{y \to \infty} \wh q(y), \limsup_{y \to \infty} \wh q(y)]$, there exists a sequence of vertices $(y_n)_{n \geq 1}$ going to infinity such that
		\[
			\lim_{n\to \infty} \wh{q} (y_n) = q.
		\]
	\end{proposition}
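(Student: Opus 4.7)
The plan is a discrete intermediate value argument, resting on two ingredients: a ``continuity'' of $\wh q$ along neighboring vertices far from $o$, and the one-endedness of $G$ (which holds by Corollary~\ref{corollary: not one-ended inplies that potential kernels is not well-defined}).

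The first step is to show that whenever $y \sim y'$ and $y, y' \to \infty$, one has $|\wh q(y) - \wh q(y')| \to 0$. By Corollary~\ref{cor: the harmonic measure for two points is well-defined} we can write $\hm_{o,y}(y) = a(y,o)/\Reff(o,y)$. For neighboring $y \sim y'$, the triangle inequality for the potential kernel (Proposition~\ref{prop: green function and PK}) gives $|a(y,o) - a(y',o)| \leq \max(a(y,y'), a(y',y)) \leq 1$, since $a(y, y') = \hm_{y, y'}(y)\Reff(y, y') \leq \Reff(y, y') \leq 1$ and symmetrically for $a(y', y)$. The triangle inequality for effective resistance gives $|\Reff(o, y) - \Reff(o, y')| \leq \Reff(y, y') \leq 1$. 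Combining these with $\Reff(o,y) \geq a(y,o) \to \infty$ (this last divergence being part of the standing assumptions), a short computation yields
\[
    \bigl|\hm_{o,y}(y) - \hm_{o,y'}(y')\bigr| \;\leq\; \frac{2}{\Reff(o,y')} \;\longrightarrow\; 0.
\]
To transfer this continuity statement from $\hm$ to $\wh q$, I would combine Lemma~\ref{lem: expression for CRW hitting probability} with the convergence $\PP_y(T_u < T_o) \to \hm_{u,o}(u)$ as $y \to \infty$ and the identity $\sum_{u \sim o} a(u, o) = 1$ (which follows from Proposition~\ref{prop: properties of the PKs}) to deduce $\wh q(y) = \hm_{o,y}(y)(1 + o(1))$ as $y \to \infty$.

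The second step is then routine. Set $L = \liminf_y \wh q(y)$ and $U = \limsup_y \wh q(y)$, and fix $q \in [L,U]$. Pick sequences $y_n, y_n' \to \infty$ with $\wh q(y_n) \to L$ and $\wh q(y_n') \to U$. Since $G$ is one-ended and locally finite, for every $R$ the complement $G \setminus B(o, R)$ has a unique infinite component $C_R$, and a simple argument (there are only finitely many edges leaving the finite set $B(o,R)$) shows that $C_R$ contains all but finitely many vertices of $G$. I would choose $R_n \to \infty$ slowly enough that $y_n, y_n' \in C_{R_n}$, and connect them by a path $\gamma_n$ lying entirely inside $C_{R_n}$. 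By the first step, consecutive values of $\wh q$ along $\gamma_n$ differ by at most some $\epsilon_n \to 0$. Since $\wh q(y_n) \to L \leq q$ and $\wh q(y_n') \to U \geq q$, a discrete intermediate value argument produces some vertex $z_n \in \gamma_n$ with $|\wh q(z_n) - q| \leq \epsilon_n$. As $z_n \in C_{R_n}$, we have $d(z_n, o) > R_n \to \infty$, so $(z_n)$ is the desired sequence.

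The main obstacle is establishing the continuity statement in the first step; once this is in hand, the rest is essentially a one-dimensional intermediate value theorem applied along a path in the complement of a large ball.
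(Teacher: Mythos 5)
Your proposal is correct and follows essentially the same route as the paper: both arguments rest on the asymptotic continuity of $\wh q$ across edges far from $o$ (obtained from $\hm_{o,y}(y)=a(y,o)/\Reff(o\leftrightarrow y)$, the triangle inequalities for $a$ and $\Reff$, and the comparison of $\wh q$ with $\hm$ via Lemma \ref{lem: expression for CRW hitting probability}), combined with one-endedness to run a discrete intermediate value argument along paths avoiding large balls. The paper merely packages the second step as a proof by contradiction, which is the same idea.
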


	\begin{proof}
		Assume that there exist $q_1 < q_2$ such that there are sequences $(y^1_n)_{n \geq 1}$ and $(y^2_n)_{n \geq 1}$ going to infinity for which $\lim_{n\to \infty} \wh{q}(y^i_n) = q_i$, but there does not exists a sequence $y_n$ going to infinity for which $q_1 < \lim_{n \to \infty} \wh{q}(y_n) < q_2$. We will derive a contradiction. We do so via the following claim.
		\begin{claim}
			For each $\epsilon > 0$, there exists an $N = N(G, o, \epsilon)$ such that for each neighboring vertices $x, y \notin B(o, N)$, we have
			\[
				|\wh{q}(x) - \wh{q}(y)| < \epsilon.
			\]
		\end{claim}
		To see this claim is true, we use Lemma \ref{lem: expression for CRW hitting probability} and Corollary \ref{cor: harmonic measure are local, not global} to get the existence of $N_1$ such that
		\begin{equation} \label{eq:diff q and hm}
			|\wh{q}(z) - \hm_{o, z}(z)| < \frac{\epsilon}{4}
		\end{equation}
		for all $z \notin B(o, N_1)$. Next, pick $N_2$ such that all $z \notin B(o, N_2)$ have $\Reff(o \leftrightarrow z) > \frac{4}{\epsilon}$. Let $x, y \notin B(o, N_1 \vee N_2)$ be neighbors. Due to \eqref{eq: growth of a bound} we have $a(x) - a(y) \leq 1$ and by the triangle inequality for effective resistance also $\Reff(o \leftrightarrow y) \leq \Reff(o \leftrightarrow x) + 1$. Hence, using the expression $a(x) = \hm_{x, o}(x) \Reff(o \leftrightarrow x)$ of Corollary \ref{cor: the harmonic measure for two points is well-defined}, we deduce that
		\[
			\hm_{x, o}(x) \frac{\Reff(o \leftrightarrow y) - 1}{\Reff(o \leftrightarrow y)} - \hm_{y, o}(y) \leq \frac{a(x) - a(y)}{\Reff(y \leftrightarrow o)} \leq \frac{1}{\Reff(o \leftrightarrow y)},
		\]
		which implies by choice of $N_2$ that in fact
		\begin{equation} \label{eq: hm neighbor diff}
			\hm_{x, o}(x) - \hm_{o, y}(y) < \frac{\epsilon}{2}.
		\end{equation}
		Thus, taking together equations \eqref{eq:diff q and hm} and \eqref{eq: hm neighbor diff} we obtain
		\[
			\wh{q}(x) - \wh{q}(y) \leq \epsilon.
		\]
		Since $x, y$ are arbitrary neighbors, this implies the claim when taking $N = N_1 \vee N_2$.
		
		By Corollary \ref{corollary: not one-ended inplies that potential kernels is not well-defined}, we know that the graph $G$ is one-ended as the potential kernel is assumed to be well defined. Take $\epsilon > 0$ so small that $q_2 > q_1 + 3 \epsilon$. By assumption on $q_1, q_2$, we thus have that for each $n$ large enough, there exist two neighboring vertices $x, y \notin B(o, n)$ satisfying
		\[
			\wh{q}(y) > q_2 - \epsilon > q_1 + 2\epsilon > \wh{q}(x) + \epsilon,
		\]
		so that $\wh{q}(y) > \wh{q}(x) + \epsilon$, a contradiction.
	\end{proof}

	\subsubsection{Harnack inequality for conditioned walk}

	Notice that the conditioned random walk viewed as a Doob $h$-transform may be viewed as a random walk on the original graph $G$ but with new conductances by $$\wh{c}(x, y) = a(x)a(y)$$ 
for each edge $\{x, y\} \in e(G)$. Indeed the symmetry of this function is obvious, as is non-negativity, and since $a$ is harmonic for the original graph Laplacian $\Delta$,
	\[
		\pi(x) := \sum_{y \sim x} \wh{c}(x, y) = \sum_{y \sim x} a(y) a(x) = a(x)^2,
	\]
we get that the random walk associated with these conductances coincides indeed with our Doob $h$-transform description of the conditioned walk. 
	
	We can thus consider the network $(G, \wh{c} \:)$, which is transient by Proposition \ref{prop: properties of the CRW}. It will be useful to consider the graph Laplacian $\wh{\Delta}$, associated with these conductances, defined by setting 
	\[
		(\wh{\Delta} h)(x) = \sum_{y \sim x} \wh{c}(x, y)(h(y) - h(x)).
	\]
for a function $h$ defined on the vertices of $G$, although $h$ does not need to be defined at $o$. 
	We will say that a function $h: v(G)\setminus \{o\} \to \RR$ is harmonic (w.r.t. the network $(G, \wh{c} \:)$) whenever $\wh{\Delta } h \equiv 0$. This is of course equivalent to
	\[
		h(x) = \E_x[h(\wh{X}_1)]
	\]
	for each $x \in v(G) \setminus \{o\}$.
	
	It might be of little surprise that the anchored Harnack inequality (Theorem \ref{theorem: harnack with a pole}) implies (in fact, it is equivalent but this will not be needed) to an elliptic Harnack inequality on the graph $G$ with conductance function $\wh{c}$, at least when viewed from the root (i.e., for exhaustion sequences centered on the root $o$).
	
	\begin{proposition} \label{prop: Harnack CRW}
		There exists a $C > 1$ 
such that the following holds. Suppose the graph $G$ satisfies the standing assumptions. Let $\hat h: v(G) \setminus \{o\} \to \RR_+$ be harmonic with respect to $(G, \wh{c} \:)$. Then for each $R \geq 1$,
		\[
			\max_{x \in \partial \Lambda_a(R)} \hat h(x) \leq C \min_{x \in \partial \Lambda_a(R)} \hat h(x).
		\]
		Equivalently, the max and the min could (by the maximum principle) be taken over $\Lambda_a(R)$ instead of $\partial \Lambda_a(R)$.
	\end{proposition}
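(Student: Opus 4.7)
The plan is to reduce to the anchored Harnack inequality (Theorem \ref{theorem: harnack with a pole}) via the standard Doob transform trick: harmonicity with respect to $\widehat c$ is equivalent, after multiplication by $a(\cdot, o)$, to harmonicity (off the root) with respect to the original Laplacian. Concretely, given $\widehat h : v(G) \setminus \{o\} \to \RR_+$ satisfying $\widehat \Delta \widehat h \equiv 0$, define
\[
f(x) := a(x, o)\, \widehat h(x) \quad \text{for } x \in v(G)\setminus\{o\}, \qquad f(o) := 0.
\]
Then $f \geq 0$, $f(o) = 0$, and a direct computation using $\widehat p(x,y) = \frac{a(y,o)}{a(x,o)} p(x,y)$ shows that for every $x \neq o$,
\[
\sum_{y \sim x} p(x,y) f(y) \;=\; \sum_{y \sim x} p(x,y)\, a(y,o)\, \widehat h(y) \;=\; a(x,o)\, \widehat h(x) \;=\; f(x),
\]
so $f$ is harmonic with respect to the simple random walk on all of $v(G) \setminus \{o\}$.

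Next I would invoke Theorem \ref{theorem: harnack with a pole} to obtain a universal constant $C_0$ with
\[
\max_{x \in \partial \Lambda_a(R)} f(x) \;\leq\; C_0 \min_{x \in \partial \Lambda_a(R)} f(x).
\]
The last step is to pass from $f$ back to $\widehat h$ on $\partial \Lambda_a(R)$. Here I would use that on $\partial \Lambda_a(R)$ the values of $a(\cdot, o)$ are pinched in $(R, R+1]$: by definition $a(x,o) > R$ for $x \in \partial \Lambda_a(R)$, while the triangle inequality (Proposition \ref{prop: green function and PK}) together with $a(x,y) + a(y,x) = \Reff(x \leftrightarrow y) \leq 1$ for neighbors $x \sim y$ gives $a(x,o) \leq a(y,o) + 1 \leq R+1$ for a neighbor $y \in \Lambda_a(R)$. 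Therefore
\[
\max_{x \in \partial \Lambda_a(R)} \widehat h(x) \;\leq\; \frac{1}{R} \max_{x \in \partial \Lambda_a(R)} f(x) \;\leq\; \frac{C_0}{R}\min_{x \in \partial \Lambda_a(R)} f(x) \;\leq\; \frac{C_0(R+1)}{R}\min_{x \in \partial \Lambda_a(R)} \widehat h(x),
\]
which gives the claim with $C := 2C_0$ (valid for $R \geq 1$). The maximum-principle remark then follows because $\widehat h$ is $\widehat c$-harmonic on the finite set $\Lambda_a(R) \setminus \{o\}$, and the boundary extrema control interior values.

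The only mildly delicate point is the pinching of $a(\cdot,o)$ on $\partial \Lambda_a(R)$, which requires the non-trivial symmetric identity $a(x,y)+a(y,x)=\Reff(x \leftrightarrow y)$. Everything else is essentially formal once one identifies $f = a \cdot \widehat h$ as an admissible test function in Theorem \ref{theorem: harnack with a pole}.
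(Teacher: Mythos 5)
Your proof is correct and follows essentially the same route as the paper's: undo the Doob transform by setting $f = a(\cdot,o)\,\hat h$ with $f(o)=0$, note $f$ is harmonic away from $o$, apply the anchored Harnack inequality of Theorem \ref{theorem: harnack with a pole}, and use that $a(\cdot,o)$ is pinched within $1$ of $R$ on $\partial\Lambda_a(R)$. The paper's argument is just a terser version of exactly this.
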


	\begin{proof}
		Since the graph follows the standing assumptions it satisfies the anchored Harnack inequality of Theorem \ref{theorem: harnack with a pole}. Furthermore, $\hat h(x)$ is $\wh{\Delta}$-harmonic if and only if
		 $$
		 	h(x) = \begin{cases}
		 	a(x) \hat h(x) & \text{ if } x \neq o\\
		 	0 & \text{ if } x = o
		 	\end{cases}
  		$$
		is harmonic for $\Delta$ away from $o$. Since on $|a(x) - R | \le 1$ for $x \in \partial \Lambda_a(R)$, we obtain the result immediately.
	\end{proof}

	As a corollary we obtain the Liouville property for $\hat X$: $(G, \hat c)$ does not carry any non-constant, bounded harmonic functions. This implies in turn that the invariant $\sigma$-algebra $\c{I}$ of the CRW is trivial.
	\begin{corollary} \label{cor: CRWliouville}
		The network $(G, \wh{c} \:)$ satisfies the Liouville property, that is: any function $h:v(G) \setminus \{o\} \to \RR$ that is harmonic and bounded must be constant.
	\end{corollary}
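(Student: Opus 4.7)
The plan is to derive the Liouville property from Proposition \ref{prop: Harnack CRW} via a standard oscillation-decay argument. Let $\hat{h}:v(G)\setminus\{o\}\to \RR$ be bounded and $\wh{\Delta}$-harmonic, set $M=\sup \hat{h}$ and $m=\inf \hat{h}$, and write $M_R = \max_{x\in \partial \Lambda_a(R)}\hat{h}(x)$ and $m_R = \min_{x\in \partial \Lambda_a(R)}\hat{h}(x)$. The goal is to show $M=m$.

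First I would observe that both $\hat{h}-m$ and $M-\hat{h}$ are nonnegative and $\wh{\Delta}$-harmonic on $v(G)\setminus\{o\}$, so Proposition \ref{prop: Harnack CRW} applies to each. Applying it on $\partial \Lambda_a(R)$ yields
\[
M_R - m \le C(m_R - m) \qquad \text{and} \qquad M - m_R \le C(M - M_R).
\]
Adding these and rearranging gives the oscillation contraction
\[
M_R - m_R \le \frac{C-1}{C+1}(M - m).
\]

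Next I would use the standing assumptions (under which $(\Lambda_a(R))_{R\ge 1}$ is an exhaustion of $G$ by finite sets, with $o\in \Lambda_a(R)$ for all $R\ge 1$) to upgrade this to a comparison of the global sup and inf. For any $\varepsilon>0$ pick $x_\varepsilon,y_\varepsilon \in v(G)\setminus\{o\}$ with $\hat h(x_\varepsilon) > M-\varepsilon$ and $\hat h(y_\varepsilon)<m+\varepsilon$, and choose $R$ large enough that $x_\varepsilon, y_\varepsilon \in \Lambda_a(R)\setminus\{o\}$. The maximum/minimum principle for $\wh{\Delta}$ on the finite set $\Lambda_a(R)\setminus\{o\}$ (its boundary being $\partial\Lambda_a(R)\cup\{o\}$; note however that $\hat h$ is not defined at $o$, but the walk started inside $\Lambda_a(R)\setminus\{o\}$ has transition kernel assigning $0$ mass to $o$, so the relevant boundary is simply $\partial \Lambda_a(R)$) then gives $\hat h(x_\varepsilon)\le M_R$ and $\hat h(y_\varepsilon)\ge m_R$, so that
\[
M - m - 2\varepsilon \le M_R - m_R \le \frac{C-1}{C+1}(M - m).
\]
Letting $\varepsilon \downarrow 0$ forces $M-m\le \frac{C-1}{C+1}(M-m)$, and since $\frac{C-1}{C+1}<1$ we conclude $M=m$, i.e., $\hat h$ is constant.

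The only point requiring slight care is the handling of the pole at $o$ in the maximum principle step above: one has to check that bounded $\wh{\Delta}$-harmonicity on $v(G)\setminus\{o\}$ together with finiteness of $\Lambda_a(R)$ (and transience of $\hat X$, Proposition \ref{prop: properties of the CRW}) gives the representation $\hat h(x) = \E_x[\hat h(\hat X_{\hat T_R})]$ for $x \in \Lambda_a(R)\setminus\{o\}$, where $\hat T_R$ is the first exit of $\Lambda_a(R)$ by $\hat X$; this is almost immediate from the optional stopping theorem since $\hat T_R<\infty$ almost surely and $\hat X$ never hits $o$ under $\PP_x$ for $x\ne o$. The rest is routine. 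The triviality of the invariant $\sigma$-algebra $\mathcal{I}$ then follows from the standard equivalence between the Liouville property and triviality of the tail/invariant $\sigma$-algebra for the Markov chain $\hat X$.
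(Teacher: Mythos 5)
Your proof is correct and follows essentially the same route as the paper: both derive the Liouville property from the Harnack inequality of Proposition \ref{prop: Harnack CRW} applied along the exhaustion $(\Lambda_a(R))_{R\ge 1}$, combined with the maximum principle justified by optional stopping at the (a.s.\ finite) exit time of $\Lambda_a(R)$. The only difference is cosmetic: the paper runs a one-sided argument (subtract $\inf h$ and let Harnack force the function to be uniformly close to its infimum), whereas you run the two-sided oscillation-contraction $M_R-m_R\le \frac{C-1}{C+1}(M-m)$; both are valid and rest on the same ingredients.
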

	\begin{proof}
		Let $h$ be a bounded, harmonic function with respect to $(G, \wh{c} \:)$. Define the function
		\[
			\hat {h} = h - \inf_{x \in v(G)} h(x),
		\]
		which is non-negative and harmonic. Moreover, for each $\epsilon > 0$, there exists an $x_{\epsilon}$ such that $\hat{h}(x_{\epsilon}) \leq \epsilon$. Take $R_{\epsilon}$ so large that $x_{\epsilon} \in \Lambda_a(R_{\epsilon})$. By the Harnack inequality (Proposition \ref{prop: Harnack CRW}) we deduce that for all $x \in \Lambda_a(R_{\epsilon})$,
		\[
			0 \leq \hat{h}(x) \leq C\hat{h}(x_{\epsilon}) \leq C\epsilon.
		\]
		Since $\epsilon$ is arbitrary, and $C$ does not depend on $R_{\epsilon}$ nor $\epsilon$, this shows the desired result.
	\end{proof}

	\subsubsection{Recurrence of sets}
	We will say that a set $A \subset v(G)$ is recurrent for the chain $\wh{X}$ whenever there exist $x \in v(G)$ such that
	\[
		\PP_x(\wh{X}_n \in A \io) = 1,
	\]
	where $\io$ is short-hand for `infinitely often'. Since $(G, \wh{c} \:)$ satisfies the Liouville property, such probabilities are $0$ or $1$, hence the definition of $A$ being recurrent is independent of the choice of $x$. If a set is not recurrent, it is called transient. Since $\wh{X}$ is transient, any finite set $A$ is transient too. Notice, by the way, that the definition above is equivalent to saying that $A$ is recurrent whenever $\PP_x(\wh{T}_A < \infty) = 1$ for all $x \in v(G)$.
	
	We capture next some results, relating recurrence and transience of sets to the harmonic measure from infinity. Recall Definition \ref{def: delta-good} of $\delta$-good points: $x$ is $\delta$-good whenever $\hm_{x, o}(x) \geq \delta$.
	
	\begin{lemma} \label{lemma: A inf delta-good then recurrent}
		If $A$ has infinitely many $\delta$-good points for some $\delta > 0$, then $A$ is recurrent for $\wh{X}$.
	\end{lemma}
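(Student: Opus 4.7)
The plan is to combine the explicit hitting-probability formula from Lemma \ref{lem: expression for CRW hitting probability} with a zero-one law argument based on the Liouville property of Corollary \ref{cor: CRWliouville}.

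First I would establish a uniform lower bound: for every $x \ne o$,
\[
\PP_x(\wh T_A < \infty) \ge \delta/2.
\]
This step is the crux. By Lemma \ref{lem: expression for CRW hitting probability}, for any $y \in A$,
\[
\PP_x(\wh T_y < \infty) = \frac{\hm_{y,o}(y)\,\PP_y(T_x < T_o)}{\hm_{x,o}(x)}.
\]
As $y \to \infty$, the probability $\PP_y(T_x < T_o)$ converges to $\hm_{x,o}(x)$: indeed this is exactly the defining limit of harmonic measure from infinity (Corollary \ref{cor: the harmonic measure for two points is well-defined} applied with the singleton sequence $A_n = \{y\}$). Since $A$ contains infinitely many $\delta$-good points, I can pick such a $y \in A$ far enough from $x$ that $\PP_y(T_x < T_o) \ge \hm_{x,o}(x)/2$, which yields $\PP_x(\wh T_y < \infty) \ge \delta/2$, and hence $\PP_x(\wh T_A < \infty) \ge \delta/2$.

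Next I would propagate this bound forward in time via the Markov property. For every $n$,
\[
\PP_x\bigl(\exists m \ge n:\ \wh X_m \in A\bigr) = \E_x\bigl[\PP_{\wh X_n}(\wh T_A < \infty)\bigr] \ge \delta/2,
\]
so letting $n \to \infty$ gives $\PP_x(\wh X_n \in A \ \text{i.o.}) \ge \delta/2 > 0$.

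Finally I would invoke a zero-one law. The event $E := \{\wh X_n \in A \ \text{i.o.}\}$ is in the tail $\sigma$-algebra of $\wh X$, so $x \mapsto \PP_x(E)$ is a bounded harmonic function on $(G, \wh c)$. By Corollary \ref{cor: CRWliouville} it must be constant, and via L\'evy's upward convergence theorem applied to $\PP_x(E \mid \c F_n) = \PP_{\wh X_n}(E)$, the indicator $\id_E$ equals this constant almost surely, which forces $\PP_x(E) \in \{0,1\}$. Combined with the previous bound $\PP_x(E) \ge \delta/2 > 0$, we conclude $\PP_x(E) = 1$, so $A$ is recurrent for $\wh X$.

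The only nontrivial step is really the first one; once the uniform hitting bound is in hand the remainder is a routine combination of the Markov property with the Liouville-derived tail triviality.
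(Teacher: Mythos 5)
Your proof is correct, but it takes a genuinely different route from the paper. The paper's proof is a hands-on second Borel--Cantelli argument: it builds a nested sequence of sublevel sets $\Lambda_a(M_i)$ and good points $g_{R_i} \in A$, and shows that the event $E_i$ that the conditioned walk hits $g_{R_i}$ before leaving $\Lambda_a(M_i)$ has conditional probability at least $\delta/4$ given $\mathcal{F}_{\tau_{i-1}}$; L\'evy's extension of Borel--Cantelli then yields $E_i$ i.o. Your proof instead extracts the uniform one-step bound $\PP_x(\wh T_A < \infty) \ge \delta/2$ for all $x$ with $a(x,o)>0$, propagates it via the Markov property to get $\PP_x(\wh X_n \in A \io) \ge \delta/2$, and then upgrades from positive probability to probability one by observing that $\{\wh X_n \in A \io\}$ is a shift-invariant event, so $x \mapsto \PP_x(\wh X_n \in A \io)$ is a bounded $\wh{c}$-harmonic function, hence constant by Corollary \ref{cor: CRWliouville}, and that constant lies in $\{0,1\}$ by L\'evy's $0$--$1$ law. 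Both arguments are sound. Your route is shorter and conceptually cleaner, but it relies on the Liouville property for $(G,\wh c)$, which in turn rests on the anchored Harnack machinery (Theorem \ref{theorem: harnack with a pole} and Proposition \ref{prop: Harnack CRW}); the paper's version avoids this heavier input and is self-contained modulo transience of $\wh X$ and the convergence of hitting probabilities to harmonic measure. This has some value because the paper later re-derives the Liouville property from the infinite intersection property, which itself uses Lemma \ref{lemma: A inf delta-good then recurrent}, so keeping the Borel--Cantelli proof avoids even the appearance of circularity (though there is no actual circularity in your argument, since Corollary \ref{cor: CRWliouville} is established before this lemma, via Harnack rather than intersections).

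One small point worth being explicit about: the uniform lower bound requires $\hm_{x,o}(x) > 0$, which holds exactly on the set of vertices reachable by $\wh X$ (those with $a(x,o) > 0$); since $\wh X_n$ always lies in this set, the Markov-property step goes through.
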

	\begin{proof}
		This follows from a Borel-Cantelli argument. Indeed, fix $x \in v(G)$. Let $\delta$ be as in the assumption. Take $(g_{i})_{i = 1}^\infty$ a sequence of $\delta$-good points in $A$, with $a(g_i) >i $ (which we can clearly find as $\Lambda_a(i)$ is finite whereas $A$ has infinitely many good points).

		We will define two sequences $(R_i)_{i \geq 1}$ and $(M_i)_{i \geq 1}$. Set $M_0 = 0$ and $R_0 = 0$. 
		Suppose we have defined $R_i, M_{i - 1}$ already. Set $a_i = a(g_{R_i})$ and $\Lambda_i = \Lambda_a (a_i)$, and note that by definition $g_{R_i} \in \Lambda_i$. Take $M_i$ so large (and greater than $R_i$) that
		\begin{equation} \label{eq:condscale1}
			\PP_z(\wh{X} \text{ ever hits } \Lambda_i) \leq \frac{\delta }{4}, \text{ uniformly over } z \in \Lambda_a (M_i)^c
		\end{equation}
This is possible since $\Lambda_i $ is finite and $\hat X$ is transient by Proposition \ref{prop: properties of the CRW} and more precisely the hitting probabilities of a finite set converge to zero (see \eqref{eq:hitsmall}). Next, let $R_{i + 1}$  be so large (and greater than $M_i$) that
		\begin{equation}\label{eq:condscale2}
			\frac{\PP_y(T_x < T_o)}{  \hm_{o, x}(x)} \geq 1/2,
		\end{equation}
		for $y  = g_{R_{i+1}}$ and all $x \in \Lambda_a(M_i)$. This is possible because $\Lambda_a(M_i)$ is finite and hitting probabilities converge to harmonic measure from infinity, by Corollary \ref{cor: the harmonic measure for two points is well-defined}. We can also require without loss of generality that $g_{R_{i+1}} \in \Lambda_a(M_i)^c$.

Suppose that $x \in \Lambda_a (M_{i-1})$ is arbitrary. We first claim that from $x$ it is reasonably likely that the conditioned walk $\wh{X}$ will hit $ y = g_{R_{i}}$. Indeed, note that by Lemma \ref{lem: expression for CRW hitting probability}, and since $y$ is $\delta$-good and \eqref{eq:condscale2} holds,
\begin{align*}
\P_x( \wh{T}_{y} < \infty ) &= \hm_{y,o} (y) \frac{\P_y (T_x< T_o)}{\hm_{x,o} (x)}
 \ge \delta /2
\end{align*}
On the other hand, conditionally on hitting $y = g_{R_i}$, the conditioned walk $\wh{X}$ is very likely to do so before exiting $\Lambda_{a} (M_i)$ (let us call $\tau_i$ this time). Indeed, by the strong Markov property at $\tau_i$ and \eqref{eq:condscale1},
$$
\P_x ( \wh{T}_y > \tau_i, \wh{T}_y < \infty) \le \sup_{z \in \Lambda_a(M_i)^c} \P_z ( \wh{X} \text{ ever hits } \Lambda_i) \le \delta /4.
$$
Therefore,
$$
\P_x (\wh{T}_y < \tau_i)  \ge \frac{\delta}{2} - \frac{\delta}{4} =  \frac{\delta }{4}.
$$
Let $E_i$ be the above event, i.e., $E_i = \{ \wh{T}_{g_{R_i}} < \tau_i\}$. Since $x \in \Lambda_a(M_{i-1})$ in the above lower bound is arbitrary, it follows from the strong Markov property at time $\tau_{i-1}$ that $\P(E_i | \cF_{\tau_{i-1}} ) \ge \delta /4$, where $(\cF_n)_{n \ge 0}$ is the filtration of the conditional walk. By Borel--Cantelli we conclude immediately that $E_i$ occurs infinitely often a.s. (for the conditioned walk), which concludes the proof.
	\end{proof}

	\begin{wrong-old}
	The next lemma connects `transience' of infinite sets to harmonic measure from infinity. The lemma hereafter shows the existence of such sets for graphs that are not `uniformly $\delta$-good'. Combining this with the previous lemma, we deduce that there are infinite transient sets for the CRW if and only if the rooted graph $(G, o)$ is not uniformly $\delta$-good for each $\delta > 0$.
	\begin{lemma}
		Suppose $A \subset v(G)$ is an infinite and satisfies $\PP_x(\wh{T}_A < \infty) < 1$ for some $x \sim o$. Write $A_N = A \cap \Beff(N)$. Then
		\[
		\lim_{N \to \infty} \hm_{A_N}(o) > 0.
		\]
	\end{lemma}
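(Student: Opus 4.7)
The plan is to apply the gluing theorem (Theorem \ref{theorem: gluing expression}) with the finite set $B_N := A_N \cup \{o\}$. By \eqref{qBext} (with $o$ as the distinguished point of $B_N$), one has
\[
\hm_{B_N}(o) = (\Delta q_{B_N})(o) = \sum_{u \sim o} q_{B_N}(u),
\]
where each $q_{B_N}(u) := a(u,o) - \E_u[a(X_{T_{B_N}}, o)] \geq 0$ (non-negativity is visible from the representation \eqref{D:qB}). Dropping all but the summand at the vertex $x \sim o$ supplied by the hypothesis -- which lies in $v(G) \setminus B_N$ for every $N$, since $x \in A$ would force $\wh T_A = 0$ and contradict the hypothesis -- reduces the problem to proving a uniform-in-$N$ lower bound on $q_{B_N}(x)$.

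The key step is the identity
\[
q_{B_N}(x) = a(x,o)\, \PP_x(\wh T_{A_N} = \infty),
\]
which comes directly from the definition of $\wh X$ as the Doob $a(\cdot, o)$-transform of simple random walk killed at $o$. Indeed, summing the transition kernel identity $\wh p(w,w') = (a(w',o)/a(w,o))\, p(w,w')$ over trajectories of length $\wh T_{A_N}$ ending at a prescribed point $y \in A_N$ gives
\[
\PP_x(\wh T_{A_N} < \infty) = \frac{1}{a(x,o)}\, \E_x\bigl[a(X_{T_{A_N}}, o);\, T_{A_N} < T_o\bigr] = \frac{\E_x[a(X_{T_{B_N}}, o)]}{a(x,o)},
\]
where in the second equality the value $a(o,o) = 0$ absorbs the contribution from $\{X_{T_{B_N}} = o\}$; rearranging yields the claimed identity.

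To finish, since $A_N \subset A$, any realisation of the CRW that avoids $A$ also avoids $A_N$, hence $\PP_x(\wh T_{A_N} = \infty) \geq \PP_x(\wh T_A = \infty) > 0$ by hypothesis, uniformly in $N$. Moreover $a(x,o) > 0$: otherwise Lemma \ref{lem: PK exists} implies that $o$ separates $x$ from infinity, and then the CRW started at $x$ would be confined to a finite component of $G \setminus \{o\}$, contradicting the transience of $\wh X$ established in Proposition \ref{prop: properties of the CRW}(i). Combining these two positive factors, $\hm_{A_N}(o) \geq q_{B_N}(x) \geq a(x,o)\, \PP_x(\wh T_A = \infty) > 0$ uniformly in $N$, which gives the lemma (with $\lim$ read as $\liminf$ if the existence of the limit has not been independently established). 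There is no serious obstacle here; the only conceptual point is recognising that the potential-kernel quantity $q_{B_N}(x)$, defined in purely analytic terms by the gluing theorem, coincides with $a(x,o)$ times the CRW escape probability from $A_N$, at which point the hypothesis transfers directly to the harmonic measure.
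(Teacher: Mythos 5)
Your proof is correct, and it takes a genuinely different route from the paper's. The paper applies Lemma \ref{lem: expression for CRW hitting probability} on the glued graph $G_{A_N}$, treating the wired set $A_N$ as a single vertex $y$, to get $\PP_x(\wh{T}_{A_N} < \infty) = (1 - \hm_{A_N \cup \{o\}}(o))\, \PP_{A_N}(T_x < T_o)/\hm_{x,o}(x)$; it then needs the quantitative convergence $\PP_y(T_x < T_o) \to \hm_{x,o}(x)$ (Corollary \ref{cor: a(x, z) - a(y, z) goes to 0 as z to infinity}) together with a preliminary reduction discarding the finitely many points of $A$ near $o$, and concludes because the left-hand side stays bounded away from $1$. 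Your argument instead isolates the single exact identity $q_{B_N}(x) = a(x,o)\,\PP_x(\wh{T}_{A_N} = \infty)$ and combines it with the pointwise bound $\hm_{B_N}(o) = \sum_{u \sim o} q_{B_N}(u) \geq q_{B_N}(x)$ from \eqref{qBext}; this needs no $\epsilon$-approximation and no reduction, and it produces the explicit uniform lower bound $a(x,o)\,\PP_x(\wh{T}_A = \infty)$, which is arguably cleaner and more informative. Two small remarks: the existence of the limit (rather than just the $\liminf$) follows from the monotonicity of $N \mapsto \hm_{A_N \cup \{o\}}(o)$, which is worth stating; and \eqref{qBext} is formally stated for sets whose removal does not disconnect $G$, so one should either pass to the hull of $B_N$ or observe that finite complementary components carry zero harmonic measure from infinity and that $q_{B_N}$ vanishes there — a technicality the paper's own argument glosses over in exactly the same way, so it does not distinguish the two proofs.
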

	\begin{proof}
		Existence of the limit is not too hard: the map $N \mapsto \hm_{A_N}(o)$ is bounded from below and non-increasing. Positivity is the only real question.
		
		Fix $x \sim o$ and $\epsilon > 0$. Take $N_0 = N_0(x, \epsilon)$ so large that for all $y \not \in \Beff(N_0)$ we have that
		\[
		\frac{\PP_y(T_x < T_o)}{\hm_{x, o}(x)} \in \left( 1 - \epsilon, 1 + \epsilon \right).
		\]
		Next, assume without loss of generality that $A \cap \Beff(N_0) = \emptyset$. Glue together all vertices in $A_N$. The potential kernel is still well defined by the Gluing Theorem \ref{theorem: gluing expression} and by Lemma \ref{lem: expression for CRW hitting probability} we have
		\[
		\PP_x(\wh{T}_{A_N} < \infty) = \frac{(1 - \hm_{A_N \cup \{o\}}(o)) \PP_{A_N}(T_x < T_o)}{\hm_{x, o}(x)}.
		\]
		By choice of $N_0$, and because the left-hand side stays positive uniformly in $N$, by transience of $A$, we get the result.
	\end{proof}
	
	\begin{lemma}
		If for all $\delta > 0$, the set $\{x \in v(G): x \text{ is not } \delta\text{-good}\}$ is non-empty, then there exist an infinite set $A$ which satisfies $\PP_x(\wh{T}_A < \infty) < 1$ for some $x$ and is in particular transient.
	\end{lemma}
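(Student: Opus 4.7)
The plan is to exploit Lemma \ref{lem: expression for CRW hitting probability}, which turns smallness of $\hm_{z,o}(z)$ directly into smallness of the CRW hitting probability $\PP_y(\wh T_z < \infty)$. First, pick a reference vertex $y$ with $\hm_{y,o}(y) > 0$: such a vertex exists because $G$ is one-ended by Corollary \ref{corollary: not one-ended inplies that potential kernels is not well-defined}, so the infinite component of $G \setminus \{o\}$ is non-empty, and Corollary \ref{Cor:hminfinity} then gives $\hm_{y,o}(y) > 0$ for any $y$ in this infinite component. Writing $h = \hm_{y,o}(y)$ and using the trivial bound $\PP_z(T_y < T_o) \leq 1$, Lemma \ref{lem: expression for CRW hitting probability} yields for every $z \neq o$
$$
\PP_y(\wh T_z < \infty) \leq \frac{\hm_{z,o}(z)}{h}.
$$

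Using the hypothesis, I would inductively select pairwise distinct vertices $(x_n)_{n \geq 1}$ with $\hm_{x_n,o}(x_n) < 2^{-n-1} h$ and set $A := \{x_n : n \geq 1\}$. Then $A$ is infinite and, by the union bound,
$$
\PP_y(\wh T_A < \infty) \leq \sum_{n \geq 1} \PP_y(\wh T_{x_n} < \infty) \leq \sum_{n \geq 1} 2^{-n-1} = \tfrac{1}{2} < 1.
$$
Transience of $A$ for $\wh X$ then follows automatically: the event $\{\wh X_n \in A \io\}$ is tail-measurable, hence has probability in $\{0,1\}$ as a function of the starting point by the Liouville property of the CRW (Corollary \ref{cor: CRWliouville}); since $\PP_y(\wh T_A < \infty) < 1$, this $0/1$-value must be $0$ everywhere.

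The delicate step is the inductive construction of distinct $x_n$. This is immediate whenever $H_\delta := \{x : \hm_{x,o}(x) < \delta\}$ is infinite for every $\delta > 0$. Otherwise, some $H_{\delta_0}$ is finite, and a monotone argument combined with the hypothesis forces the existence of at least one ``bush'' vertex $x^*$ with $\hm_{x^*,o}(x^*) = 0$, i.e.\ lying in a finite connected component of $G \setminus \{o\}$. If infinitely many bush vertices exist, I would instead take $A$ to be the set of all bush vertices: since $a(\cdot) \equiv 0$ on bushes, the CRW never visits $A$, and the conclusion is trivial. The main obstacle is the remaining corner case, in which $G \setminus \{o\}$ has only finitely many finite components while every non-bush $x$ satisfies $\hm_{x,o}(x) \geq \delta_0$. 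In this configuration the hypothesis is witnessed solely by finitely many bush points, and producing an infinite transient set requires a separate argument; I would attempt this by picking an extremely sparse sequence along the unique infinite end, gluing successive shells via Theorem \ref{theorem: gluing expression}, and extracting capacity decay estimates to render the sum of CRW hitting probabilities summable. This last case is where I would expect the technical work to concentrate.
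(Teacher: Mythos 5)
Your main construction is exactly the paper's own: the paper fixes $x \sim o$, takes $N_0$ so large that $\PP_y(T_x<T_o)$ is close to $\hm_{x,o}(x)$ for all $y \notin \Beff(N_0)$, selects vertices $x_i$ that are not $3^{-i}$-good with $x_i \notin \Beff(N_0)$, sets $A=\{x_i\}$, and concludes by the same union bound you use via Lemma \ref{lem: expression for CRW hitting probability}. The supporting steps you supply (positivity of $\hm_{y,o}(y)$ for $y$ in the infinite component of $G\setminus\{o\}$, the $0$--$1$ law from Corollary \ref{cor: CRWliouville}, and the trivial sub-case where infinitely many bush vertices exist) are all correct.

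The corner case you isolate is, however, a genuine gap, and it is present silently in the paper's proof as well, which simply asserts that the $x_i$ may be chosen outside $\Beff(N_0)$. The hypothesis yields only one non-$\delta$-good vertex per $\delta$, and nothing prevents all of them, for small $\delta$, from being a single finite collection of bush vertices. Worse, in that configuration the statement itself is false, so the capacity-decay rescue you sketch cannot succeed: if all but finitely many vertices are $\delta_0$-good, then every infinite set $A$ contains infinitely many $\delta_0$-good points and is therefore recurrent for $\wh{X}$ by Lemma \ref{lemma: A inf delta-good then recurrent}, i.e.\ $\PP_x(\wh{T}_A<\infty)=1$ for every $x$. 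A concrete instance is $\ZZ^2$ with one pendant vertex $v$ attached to $o$: paths from infinity stopped on first hitting $\{x,o\}$ never see $v$, so $\hm_{x,o}(x)=1/2$ for every $x$ in the $\ZZ^2$ part, while $\hm_{v,o}(v)=0$; the hypothesis holds, yet no infinite transient set exists. The right repair is not more analysis but a stronger hypothesis (for every $\delta>0$ there are non-$\delta$-good vertices outside every finite set), under which your inductive selection of distinct $x_n$ goes through verbatim. Flagging this rather than papering over it was the correct call.
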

	\begin{proof}
		We will be slightly informal. Fix $x \sim o \in v(G)$ and take $N_0$ so large that $\PP_y(T_x < T_o)$ is close to $\hm_{x, o}(x)$ for all $y \in v(G) \setminus \Beff(N_0)$. By assumption, we can take a sequence of points $x_i$, such that $x_i$ is not $3^{-i}$-good and such that $x_i \not \in \Beff(N_0)$. Set $A = \{x_i: i \geq 1\}$. By a simple union bound, we get
		\[
		\PP_x(\wh{T}_A < \infty) = \PP_x\left( \bigcup_{i=1}^\infty \{T_{x_i} < \infty\} \right) \leq \sum_{i = 1}^\infty \PP_x(\wh{T}_{x_i} < \infty) < 1.
		\]
	\end{proof}
	
	With the same proof, we get that if the sets $\{x: x \text{ is not } \delta\text{-good}\}$ are non-empty for all $\delta > 0$, then for each $\epsilon > 0$, there exist infinite sets $A \subset v(G)$ with $o \in A$ for which $\lim_{N \to \infty} \hm_{A_N}(o) \geq 1 - \epsilon$. We would like to warn the reader, on the other hand, that
	\[
		\liminf_{w \to \infty} \PP_w(T_o < T_A)	 = 0
	\]
	for instance by taking a sequence $w$ that always stays inside (or near) $A$.
	\end{wrong-old}
	\subsubsection{Infinite intersection of two conditioned walks}
	We finish this section by showing that two independent conditioned random walks have traces that intersect infinitely often (for simplicity here the CRW's are conditioned to not hit the same root $o$). We manage to prove this under two (different) additional assumptions. We start by adding the assumption that $(G, o)$ is random and reversible.
	
	\begin{proposition} \label{prop: REVERSIBLE CRW traces intersect io}
		Suppose that $(G, o)$ is a reversible random graph, such that a.s. it is recurrent and a.s. the potential kernel is well defined. Let $\wh{X}$, $\wh{Y}$ be two independent CRW's  started from $x, y \in v(G)$ respectively, avoiding $o$. Then a.s.
		\[
			\PP(|\{\wh{X}_n: n \in \NN\} \cap \{\wh{Y}_n: y \in \NN\} | = \infty) =1.
		\]
	\end{proposition}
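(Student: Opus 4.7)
My plan is to leverage the two preceding lemmas (Lemma \ref{lemma: reversible graphs are delta-good} and Lemma \ref{lemma: A inf delta-good then recurrent}) and apply the second one twice: once to show that a single conditioned walk visits the set of $\delta$-good points infinitely often, and a second time to show that the other walk must return to that (random) visited set.

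Concretely, fix any $\delta < 1/2$ and set $\mathcal{G}_\delta := \{x \in v(G) : \hm_{o,x}(x) \geq \delta\}$. By Lemma \ref{lemma: reversible graphs are delta-good}, $(G,o)$ is almost surely $\delta$-good, so $\mathcal{G}_\delta$ is a.s.\ infinite. Trivially, $\mathcal{G}_\delta$ contains infinitely many $\delta$-good vertices (all of its elements are $\delta$-good), so Lemma \ref{lemma: A inf delta-good then recurrent} applied to $A = \mathcal{G}_\delta$ (which only involves the graph, not the particular walk) gives that $\mathcal{G}_\delta$ is recurrent for both $\wh{X}$ and $\wh{Y}$. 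Since by Proposition \ref{prop: properties of the CRW} the conditioned walk is transient, each vertex is visited finitely often, and therefore $\wh{Y}$ must visit infinitely many distinct vertices of $\mathcal{G}_\delta$; in other words, $|\text{trace}(\wh{Y}) \cap \mathcal{G}_\delta| = \infty$ almost surely.

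Now condition on $(G,o)$ and on the path of $\wh{Y}$. Since $\wh{X}$ and $\wh{Y}$ are independent, the (random) set
\[
A_Y := \text{trace}(\wh{Y}) \cap \mathcal{G}_\delta
\]
is, conditionally on $\wh{Y}$, a deterministic subset of $v(G)$, almost surely infinite, and consists entirely of $\delta$-good vertices. We may therefore reapply Lemma \ref{lemma: A inf delta-good then recurrent}, this time with $A = A_Y$ and walk $\wh{X}$, to conclude that $A_Y$ is recurrent for $\wh{X}$, hence $\wh{X}$ visits $A_Y$ infinitely often almost surely. Since $A_Y \subseteq \text{trace}(\wh{Y})$, this gives $|\text{trace}(\wh{X}) \cap \text{trace}(\wh{Y})| = \infty$ almost surely, as required.

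The only subtle point is the legitimacy of applying Lemma \ref{lemma: A inf delta-good then recurrent} to a random set $A_Y$: this is handled by conditioning on $\wh{Y}$, which is independent of $\wh{X}$, so that conditionally the hypothesis of the lemma (that $A$ is a fixed infinite set of $\delta$-good points) is satisfied almost surely. No new probabilistic estimates are needed beyond reversibility (used once, via Lemma \ref{lemma: reversible graphs are delta-good}) and transience of $\wh{X}, \wh{Y}$; the whole argument is a clean two-step application of the recurrence criterion for conditioned walk.
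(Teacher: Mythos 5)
Your proof is correct and follows essentially the same route as the paper: both arguments use Lemma \ref{lemma: reversible graphs are delta-good} to get an infinite set of $\delta$-good points (the paper takes $\delta = 1/3$) and then apply Lemma \ref{lemma: A inf delta-good then recurrent} twice, the second time to the random set of good points visited by the first walk, conditioning on that walk and using independence. The only (welcome) addition is your explicit remark that transience is needed to upgrade ``visits $A$ infinitely often in time'' to ``visits infinitely many distinct points of $A$'', a point the paper leaves implicit.
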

	\begin{proof}
		Suppose that $(G, o)$ has infinitely many $\frac{1}{3}$-good vertices, and call the set of such vertices $A := A(G, o)$. Since there are various sources of randomness here, it is useful to recall that $\P$
the underlying probability measure $\PP$ is always conditional on the rooted graph $(G, o)$.
Then by Lemma \ref{lemma: A inf delta-good then recurrent}, we know that
		\[
			\PP(|\{\wh{X}_n : n \in \NN\} \cap A| = \infty) = 1.
		\]
		Now, consider the set $B = \{\wh{X}_n: n \in \NN\} \cap A$. By definition, every point in $B$ is $\frac{1}{3}$-good. Since $\wh{Y}$ is independent of $\wh{X}$ (when conditioned on $(G, o)$), we can use Lemma \ref{lemma: A inf delta-good then recurrent} again to see that on an event of $\PP$-probability $1$,
		\[
			\PP(|\{\wh{Y}_n: n \in \NN\} \cap B| = \infty \mid \wh{X}) = 1
		\]
		Taking expectation w.r.t. $\wh{X}$ we deduce that the traces of $\wh{X}$ and $\wh{Y}$ intersect infinitely often  $\PP$-almost surely, conditioned on $(G, o)$ having infinitely many $\frac{1}{3}$-good vertices. However, Lemma \ref{lemma: reversible graphs are delta-good} implies that, under our assumptions on $(G, o)$, this happens with $\bf{P}$-probability one, showing the desired result.
	\end{proof}

	A consequence of the infinite intersection property is that the (random) network $(G, \wh{c} \:)$ is a.s. Liouville. Therefore we get a new proof of the already obtained (in Corollary \ref{cor: CRWliouville}) Liouville property for the conditioned walk,
but this time without using the Harnack inequality. On the other hand, \cite{BenCurienGeorga2012} proved that for planar graphs, the Liouville property is in fact equivalent to the infinite intersection property and this results extends without any additional arguments to the case of planar networks.

By  Proposition \ref{prop: Harnack CRW} and Corollary  \ref{cor: CRWliouville} we thus also obtain as a corollary of  \cite{BenCurienGeorga2012} the infinite intersection property for planar networks such that the potential kernel tends to infinity.
	
	\begin{proposition} \label{prop: PLANAR CRW traces intersect i.o.}
		Suppose $G$ is a (not necessarily reversible) planar graph satisfying the standing assumptions. Let $\wh{X}$ and $\wh{Y}$ be two independent CRW's avoiding $o$, started from $x, y \in v(G)$ respectively. Then
		\[
			\PP(|\{\wh{X}_n: n \in \NN\} \cap \{\wh{Y}_n: n \in \NN\}| = \infty) = 1.
		\]
	\end{proposition}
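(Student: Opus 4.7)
The plan is to invoke directly the theorem of Benjamini, Curien and Georgakopoulos \cite{BenCurienGeorga2012}, which characterizes the infinite intersection property on planar graphs as being equivalent to the Liouville property (in the transient case). The three ingredients needed for this are already available: (i) by the remark preceding Proposition \ref{prop: Harnack CRW}, the CRW is exactly the random walk on the weighted network $(G\setminus\{o\}, \hat c)$, where $\hat c(x,y) = a(x)a(y)$; since $G$ is planar, so is $G\setminus\{o\}$, and hence this network is planar; (ii) by Proposition \ref{prop: properties of the CRW}, this walk is transient; (iii) by Corollary \ref{cor: CRWliouville}, the network $(G\setminus\{o\}, \hat c)$ satisfies the Liouville property.

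The main step is therefore to verify that the Benjamini--Curien--Georgakopoulos argument, stated in \cite{BenCurienGeorga2012} for planar graphs with unit edge weights, carries over to planar networks with arbitrary positive edge weights. The proof there is essentially topological and relies on planarity plus a capacity-type estimate on intersection probabilities; these inputs are insensitive to the precise conductances, as long as the random walk is transient and Liouville. This is the only nontrivial aspect and is the main obstacle, but it is the same remark the authors are relying on when they state ``this results extends without any additional arguments to the case of planar networks''. Once this extension is in place, applied to $(G\setminus\{o\}, \hat c)$ it yields that two independent CRWs have almost surely infinite intersection of traces.

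Finally, to handle the fact that the two walks $\wh X, \wh Y$ start from arbitrary (possibly distinct) vertices $x,y$, I would note that the event $\{|\{\wh X_n\} \cap \{\wh Y_n\}| = \infty\}$ is invariant under finite modifications of either trajectory; since the invariant $\sigma$-algebra of the CRW is trivial (by Corollary \ref{cor: CRWliouville} applied to bounded harmonic functions), this event has probability $0$ or $1$, independent of the starting points. The theorem of \cite{BenCurienGeorga2012} applied to some convenient pair of starting vertices then gives that this probability equals $1$, concluding the proof.
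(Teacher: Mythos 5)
Your proposal is correct and follows essentially the same route as the paper: the authors also deduce the proposition from Proposition \ref{prop: Harnack CRW} and Corollary \ref{cor: CRWliouville} together with the equivalence of the Liouville property and the infinite intersection property for planar graphs from \cite{BenCurienGeorga2012}, asserting (as you do) that this equivalence extends to planar networks without additional arguments. Your extra remarks on the irrelevance of the starting points are a harmless refinement of the same argument.
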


	\begin{remark}
		It will be useful for us to recall that the infinite intersection property implies that one walk intersects the loop-erasure of the other:
		\[
			\PP(|\{\LE(\wh{X})_n: n \in \NN\} \cap \{\wh{Y}_n: n \in \NN\}| = \infty) = 1,
		\]
		where $\LE(\wh{X})$ is the Loop Erasure of $\wh{X}$ and $\wh{X}, \wh{Y}$ are two CRW's that don't hit the root $o$, started from $x, y$ respectively. See \cite{Lyons_2003} for this result.
	\end{remark}

	\section{(a) implies (d): One-endedness of the uniform spanning tree} \label{sec: UST one ended}
	In this section we show that the uniform spanning tree is one ended, provided that the underlying graph satisfies the standing assumptions and is either planar or unimodular. In particular, since on unimodular graphs $a(x) \to \infty$ along any sequence $x \to \infty$ (see Proposition \ref{prop: PK level-sets are exhaustion}) we prove that (a) implies (d) in Theorem \ref{T:main_equiv}.
	
	\begin{theorem} \label{T: UST one-ended}
		Suppose that $(G,o)$ is a reversible, recurrent graph for which the potential kernel is a.s. well defined and such that $a(x) \to \infty$ along any sequence $x \to \infty$. Then the uniform spanning tree is one-ended almost surely.
	\end{theorem}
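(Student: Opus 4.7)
My plan is to argue by contradiction: I suppose $\mathcal{T}$ has more than one end with positive probability and derive a contradiction by combining the infinite intersection property of the CRW (Proposition \ref{prop: REVERSIBLE CRW traces intersect io}) with a space-filling property of bi-infinite paths which, as noted in the Remark in Section~\ref{section: Introduction}, follows from reversibility via a mass-transport argument. If the number of ends is not a.s. one, then on a positive-probability event $\mathcal{T}$ contains a bi-infinite simple path; using unimodularity and reversibility one can arrange for this path $P$ to pass through $o$ almost surely, and I write $P^+, P^-$ for the two semi-infinite tails of $P$ emanating from $o$.

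I would then sample $\mathcal{T}$ via Wilson's algorithm rooted at $o$: for each vertex $v$ the branch $\gamma_v$ from $v$ to $o$ in $\mathcal{T}$ is distributed as $\LE(X^v)$, the loop-erasure of SRW from $v$ killed at $o$. For $v$ deep inside $P^+$ and $w$ deep inside $P^-$, the branches $\gamma_v, \gamma_w$ must coincide with the corresponding portions of $P^+$ and $P^-$ on their way to $o$. The crucial input is Lemma \ref{lemma: cRW does what it should}, which lets one compare $X^v$ conditioned not to hit $o$ before exiting $\Lambda_a(R)$ with the CRW $\hat X$ started from $v$; this allows $\gamma_v$ to be approximated by the loop-erasure of $\hat X$, and similarly $\gamma_w$ by the loop-erasure of an independent CRW $\hat Y$.

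I would then invoke the remark following Proposition \ref{prop: REVERSIBLE CRW traces intersect io}: one CRW almost surely intersects the loop-erasure of an independent CRW infinitely often. Applied here, this forces $\gamma_v \cap \gamma_w$ to be infinite when $v, w$ are sufficiently deep in their respective tails, hence $P^+ \cap P^-$ is infinite — contradicting the simplicity of $P$. The almost-space-filling property of $P$ enters at this step to guarantee that $v$ and $w$ can be chosen arbitrarily deep along each tail (and in particular that the two tails carry enough mass for the CRW comparison to be quantitative), while the hypothesis $a(x) \to \infty$ from Proposition \ref{prop: PK level-sets are exhaustion} ensures that the exhaustion $\Lambda_a(R)$ really captures the entire graph at infinity.

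The main obstacle I expect is to transfer the Radon--Nikodym comparison between SRW killed at $o$ and the CRW (which is pointwise on trajectories, via the $h$-transform by $a(\cdot, o)$) to the highly non-local operation of loop-erasure, and to combine this with the mass-transport step establishing the space-filling property of $P$. In particular, one must argue that the portion of $\gamma_v$ which witnesses the CRW intersection lies near $v$, where the SRW-versus-CRW comparison is valid, rather than near $o$, where the walks are forced to meet by construction and any ``intersection'' is trivial. This is the point at which reversibility and the fact that $a(x) \to \infty$ along any sequence $x \to \infty$ are both used essentially.
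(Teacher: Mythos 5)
Your outline correctly identifies the main ingredients that the paper actually uses (Wilson's algorithm, the CRW infinite intersection property, the path-reversal comparison of a killed walk with the CRW, and a mass-transport/stationarity argument to find far-away vertices on the putative bi-infinite path). However, the mechanism you propose to derive the contradiction does not work, for two distinct reasons.

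First, the branches $\gamma_v$ and $\gamma_w$ in Wilson's algorithm are \emph{not} independent: once $\gamma_v$ is sampled, the branch from $w$ is obtained by running a random walk from $w$ and stopping it the instant it hits $\gamma_v\cup\{o\}$, then loop-erasing. You cannot approximate $\gamma_v$ and $\gamma_w$ by two independent objects and then invoke an intersection theorem for independent walks; the whole point of the construction is that the second walk is killed on the first branch. Second, even in the independent setting, the intersection result you cite (the remark after Proposition \ref{prop: REVERSIBLE CRW traces intersect io}) gives that a CRW $\wh X$ hits the \emph{loop-erasure} of an independent CRW $\wh Y$ infinitely often; it does \emph{not} say that two independent loop-erasures $\LE(\wh X)$ and $\LE(\wh Y)$ meet infinitely often, and the paper carefully avoids claiming the latter. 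Your conclusion that ``$\gamma_v\cap\gamma_w$ is infinite'' therefore does not follow from the tools at hand. A third, smaller, issue: in Wilson's algorithm rooted at $o$, the branch $\LE(X^{v\to o})$ viewed near $o$ looks, after path reversal, like $\LE(\wh X)$ \emph{started from $o$}, not like a loop-erased CRW started from $v$, so even the marginal comparison you set up is pointed the wrong way.

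The paper's actual route is quite different and sharper. It uses Wilson's algorithm \emph{rooted at infinity}: sample first the branch $\LE(\wh Y)$ from $o$, then the branch from a far-away vertex $x$ by running a walk $Z$ from $x$ stopped when it first hits $\LE(\wh Y)$, at time $T$. The key observation is that if $o$ and $x$ both lie on a bi-infinite path of $\mathcal T$ (which, since $\mathcal T$ is at most two-ended by \cite{AldousLyonsUnimod2007}, must then be the \emph{same} bi-infinite path) and $x$ is in the past of $o$ with respect to the orientation inherited from Wilson's algorithm, then necessarily $Z_T=o$. Lemma \ref{L: UST limsup X hits LE at o} shows $\sup_{u\notin\Lambda_a(M)}\mathbb P_u(Z_T=o)\to 0$ as $M\to\infty$; this is where the infinite intersection property is used, via the excursion-decomposition bound together with path reversal, which turns the event $\{Z_T=o\}$ into the event that a walk from $o$, conditioned to reach $u$ before returning, avoids $\LE(\wh Y)$ except at $o$ — and by Lemma \ref{L: RW cond to hit u is CRW} such a walk locally resembles a CRW, which by the intersection property must hit $\LE(\wh Y)$ somewhere other than $o$ with high probability. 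The final contradiction comes from a Bonferroni-and-stationarity argument (with $k\approx 2/\epsilon$ walks from $o$) showing that if $\mathbf P(\mathcal A_2(o))\ge\epsilon$ then $\mathbf P(\mathcal A_2(o)\cap\mathcal A_2(X_{2n}))$ stays bounded below, even with $X_{2n}$ forced outside $\Lambda_a(M)$ for $n$ large by null recurrence — which violates the hitting estimate. To repair your argument you would need to replace the ``infinite intersection of two loop-erased branches'' step with this hitting-at-$o$ estimate, and to supply the stationarity/Bonferroni argument that your ``space-filling'' remark only gestures at.
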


Before proving this theorem, we start with a few preparatory lemmas. We will write $\c{T}$ to denote the uniform spanning tree and begin by recalling the following ``path reversal'' for the simple random walk, a standard result. In what follows, fix the vertex $o \in v(G)$, but it plays no particular role (in the random setting) other than to simplify the notation.
	
	\begin{lemma}[Path reversal] \label{lemma: path reversal of RW}
		Let $o, u \in v(G)$. For any subset of paths $\c{P}$
		\[
			\PP_u((X_n: n \leq T_o) \in \c{P} \mid T_o < T_u^+) = \PP_o((X_n: n \leq T_u) \in \c{P}' \mid T_u < T_o^+),
		\]
		where a path $\varphi \in \c{P}'$ if and only if the reversal of the path is in $\c{P}$.
	\end{lemma}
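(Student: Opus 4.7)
The plan is to verify the identity path by path. Fix any finite path $\varphi = (\varphi_0, \varphi_1, \ldots, \varphi_k)$ with $\varphi_0 = u$, $\varphi_k = o$, and $\varphi_i \notin \{u, o\}$ for $0 < i < k$; these are exactly the finite paths that contribute to the conditional distribution on the left-hand side, and their reversals $\varphi' = (\varphi_k, \varphi_{k-1}, \ldots, \varphi_0)$ are the paths that contribute on the right-hand side. It suffices to show that each such $\varphi$ and its reversal $\varphi'$ carry equal conditional mass, since summing over $\varphi \in \c{P}$ then yields the lemma.

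Next I would compute the two raw (unconditioned) probabilities using the transition kernel $p(x,y) = 1/\deg(x)$ for $y \sim x$. This gives
\[
\PP_u(X_0, \ldots, X_k) = \varphi) = \prod_{i=0}^{k-1} \frac{1}{\deg(\varphi_i)}, \qquad \PP_o((X_0, \ldots, X_k) = \varphi') = \prod_{i=0}^{k-1} \frac{1}{\deg(\varphi_{i+1})}.
\]
Taking the ratio, all factors cancel telescopically except the endpoints and one obtains $\PP_u(\varphi)/\PP_o(\varphi') = \deg(o)/\deg(u)$. This is just the standard $\deg(\cdot)$-reversibility of the simple random walk restricted to a single path.

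Finally I would remove the conditioning using the effective-resistance identity \eqref{eq:efres through flow}, which asserts that $\deg(u)\PP_u(T_o < T_u^+) = 1/\Reff(u \leftrightarrow o) = \deg(o)\PP_o(T_u < T_o^+)$. Dividing the previous displayed identity through by these normalizing probabilities and applying this symmetry yields
\[
\frac{\PP_u((X_n : n \le T_o) = \varphi \mid T_o < T_u^+)}{\PP_o((X_n : n \le T_u) = \varphi' \mid T_u < T_o^+)} = \frac{\deg(o)}{\deg(u)} \cdot \frac{\deg(u)\PP_u(T_o < T_u^+)}{\deg(o)\PP_o(T_u < T_o^+)} = 1.
\]
Summing over $\varphi \in \c{P}$ gives the claimed identity. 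There is no real obstacle: everything reduces to the telescoping degree cancellation plus the symmetry of effective resistance, both of which are elementary. The only mild subtlety is to make sure we restrict attention to paths of the correct form (staying away from $\{u,o\}$ strictly between endpoints), but this is forced by the conditioning events $\{T_o < T_u^+\}$ and $\{T_u < T_o^+\}$ and mirrors the set of paths indexed by $\c{P}$ and $\c{P}'$.
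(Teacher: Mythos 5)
Your approach is the standard one, and the paper does not prove this lemma itself (it cites Exercise 2.1d in Lyons--Peres), so you are in fact supplying the proof that the authors chose to omit. The reduction to single paths with endpoints in $\{u,o\}$ and interior avoiding $\{u,o\}$, the telescoping cancellation giving $\PP_u(\varphi)/\PP_o(\varphi') = \deg(o)/\deg(u)$, and the use of the symmetry $\deg(u)\PP_u(T_o < T_u^+) = \deg(o)\PP_o(T_u < T_o^+)$ to absorb the degree factor are all exactly right.

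However, the final displayed chain contains an algebraic slip: the second factor is written upside down. When you divide the unconditional identity by the normalizing constants you get
\[
\frac{\PP_u(\varphi \mid T_o < T_u^+)}{\PP_o(\varphi' \mid T_u < T_o^+)}
= \frac{\PP_u(\varphi)}{\PP_o(\varphi')}\cdot\frac{\PP_o(T_u < T_o^+)}{\PP_u(T_o < T_u^+)}
= \frac{\deg(o)}{\deg(u)}\cdot\frac{\deg(u)}{\deg(o)} = 1,
\]
i.e.\ the second fraction should be $\PP_o(T_u < T_o^+)/\PP_u(T_o < T_u^+)$, which the symmetry of effective resistance identifies with $\deg(u)/\deg(o)$. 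As you wrote it, the second factor $\tfrac{\deg(u)\PP_u(T_o < T_u^+)}{\deg(o)\PP_o(T_u < T_o^+)}$ is itself equal to $1$ by that same symmetry, so your displayed chain actually evaluates to $\deg(o)/\deg(u)$, not to $1$. This is only a bookkeeping inversion and not a conceptual gap --- with the fraction flipped, the argument closes cleanly.
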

	
See Exercise (2.1d) in \cite{LyonsPeresProbNetworks}. The next result says that the random walk started from $o$ and stopped when hitting $u$, conditioned to hit $u$ before returning to $o$ looks locally like a conditioned random walk when $u$ is far away. This is an extension of Lemma \ref{lemma: cRW does what it should} and its proof is similar.
	\begin{lemma} \label{L: RW cond to hit u is CRW}
		For each $M \in \NN$ and $\epsilon > 0$, there exists an $L$ such that for all $u \notin \Lambda_a(L)$ and uniformly over all paths $\varphi$ going from $o$ to $\partial \Lambda_a(M)$,
		\[
			\PP_o((X_0, \ldots, X_{T_M}) = \varphi \mid T_u < T_o^+) = \PP_o((\wh{X}_0, \ldots, \wh{X}_{T_M}) = \varphi) \pm \epsilon.
		\]
	\end{lemma}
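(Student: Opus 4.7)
The plan is to compute both sides via Green-function/potential-kernel identities and read off the comparison from Corollary \ref{cor: a(x, z) - a(y, z) goes to 0 as z to infinity}. I will take $L \ge M$, so that $u \notin \Lambda_a(L)$ forces $u \notin \Lambda_a(M)$ and hence $T_u > T_M$ under $\PP_o$. If the path $\varphi$ revisits $o$ at some time in $\{1, \ldots, T_M\}$, both sides are zero: the left because then $T_o^+ \le T_M < T_u$, incompatible with $T_u < T_o^+$; the right because $\wh p(x, o) = a(o)p(x, o)/a(x) = 0$, so the conditioned walk never returns to $o$.

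Assume henceforth that $\varphi$ does not revisit $o$, and set $\xi := \varphi_{T_M} \in \partial \Lambda_a(M)$. The strong Markov property at time $T_M$ yields
$$\PP_o(X = \varphi,\, T_u < T_o^+) = \PP_o(X = \varphi)\,\PP_\xi(T_u < T_o).$$
The denominator is handled exactly by \eqref{eq:efres through flow}: $\PP_o(T_u < T_o^+) = 1/(\deg(o)\,\Reff(o \leftrightarrow u))$. For the numerator I will use $\PP_\xi(T_u < T_o) = \Gr_o(\xi, u)/\Gr_o(u, u)$ combined with Proposition \ref{prop: green function and PK}, which gives
$$\frac{\Gr_o(\xi, u)}{\deg(u)} = a(\xi, o) - a(\xi, u) + a(o, u), \qquad \frac{\Gr_o(u, u)}{\deg(u)} = \Reff(o \leftrightarrow u).$$
Dividing through yields
$$\frac{\PP_\xi(T_u < T_o)}{\PP_o(T_u < T_o^+)} = \deg(o)\bigl(a(\xi, o) - [a(\xi, u) - a(o, u)]\bigr),$$
and Corollary \ref{cor: a(x, z) - a(y, z) goes to 0 as z to infinity} (applied with $x = \xi$, $z = o$, $y = u$) shows the bracket tends to $0$ as $u \to \infty$, uniformly over $\xi$ in the finite set $\partial \Lambda_a(M)$. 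So the ratio equals $\deg(o)\,a(\xi) + o_u(1)$.

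On the other hand, the Doob transform gives $\PP_o(\wh X_1 = \varphi_1) = a(\varphi_1)$ and $\wh p(x, y) = a(y)p(x, y)/a(x)$ for $x \neq o$, so the $a$-factors telescope and $p(o, \varphi_1) = 1/\deg(o)$ produces
$$\PP_o(\wh X = \varphi) = a(\varphi_1)\prod_{i=1}^{T_M - 1}\frac{a(\varphi_{i+1})}{a(\varphi_i)}\,p(\varphi_i, \varphi_{i+1}) = a(\xi)\prod_{i=1}^{T_M - 1}p(\varphi_i, \varphi_{i+1}) = \deg(o)\,a(\xi)\,\PP_o(X = \varphi).$$
Combining, $\PP_o(X = \varphi \mid T_u < T_o^+) = \PP_o(\wh X = \varphi) + o_u(1)\PP_o(X = \varphi)$, and since $\PP_o(X = \varphi) \le 1$ this is within $\epsilon$ of $\PP_o(\wh X = \varphi)$ uniformly in $\varphi$ once $L$ is chosen large enough. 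The only real subtlety is that one cannot apply optional stopping to $a(\cdot, o)$ directly at $T_{\{o, u\}}$ (since $a$ is unbounded along the walk path), which is precisely why the error in the lemma must be $\epsilon$ rather than identically $0$; Corollary \ref{cor: a(x, z) - a(y, z) goes to 0 as z to infinity} supplies the asymptotic substitute.
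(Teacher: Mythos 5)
Your proof is correct and follows essentially the same route as the paper's: strong Markov at $T_M$, identification of the ratio $\PP_{\xi}(T_u<T_o)/(\deg(o)\PP_o(T_u<T_o^+))$ with $a(\xi)+o_u(1)$, and the telescoping of the $h$-transform weights. The only difference is that you explicitly justify the uniform convergence of that ratio (via Proposition \ref{prop: green function and PK} and Corollary \ref{cor: a(x, z) - a(y, z) goes to 0 as z to infinity}, equivalently via $a_{\{u\}}(\xi,o)\to a(\xi,o)$), a step the paper simply asserts by choosing $L$ large.
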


	\begin{proof}
		Fix $M \in \NN$ and $\epsilon > 0$. Let $\varphi$ be some path $o$ to $\Lambda_a(M)$ not returning to $o$. Denote by $\varphi_{end} \in \partial \Lambda_a(M)$ the endpoint of such a path. By the Markov property for the simple walk
		\begin{align*}
			\PP_o((X_o, \ldots, X_{T_M}) = \varphi, T_u < T_o^+) &= \PP_o((X_{o}, \ldots, X_{T_M}) = \varphi)\PP_{\varphi_{end}}(T_u < T_o).
		\end{align*}
		Now, take $L$ so large that uniformly over $x \in \Lambda_a(M)$ with $x \neq o$,
		\[
			\frac{\PP_{x}(T_u < T_o)}{\deg(o)\PP_o(T_u < T_o^+)} = a(x) \pm \epsilon
		\]
		By definition, we have that
		\[
			\PP_o(X_1 = \varphi_1) = \frac{1}{\deg(o)},
		\]
		yet $\PP_o(\wh{X}_1 = \varphi_1) = a(\varphi_1)$. Therefore, and by definition of the $h$-transform,
		\[
			\PP_o((X_o, \ldots, X_{T_M}) = \varphi, T_u < T_o^+) = \PP_o((\wh{X}_o, \ldots, \wh{X}_{T_M}))\frac{1}{\deg(o)a(\varphi_{end})}\PP_{\varphi_{end}}(T_u < T_o),
		\]
		so that after dividing both sides through $\PP_o(T_u < T_o^+)$, we have
		\[
			\PP_o((X_o, \ldots, X_{T_M}) = \varphi \mid T_u < T_o^+) = \PP_o((\wh{X}_o, \ldots, \wh{X}_{T_M}) = \varphi) \pm \epsilon
		\]
		as desired.
	\end{proof}

	We will say that the graph satisfies an infinite intersection property for the CRW whenever
	\begin{equation} \label{E: CRW IP} \tag{cIP}
		\PP(|\{\wh{X}_n: n \in \NN\} \cap \{\LE(\wh{Y})_n: n \in \NN\}| = \infty) = 1.
	\end{equation}
	
	Next, under the assumption \eqref{E: CRW IP} it holds that as $u \to \infty$, a simple random walk started at $u$ is very unlikely to hit $\LE(\wh{Y})$ in $o$. This is the key property which gives one-endedness of the UST.
	
	\begin{lemma} \label{L: UST limsup X hits LE at o}
		Suppose \eqref{E: CRW IP} holds, then
		\[
			\limsup_{M \to \infty} \sup_{u \notin \Lambda_a(M)} \PP_u(X_{T_{\LE(\wh{Y})}} = o) = 0,
		\]
		where $X$ is a simple random walk started at $u$ and $\wh{Y}$ an independent conditioned walk started at $o$.
	\end{lemma}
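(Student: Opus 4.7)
The strategy is to turn the problem around using path reversal: instead of studying a walk from far-away $u$ hitting $o$ before $\LE(\hat Y)\setminus\{o\}$, we study a walk from $o$ hitting $u$ before $\LE(\hat Y)\setminus\{o\}$, conditioned to reach $u$ before returning to $o$. This conditioned walk is pathwise close to the CRW $\wh X$ by Lemma \ref{L: RW cond to hit u is CRW}, and the infinite intersection property \eqref{E: CRW IP} will then force the probability to be small. Writing $S := T_{\LE(\wh Y)\setminus\{o\}}$ for brevity, a one-step restart argument at $u$, decomposing according to whether or not $T_u^+ < T_o \wedge S$, yields
\[
\PP_u(T_o < S) = \frac{\PP_u(T_o < T_u^+ \wedge S)}{\PP_u(T_o \wedge S < T_u^+)} \le \frac{\PP_u(T_o < T_u^+ \wedge S)}{\PP_u(T_o < T_u^+)} = \PP_u(T_o < S \mid T_o < T_u^+).
\]
Applying Lemma \ref{lemma: path reversal of RW} to the (reversal-symmetric) event ``the path avoids $\LE(\wh Y) \setminus \{o\}$'' then gives
\[
\PP_u(T_o < S \mid T_o < T_u^+) = \PP_o(T_u < S \mid T_u < T_o^+).
\]

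Since $o \in \Lambda_a(M)$ and $u \notin \Lambda_a(M)$, any trajectory from $o$ reaching $u$ must first cross $\partial \Lambda_a(M)$; hence
\[
\PP_o(T_u < S \mid T_u < T_o^+) \le \PP_o(T_{\partial \Lambda_a(M)} < S \mid T_u < T_o^+).
\]
The event on the right is measurable with respect to the trajectory stopped at time $T_{\partial \Lambda_a(M)} \wedge S \le T_{\partial \Lambda_a(M)}$, and on this event the trajectory ends at $\partial \Lambda_a(M)$. Inspecting the proof of Lemma \ref{L: RW cond to hit u is CRW}, the ratio of the $\PP_o(\,\cdot \mid T_u < T_o^+)$-probability to the $\wh X$-probability of any such path $\varphi$ equals $\PP_{\varphi_{\mathrm{end}}}(T_u < T_o)/[\deg(o)\,a(\varphi_{\mathrm{end}})\,\PP_o(T_u < T_o^+)]$, which tends to $1$ uniformly over endpoints $\varphi_{\mathrm{end}} \in \partial \Lambda_a(M)$ as $u \to \infty$. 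Hence for any $\epsilon > 0$ there exists $L = L(M,\epsilon) \ge M$ such that for all $u \notin \Lambda_a(L)$,
\[
\PP_o(T_{\partial \Lambda_a(M)} < S \mid T_u < T_o^+) \le (1+\epsilon)\,\PP\bigl(\wh T_{\partial \Lambda_a(M)} < \wh T_{\LE(\wh Y)\setminus\{o\}}\bigr),
\]
where the right-hand side now refers to an independent CRW $\wh X$ from $o$.

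It remains to show the right-hand side vanishes as $M \to \infty$. By \eqref{E: CRW IP}, $\wh X$ intersects $\LE(\wh Y)$ infinitely often almost surely; since $\wh X_n \neq o$ for $n \ge 1$, this forces $\wh T_{\LE(\wh Y)\setminus\{o\}} < \infty$ a.s. On the other hand, each $\Lambda_a(M)$ is finite and $\cup_M \Lambda_a(M) = v(G)$ (using that $a(x)\to\infty$), so $\wh T_{\partial \Lambda_a(M)} \to \infty$ a.s.\ as $M \to \infty$. Thus $\PP(\wh T_{\partial \Lambda_a(M)} < \wh T_{\LE(\wh Y)\setminus\{o\}}) \to 0$, and combined with the previous chain of inequalities this yields $\sup_{u \notin \Lambda_a(L)} \PP_u(X_{T_{\LE(\wh Y)}} = o) \to 0$ as $L \to \infty$, which is the claim. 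The only delicate point is the passage from the per-path comparison of Lemma \ref{L: RW cond to hit u is CRW} to a bound on the event $\{T_{\partial \Lambda_a(M)} < S\}$: an additive $\pm\epsilon$ control would be insufficient since infinitely many paths contribute, but the multiplicative factor $1 + O(\epsilon/M)$ extracted from that lemma's proof is precisely what is needed.
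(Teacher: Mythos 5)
Your proof is correct and follows essentially the same route as the paper's: an excursion/renewal decomposition at $u$ to remove the return-conditioning, path reversal to transfer the question to a walk from $o$ conditioned to reach $u$ before returning to $o$, comparison of that walk with the conditioned walk $\wh{X}$ via Lemma \ref{L: RW cond to hit u is CRW}, and the intersection property \eqref{E: CRW IP} to make the resulting CRW probability small for large $M$. Your observation that the per-path comparison must be used in its uniform multiplicative form (extracted from the proof of Lemma \ref{L: RW cond to hit u is CRW}) rather than as a per-path additive bound is a valid and slightly more careful rendering of the step the paper states as \eqref{E: CRW X_n vs whX_n hits A}.
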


	\begin{proof}
		Let $A$ be any simple path from $o$ to infinity in $G$. Then
		\begin{equation}\label{eq:excdom}
			\PP_u(X_{T_{A}} = o) \leq \PP_u(\{X_n: n \leq T_o\} \cap A = \{o\} \mid T_o < T_u^+).
		\end{equation}
To see this, it is useful to recall that the successive excursions (or loops) from $u$ to $u$ forms a sequence $(Z_1, Z_2, \ldots)$ of i.i.d. paths (with a.s. finite length). Let $N$ be the index of the first excursion which touches $o$. Then the law of $Z_N$, up to its hitting time of $o$, is that of $\P_u( \cdot | T_o < T_u^+)$. Furthermore, on the event $ \{ X_{T_{A}} = o\}$ it is necessarily the case that:
\begin{itemize}
  \item $Z_1, \ldots, Z_{N-1}$ avoid $A$. 
  \item $Z_N$ touches $A$ for the first time in $o$. 
\end{itemize}
When we ignore the first point above, we therefore obtain the upper-bound \eqref{eq:excdom}.

By Lemma \ref{lemma: path reversal of RW}, the right hand side is equal to 
		\[
			\PP_o(\{X_n: n \leq T_u\} \cap A = \{o\} \mid T_u < T_o^+). 
		\]
		Therefore, it suffices to show that this converges uniformly to zero over $u \in \Lambda_a(M)^c$, as $M \to \infty$.
		
		Let $\wh{X}$ be a CRW, started at $o$. Fix $\epsilon > 0$ and let $M$ be some integer to be fixed later. Take $L = L(M, \epsilon)$ large enough so that
		\begin{equation} \label{E: CRW X_n vs whX_n hits A}
			\PP_o(\{X_n: n \leq T_M\} \cap A = \{o\} \mid T_u < T_o^+) \leq \PP_o(\{\wh{X}_n: n \leq T_M\} \cap A = \{o\}) + \frac{\epsilon}{2},
		\end{equation}
		for all $u \notin \Lambda_a(L)$, which is possible by Lemma \ref{L: RW cond to hit u is CRW} (note that $L$ depends only on $\epsilon$ and $M$, in particular does not depend on the choice of $A$). Next, take $M$ so large that
		\begin{equation} \label{E CRW avoid LECRW inside B(M)}
			\PP(\{\wh{X}_n: n \leq T_M\} \cap \{\LE(\wh{Y})_n: n \in \NN\} \neq \{o\}) \geq 1 - \frac{\epsilon}{2},
		\end{equation}
		where $\wh{Y}$ is an independent CRW. This is possible by the intersection property \eqref{E: CRW IP} and by monotonicity. Hence for $u \notin \Lambda_a(L)$, combining \eqref{E: CRW X_n vs whX_n hits A} and \eqref{E CRW avoid LECRW inside B(M)}, conditioning on  $\LE(\wh{Y})$,
		\[
			\PP_u(X_{T_{\LE(\wh{Y})}} = o) \leq  \epsilon
		\]
	As $\epsilon$ was arbitrary, this shows the result.
	\end{proof}

	\subsubsection*{Wilson's algorithm rooted at infinity}
	Recall Wilson's algorithm for recurrent graphs: let $I = (v_0, v_1, \ldots)$ be any enumeration of the vertices $v(G)$. Fix $E_0 = \{v_0\}$ and define inductively $E_{i + 1}$ given $E_i$, to be $E_i$ together with the loop erasure of an (independent) simple random walk started at $v_{i + 1}$ and stopped when hitting $E_{i}$. Set $E = E(I) = \cup_{i \geq 0} E_i$. Then Wilson's algorithm tells us that the spanning tree $E$ is in fact a uniform spanning tree (i.e., its law is the weak limit of uniform spanning trees on exhaustions) and in particular, its law does not depend on $I$, see Wilson \cite{WilsonAlgorithm} for finite graphs and e.g. \cite{LyonsPeresProbNetworks} for infinite recurrent graphs.
	
	Since the conditioned random walk is well defined, we can also start differently: namely take again some enumeration $I = (v_0, \ldots)$ of $v(G)$. Define $F_0 = \LE(\wh{X})$, started at $v_0$ say and let $F_{i + 1}$ be $F_i$ together with the loop erasure of a simple random walk started at $v_{i + 1}$ and stopped when hitting $F_{i}$. Define $F = F(I) = \cup_{i \geq 0} F_i$. It is not hard to see that again, $F$ is a spanning tree of $G$ (the idea is that the loops formed by the walk coming back to the origin are erased anyway, so one might as well consider the conditioned walk). This is called ``Wilson's algorithm rooted at infinity''.
A similar idea was first introduced for transient graphs in \cite{BLPS} and later defined for $\ZZ^2$.
	
	\begin{lemma}[Wilson's algorithm rooted at infinity] \label{L: UST Wilson's algorithm}
		The spanning tree $F$ is a uniform spanning tree.
	\end{lemma}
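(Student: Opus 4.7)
The plan is to realize $F$ as the limit of wired USTs on a finite exhaustion, and then to use that the wired UST on a finite exhaustion converges to the UST on $G$. Let $(G_n)_{n \ge 1}$ be a finite exhaustion of $G$, and let $G_n^*$ be $G_n$ with the complement identified to a single vertex $\partial_n$. It is standard that the UST on $G_n^*$ converges weakly to the wired UST $\c{T}$ on $G$. On the other hand, by Wilson's algorithm on the finite graph $G_n^*$, one can sample the UST on $G_n^*$ using the enumeration $(\partial_n, v_0, v_1, \ldots)$: the first step produces $\LE(X^{(n)})$, the loop-erasure of a simple random walk from $v_0$ stopped at $\partial_n$, and each subsequent step uses a SRW from $v_i$ stopped when it hits the current tree. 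Call the resulting tree $\c{T}_n$. It suffices to prove that $\c{T}_n$ converges weakly to $F$.

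For the convergence of the first step, observe that since loops from $v_0$ to $v_0$ are erased, $\LE(X^{(n)})$ coincides with the loop-erasure of the last excursion of $X^{(n)}$ away from $v_0$, i.e. of a SRW started at $v_0$ conditioned on $\{T_{\partial_n} < T_{v_0}^+\}$. Fix $M \ge 1$; by Lemma \ref{L: RW cond to hit u is CRW}, as $n \to \infty$ the law of the initial segment of this conditioned walk up to $T_{\partial \Lambda_a(M)}$ converges in total variation to the law of the initial segment of the CRW $\wh{X}$ up to $T_{\partial \Lambda_a(M)}$. Consequently, the restriction of $\LE(X^{(n)})$ to $\Lambda_a(M)$ converges in law to the restriction of $\LE(\wh{X}) = F_0$ to $\Lambda_a(M)$. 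Since $\wh{X}$ is transient (Proposition \ref{prop: properties of the CRW}) and $a(x) \to \infty$, the loop-erasure $\LE(\wh{X})$ exits $\Lambda_a(M)$ only finitely often, so letting $M \to \infty$ we get convergence of the full first step of Wilson's algorithm on $G_n^*$ to $F_0$.

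For the subsequent steps, the $i$-th walk in the $G_n^*$ algorithm is a SRW from $v_i$ stopped on hitting $\{\partial_n\} \cup \bigcup_{j<i} \LE^{(n)}_j$, while the $i$-th walk in $F$ is a SRW from $v_i$ stopped on hitting $F_{i-1}$. Given that $\LE^{(n)}_0, \ldots, \LE^{(n)}_{i-1}$ agree with $F_0, \ldots, F_{i-1}$ inside $\Lambda_a(M)$ with high probability, and since $G$ is recurrent so the walk from $v_i$ a.s.\ hits the infinite set $F_0$ (hence hits $F_{i-1}$) in finite time, on the event that this hitting time occurs before exiting $\Lambda_a(M)$ the two walks agree. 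Taking $M$ large to make this event have probability arbitrarily close to one, we inductively obtain that for every $k$, the first $k$ steps of the algorithm on $G_n^*$ converge jointly in law to $F_0, F_1, \ldots, F_{k-1}$. This gives the weak convergence $\c{T}_n \to F$ in the local topology on spanning trees; combined with $\c{T}_n \to \c{T}$, we conclude $F$ has the law of the wired UST.

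The main obstacle is the bookkeeping in the first step: we must argue carefully that the SRW conditioned to reach $\partial_n$ before returning to $v_0$ — not merely the CRW's local marginals, but its loop-erasure — converges to $\LE(\wh{X})$ in an appropriate local sense. This requires knowing that $\LE(\wh{X})$ is a \emph{well-defined infinite path} that escapes every finite set (which follows from transience of $\wh{X}$), so that only finitely many loops in any bounded region depend on the global behaviour; once this is in hand, the total-variation estimate from Lemma \ref{L: RW cond to hit u is CRW} passes through loop-erasure automatically.
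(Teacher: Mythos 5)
Your proof is correct, but it takes a different route from the paper's. You realise $F$ as the weak limit of Wilson's algorithm on the wired finite graphs $G_n^*$ rooted at $\partial_n$: you identify the first branch with the loop-erasure of the walk conditioned on $\{T_{\partial_n}<T_{v_0}^+\}$, pass this through Lemma \ref{L: RW cond to hit u is CRW} (extended from a single far vertex $u$ to the wired set $\partial_n$, which is harmless since the potential kernel is defined via arbitrary sequences of sets going to infinity), use transience of $\wh X$ to make the loop-erasure locally determined, and then handle all subsequent branches by induction. The paper instead works directly on the infinite graph: Wilson's algorithm is already known to produce the UST on infinite recurrent graphs and its law is invariant under the ordering of vertices, so it suffices to control a \emph{single} branch. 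Choosing the ordering $(o,z_n,v_2,\ldots)$ with $z_n\to\infty$ makes the first branch $\LE(X^{z_n\to o})$, whose law converges locally to that of $\LE(\wh X)$ by the path-reversal Lemma \ref{lemma: path reversal of RW} combined with Lemma \ref{L: RW cond to hit u is CRW}. What each buys: the paper's argument is shorter because ordering-invariance dispenses with the induction over later branches, at the price of invoking path reversal (and, implicitly, the distributional reversibility of loop-erasure); your argument avoids any reversal — the branch is already oriented away from $v_0$ — and is the more standard and robust justification of ``Wilson rooted at infinity'', but requires the two-scale bookkeeping (convergence of the walk up to $T_{\partial\Lambda_a(M')}$ for $M'\gg M$ controlling the loop-erasure inside $\Lambda_a(M)$, plus the inductive step for the remaining branches), which you sketch at about the same level of rigour as the paper.
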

	\begin{proof}
		Begin with $o$ and let $(z_n)_{n \geq 0}$ be some sequence of vertices going to infinity in $G$. Apply Wilson's algorithm with the orderings $I_n := (o, z_n, v_2, \ldots ) \equiv v(G)$, then the law of the first branch $E_1$ equals $\LE(X^{z_n \to o})$ by construction, where $X^{z_n \to o}$ is (the trace of) a random walk started at $z_n$ and stopped when hitting $o$. This law converges to $\LE(\wh{X})$ as $i \to \infty$ due to first the path-reversal (Lemma \ref{lemma: path reversal of RW}) and them Lemma \ref{L: RW cond to hit u is CRW}. Since Wilson's algorithm is independent of the ordering of $v(G)$, the result follows.
	\end{proof}

\paragraph{Orienting the UST.} When the UST is one-ended, it is always possible to unambiguously assign a consistent orientation to the edges (from each vertex there is a unique forward edge) such that the edges are oriented towards the unique end of the tree. Although we do not of course know \emph{a priori} that the UST is one-ended, it will be important for us to observe that the tree inherits such a consistent orientation from Wilson's algorithm rooted at infinity. Furthermore, this orientation does not depend on the ordering used in the algorithm. To see this, consider an exhaustion $G_n$ of the graph. Perform Wilson's algorithm (with initial boundary given by the boundary of $G_n$) and some given sequence of vertices. When adding the  branch containing the vertex $x$ to the tree by performing a loop-erased walk starting from $x$, orient these edges uniquely from $x$ to the boundary. 

We point out that it is not entirely clear \emph{a priori} that this orientation converges, the limit of the orientation does not on the exhaustion (indeed on $\ZZ$ the oriented tree converges but the orientation depends on the exhaustion, though the UST itself doesn't), and the law of the oriented doesn't depend on the sequence of vertices. But this follows readily from the fact that the loops at $x$ from a random walk starting from $x$ are all erased, so that the branch containing $x$ is obtained by loop-erasing a random walk conditioned to hit the boundary before returning to $x$, a process which has a limit as $n \to \infty$, is transient, and does not depend on the exhaustion used. The resulting orientation does not in fact depend on the ordering of vertices, since we can also see that this orientation is identical to the one where all edges are oriented towards $\partial G_n$, and the law of the tree itself does not depend on the ordering, as discussed before.

Once the oriented spanning tree has a limit, it is clear that the orientation of edges does not depend on the sequence of vertices, since this is true for all finite $n \ge 1$. Let $\vec{\mathcal{T}}$ denote the oriented unform spanning tree obtained in this fashion. Note that if $x,y$ are two vertices on a bi-infinite path of $\vec{\cT}$, then it makes sense to ask if $y$ is in the past of $x$ or vice-versa: exactly one of these alternatives must hold.

\medskip We are now ready to start with the proof of Theorem \ref{T: UST one-ended}.

	\begin{proof}[Proof of Theorem \ref{T: UST one-ended}]
		Notice that if $G$ is a graph satisfying the standing assumptions (recurrent, the potential kernel $a(x)$ is well defined and $a(x) \to \infty$ as $x \to \infty$) and is moreover planar or random and unimodular then (almost surely), $G$ satisfies the intersection property for CRW \eqref{E: CRW IP} due to Propositions \ref{prop: PLANAR CRW traces intersect i.o.} and \ref{prop: REVERSIBLE CRW traces intersect io} respectively.

Suppose $(G,o)$ is reversible, and satisfies the standing assumptions a.s. For a vertex $x$ of $G$, consider the event $\c{A}_2(x)$ that there are two \emph{disjoint and simple} paths from $x$ to infinity in the UST $\c{T}$, in other words there is a bi-infinite path going through $x$. Note that it is sufficient to prove
		\[
			\PP(\c{A}_2(x)) = 0
		\]
		for each $x \in v(G)$ a.s., where we remind the reader that here $\P$ is conditional given the graph (i.e., it is an average over the spanning tree $\cT$).
Indeed, for the tree $\c{T}$ to be more than one-ended, there must at least be some simple path in $\c{T}$ which goes to infinity in both directions. By biaising and unbiaising by the degree of the root to get a unimodular graph, it is sufficient to prove that $\P(\cA_2(o)) = 0$ a.s. Therefore it is sufficient to prove $\bf P (\cA_2(o)) = 0$, where we remind the reader that $\bf P$ is averaged also over the graph. Suppose for contradiction that $\bf P(\cA_2(o)) \ge \epsilon>0$. The idea will be to say that if this is the case then it is possible for both $\cA_2(o)$ and $\cA_2(x)$ to hold simultaneously, for many other vertices -- including vertices far away from $o$. However, $\cT$ is connected (since $G$ is recurrent) and by Theorem 6.2 and Proposition 7.1 in \cite{AldousLyonsUnimod2007}, $\cT$ is at most two-ended. Therefore the bi-infinite paths going through $x$ and $o$ must coincide: essentially, the bi-infinite path containing $o$ must be almost space-filling. 

Suppose $x$ is in the past of $o$ (which we can assume without loss of generality by reversibility). Using Wilson's algorithm rooted at infinity to sample first the path from $o$ and then that from $x$, the event $\cA_2(o) \cap \cA_2(x)$ requires a very unlikely behaviour: namely, a random walk starting from $x$ must hit the loop-erasure of the conditioned walk starting from $o$ exactly at $o$. This is precisely what Lemma \ref{L: UST limsup X hits LE at o} shows is unlikely, because of the infinite intersection properties.

Let us now give the details. Given $G$, we sample $k$ independent random walks $(X^1, \ldots, X^k)$ from $o$, independently of $\cT$, where $k = k(\epsilon)$ will be chosen below. Observe that by stationarity of $(G, o)$, we have for every $n\ge 0$,
$$
\bf P (\cA_2(X^i_n)) = \bf P (\cA_2(o)) \ge \epsilon.
$$
First we show that we can choose $k$ such that for every $n$, there is $i$ and $j$ such that  $\cA_2(X^i_n) \cap \cA_2(X^j_n)$ holds with $\bf P$- probability at least $\epsilon/2$. Indeed fix $n\ge 0$ arbitrarily for now, write $E_i = \cA_2(X^i_n)$. Then by the Bonferroni inequalities,
$$
\bf P ( \bigcup_{i=1}^k E_i) \ge \sum_{i=1}^k \bf P(E_i) - \sum_{1\le i \neq j\le k} \bf P (E_i \cap E_j)
$$
so that 
$$
\sum_{1\le i \neq j\le k} \bf P (E_i \cap E_j) \ge k \epsilon - \bf P ( \bigcup_{i=1}^k E_i) \ge k \epsilon - 1.
$$
Choose $k = \lceil 2/\epsilon\rceil$, then we deduce that for some $1\le i< j \le k$,
$$
\bf P ( E_i \cap E_j) \ge {k \choose 2}^{-1}.
$$
By stationarity (rerooting at the endpoint of the $i$th walk), and the Markov property of the walk, this implies
\begin{equation}
\bf P ( \cA_2(o) \cap \cA_2 (X_{2n}) ) \ge {k \choose 2}^{-1}.
\end{equation}
When $\cA_2(o) \cap \cA_2 (X_{2n})$ occurs, both $o$ and $X_{2n}$ are on some bi-infinite path, the two paths must coincide. By symmetry (i.e., reversibility) and invariance of the oriented tree $\vec{\cT}$ with respect to the ordering of vertices,
\begin{equation}
\label{eq:bothpath}
\bf P ( \cA_2(o) \cap \cA_2 (X_{2n}) ; X_{2n} \in \mathbf{Past}(o)) \ge \delta := (1/2) {k \choose 2}^{-1}.
\end{equation}
Let $\wh{Y}$ denote a conditioned walk starting from $o$ and let $\LE( \wh{Y})$ denote its loop-erasure, and let $Z$ be a random walk starting from a different vertex $x$. Now, pick $M$ large enough that for any $x \in \Lambda_a(M)^c$
\begin{equation}\label{eq:LEhit}
 \PP_x(Z_{T_{\LE(\wh{Y})}} = o)  \le \delta/3,
\end{equation}
which we may by Lemma \ref{L: UST limsup X hits LE at o}. Even though $M$ is random (depending only on the graph), observe that as $n \to \infty$, 
$$
\P( X_{2n} \in \Lambda_a(M)) \to 1
$$
since $G$ is a.s. null recurrent (as is any recurrent infinite graph). Therefore by dominated convergence, 
$$
\bf P (X_{2n} \in \Lambda_a(M)) \to 1.
$$
It follows using \eqref{eq:bothpath} that we may choose $n$ large enough that 
\begin{equation}\label{eq:bothpathfaraway}
\bf P ( \cA_2(o) \cap \cA_2 (X_{2n})  \cap \{ X_{2n } \in \mathbf{Past}(o)\} \cap\{ X_{2n} \notin \Lambda_a(M)\}) \ge 2 \delta /3.
\end{equation}
To conclude, we pick $n$ as above, and		
use Wilson's algorithm rooted at infinity (Lemma \ref{L: UST Wilson's algorithm}) by first sampling the path from $o$ (which is nothing else by $\LE (\wh{Y})$ and then sampling the path in $\vec{\cT}$ from $x = X_{2n}$, by loop-erasing a random walk $Z$ from this point, stopped at the time $T$ where it hits $\LE (\hat Y)$. As mentioned above, When $\cA_2(x) $ and $\cA_2(o)$ occur and $x$ is in the past of $o$, since $\cT$ is at most two-ended (by \cite{AldousLyonsUnimod2007}), it must be that $Z_T = o$. (If we do not specify that $x \in \mathbf{Past} (o)$ there might otherwise also be the possibility that $x$ itself was directly on the loop-erasure of the conditioned walk). Hence, using \eqref{eq:bothpathfaraway} and \eqref{eq:LEhit},
\begin{align*}
2\delta/3 &\le  \bf P ( \cA_2(o) \cap \cA_2 (X_{2n})  \cap\{X_{2n} \in \mathbf{Past} (o) \} \cap \{ X_{2n} \notin \Lambda_a(M)\}) \\
& \le \bf E (1_{\{Z_T = o\}} 1_{\{X_{2n} \notin \Lambda_a(M)\}} ) \\ 
&\le \bf E (  \P_{X_{2n}} (Z_T = o) 1_{ X_{2n} \notin \Lambda_a(M)} ) \le \delta /3,
\end{align*}
after conditioning on $X_{2n}$. This is a contradiction, and concludes the proof of Theorem \ref{T: UST one-ended} (and hence also that (a) implies (d) in Theorem \ref{T:main_equiv}).
\end{proof}

Furthermore, (d) is already known by \cite[Theorem 14.2]{BLPS} to imply (b), which we have already shown is equivalent to (a). This finishes the proof of Theorem \ref{T:main_equiv}.

	\begin{wrong-old}
	\subsubsection*{Wilson's algorithm rooted at infinity}
	Let $\mu$ denote the law of a uniform spanning tree of a recurrent graph $G$. Assume that the potential kernel is well-defined, so that the random walk conditioned to never return to its starting point is well-defined. Define Wilson's algorithm as follows.
	
	Take some ordering of $v(G)$, say $\{o, v_1, v_2, \ldots\}$ for the fixed vertex $o \in v(G)$. Let $E_0$ denote the trace of $\LE(\wh{X})$, where $\wh{X}$ is a CRW and $\LE(\cdot)$ denotes the loop erasure of this path. Given $E_i$, take $E_{i + 1}$ the loop erasure of an independent random walk started at $v_{i + 1}$ and stopped when hitting $E_{i}$, together with $E_i$. Notice that, if $v_{i + 1} \in E_i$, we get $E_{i + 1} = E_i$. Define $E = \bigcup_{i=1}^\infty E_i$, as a collection of vertices and edges. It is not hard to see that $E$ is a spanning tree of $G$.
	
	\begin{lemma}
		$E$ is a uniform spanning tree.
	\end{lemma}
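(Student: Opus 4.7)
The plan is to realize $E$ as a limit of uniform spanning trees obtained from the ordinary Wilson's algorithm applied to cleverly chosen enumerations of $v(G)$, and then to transfer the UST property to $E$ in the limit using the fact that $E$ is determined by its finite cylinders.

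For each $n \geq 1$, choose an enumeration $I_n = (o, z_n, v_2, v_3, \ldots)$ of $v(G)$, where $(z_n)_{n \geq 1}$ is a fixed sequence of vertices with $z_n \to \infty$ (e.g., $z_n$ with $a(z_n) \to \infty$), and $(v_2, v_3, \ldots)$ is a fixed enumeration of the remaining vertices that does not depend on $n$. Let $E^{(n)}$ be the spanning tree produced by Wilson's algorithm with this ordering: we start with $\{o\}$, and the first branch added is the loop-erasure of a simple random walk $X^{z_n \to o}$ started at $z_n$ and stopped when hitting $o$; subsequent branches are produced in the usual way from $v_2, v_3, \ldots$. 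By Wilson's theorem on recurrent graphs (see e.g.\ \cite{LyonsPeresProbNetworks}), $E^{(n)}$ has the law of the UST.

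The key step is to argue that, as $n \to \infty$, the first branch $\LE(X^{z_n \to o})$ converges in distribution (with respect to the local topology on paths) to $\LE(\wh{X})$, the loop-erasure of a CRW from $o$ reversed appropriately. For this, fix $M \geq 1$ and a path $\varphi$ starting at $o$ and ending on $\partial \Lambda_a(M)$. By path reversal (Lemma~\ref{lemma: path reversal of RW}), the law of the portion of $X^{z_n \to o}$ from $z_n$ back to the first time it enters $\Lambda_a(M)$ is the same as the reversed portion of a walk from $o$ conditioned on $\{T_{z_n} < T_o^+\}$, up to the last time it exits $\Lambda_a(M)$. Combined with Lemma~\ref{L: RW cond to hit u is CRW}, which states that conditioning on hitting $z_n$ before returning to $o$ converges uniformly on paths in $\Lambda_a(M)$ to the law of the CRW, this yields convergence of the loop-erasure restricted to $\Lambda_a(M)$. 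Since $M$ is arbitrary and the loop-erasure is a local functional given the path restricted to large balls (using transience of $\wh{X}$, Proposition~\ref{prop: properties of the CRW}), we obtain the desired convergence of the first branch.

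Once the first branch converges, the remaining branches of $E^{(n)}$ are obtained by an identical (deterministic) procedure --- running independent simple random walks from $v_2, v_3, \ldots$ stopped when they hit the current tree --- and hence $E^{(n)}$ converges in distribution to $E$ in the local topology on spanning trees. Since each $E^{(n)}$ has the UST law and the UST law is determined by its cylinder events, the limit $E$ also has the UST law. The main obstacle will be the technical step of showing that the convergence of the first branch is strong enough to imply joint convergence of the whole tree; this is handled by noting that for any finite window $W \subset v(G)$, the restriction $E^{(n)} \cap W$ is a function of only finitely many random walk trajectories run until they hit the current tree inside a slightly larger window, and transience of $\wh{X}$ together with Lemma~\ref{L: RW cond to hit u is CRW} ensures tightness of all the relevant hitting times.
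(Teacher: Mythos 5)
Your proposal is correct and follows essentially the same route as the paper: approximate by ordinary Wilson's algorithm with orderings $(o, z_n, v_2, \ldots)$, use path reversal together with Lemma \ref{L: RW cond to hit u is CRW} to show the first branch $\LE(X^{z_n \to o})$ converges to $\LE(\wh{X})$, and invoke the ordering-independence of Wilson's algorithm to conclude. You supply more detail than the paper on propagating the convergence of the first branch to the whole tree, but the underlying argument is identical.
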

	\begin{proof}
		Begin with $o$ and let $(z_i)_{i \geq 0}$ be some sequence of vertices going to infinity in $G$. Apply Wilson's algorithm with the orderings $I_i := (o, z_i, v_2, \ldots ) \equiv v(G)$, then the law of the first branch equals $\LE(X^{z_i \to o})$ by construction. This law converges to $\LE(\wh{X})$ as $i \to \infty$. Since Wilson's algorithm is independent of the ordering of $v(G)$, the result follows.
	\end{proof}
	
	Wilson's algorithm rooted at infinity offers more than just a uniform spanning tree: it gives an \emph{oriented} tree where, for each $x \in v(G)$, there exists exactly one vertex pointing outwards. We say such a tree is \textbf{rooted at infinity}. Write $\overrightarrow{T}$ for this oriented uniform spanning tree. We define the ``past'' of $x$ in the (rooted) spanning tree $\overrightarrow{T}$ to be the subtree of $\overrightarrow{T}$, made by the vertices such that there exists an oriented path in $\overrightarrow{T}$ to $x$ and denote this by $P(x)$. Similarly, we define the future $F(x)$ of $x$ to be the subtree $\overrightarrow{T} \setminus P(x)$ (and we include $x$ in both $F(x)$ and $P(x)$ for concreteness).
	
	\subsubsection*{The future contains almost everything.}
	Suppose $G$ satisfies the standing assumptions, and the ``intersection property''
	\begin{equation} 
		\PP(|\{\wh{X}_n: n \in \NN\} \cap \{\LE(\wh{Y})_n: n \in \NN\}| = \infty) = 1
	\end{equation}
	This holds for instance when $G$ satisfies the standing assumptions and is planar or unimodular. It is somewhat tempting to assume that this also implies that two loop erasures intersect infinitely often, yet this is an open question. However, due to Wislon's algorithm rooted at infinity, it does help to prove the following proposition.
	
	\begin{proposition} \label{P: UST past is almost empty}
		If $G$ satisfies the standing assumptions and \eqref{E: CRW IP}, then
		\[
			\limsup_{n \to \infty} \E \left[ \frac{|P(o) \cap B(n)|}{|B(n)|} \right] = 0.
		\]
	\end{proposition}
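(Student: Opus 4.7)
My plan is to use Wilson's algorithm rooted at infinity (Lemma \ref{L: UST Wilson's algorithm}) to reduce the event $\{x \in P(o)\}$ to a hitting event that I have already controlled in Lemma \ref{L: UST limsup X hits LE at o}, and then conclude by a first-moment argument based on the finiteness of the sublevel sets $\Lambda_a(M)$.

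First, I sample the UST via Wilson's algorithm rooted at infinity with the ordering $(o, x, v_2, \ldots)$. The first branch is the loop-erasure $\LE(\wh{Y})$ of a conditioned walk $\wh{Y}$ started at $o$, which is a simple path from $o$ to infinity. For a second vertex $x$, we then run an independent simple random walk $Z$ from $x$ and stop it at the first time $T$ it hits $\LE(\wh{Y})$; the branch from $x$ in the oriented UST is the loop-erasure of $(Z_0, \ldots, Z_T)$ concatenated with the tail of $\LE(\wh{Y})$ starting at $Z_T$. Since $\LE(\wh{Y})$ starts at $o$ and moves away from it toward infinity, the forward path from $x$ can contain $o$ only if $Z_T = o$. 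Hence for $x \neq o$,
\[
  \PP(x \in P(o)) \;=\; \PP_x\bigl(Z_{T_{\LE(\wh{Y})}} = o\bigr).
\]

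Next, fix $\epsilon > 0$. By Lemma \ref{L: UST limsup X hits LE at o} (which is where the intersection property \eqref{E: CRW IP} enters), there exists $M = M(\epsilon)$ such that
\[
  \sup_{x \notin \Lambda_a(M)} \PP_x\bigl(Z_{T_{\LE(\wh{Y})}} = o\bigr) \;\le\; \epsilon.
\]
Splitting $B(n) = (B(n) \cap \Lambda_a(M)) \cup (B(n) \setminus \Lambda_a(M))$ and using the crude bound $\PP(x \in P(o)) \le 1$ inside $\Lambda_a(M)$ (which is finite under the standing assumptions, by Proposition \ref{prop: PK level-sets are exhaustion} in the reversible case, or by hypothesis otherwise), the linearity of expectation gives
\[
  \E\bigl[|P(o) \cap B(n)|\bigr] \;=\; \sum_{x \in B(n)} \PP(x \in P(o)) \;\le\; |\Lambda_a(M)| \;+\; \epsilon\,|B(n)|.
\]

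Dividing by $|B(n)|$, which tends to infinity as $n \to \infty$ (the graph $G$ is infinite and locally finite), we obtain
\[
  \limsup_{n \to \infty} \E\!\left[\frac{|P(o) \cap B(n)|}{|B(n)|}\right] \;\le\; \epsilon,
\]
and letting $\epsilon \downarrow 0$ concludes the proof. The main conceptual step is the first one, namely recognising that under Wilson's algorithm rooted at infinity, membership in $P(o)$ is exactly the event that the random walk from $x$ first hits the spine $\LE(\wh{Y})$ precisely at its initial vertex $o$; once this reduction is in place, the content of the proposition is really packaged inside Lemma \ref{L: UST limsup X hits LE at o}, and the remaining obstacle is only the harmless fact that $|\Lambda_a(M)|$ is finite so that it becomes negligible after normalising by $|B(n)|$.
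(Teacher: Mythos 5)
Your proof is correct and follows essentially the same route as the paper's: identify $\PP(x \in P(o))$ via Wilson's algorithm rooted at infinity with the hitting event $\{Z_{T_{\LE(\wh{Y})}} = o\}$, invoke Lemma \ref{L: UST limsup X hits LE at o} to make this uniformly small outside a finite set $\Lambda_a(M)$, and finish by averaging over $B(n)$. The observation that $x \in P(o)$ forces $Z_T = o$ (because $o$ lies on $\LE(\wh{Y})$ and hence cannot be visited by $Z$ strictly before $T$) is exactly the reduction the paper uses.
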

	
	We begin with a preliminary result, which is an extension of Lemma \ref{lemma: cRW does what it should}.
	
	\begin{lemma} \label{L: CRW cond to hit u}
		Let $\wh{X}$ be a CRW. For each $M \geq 1$, uniformly over all paths $\varphi$ going from $o$ to $M$ and staying inside $B(M)$, and $u \in \partial B(R)$
		\[
			\PP((\wh{X}_0, \ldots, \wh{X}_{T_M}) = \varphi) = \PP((X_0, \ldots, X_{T_M}) = \varphi \mid T_u < T_0^+)(1 + o_R(1)).
		\]
	\end{lemma}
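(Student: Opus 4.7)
The plan is to essentially repeat the argument given in Lemma \ref{L: RW cond to hit u is CRW}, since Lemma \ref{L: CRW cond to hit u} is the same statement, up to replacing the sublevel sets $\Lambda_a(M), \Lambda_a(L)$ by metric balls $B(M), B(R)$ (both are finite, which is all that matters for uniformity).

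First I would use the definition of the conditioned walk as a Doob $h$-transform with $h = a(\cdot) = a(\cdot,o)$, together with the convention $\PP_o(\wh X_1 = x) = a(x)$, to write explicitly
\[
\PP_o((\wh X_0,\ldots,\wh X_{T_M}) = \varphi) \;=\; a(\varphi_{\mathrm{end}})\,\deg(o)\,\PP_o((X_0,\ldots,X_{T_M})=\varphi),
\]
where $\varphi_{\mathrm{end}} \in \partial B(M)$ is the endpoint of $\varphi$, because the transition probabilities along the path telescope and the only surviving factor is $a(\varphi_{\mathrm{end}})/p(o,\varphi_1) = a(\varphi_{\mathrm{end}})\deg(o)$.

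Next, on the SRW side, I would use the strong Markov property of $X$ at the stopping time $T_M$ to split
\[
\PP_o((X_0,\ldots,X_{T_M}) = \varphi,\; T_u < T_o^+) \;=\; \PP_o((X_0,\ldots,X_{T_M}) = \varphi)\,\PP_{\varphi_{\mathrm{end}}}(T_u < T_o),
\]
so that after dividing by $\PP_o(T_u<T_o^+)$ the conditional probability equals
\[
\PP_o((X_0,\ldots,X_{T_M})=\varphi)\,\frac{\PP_{\varphi_{\mathrm{end}}}(T_u<T_o)}{\PP_o(T_u<T_o^+)}.
\]

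The main step is then to identify $\PP_x(T_u<T_o)/\PP_o(T_u<T_o^+)$ with $\deg(o)\,a_{\{u\}}(x,o)$. Indeed, $\PP_o(T_u<T_o^+) = 1/(\deg(o)\Reff(o\leftrightarrow u))$ by \eqref{eq:efres through flow}, so the ratio equals $\deg(o)\,\Reff(o\leftrightarrow u)\PP_x(T_u<T_o)$, which is precisely $\deg(o)\,a_{\{u\}}(x,o)$ by \eqref{eq: a_n = Reff()*Prob}. By assumption the potential kernel is well defined, so this converges to $\deg(o)\,a(x)$ as $u\to\infty$; since $B(M)$ is a finite set the convergence is uniform in $x\in B(M)$. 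Combining this with the explicit expression from the first step yields
\[
\PP_o((X_0,\ldots,X_{T_M})=\varphi \mid T_u<T_o^+) \;=\; \PP_o((\wh X_0,\ldots,\wh X_{T_M})=\varphi)(1+o_R(1)),
\]
uniformly in $\varphi\subset B(M)$ and $u\in\partial B(R)$, as $R\to\infty$.

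There is no real obstacle here beyond bookkeeping: the only point requiring attention is the starting vertex $o$, where $a(o)=0$ forces the special convention for the first step of $\wh X$, but this is exactly compensated by the factor $\deg(o)$ coming from the ratio $\PP_{\varphi_{\mathrm{end}}}(T_u<T_o)/\PP_o(T_u<T_o^+)$. The uniformity over $\varphi$ reduces to uniformity of $a_{\{u\}}(\cdot,o)\to a(\cdot,o)$ on the finite set $B(M)$, which is automatic from the standing assumption that the potential kernel is well defined.
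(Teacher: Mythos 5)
Your proof is correct, and it is essentially the computation the paper itself uses for the twin statement, Lemma \ref{L: RW cond to hit u is CRW}: strong Markov at the exit time, the telescoping identity $\PP_o((\wh X_0,\ldots,\wh X_{T_M})=\varphi)=a(\varphi_{\mathrm{end}})\deg(o)\PP_o((X_0,\ldots,X_{T_M})=\varphi)$, and the identification of $\PP_x(T_u<T_o)/(\deg(o)\PP_o(T_u<T_o^+))$ with $a_{\{u\}}(x,o)$ via \eqref{eq:efres through flow} and \eqref{eq: a_n = Reff()*Prob}, followed by uniform convergence over the finite set $\partial B(M)$. The proof attached to the statement as labelled takes a different route: it invokes the Markov property of $\wh X$ together with Proposition \ref{prop: conditioned exit measure mixes} to argue that the initial segment $(\wh X_0,\ldots,\wh X_{T_M})$ is asymptotically independent of the exit location from the large ball, and deduces the comparison with the walk conditioned on $\{T_u<T_o^+\}$ from that mixing statement. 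Your argument is the more self-contained and quantitative of the two; the mixing argument generalises more readily to conditioning on events measurable with respect to the far tail of the walk, but as written it requires an extra step to pass from the conditioned walk's exit measure to the event $\{T_u<T_o^+\}$ for the simple random walk, which your direct computation avoids entirely. The only points worth making explicit in your write-up are (i) that paths returning to $o$ contribute zero to both sides, so the Markov decomposition at $T_M$ applies, and (ii) that the multiplicative error is interpreted trivially for endpoints $\varphi_{\mathrm{end}}$ with $a(\varphi_{\mathrm{end}})=0$, where both sides vanish.
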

	\begin{proof}
		Since $\wh{X}$ is a Markov process, the law of $\wh{X}_{0}, \ldots, \wh{X}_{T_M}$ depends on $\wh{X}_{T_M}, \ldots, \wh{X}_{T_u}$ only through $\wh{X}_{T_M}$. However, by Proposition \ref{prop: conditioned exit measure mixes} it follows that the exit measure
		\[
		\wh{\mu}_R(x, b) := \PP_x(\wh{X}_{T_R} = b)
		\]
		satisfies $\mu_R(x, b) = \mu_R(y, b)(1 + o_R(1))$ uniformly in $b \in \partial B(R)$ and $x, y \in B(M)$. This implies that $\wh{X}_0, \ldots, \wh{X}_{T_M}$ is asymptotically independent of where the walk exists $B(R)$.
	\end{proof}
	
	Next, we prove that the probability when $u \to \infty$, for the random walk started at $u$ to hit the loop erasure of $\wh{Y}$ at $o$ is asymptotically $0$.
	
	\begin{lemma}
		Let $G$ satisfy the standing assumptions and the intersection property for $\wh{X}$. Let $X$ denote a simple random walk started from $u$ and $\wh{Y}$ an independent CRW started at $o$. Then
		\[
			\limsup_{M \to \infty}\sup_{u \notin B(M)} \PP_u(X_{T_{\LE(\wh{Y})}} = o) = 0.
		\]
	\end{lemma}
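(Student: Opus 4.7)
The plan is to reduce the hitting-at-$o$ event on the random walk started from far away to an intersection statement for the conditioned walk started at $o$, where the hypothesis \eqref{E: CRW IP} directly applies.

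First, I will dominate $\PP_u(X_{T_{\LE(\wh{Y})}} = o)$ by conditioning on the first excursion from $u$ that reaches $o$. Decomposing the trajectory into i.i.d.\ excursions from $u$, the event $\{X_{T_{\LE(\wh{Y})}} = o\}$ forces the excursion that first reaches $o$ to avoid $\LE(\wh{Y})$ until time $T_o$. Dropping the restriction on the earlier excursions, this gives
\[
    \PP_u\bigl(X_{T_{\LE(\wh{Y})}} = o\bigr) \le \PP_u\bigl(\{X_n:\, n \le T_o\} \cap \LE(\wh{Y}) = \{o\} \,\bigm|\, T_o < T_u^+\bigr).
\]
Now apply the path reversal identity of Lemma~\ref{lemma: path reversal of RW} to rewrite the right-hand side as
\[
    \PP_o\bigl(\{X_n:\, n \le T_u\} \cap \LE(\wh{Y}) = \{o\} \,\bigm|\, T_u < T_o^+\bigr),
\]
where $\LE(\wh Y)$ is still viewed as an independent random set.

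Next, fix $\varepsilon > 0$ and choose $M$ large. The strategy is to look only at the portion of this conditioned path up to the hitting time $T_M$ of $B(M)^c$ (or of $\partial \Lambda_a(M)$, depending on what is most convenient); clearly the event of avoiding $\LE(\wh{Y})\setminus\{o\}$ up to $T_u$ is contained in the analogous event up to $T_M$. By Lemma~\ref{L: RW cond to hit u is CRW}, for $u$ chosen outside a sufficiently large set (depending only on $M$ and $\varepsilon$), the law of the initial segment $(X_0,\dots,X_{T_M})$ under $\PP_o(\cdot \mid T_u < T_o^+)$ is within $\varepsilon/2$ in total variation of the law of $(\wh X_0,\dots,\wh X_{T_M})$ for an independent CRW $\wh X$ started at $o$. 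Hence
\[
    \PP_u\bigl(X_{T_{\LE(\wh{Y})}} = o\bigr) \le \PP\bigl(\{\wh X_n:\, n \le T_M\} \cap \LE(\wh Y) = \{o\}\bigr) + \tfrac{\varepsilon}{2}.
\]

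The final step is to apply the intersection property \eqref{E: CRW IP}. Since $\wh X$ and $\wh Y$ are independent CRWs both avoiding $o$, the trace of $\wh X$ intersects $\LE(\wh Y)$ infinitely often a.s., and in particular at some point other than $o$. By monotone convergence applied to the events $\{\{\wh X_n:\, n \le T_M\} \cap \LE(\wh Y) \ne \{o\}\}$ as $M \to \infty$, we can pick $M$ large enough that the probability above is at most $\varepsilon/2$. Combining, $\PP_u(X_{T_{\LE(\wh Y)}} = o) \le \varepsilon$ for all $u$ outside the corresponding large set; since $\varepsilon$ was arbitrary, the $\limsup$ vanishes.

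The main obstacle is ensuring that the uniformity in $u$ in Lemma~\ref{L: RW cond to hit u is CRW} lines up properly with the choice of $M$: the parameter $L$ governing ``$u$ far away'' depends on $M$, not the other way round, so the quantifiers must be taken in the order ``fix $\varepsilon$, then $M$ (via the intersection property), then $L$ (via Lemma~\ref{L: RW cond to hit u is CRW})''; this is what makes the bound uniform over $u \notin B(L)$ and yields the stated $\limsup$.
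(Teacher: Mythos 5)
Your proposal is correct and follows essentially the same route as the paper's proof: the excursion decomposition giving the domination by $\PP_u(\{X_n: n \leq T_o\} \cap \LE(\wh Y) = \{o\} \mid T_o < T_u^+)$, the path reversal, the comparison of the conditioned walk's initial segment with the CRW via Lemma \ref{L: RW cond to hit u is CRW}, and the intersection property \eqref{E: CRW IP} with monotonicity to kill the main term. Your remark on the order of quantifiers (first $\varepsilon$, then $M$, then $L = L(M,\varepsilon)$ uniformly in the target set) is exactly the point the paper also flags.
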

	\begin{proof}
		Notice that, by independence of $\wh{Y}$ and $X$, it is thus enough to prove
		\[
			\PP_u(\{X_n: n \leq T_o\} \cap \{\LE(\wh{Y})_n: n \in \NN\} = \{o\} \mid T_o < T_u^+)
		\]
		goes to $0$ as $u \to \infty$ along any sequence. Indeed, any loop from $u$ that returns to $u$ before touching $o$ might also hit $\LE(\wh{Y})$ and therefore, conditioning on $o$ being hit before returning to $u$ increases (or does not decrease) the probability. By Lemma \ref{lemma: path reversal of RW}, this is equivalent to
		\[
			\PP_o(\{X_n: n \leq T_u\} \cap \{\LE(\wh{Y})_n: n \in \NN\} = \{o\} \mid T_u < T_o^+) \to 0
		\]
		uniformly in $u \in \partial B(M)$ as $M \to \infty$.
		
		Let $\wh{X}$ be an independent CRW started at $o$. Fix $\epsilon > 0$ and let $M_1$ be so large that
		\[
		\PP(|\{\wh{X}_n: n \leq \wh{T}^{\wh{X}}_{M_1}\} \cap \{\LE(\wh{Y})_n: n \in \NN\}| > 1) \geq 1 - \epsilon.
		\]
		This is possible by the intersection property \eqref{E: CRW IP} and because $\wh{X}$ will avoid $o$ eventually. Let $M_2 \geq M_1$ be so large that for all paths $\mathcal{P}$ that stay inside $B(M_1)$, we have
		\[
			\PP_o(\{X_n: n \leq T_u\} \in \mathcal{P} \mid T_u < T_o^+) \geq \PP_o(\{\wh{X}_n: n \leq T_u\} \in \mathcal{P}) - \epsilon,
		\]
		for all $u \notin B(M_2)$, which is possible by Lemma \ref{L: CRW cond to hit u}. The two last inequalities joined together give that for all $u \notin B(M_2)$,
		\[
			\PP_o(\{X_n: n \leq T_u\} \cap \{\LE(\wh{Y})_n: n \in \NN\} = \emptyset \mid T_u < T_o^+) < 2\epsilon.
		\]
		As $\epsilon$ was arbitrary, we obtain the result.
	\end{proof}

	\begin{proof}[Proof of Proposition \ref{P: UST past is almost empty}]
		Fix $\epsilon > 0$ and then $M$ so large that for all $u \notin B(M)$
		\[
			\PP_u(X_{T_{\LE(\wh{Y})}} = o) \leq \frac{\epsilon}{2},
		\]
		possible by Lemma \ref{L: UST limsup X hits LE at o}. Next, let $N$ be so large that
		\[
			\frac{|B(M)|}{|B(N)|} < \frac{\epsilon}{2}.
		\]
		It follows from Wilson's algorithm (as it is invariant under the ordering of the vertices) that
		\[
			\E \left[ \frac{|P(o) \cap B(n)|}{|B(n)|} \right] \leq \frac{1}{|B(n)|} \sum_{u \in B(n)} \PP_u(X_{T_{\LE(\wh{Y})}} = o) < \epsilon,
		\]
		for all $n \geq N$ showing the desired result.
	\end{proof}
	
	\subsubsection*{Reversing the UST rooted at infinity}
	The goal of this section will be to show that if $A_2(o)$ occurs, then the past cannot be ``almost empty'' in the sense of Proposition \ref{P: UST past is almost empty}. To do so, we will assume again that the underlying graph satisfies the standing assumptions so that we can make sense of the CRW. Then, we will try to ``reverse'' the orientation of some bi-infinite ray in $\overrightarrow{T}$ to obtain a new spanning tree - which we prove has the same distribution as $\overrightarrow{T}$. This implies that the future cannot be ``bigger'' than the past, which will finish the proof of Theorem \ref{T: UST one-ended}.
	
	If $\overrightarrow{T}$ is a spanning tree rooted at infinity (for each vertex, there is a unique oriented ray to infinity), we define a random walk on the infinite rays as follows. Take $\overrightarrow{T}$ and switch the orientation of each edge that is on an infinite ray of $o$, (meaning that there is a \emph{simple} path from $o$ to infinity on the unoriented tree $T$ to infinity passing by the edge). Denote this new oriented graph $\overrightarrow{S}$
	
	Now, let $W$ be a simple random walk on $\overrightarrow{S}$ started at $o$, which can only follow oriented edges in the natural way. The trace of $W$ is a ray from $o$ to infinity in the past of $o$ on $\overrightarrow{T}$, if such a ray exists. Otherwise the trace of $W$ is a finite path.
	
	Define next the oriented bi-infinite ray $\mathrm{RAY}(o)$ by setting it equal to the unique ray (a simple path) from $o$ to infinity in the future of $o$ together with the trace of $W$ as a subgraph of $\overrightarrow{T}$. Thus, $\mathrm{RAY}(o)$ is a by-infinite path passing through $o$ with a fixed orientation induced by $\overrightarrow{T}$ whenever $A_2(o)$ occurs.
	
	This allows to define a spanning tree $\overleftarrow{T}$ as follows. If $A_2(o)$ does not occur, set $\overleftarrow{T} = \overrightarrow{T}$. If $A_2(o)$ does occur, let $\overleftarrow{T}$ be the oriented tree obtained from $\overrightarrow{T}$ by switching the orientation of $\mathrm{RAY}(o)$. Clearly, this gives an oriented spanning tree which is ``rooted at infinity''. We will prove that $\overrightarrow{T}$ has the same distribution as $\overleftarrow{T}$, so that the latter is in fact a uniform spanning tree rooted at infinity.
	
	In what follows, we will write $T$ for the uniform spanning tree of $G$ without orientation and $T(x \to y)$ for the (oriented) path $x \to y$ in $T$. The notation $X^{x \to y}$ stands for the law of (the trace of) a simple random walk started at $x$, stopped when hitting $y$. Lastly, we introduce the notation $\c{B}_2(M)$ for the collection of \emph{simple} paths that connect $\partial B(M)$ with $\partial B(M)$, stay otherwise inside $B(M)$ and pass by $o$.
	
	The following lemma is an immediate result from Wilson's algorithm.
	
	\begin{lemma} \label{L: T(x, y) equals LE(x, y)}
		Let $x, y$ be any two vertices, then for any path $\varphi$ connecting $x$ to $y$ and passing by $o$,
		\[
			\PP(T(x \to y) = \varphi \mid o \in T(x \to y)) = \PP(\LE(X^{x \to y}) = \varphi \mid o \in \LE(X^{x \to y})).
		\]
	\end{lemma}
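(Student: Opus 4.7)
The claim is essentially an unpacking of Wilson's algorithm, applied with $y$ playing the role of the initial ``root''. My plan is therefore to set up this version of the algorithm carefully, identify the branch from $x$ with $\LE(X^{x \to y})$, and then take a quotient to recover the conditional statement.

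Concretely, first I would recall that since $G$ is recurrent, the random walk $X^{x \to y}$ started at $x$ hits $y$ in finite time almost surely, so its loop-erasure $\LE(X^{x \to y})$ is a finite simple path from $x$ to $y$. Next, I would run Wilson's algorithm on $G$ with the initial set $E_0 = \{y\}$, the second vertex in the enumeration equal to $x$, and then an arbitrary enumeration of the remaining vertices. Since $G$ is recurrent, this version of Wilson's algorithm is well-defined and produces a sample of the (wired, equivalently free) uniform spanning tree $T$; the standard reference is \cite[Chapter 10]{LyonsPeresProbNetworks}. By construction, the branch added at the second step is precisely $\LE(X^{x \to y})$. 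Because the result $T$ is a spanning tree (hence a tree), this branch coincides with the unique path $T(x \to y)$ from $x$ to $y$ in $T$.

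Thus $T(x \to y)$ and $\LE(X^{x \to y})$ have the same distribution as random paths from $x$ to $y$. The conditional statement then follows by a direct calculation: for any path $\varphi$ from $x$ to $y$ passing through $o$,
\[
\PP(T(x \to y) = \varphi \mid o \in T(x \to y)) = \frac{\PP(T(x \to y) = \varphi)}{\PP(o \in T(x \to y))} = \frac{\PP(\LE(X^{x \to y}) = \varphi)}{\PP(o \in \LE(X^{x \to y}))},
\]
which is exactly the right-hand side.

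There is no real obstacle here; the only point that requires care is citing the validity of Wilson's algorithm on recurrent graphs, which is why the authors call the lemma ``immediate''. If one wanted to avoid that citation, one could instead take an exhaustion $G_n$ of $G$, apply the finite-graph Wilson's algorithm on $G_n^*$ (the wired graph) with $y$ as root, and pass to the $n \to \infty$ limit using recurrence to ensure that the loop-erased walk from $x$ hits $y$ before reaching the wired boundary with probability tending to $1$.
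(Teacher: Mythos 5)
Your proof is correct and follows essentially the same route as the paper: both rest on the observation that Wilson's algorithm, run with $y$ as the initial root and $x$ as the next vertex, identifies the unconditioned law of $T(x \to y)$ with that of $\LE(X^{x \to y})$, after which summing over paths through $o$ equates the probabilities of the conditioning events and the conditional identity follows by division. The extra detail you give on setting up the algorithm is just an unpacking of what the paper compresses into ``by Wilson's algorithm.''
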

	\begin{proof}
		Fix $\varphi$ as above, then by Wilson's algorithm, we know that
		\[
			\PP(T(x \to y) = \varphi) = \PP(\LE(X^{x \to y}) = \varphi).
		\]
		This shows the result since the sum over all allowed $\varphi$ provides $\PP(o \in T(x \to y)) = \PP(o \in \LE(X^{x \to y}))$.
	\end{proof}

	Next, we provide a technical lemma, giving a notion of asymptotic independence for the CRW, which is essentially a consequence of Proposition \ref{prop: conditioned exit measure mixes}.
	
	\begin{lemma} \label{L: CRW asymp indep}
		For $M, L \in \NN$ define $\c{F}_{M} = \sigma(\wh{X}_0, \ldots, \wh{X}_{\wh{T}_M})$ and $\c{F}_{L, \infty} = \sigma(\wh{X}_{\wh{T}_L}, \ldots)$. For each $\epsilon > 0$ and all $M \in \NN$, there exists an $L_0$ such that for all $L \geq L_0$ and uniformly in $A \in \c{F}_M, B \in \c{F}_{L, \infty}$ it holds that
		\[
			\PP(A \cap B) = \PP(A)\PP(B)(1 \pm \epsilon).
		\]
	\end{lemma}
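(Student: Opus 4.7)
My plan is to reduce the asymptotic independence to a near-constancy statement for $\phi_B(z) := \PP_z(B)$ on $\partial \Lambda_a(M)$, and then to prove that statement by a compactness argument combined with the uniqueness of the potential kernel (Remark \ref{R:anchoredPK}). First, since $L \geq M$ forces $\wh{T}_M \le \wh{T}_L$, the strong Markov property at time $\wh{T}_M$ gives $\PP(B \mid \c{F}_M) = \phi_B(\wh{X}_{\wh{T}_M})$ for every $B \in \c{F}_{L, \infty}$. Consequently $\PP(A \cap B) = \E[\id_A\, \phi_B(\wh{X}_{\wh{T}_M})]$ and $\PP(B) = \E[\phi_B(\wh{X}_{\wh{T}_M})]$, so it suffices to show that for every $M$ and $\epsilon > 0$ there exists $L_0$ such that
\[
\left|\frac{\phi_B(z_2)}{\phi_B(z_1)} - 1\right| \le \epsilon \qquad \forall\, z_1, z_2 \in \partial \Lambda_a(M),
\]
uniformly in $L \ge L_0$ and $B \in \c{F}_{L, \infty}$ with $\phi_B \not\equiv 0$ on $\partial \Lambda_a(M)$ (when $\phi_B \equiv 0$, both sides of the desired estimate vanish).

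I would then argue by contradiction. If the claim fails, then, using finiteness of $\partial \Lambda_a(M)$, one can extract sequences $L_n \to \infty$, $B_n \in \c{F}_{L_n, \infty}$ and fixed $z^1, z^2 \in \partial \Lambda_a(M)$ with $\PP_{z^1}(B_n) > 0$ and $\bigl|\PP_{z^2}(B_n)/\PP_{z^1}(B_n) - 1\bigr| > \epsilon$. Set $h_n(z) := \PP_z(B_n)/\PP_{z^1}(B_n)$ and the SRW-lift $g_n(z) := a(z) h_n(z)$. Then $h_n$ is $\wh{\Delta}$-harmonic on $\Lambda_a(L_n)\setminus\{o\}$ with $h_n(z^1) = 1$, and as in the proof of Proposition \ref{prop: Harnack CRW}, $g_n$ is $\Delta$-harmonic on the same set, nonnegative, vanishes at $o$, and satisfies $g_n(z^1) = a(z^1)$. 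Combining the identity $\wh{\mu}_L(z, w) = (a(w)/a(z))\, \nu_L(z, w)$ with Proposition \ref{prop: conditioned exit measure mixes} (iterating from scale $\Psi(R)$ to any larger scale via the strong Markov property) yields $h_n(z) \le C$ uniformly over $z \in \Lambda_a(R) \setminus \Lambda_a(1)$ as soon as $L_n \ge \Psi(R)$; the remaining finite set $\Lambda_a(1) \setminus \{o\}$ is handled by the discrete maximum principle applied to $g_n$ with boundary data $g_n(o) = 0$ and the bound just proved on $\partial \Lambda_a(1)$. Hence $g_n$ is uniformly bounded on every finite set, and a diagonal extraction produces a pointwise limit $g_n \to g$ on $v(G) \setminus \{o\}$.

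The limit $g$ is nonnegative, satisfies $g(o) = 0$, and is $\Delta$-harmonic on $v(G) \setminus \{o\}$: harmonicity at any fixed vertex is preserved once $L_n$ is large enough that the vertex and its neighbours lie in $\Lambda_a(L_n)$. By Remark \ref{R:anchoredPK}, which relies precisely on the uniqueness of the potential kernel (a standing assumption in this section), $g = \alpha\, a(\cdot, o)$ for some $\alpha \ge 0$; evaluation at $z^1$ forces $\alpha = 1$, so $g(z^2) = a(z^2)$ and therefore $h_n(z^2) = g_n(z^2)/a(z^2) \to 1$, contradicting the choice of $z^1, z^2$. The main obstacle I foresee is the absence of any global bound on $h_n$: typically $\PP_{z^1}(B_n) \to 0$, so $h_n$ can blow up away from $\partial \Lambda_a(M)$ and there is no direct Liouville theorem for $\wh{X}$ that one can quote at this level of generality. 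The device of passing to the SRW-lift $g_n = a\, h_n$, which only grows linearly in $a$, together with the one-dimensionality of admissible nonnegative $\Delta$-harmonic functions vanishing at $o$, is what replaces the usual Liouville step.
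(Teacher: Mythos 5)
Your proof is correct, and it takes a genuinely different route from the paper's. The paper's own argument (which in the source appears inside a block marked \texttt{wrong-old} for exclusion) expands $\id_A$ and $\id_B$ in the atoms of $\sigma(\wh{X}_{\wh{T}_M})$ and $\sigma(\wh{X}_{\wh{T}_L})$ and then claims, citing Proposition~\ref{prop: conditioned exit measure mixes}, that for $L$ large the exit distributions $\wh{\mu}_L(x,\cdot)$ from different $x\in\partial\Lambda_a(M)$ agree up to a $(1\pm\epsilon)$ factor. As Proposition~\ref{prop: conditioned exit measure mixes} is actually stated, it only furnishes a fixed constant $C$ (there $C=20$), not an $\epsilon$-sharp comparison, so the direct argument has a gap which your proof repairs. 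Your route is a compactness/contradiction argument: you use the Markov property to reduce to near-constancy of $\phi_B$ on $\partial\Lambda_a(M)$, normalize to $h_n=\phi_{B_n}/\phi_{B_n}(z^1)$, lift to the SRW via $g_n=a\,h_n$, and use Proposition~\ref{prop: conditioned exit measure mixes} only at the level of the crude constant $C$ to obtain locally uniform bounds; the iteration from scale $\Psi(R)$ to $L_n$ via strong Markov, the maximum-principle patch on $\Lambda_a(1)\setminus\{o\}$, and the diagonal extraction are all sound. The punchline, that a nonnegative $\Delta$-harmonic function vanishing at $o$ is a scalar multiple of $a(\cdot,o)$ (Remark~\ref{R:anchoredPK}), is available under the standing assumptions of Section~\ref{section: The CRW}, and evaluating the limit at $z^1$ and $z^2$ yields the contradiction. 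One small inaccuracy in your commentary: Corollary~\ref{cor: CRWliouville} does give a Liouville theorem for $\wh{X}$ under the standing assumptions; it does not apply directly to the (possibly unbounded) $h_n$, but once you know the lift $g_n$ converges to $\alpha a$ the limit $h=g/a$ is bounded and constant, so one could in principle close with that Liouville statement in place of Remark~\ref{R:anchoredPK}. This does not affect the validity of your argument, which in fact buys you something the paper's direct approach does not: it only needs the constant-factor form of the exit-measure comparison that the paper actually proves.
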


	\begin{proof}
		Let $M \in \NN$, $L \geq L_0$ - where we fix $L_0$ later - and $\epsilon > 0$. Since $\wh{X}$ is a Markov chain, it is enough to prove the result for $A \in \sigma(\wh{X}_{\wh{T}_M})$ and $B \in \sigma(\wh{X}_{\wh{T}_L})$. Let $(\alpha_x)_{x \in \partial \Lambda_a(M)}$ and $(\beta_y)_{y \in \partial \Lambda_a(L)}$ be two sequences of real numbers such that
		\[
			\id_{A} = \sum_{x \in \partial \Lambda_a(M)} \alpha_x \id_{\wh{X}_{\wh{T}_M} = x} \qquad \text{and} \qquad \id_{B} = \sum_{y \in \partial \Lambda_a(L)} \beta_y \id_{\wh{X}_{\wh{T}_L} = y}.
		\]
		Notice that Proposition \ref{prop: conditioned exit measure mixes} shows that for $L_0$ large enough
		\[
			\wh{\mu}(x, y) = \wh{\mu}(z, y)(1 \pm \epsilon),
		\]
		uniformly in $x, z \in \partial \Lambda_a(M)$ and $y \in \partial \Lambda_a(L)$. In particular, it holds that
		\[
			\wh{\mu}(x, y) = (1 \pm \epsilon) \sum_{z \in \partial \Lambda_a(M)} \PP(\wh{X}_{\wh{T}_M} = z)\wh{\mu}(z, y) = (1 \pm \epsilon) \PP(\wh{X}_{\wh{T}_L} = y),
		\]
		uniformly in $x$ and $y$. Therefore,
		\begin{align*}
			\PP(A \cap B) &= \sum_{x \in \partial \Lambda_a(M), y \in \partial \Lambda_a(L)} \alpha_x \beta_y \PP(\wh{X}_{\wh{T}_M} = x, \wh{X}_{\wh{T}_L} = y) \\
			&= \sum_{x \in \partial \Lambda_a(M), y \in \partial \Lambda_a(L)} \alpha_x \PP(\wh{X_{\wh{T}_M}} = x)\wh{\mu}(x, y) \\
			&= (1 \pm \epsilon )\sum_{x \in \partial \Lambda_a(M)} \alpha_x \PP(\wh{X}_{\wh{T}_M} = x) \sum_{y \in \partial \Lambda_a(L)} \beta_y\PP(\wh{X}_{\wh{T}_M} = y) = (1 \pm \epsilon)\PP(A)\PP(B)
		\end{align*}
	\end{proof}
	
	Next, we prove that locally, the law $T(x \to y)$ conditioned to contain $o$ does not depend on $x, y$ when $x, y$ are far away.
	
	\begin{lemma} \label{L: T(x, y) locally independent xy}
		For each $M \in \NN, \epsilon > 0$, there exists a $K \in \NN$ such that for all $x, y. u, v \notin B(K)$ and all $\varphi \in \c{B}_2(M)$
		\[
			\PP(T(x \to y)_M = \varphi \mid o \in T(x \to y)) = \PP(T(u \to v)_M = \varphi \mid o \in T(u \to v))(1 \pm \epsilon).
		\]
	\end{lemma}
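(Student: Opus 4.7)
My first move is to apply Lemma \ref{L: T(x, y) equals LE(x, y)} to reduce to the analogous statement for $\LE(X^{x\to y})$: it suffices to show that the conditional law of the $o$-containing sub-path of $\LE(X^{x\to y})$ inside $B(M)$, given $o \in \LE(X^{x\to y})$, converges as $x,y \to \infty$ to a limit independent of $x,y$. The analogous fact for $(u,v)$ will then give that both sides of the claimed equality are within $\epsilon$ of the common limit for $K$ large enough.

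I would then generate $T$ via Wilson's algorithm rooted at infinity (Lemma \ref{L: UST Wilson's algorithm}) with ordering $(o,y,x,\ldots)$, producing $F_0 = \LE(\wh Y)$ (a simple path from $o$ to infinity), then $\pi_y$ (the LERW from $y$ stopped on $F_0$, ending at $p_y$), then $\pi_x$ (the LERW from $x$ stopped on $F_0 \cup \pi_y$, ending at $p_x$). A direct case analysis --- using that $o$ sits at the endpoint of the simple path $F_0$ --- shows that $o \in T(x \to y)$ if and only if $p_x = o$ or $p_y = o$. By symmetry I restrict to the event $\{p_x = o\}$, on which $T(x \to y) = \pi_x \cup [o \to p_y]_{F_0} \cup \pi_y^{-1}$. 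The $o$-containing sub-path of $T(x \to y)$ inside $B(M)$ then splits into a \emph{forward arm} along $F_0$ (from $o$ to the first exit of $B(M)$) and a \emph{backward arm} along $\pi_x$ (from the last entry of $\pi_x$ into $B(M)$ down to $o$).

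The forward arm is a function of $\wh Y$ alone and is therefore independent of $(x,y)$. For the backward arm, I would combine path reversal (Lemma \ref{lemma: path reversal of RW}) with Lemma \ref{L: RW cond to hit u is CRW}: the walk from $x$ conditioned to hit $F_0 \cup \pi_y$ first at $o$, time-reversed and viewed as a walk started at $o$, should asymptotically (as $x \to \infty$) look locally like a CRW further conditioned to avoid $F_0 \setminus \{o\}$ forever. In particular, its loop-erased trace inside $B(M)$ should converge in distribution to a law depending only on $(G, o)$ and on $F_0$ near $o$, but not on $x$.

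The main obstacle is to argue that the conditioning on the rare event $\{p_x = o\}$ (whose $\PP$-probability decays to $0$ as $x \to \infty$, by a variant of Lemma \ref{L: UST limsup X hits LE at o}) does not asymptotically bias the local picture of $F_0$ near $o$. The mechanism I have in mind is a last-exit-type decomposition of the hitting probability $\PP_x(X_{T_{F_0 \cup \pi_y}} = o \mid F_0, \pi_y)$: for large $x$ this should factorize into $a(x)^{-1}$ times a harmonic-measure-like quantity that depends on $F_0 \cup \pi_y$ only through their far structure, decoupling from $F_0 \cap B(M)$. Once this asymptotic factorization is in place, the joint law of the forward and backward arms converges to a limit determined only by $(G, o)$, and the same analysis applied to $(u,v)$ yields the lemma.
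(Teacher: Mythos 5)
Your reduction to $\LE(X^{x\to y})$ and the idea of generating the tree by Wilson's algorithm rooted at infinity are sensible starting points, but the ``direct case analysis'' on which the whole decomposition rests is incorrect. Since $o$ is an \emph{endpoint} of the simple path $F_0=\LE(\wh{Y})$, one finds that $o\in T(x\to y)$ holds if and only if either $p_x=o$, or ($p_y=o$ \emph{and} $p_x\in F_0$); your stated equivalence with $\{p_x=o\}\cup\{p_y=o\}$ fails on the event $\{p_y=o,\ p_x\in\pi_y\setminus\{o\}\}$, where the tree path from $x$ to $y$ runs along $\pi_x$ to its attachment point on $\pi_y$ and then back up $\pi_y$ without ever visiting $o$. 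There is moreover no symmetry allowing you to treat only $\{p_x=o\}$: $\pi_y$ is stopped on $F_0$ while $\pi_x$ is stopped on $F_0\cup\pi_y$, and on the second event the two arms of $T(x\to y)$ at $o$ are a piece of $F_0$ and a piece of $\pi_y$, not a piece of $\pi_x$ and a piece of $F_0$. Both events are ``hit the tip of a loop-erased infinite path from far away'' events of comparable order, so the conditional law given $\{o\in T(x\to y)\}$ is a genuine mixture of the two cases, and you would need to analyse both and control their relative weights (or prove the two conditional limits coincide) --- none of which the proposal addresses. A smaller gap of the same flavour: the ``forward arm'' is a function of $\wh{Y}$ alone only on the event $p_y\notin B(M)$, which also requires an argument.

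The second, independent problem is the decoupling mechanism you propose for the conditioning on $\{p_x=o\}$. The probability $\PP_x(X_{T_{F_0\cup\pi_y}}=o)$ converges as $x\to\infty$ to the harmonic measure from infinity of $o$ relative to $F_0\cup\pi_y$, and by the Gluing Theorem (Theorem \ref{theorem: gluing expression}, applied to finite truncations) this quantity is of the form $\Delta q_B(o)=\sum_{w\sim o}q_B(w)$, which depends sensitively on the geometry of the set \emph{near} $o$ (for instance it vanishes if all neighbours of $o$ lie in the set). So the conditioning does \emph{not} decouple from $F_0\cap B(M)$; it genuinely reweights the local picture of $(F_0,\pi_y)$ by this harmonic-measure factor. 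What would save the argument is only that this factor converges to an $x$-independent limit, uniformly enough over realizations of the infinite random set $F_0\cup\pi_y$ to pass to the conditional law --- and that uniform convergence (for an \emph{infinite} set, which the paper's gluing theory does not directly cover) is exactly the hard part, not supplied here. The paper's own sketch avoids this trap by never sampling $F_0$ first: it conditions $\LE(X^{x\to y})$ to pass through $o$ and splits it \emph{at} $o$ via the domain Markov property of loop-erased walk into $\LE(X^{x\to o})$ and a walk from $o$ conditioned to avoid it; the $x$-dependence is then removed by the local convergence $\LE(X^{x\to o})\to\LE(\wh{X})$ (the mechanism of Lemma \ref{L: RW cond to hit u is CRW}) and the $y$-dependence by the asymptotic independence of the conditioned walk across scales, so no conditioning on a vanishing event attached to an infinite random set ever appears.
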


	\begin{proof} \DE{detailed sketch}
		Fix first $M \in \NN$, $\epsilon > 0$ and let $K > L > M$ to be chosen large enough later. Take $x, y \in B(K)^c$. By Lemma \ref{L: T(x, y) equals LE(x, y)}, it follows that the law of $T(x \to y)_M$ conditioned to contain $o$ is that of $\LE(X^{x \to y})_M$, conditioned to contain $o$. Then, by what is called the ``domain Markov property'', it holds that
		\[
			\PP(\LE(X^{x \to y})_M = \varphi \mid o \in \LE(X^{x \to y})) = \PP(\LE(X^{x \to o})_M = \varphi_1, \LE(\wh{S})_M = \varphi_2 \mid \wh{S} \text{ touches } y),
		\]
		where $\wh{S}$ is a random walk, conditioned to never hit $\LE(X^{x \to o})$ and $\varphi_1, \varphi_2$ is the unique decomposition of $\varphi$ into two pieces connecting $o$ to $\partial B(M)$ and say $\varphi_2$ does not contain $o$.
		
		Notice that inside $B(L)$, we have that $\LE(X^{x \to o})$ has the law of $\LE(\wh{X})$, up to a small error as $K = K(L)$ is chosen large enough. If also $L = L(M)$ is large enough, then the law of $\wh{S}$ inside $B(M)$ only depends on (up to a small error) $\LE(X^{x \to o})$ inside $B(L)$ due to Lemma \ref{L: CRW asymp indep}, which looks like $\LE(\wh{X})$ by the above. Moreover, picking possibly $K$ larger, it will not depend on $y$ up to a small error. Hence
		\[
			\PP(\LE(X^{x \to o})_M = \varphi_1, \LE(\wh{S})_M = \varphi_2 \mid \wh{S} \text{ touches } y) = \PP(\LE(\wh{X})_M = \varphi_1, \LE(\wh{S}')_M = \varphi_2)(1 \pm \epsilon)
		\]
		where now $\wh{S}'$ is a random walk conditioned to never hit $\LE(\wh{X})_L$. Since the expression on the right-hand side does not depend on $x$ or $y$, the result follows.
	\end{proof}

	The next result is an immediate corollary of this lemma, showing that the oriented paths $T(x \to y)$ and  $T(y \to x)$, both conditioned on containing $o$, are locally around $o$ the same whenever $x, y$ are far away from $o$.
	
	\begin{corollary}
		For each $M \in \NN, \epsilon > 0$, there exists a $K = K(\epsilon, M)$ such that for all $x, y \notin B(K)$ and all \emph{oriented} $\varphi \in \mathcal{B}_2(M)$
		\[
			\PP(T(x \to y) = \varphi \mid o \in T(x \to y)) = \PP(T(y \to x) = \varphi \mid o \in T(y \to x))(1 \pm \epsilon).
		\]
	\end{corollary}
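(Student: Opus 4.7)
The plan is to derive this as an immediate specialization of the preceding Lemma \ref{L: T(x, y) locally independent xy}. That lemma produces, for every $M, \epsilon$, a threshold $K$ such that for \emph{every} quadruple $x, y, u, v \notin B(K)$ and every oriented $\varphi \in \c{B}_2(M)$,
\[
\PP(T(x \to y)_M = \varphi \mid o \in T(x \to y)) \;=\; \PP(T(u \to v)_M = \varphi \mid o \in T(u \to v))(1 \pm \epsilon).
\]
Substituting the particular choice $(u,v) = (y,x)$ gives the corollary without any further work.

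The only point that needs checking is that the lemma really is a statement about \emph{oriented} paths in $\c{B}_2(M)$, as opposed to unoriented traces in the UST; otherwise the substitution above would only give a weaker unoriented version. This genuinely is the case, and one can read it off the proof: via Wilson's algorithm one identifies $T(x \to y)$ with the loop-erasure $\LE(X^{x \to y})$, which is intrinsically oriented from $x$ to $y$. The subsequent domain Markov decomposition at $o$ then splits this into $\LE(X^{x \to o})$, oriented toward $o$, together with a conditional loop-erased walk started at $o$ and oriented outward. Both pieces inherit their orientation from the direction of the underlying random walk, so the comparison performed in the lemma really is a comparison of laws of \emph{oriented} paths $T(\cdot \to \cdot)_M$. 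Under this reading, the one-line substitution $(u,v) = (y,x)$ is legitimate and yields the corollary.

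Conceptually, what the corollary records is a form of asymptotic reversal symmetry: conditioned on the UST path between two distant vertices passing through $o$, the orientation of the path near $o$ is, up to vanishing error, independent of which endpoint is labelled as the starting one. I do not expect any serious obstacle in the proof; the content has already been absorbed into Lemma \ref{L: T(x, y) locally independent xy}, and the only task here is to observe that its conclusion, read for oriented paths, is invariant under swapping the roles of the two distant endpoints.
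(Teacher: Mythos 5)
Your proposal is correct and matches the paper exactly: the paper presents this statement as an immediate corollary of Lemma \ref{L: T(x, y) locally independent xy}, obtained precisely by the substitution $(u,v)=(y,x)$, and your observation that the lemma genuinely concerns oriented paths (via the identification of $T(x\to y)$ with the oriented loop-erasure $\LE(X^{x\to y})$) is the right justification for why that substitution is legitimate.
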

	
	We finally use this to prove that $\overleftarrow{T}$ is in fact a uniform spanning tree rooted at infinity, after which we can prove Theorem \ref{T: UST one-ended}.

	\begin{proposition}[Switching the orientation] \label{L: UST switching orientation}
		The distribution of $\overleftarrow{T}$ is the same as the distribution of $\overrightarrow{T}$.
	\end{proposition}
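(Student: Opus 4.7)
The plan is to argue via finite approximation combined with Wilson's algorithm rooted at infinity (Lemma \ref{L: UST Wilson's algorithm}). In a wired exhaustion $G_n^\star$ of $G$, let $T_n$ denote the UST of $G_n^\star$; for any vertex $\rho$, orienting $T_n$ towards $\rho$ yields an oriented tree $T_n^\rho$ whose law is uniform among spanning trees rooted at $\rho$. For two vertices $\rho_1, \rho_2$, the oriented trees $T_n^{\rho_1}$ and $T_n^{\rho_2}$ differ only by reversal of the orientation along the unique tree-path between $\rho_1$ and $\rho_2$, yet share the same distribution as random oriented trees (each one being uniform over spanning trees rooted at its designated vertex). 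Letting $n \to \infty$ with $\rho_n$ the identified boundary vertex, $T_n^{\rho_n}$ converges to $\overrightarrow{T}$.

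The first step is to show that for any sequence $\rho'_n \to \infty$, the oriented tree $T_n^{\rho'_n}$ also converges in distribution to a ``UST rooted at infinity'' whose distribution coincides with that of $\overrightarrow{T}$. This follows from Wilson's algorithm by noting that the first branch $T_n^{\rho'_n}(o \to \rho'_n)$ is distributed as $\mathrm{LE}(X^{o \to \rho'_n})$, which, via the path reversal (Lemma \ref{lemma: path reversal of RW}) and the approximation of walks conditioned to hit a far-away vertex by the CRW (Lemma \ref{L: RW cond to hit u is CRW}), converges to $\mathrm{LE}(\wh{X})$; the remaining branches are then sampled identically in both cases.

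The second step is to select $\rho'_n$ in such a way that, on the event $\cA_2(o)$, the tree-path from $\rho_n$ to $\rho'_n$ in $T_n$ converges to $\mathrm{RAY}(o)$. Concretely, one would take $\rho'_n$ to be a vertex lying on the backward portion of $\mathrm{RAY}(o)$ at combinatorial depth of order $n$ in the past of $o$ (a choice that is well-defined conditionally on $\overrightarrow{T}$). With this choice, the reversal of orientations along $T_n(\rho_n \to \rho'_n)$ converges in the limit to the reversal of $\mathrm{RAY}(o)$, i.e., produces $\overleftarrow{T}$. Since $T_n^{\rho_n}$ and $T_n^{\rho'_n}$ have the same distribution in every $G_n^\star$, this distributional identity is preserved in the limit, giving $\overrightarrow{T} \stackrel{d}{=} \overleftarrow{T}$.

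The main obstacle will be the measurability and limit-taking when $\rho'_n$ is chosen as a function of the random tree itself, since we cannot invoke the uniform rooted-UST symmetry when the root depends on the tree. To handle this rigorously, I would first reduce to proving equality of laws on local events (functions of the trees restricted to $B(M)$), condition on the unoriented tree and on the event that a deterministic vertex $v$ lies on the backward ray of $o$ (so that $\rho'_n = v$ becomes a valid deterministic choice inside the conditional probability space), and invoke asymptotic independence of the CRW coming from Proposition \ref{prop: conditioned exit measure mixes} to decouple the local structure in $B(M)$ from the tail behavior at infinity. Integrating back over the conditioning then produces the desired identity of distributions.
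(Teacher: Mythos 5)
Your overall strategy (realize $\overleftarrow{T}$ as a re-rooting of the finite-volume UST and pass to the limit) is in the right spirit, but the crucial step is not actually carried out, and the fix you propose does not repair it. First, a small but consequential inaccuracy: for two deterministic roots $\rho_1,\rho_2$ the oriented trees $T_n^{\rho_1}$ and $T_n^{\rho_2}$ do \emph{not} have the same distribution as oriented trees (they are uniform over two different sets, namely spanning trees oriented toward $\rho_1$ resp.\ $\rho_2$); what is true is only that both converge to the same limit law when both roots tend to infinity deterministically. Hence there is no exact finite-$n$ distributional identity to ``preserve in the limit''. Second, and more seriously, your choice of $\rho'_n$ on the backward portion of $\mathrm{RAY}(o)$ is a function of the oriented tree itself, and the re-rooting symmetry you invoke is only valid for roots chosen independently of the tree. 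Conditioning on the event that a fixed vertex $v$ lies on the \emph{backward} ray of $o$ does not rescue this: that event is defined in terms of the very orientation whose reversal-invariance you are trying to prove (it is not measurable with respect to the unoriented tree), so the argument becomes circular; and even the symmetric, unoriented version of the conditioning (``$o$ lies on the tree path between $\rho_n$ and $v$'') changes the law of the tree, so the asymptotic agreement of the two rootings must be re-established \emph{under the conditional measure}, which is precisely the nontrivial content. Finally, Proposition \ref{prop: conditioned exit measure mixes} concerns exit measures of the conditioned walk and says nothing by itself about loop-erased paths or orientations of tree branches, so it cannot ``decouple'' what you need.

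The paper's route supplies exactly the missing ingredient. Using Wilson's algorithm one identifies the oriented tree path $T(x\to y)$, conditioned to contain $o$, with the loop erasure $\LE(X^{x\to y})$ conditioned to contain $o$; then, via the domain Markov property of the loop-erased walk, the approximation of walks conditioned to hit a far-away point by the conditioned walk $\wh X$ (Lemma \ref{L: RW cond to hit u is CRW}), and an asymptotic independence statement for $\wh X$ across scales, one shows that the law of this oriented path \emph{in a fixed neighbourhood of $o$} is asymptotically the same for all far-away endpoints $x,y$ --- in particular it is invariant under swapping $x\leftrightarrow y$, i.e.\ under orientation reversal. Passing to the limit gives that the law of $\mathrm{RAY}(o)$ is symmetric under reversal, and Wilson's algorithm off the ray shows the rest of the oriented tree does not see the ray's orientation, whence $\overrightarrow{T}\overset{d}{=}\overleftarrow{T}$. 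Your sketch presupposes this reversal symmetry of the conditioned oriented path rather than proving it; that is the genuine gap.
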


	\begin{proof}
		Since any long path containing $o$ inside $T$ is symmetric under reversing its orientation, we obtain that the law of $\mathrm{RAY}(o)$ must also be symmetric under changing its orientation, conditionally on $A_2(o)$ occurring. In particular, the coupling $(\overrightarrow{T}, \overleftarrow{T})$ is distributed as $(\overleftarrow{T}, \overrightarrow{T})$ and therefore, $\overrightarrow{T} \overset{d}{=} \overleftarrow{T}$.
	\end{proof}
	
	\begin{proof}[Proof of Theorem \ref{T: UST one-ended}]
		Notice first that as $G$ satisfies the standing assumptions and is unimodular or planar, it satisfies the intersection property for $\wh{X}$ due to Propositions \ref{prop: REVERSIBLE CRW traces intersect io} and \ref{prop: PLANAR CRW traces intersect i.o.} respectively.
		
		Since $\overrightarrow{T}$ has the same distribution as $\overleftarrow{T}$ by Lemma \ref{L: UST switching orientation} it follows that when $A_2(o)$ occurs, the past of $o$ cannot be smaller than the future (indeed, there might be more ways to switch the orientation, any such way will make the future into the past), hence for each $n$,
		\[
			\E\left[\frac{|F(o) \cap B(n)|}{|B(n)|} \Big| A_2(o)\right] \leq \E \left[ \frac{|P(o) \cap B(n)|}{|B(n)|} \Big| A_2(o) \right].
		\]
		If $\PP(A_2(o)) > 0$, this contradicts Lemma \ref{L: UST limsup X hits LE at o}. Therefore, $\PP(A_2(o)) = 0$ as desired.
	\end{proof}
	\end{wrong-old}
	\section{Harmonic measure from infinity on mated-CRT maps} \label{section: examples}
	
	Let $\bf{P}$ denote the law of the mated-CRT map $G = G^{1}$ with parameter $\gamma \in (0,2)$ and with root $o$. We will not give a precise definition of these maps here and instead refer the reader for instance to \cite{GwynneMillerSheffieldTutteCRT} or \cite{NathanaelEwainCRT}. As already mentioned, it is straightforward to check that the theorem guaranteeing the existence of the potential kernel (Theorem \ref{T:main_existence}) applies a.s. to these maps. We now discuss a more quantitative statement concerning the harmonic measure from infinity which underlines substantial differences with the usual square lattice.

	We will write $\Beuc(x, n)$ for the ball of vertices $z \in v(G)$ such that the Euclidean distance between $z$ and $x$ (w.r.t. the natural embedding) is at most $n$.
	
	\begin{theorem}\label{theorem: CRT hm bounds}
		There exists a $\delta = \delta(\gamma) > 0$ such that the following holds. Almost surely, there exits an $N \geq 1$ such that for all $x \notin \Beuc(N)$ we have that
		\[
			\hm_{o, x}(x) \leq 1 - \delta.
		\]
		In particular,
		\[
			\bf{P}\left(\frac{1}{2} \leq \limsup_{y \to \infty} \wh{q}(y) \leq 1 - \delta\right) = 1.
		\]
	\end{theorem}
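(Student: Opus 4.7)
The statement naturally splits into two parts: the quantitative claim that $\hm_{o,x}(x) \leq 1 - \delta$ for $x$ outside a random Euclidean ball, and the bounds on $\limsup \wh{q}(y)$.

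My starting point for the main claim is the symmetric identity
\[
  \hm_{o,x}(x) = \frac{a(x,o)}{a(x,o) + a(o,x)},
\]
which follows from Corollary \ref{cor: the harmonic measure for two points is well-defined} together with the relation $\Reff(o \leftrightarrow x) = a(x,o) + a(o,x)$ obtained by setting $x=y$ in Proposition \ref{prop: green function and PK}. This recasts the desired inequality as $a(o,x) \geq \tfrac{\delta}{1-\delta} \cdot a(x,o)$, i.e.\ a two-sided comparison of the potential kernel under swap of arguments. On $\ZZ^2$ the two sides are equal by translation invariance; on the mated-CRT map the symmetry is broken by the fixed root, and the point is that on Euclidean scales this breaking is controlled.

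The plan for the main step is to combine the Harnack inequalities of Theorems \ref{theorem: harnack inequality} and \ref{theorem: harnack with a pole} (which apply since the mated-CRT map is unimodular, recurrent and satisfies the standing assumptions by Theorem \ref{T:main_existence}) with quantitative estimates on the effective resistance and on the distortion of the natural embedding established in \cite{NathanaelEwainCRT}. The latter tell us that the sublevel sets $\Lambda_a(R)$ are sandwiched between Euclidean balls whose radii are comparable up to polynomial factors in $R$. Taking $x$ at large Euclidean distance, one chooses $R$ with $x \in \partial \Lambda_a(R)$ and considers the harmonic function $w \mapsto a(w,x)$, which is non-negative and harmonic away from $x$. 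Applying the elliptic Harnack inequality on a shell intermediate between $o$ and $x$ (whose existence requires the above two-sided embedding control), the value $a(o,x)$ is comparable to $\max_w a(w,x)$ over this shell, and by another application of Harnack and the triangle inequality for the quasi-distance $a(\cdot,\cdot)$ this maximum is itself comparable to $a(x,o)$. This produces a constant $c = c(\gamma) > 0$, depending only on the Harnack constants and on the deterministic bounds in \cite{NathanaelEwainCRT}, with $a(o,x) \geq c \cdot a(x,o)$. The main obstacle is ensuring that these Harnack transports can indeed be carried out with constants depending only on $\gamma$: one must verify that the ``shells'' are well-placed between $o$ and $x$ in the effective-resistance geometry, which is where the mated-CRT specific estimates play their critical role.

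For the final conclusion on $\limsup_{y\to\infty} \wh{q}(y)$, Corollary \ref{cor: hm and q are equivalent} converts harmonic-measure statements into statements about the conditioned-walk hitting probabilities. The first part of the theorem applied to all sufficiently far Euclidean points gives $\limsup_{y\to\infty} \wh{q}(y) \leq 1 - \delta$ almost surely. For the matching lower bound $\limsup \geq 1/2$, I would invoke Lemma \ref{lemma: reversible graphs are delta-good}: after biasing by the degree of the root, the mated-CRT map becomes a reversible random graph with $\bf{E}[\deg(o)] < \infty$, and this biasing does not affect almost-sure properties. The lemma then guarantees infinitely many $(\delta', o)$-good vertices for each $\delta' < 1/2$, which by Corollary \ref{cor: hm and q are equivalent} gives $\limsup_{y\to\infty} \wh{q}(y) \geq \delta'$ a.s.; letting $\delta' \uparrow 1/2$ concludes the proof.
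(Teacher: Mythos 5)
Your reformulation $\hm_{o,x}(x)=a(x,o)/(a(x,o)+a(o,x))$ is correct, and your treatment of the second display --- Corollary \ref{cor: hm and q are equivalent} for the upper bound and Lemma \ref{lemma: reversible graphs are delta-good} for $\limsup_y \wh q(y)\ge 1/2$ --- is exactly the paper's argument. The gap is in the core comparison $a(o,x)\ge c\,a(x,o)$. Your step ``by another application of Harnack and the triangle inequality this maximum is itself comparable to $a(x,o)$'' is unsupported and cannot work as stated: both Harnack inequalities only compare values of the \emph{single} harmonic function $a(\cdot,x)$ across a level set or resistance ball, and the triangle inequality $a(w,x)\le a(w,o)+a(o,x)$ bounds such values from \emph{above} in terms of $a(o,x)$; neither mechanism converts information about $a(\cdot,x)$ into a lower bound in terms of $a(x,o)$. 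If a soft Harnack-transport argument sufficed, the bound would hold on every graph satisfying the standing assumptions, whereas it is a genuinely quantitative, model-specific fact. What actually closes the argument in the paper is a pair of \emph{matching} estimates: the upper bound $a(x,o)\le C\log|x|$ (Proposition \ref{prop: PK on CRT bounds root}, coming from $\Reff(o\leftrightarrow\partial\Beuc(o,n))\le C\log n$ plus Harnack around $o$), and the lower bound $a(o,x)\ge C^{-2}\log|x|$ obtained by applying the Euclidean Harnack inequality to $a(\cdot,x)$ on $\partial\Beuc(x,|x|)$ (which contains $o$) together with the identity $\E_x[a(X_{T_{\Beuc(x,|x|)}},x)]=\Reff(x\leftrightarrow\partial\Beuc(x,|x|))\ge C^{-1}\log|x|$ from Lemma \ref{lemma: CRTeffres lowerbound}. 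It is the coincidence of the logarithmic resistance rates seen from $o$ and from $x$ --- not the placement of intermediate shells --- that produces a nonrandom $\delta(\gamma)$.

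A second, smaller omission concerns the quantifiers. The inputs (Lemmas \ref{lemma: CRTresistanceBound}, \ref{lemma: CRTharnack}, \ref{lemma: CRTeffres lowerbound}) hold at a fixed scale $n$ with probability $1-n^{-\alpha}$ or $1-\log(n)^{-\alpha}$, whereas the theorem asserts an almost-sure bound valid simultaneously for \emph{all} $x$ outside a random Euclidean ball. The paper upgrades the scale-by-scale statements by Borel--Cantelli along the sparse sequences of scales $e^{n^{2/\alpha}}$ and $n^{2/\alpha}$, using that consecutive scales have bounded ratio; some such argument is needed and is absent from your proposal.
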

	In fact, we expect the following stronger result to hold:

	\begin{conjecture}\label{Conj}
		For some (nonrandom) $a,b>0$, almost surely
		\begin{equation}\label{Conj:hmbounds}
				a = \liminf_{y \to \infty} \wh{q}(y) \le \limsup_{y \to \infty} \wh{q}(y) =1 - b.
		\end{equation}
	\end{conjecture}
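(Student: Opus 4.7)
The second displayed assertion reduces to the first plus earlier results, so I would treat those reductions first and then tackle the uniform bound on $\hm_{o,x}(x)$, which is the real content.

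For the lower bound $\limsup_{y \to \infty} \wh q(y) \ge 1/2$, I would simply invoke reversibility: after degree-biasing, the mated-CRT map $(G,o)$ is a reversible random graph, so Lemma \ref{lemma: reversible graphs are delta-good} applies and gives, for every $\delta < 1/2$, infinitely many vertices $x$ with $\hm_{x,o}(x) \ge \delta$ almost surely. Combined with Corollary \ref{cor: hm and q are equivalent} (which identifies $\limsup_{y \to \infty} \wh q(y)$ with $\limsup_{y \to \infty} \hm_{o,y}(y)$), this gives the claimed lower bound. For the upper bound $\limsup_{y \to \infty} \wh q(y) \le 1 - \delta$, apply Corollary \ref{cor: hm and q are equivalent} again together with the first assertion: any sequence of vertices going to infinity in graph distance eventually leaves every Euclidean ball $\Beuc(N)$ (since such balls are a.s. finite in the mated-CRT map), so $\hm_{o,y}(y) \le 1 - \delta$ for all but finitely many terms.

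The substantive step is thus the uniform bound $\hm_{o,x}(x) \le 1 - \delta$ for $x \notin \Beuc(N)$. Using Corollary \ref{cor: the harmonic measure for two points is well-defined} and writing $a(x,o) + a(o,x) = \Reff(o\leftrightarrow x)$, this is equivalent to showing
\[
a(o,x) \ge \delta\, \Reff(o \leftrightarrow x)
\]
uniformly in $x$ far enough from $o$. To do this, I would exploit the natural conformal (Tutte) embedding of the mated-CRT map: by the results of Gwynne--Miller--Sheffield \cite{GwynneMillerSheffieldTutteCRT}, simple random walk on $G^1$ embedded by Tutte converges to a time change of planar Brownian motion. The harmonic measure of the pair $\{o,x\}$ seen from far away is therefore well approximated by the corresponding BM harmonic measure of two "cells" of the embedding. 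For planar BM launched from a large circle, the probability of hitting the cell around $o$ before that around $x$ is controlled by the ratio of logarithmic capacities of the two cells, and the key point is that the cell around the (fixed) root $o$ has a.s. positive Euclidean diameter and positive $\gamma$-LQG mass, so its log-capacity is a.s. comparable (up to bounded random factors, with good tails under the LQG law) to that of a typical cell at infinity. This yields $\hm_{o,x}(o) \ge \delta$ and hence the claim. To upgrade $\delta$ from random to a deterministic constant $\delta(\gamma)$, I would use ergodicity and translation stationarity of the mated-CRT map at the level of the field $h$, combined with standard negative-moment bounds for Gaussian multiplicative chaos masses of small balls, to show that a nontrivial fraction of cells around $o$ have capacity bounded below by a universal quantity.

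The main obstacle is the \emph{uniformity} of the comparison between discrete harmonic measure on $G^1$ and Brownian harmonic measure in the embedding: one needs an estimate that is uniform in the location of $x$ and robust to the (unbounded) random distortion of the embedding at mesoscopic scales. Existing convergence-to-BM results are typically stated for a fixed starting domain and provide weak-type convergence, whereas here we need pointwise hitting-probability estimates holding up to infinity. I expect the cleanest route is to combine the anchored Harnack inequality of Theorem \ref{theorem: harnack with a pole} (which, via Remark \ref{R:AHIEHI} and Theorem \ref{theorem: harnack inequality}, applies on the sublevel sets of $a(\cdot,o)$) with mesoscopic LQG regularity estimates to reduce to an asymptotic BM computation on annuli of fixed conformal modulus, where the $1/2$--$1/2$ ratio of $\ZZ^2$ is replaced by a genuinely random (but a.s. bounded away from $0$ and $1$) ratio determined by the relative cell sizes.
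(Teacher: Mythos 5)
What you have been asked about is stated in the paper as a \emph{conjecture} (Conjecture \ref{Conj}), and the authors explicitly say they cannot prove it; what they do prove is the strictly weaker Theorem \ref{theorem: CRT hm bounds}, namely $\tfrac12 \le \limsup_{y\to\infty}\wh q(y) \le 1-\delta$ a.s., plus a density statement for $\delta$-good points. Your plan, read carefully, only targets that weaker theorem, and even there it leaves the hard step open. Concretely: (i) the conjecture requires $\liminf_{y\to\infty}\wh q(y) = a > 0$, i.e.\ a \emph{lower} bound $\hm_{o,y}(y)\ge a$ for all but finitely many $y$ (equivalently an upper bound on $\hm_{o,y}(o)$ uniform in far $y$); your argument only bounds $\hm_{o,x}(x)$ from above, and this liminf bound is exactly the part the paper singles out as open --- their unimodularity argument only shows the bad points have asymptotic density zero, which is compatible with $\liminf = 0$ along a sparse sequence of anomalously small cells. (ii) The conjecture asserts exact nonrandom \emph{equalities} for the liminf and limsup, which needs a zero--one/ergodicity argument together with matching two-sided bounds; you gesture at ergodicity for the constant $\delta$ but do not carry it out. (iii) Even for the upper bound, the step you yourself flag as ``the main obstacle'' --- a comparison between the discrete harmonic measure of $\{o,x\}$ and Brownian harmonic measure of the corresponding cells of the Tutte embedding, \emph{uniform in $x$ out to infinity} --- is not supplied, and the known invariance results (weak convergence on fixed compacts) do not give it. A proof plan whose central estimate is acknowledged to be unavailable is a gap, not a proof.

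For the parts that do go through, note that the paper obtains $\hm_{o,x}(x)\le 1-\delta$ for $x\notin\Beuc(N)$ by a more elementary route than the one you sketch: it combines the two-sided resistance estimate $\Reff(o\leftrightarrow \partial\Beuc(o,n)) \asymp \log n$ (Lemma \ref{lemma: CRTresistanceBound}), the annulus Harnack inequality (Lemma \ref{lemma: CRTharnack}), the pointwise resistance lower bound (Lemma \ref{lemma: CRTeffres lowerbound}) and a Borel--Cantelli argument, using only the identities $a(x,o)=\hm_{x,o}(x)\Reff(o\leftrightarrow x)$ and $a(x,o)+a(o,x)=\Reff(o\leftrightarrow x)$ from Corollary \ref{cor: the harmonic measure for two points is well-defined}; no convergence to Brownian motion is invoked. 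If you want to attack the conjecture itself, the genuinely open items are the uniform lower bound on $\hm_{o,y}(y)$ and the identification of the nonrandom constants $a,b$, which the authors expect to be governed by the extremal exponents for the LQG volume of small Euclidean balls.
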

	In fact, sharp values for $a,b$ can be conjectured by considering the minimal and maximal exponents for the LQG volume of a Euclidean ball of radius $\varepsilon$ in a $\gamma$-quantum cone, which all decay polynomially as  $\varepsilon \to 0$ (see Lemma A.1 in \cite{NathanaelEwainCRT}). We also conjecture that this holds for, say, the UIPT.

	Based on this we conjecture that $\max(a,b) < 1/2$. This would show a stark contrast with the square lattice $\mathbb{Z}^2$ where we recall that $a = b = 1/2$ (see e.g. \cite{PopovRW}).
	The upper bound in \eqref{Conj:hmbounds} is of course stated in Theorem \ref{theorem: CRT hm bounds} so that the lower bound in \eqref{Conj:hmbounds} is what we are asking about.
	While we are not able to prove this, we may use the unimodularity of the law $\bf{P}$ is unimodular, to prove a slightly weaker lower bound:
	\begin{corollary}
		Let $\delta > 0$ as in the previous theorem. Then, almost surely, the asymptotic fraction of $\delta$-good points equals one or in other words, a.s.,
		\[
			\liminf_{n \to \infty} \frac{1}{|B(n)|}|\{x \in B(n): \hm_{o, x}(x) < \delta\}| = 0.
		\]
	\end{corollary}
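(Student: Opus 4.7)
My plan is to invoke the mass transport principle, which is the only place the stated hypothesis of unimodularity enters. Define the doubly-rooted indicator $f(G, o, x) := \mathbf{1}\{\hm_{o,x}(x) < \delta\}$. Since $\hm_{o,x}$ is the harmonic measure from infinity on the two-point set $\{o,x\}$, it does not depend on which point is the root, and $\hm_{o,x}(o) + \hm_{o,x}(x) = 1$; hence swapping the roles of $o$ and $x$ yields $f(G,x,o) = \mathbf{1}\{\hm_{o,x}(x) > 1 - \delta\}$. Applying the mass transport principle to the unimodular law of the mated-CRT map gives
\[
\mathbf{E}\big[\#\{x : \hm_{o,x}(x) < \delta\}\big] \;=\; \mathbf{E}\big[\#\{x : \hm_{o,x}(x) > 1 - \delta\}\big].
\]

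By Theorem \ref{theorem: CRT hm bounds}, the set appearing on the right-hand side is almost surely contained in $\Beuc(N)$ for some random $N$, and so is a.s.\ finite. Together with LQG-volume estimates on the mated-CRT that control the tail of $N$ (e.g.\ Lemma A.1 of \cite{NathanaelEwainCRT}), one obtains that this expectation is itself finite. The mass transport identity above then gives that $\#\{x : \hm_{o,x}(x) < \delta\}$ has finite expectation, hence is almost surely finite. Since $|B(n)| \to \infty$ a.s., the bad set has asymptotic density zero and the claim follows (in fact with a genuine limit, not just a liminf).

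The main obstacle in this plan is the integrability step $\mathbf{E}|\Beuc(N)| < \infty$: the theorem only asserts a.s.\ finiteness of $N$, and one must extract a moment bound from its proof or from LQG-volume asymptotics. If this integrability step is undesirable, one may instead run the MTP with the normalized indicator $f_n(G,o,x) := |B(o,n)|^{-1}\mathbf{1}\{x \in B(o,n),\, \hm_{o,x}(x) < \delta\}$. Writing $S_n = \#\{x \in B(n) : \hm_{o,x}(x) < \delta\}$ and $A(o) = \{x : \hm_{o,x}(x) > 1 - \delta\}$, the MTP then gives
\[
\mathbf{E}\!\left[\frac{S_n}{|B(n)|}\right] \;=\; \mathbf{E}\!\left[\sum_{x \in A(o) \cap B(n)} \frac{1}{|B(x,n)|}\right].
\]
For every vertex $x$ one has $|B(x,n)| \geq n+1$ (since the graph is infinite and connected), so the right-hand side is at most $\mathbf{E}[|A(o)|]/(n+1)$ after a suitable truncation on $|A(o)|$ combined with the a.s.\ convergence of the inner sum to $0$ (using the a.s.\ finiteness of $A(o)$ and $|B(x,n)|\to\infty$). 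This yields $\mathbf{E}[S_n/|B(n)|] \to 0$, and Fatou's lemma applied to the nonnegative sequence $S_n/|B(n)|$ delivers $\liminf_n S_n/|B(n)| = 0$ almost surely, as required.
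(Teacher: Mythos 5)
Your mass-transport identities are correct (the symmetry $\hm_{x,o}(o)=1-\hm_{o,x}(x)$ and the swap are fine, and so is the bound $|B(x,n)|\ge n+1$), but both variants of your argument founder on the same genuine gap: converting the \emph{almost sure} finiteness of $A(o)=\{x:\hm_{o,x}(x)>1-\delta\}$ supplied by Theorem \ref{theorem: CRT hm bounds} into a statement \emph{in expectation}, which is what the mass transport principle forces you to work with. In the first variant you acknowledge that $\mathbf{E}|A(o)|<\infty$ is not available; the theorem gives no moment control on the random radius $N$, and Lemma A.1 of \cite{NathanaelEwainCRT} does not hand you one for free. In the second variant the gap is hidden in the phrase ``after a suitable truncation on $|A(o)|$'': writing $Y_n=\sum_{x\in A(o)\cap B(n)}|B(x,n)|^{-1}$ and splitting on $\{|A(o)|\le K\}$, the main term is indeed at most $K/(n+1)\to 0$, but the complementary term $\mathbf{E}\bigl[Y_n\,\mathbf{1}\{|A(o)|>K\}\bigr]$ can only be bounded by $\mathbf{E}\bigl[Z_n\,\mathbf{1}\{|A(o)|>K\}\bigr]$ with $Z_n=\sum_{x\in B(n)}|B(x,n)|^{-1}$; one knows $\mathbf{E}[Z_n]=1$ by the MTP, but without uniform integrability of $(Z_n)_n$ (equivalently, without an ergodic-type control of spatial averages over balls, which is not available for general unimodular graphs and is not established here) this term need not be small uniformly in $n$ as $K\to\infty$. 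So a.s.\ convergence $Y_n\to 0$ does not yield $\liminf_n\mathbf{E}[Y_n]=0$, and the final Fatou step has nothing to feed on.

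The paper circumvents exactly this issue by replacing the uniform measure on $B(n)$ with the law of $X_n$: it passes to the degree-biased (reversible) law and uses $\tilde{\bf{P}}(\hm_{\tilde o,X_n}(X_n)>1-\delta)=\tilde{\bf{P}}(\hm_{\tilde o,X_n}(X_n)<\delta)$, where now the quantity transported is a \emph{probability}, hence bounded by $1$. Null recurrence gives the quenched convergence $\PP(X_n\in A(\tilde o))\le \PP(X_n\in\Beuc(N))\to 0$ a.s., and bounded convergence (reverse Fatou) then upgrades this to the annealed statement with no moment hypotheses whatsoever; the counting statement in balls is extracted only at the very end. If you want to salvage your route, you would either need to make the tail of $N$ (hence of $|A(o)|$) summable by reworking the quantitative estimates behind Theorem \ref{theorem: CRT hm bounds}, or switch the reference measure from $|B(n)|^{-1}\sum_{x\in B(n)}\delta_x$ to $\mathbb{P}_o(X_n\in\cdot)$ as the paper does.
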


	\begin{proof}
		Let $\tilde{\bf{P}}$ denote the law $\bf{P}$ after degree biasing. We write $(\tilde G, \tilde o)$ for the random graph with law $\tilde{\bf{P}}$.
		
		On the one hand, by reversibility of $\tilde{\bf{P}}$, we know that
		\[
			\tilde{\bf{P}}(\hm_{\tilde{o}, X_n}(X_n) > 1 - \delta) = \tilde{\bf{P}}(\hm_{\tilde{o}, X_n}(o) > 1 - \delta) = \tilde{\bf{P}}(\hm_{\tilde{o}, X_n}(X_n) < \delta).
		\]
		On the other hand, by Theorem \ref{theorem: CRT hm bounds} and the reversed Fatou's lemma, we have
		\[
			\limsup_{n \to \infty}\tilde{\bf{P}}(\hm_{\tilde{o}, X_n}(X_n) > 1 - \delta) = 0,
		\]
		thus
		\[
			\lim_{n \to \infty}\tilde{\bf{P}}(\hm_{\tilde{o}, X_n}(X_n) < \delta) = 0.
		\]
		The result now follows by contradiction: indeed, suppose that with positive probability, there is a positive asymptotic fraction of vertices $x \in B(n)$ which have $\hm_{\tilde{o}, x}(x) < \delta$, then the random walk will spend a positive fraction of time in these points, giving a contradiction.
	\end{proof}

	\subsection{Preliminaries and known results.} \label{sec: CRT prelim}
	We collect some known results about mated-CRT maps which are needed for the proof of Theorem \ref{theorem: CRT hm bounds}.
	
	\begin{lemma} \label{lemma: CRTresistanceBound}
		There exist $C = C(\gamma) < \infty$ and $\alpha = \alpha(\gamma) > 0$, such that for all $n \in \NN$,
		\[
			\bf{P}\left( \frac{1}{C} \log(n) \leq \Reff(o \leftrightarrow \partial \Beuc(o, n)) \leq C \log(n) \right) \geq 1 - \frac{1}{\log(n)^\alpha}.
		\]
	\end{lemma}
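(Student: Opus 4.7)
The plan is to establish matching upper and lower bounds on $\Reff(o \leftrightarrow \partial \Beuc(o,n))$, each of order $\log n$, by exploiting the (approximate) scale invariance of the mated-CRT map under the rescaling coming from the underlying $\gamma$-quantum cone, and then decomposing $\Beuc(o,n)$ into $\lfloor \log_2 n\rfloor$ dyadic Euclidean annuli $A_k := \Beuc(o, 2^k) \setminus \Beuc(o, 2^{k-1})$. The aim on each $A_k$ will be to show that both its effective resistance and its effective conductance (in the series/parallel sense) are bounded above and below by absolute constants on an event of probability at least $1 - 2^{-\beta k}$ for some $\beta=\beta(\gamma)>0$, after which summing over scales and a union bound yield the claimed $(\log n)^{-\alpha}$ error.

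For the upper bound, I would use Thomson's principle: in each annulus $A_k$, construct a unit flow from the inner to the outer boundary that spreads over the many (essentially disjoint) crossings provided by the planar structure of the mated-CRT map. Using the identification of the mated-CRT map with a coarse-graining of a $\gamma$-quantum cone along its space-filling SLE trajectory (as in Gwynne--Miller--Sheffield and in the formulation used in \cite{NathanaelEwainCRT}), and the LQG area moment bounds of Lemma A.1 in \cite{NathanaelEwainCRT}, I would show that the number of vertices in $A_k$ is polynomially large in $2^k$ with sufficiently strong tail, and correspondingly that sufficiently many disjoint paths cross $A_k$. Spreading one unit of flow uniformly then gives Dirichlet energy $O(1)$ in each annulus, producing an $O(\log n)$ upper bound.

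For the lower bound, I would invoke the Nash--Williams inequality: any cutset separating $o$ from $\partial \Beuc(o,n)$ inside $A_k$ yields a lower bound on $\Reff$ of $1/C_k$, with $C_k$ the number of edges in the cutset (conductances are unit). Taking as a cutset the edges whose endpoints lie on opposite sides of the median Euclidean circle of $A_k$, I would upper bound $C_k$ by (a constant times) the number of vertices in $A_k$, which is polynomial in $2^k$ with the same LQG-moment tails as above. A key point is that one only needs $C_k \le C \cdot 2^{\eta k}$ for the \emph{right} choice of cutset to yield a contribution of order $1$ per scale after the series-law summation, so one can afford to optimise the cutset within each annulus.

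The main obstacle is the probabilistic side rather than the deterministic resistance calculus: we need the good events at all $\lfloor\log_2 n\rfloor$ scales simultaneously, with the failure probability of \emph{any} scale summing to $O((\log n)^{-\alpha})$. Here the crucial input is the scale invariance of the $\gamma$-quantum cone, which makes the events at different scales identically distributed (up to the deterministic dyadic rescaling that maps $A_k$ to $A_0$), so that only the tail at a single fixed annulus needs to be controlled; Gaussian tails for the Brownian motion coordinates defining the mated-CRT map together with Gaussian multiplicative chaos moment bounds (as in Lemma A.1 of \cite{NathanaelEwainCRT}) should yield stretched-exponential tails at each scale, which is more than enough to sum over $O(\log n)$ scales and produce the stated $1 - (\log n)^{-\alpha}$ estimate.
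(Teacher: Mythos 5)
The paper does not prove this lemma at all: it is quoted directly as Proposition 3.1 of \cite{GwynneMiller2020}, so you are attempting to reprove a substantial external input from scratch. Your overall architecture (dyadic annuli, Thomson for the upper bound, a cutset/test-object argument for the lower bound, scale invariance of the quantum cone to reduce to one scale) is the right general shape, but as written the argument has two genuine gaps, one of which is fatal to the lower bound.

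First, the Nash--Williams step does not produce a $\log n$ lower bound. With one cutset per dyadic annulus you have only $\lfloor \log_2 n\rfloor$ disjoint cutsets, and if the $k$-th cutset has $C_k \le C\cdot 2^{\eta k}$ unit-conductance edges, Nash--Williams gives $\Reff \ge \sum_k C_k^{-1} \ge \tfrac{1}{C}\sum_k 2^{-\eta k}$, which is bounded by a constant independent of $n$ --- not ``a contribution of order $1$ per scale'', which would require $C_k = O(1)$. (On $\ZZ^2$ the $\log n$ comes from using $n$ cutsets, one per unit radius, with the $r$-th of size $\asymp r$; on the mated-CRT map the number of cells meeting a Euclidean circle of radius $r$ grows polynomially in $r$ with exponent strictly larger than $1$, so even the radius-by-radius version of Nash--Williams only yields a convergent series.) The actual route to the lower bound is the Dirichlet principle: take the discretisation of the continuum test function $f(z)=1-\log|z|/\log n$ and show its discrete Dirichlet energy is $O(1/\log n)$, which requires moment bounds on $\mathrm{diam}(H_x)^2\deg(x)/\mathrm{dist}(H_x,0)^2$ summed over cells $H_x$ --- this is where the LQG estimates genuinely enter, and it is not interchangeable with a cutset count. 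Second, your bookkeeping of failure probabilities does not close: if the bad event at scale $k$ has probability $2^{-\beta k}$ (or any quantity depending on $k$ alone), then the union over $k\le \log_2 n$ is $\Theta(1)$, dominated by the smallest scales, and does not decay in $n$ at all, let alone like $(\log n)^{-\alpha}$. One must instead argue that with probability $1-(\log n)^{-\alpha}$ a large \emph{fraction} of the scales are good (using approximate independence across scales), that good scales alone already force $\Reff \gtrsim \log n$, and that the resistance contributed by the bad scales to the upper bound can still be controlled; this is precisely the delicate part of the Gwynne--Miller argument and is missing from your sketch.
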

	\begin{proof}
		This is Proposition 3.1 in \cite{GwynneMiller2020}.
	\end{proof}
	
	\begin{wrong-old}
	\begin{lemma} \label{lemma: CRT RW close to curve}
		There exist $\alpha = \alpha(\gamma) > 0$, $C = C(\gamma)$ and $s_0 > 0$ such that the following holds. Let $\c{P}$ be a path of finite length in the plane. For $r > 0$, write $D(\c{P}, r) = \{x \in v(G): |\eta(x) - \c{P}| \leq r\} \subset \C$. Take $n/10 \leq \tilde{r} < r$ and let $N$ denote the minimum number of euclidean balls of radius $s_0(r - \tilde{r})$ needed to cover $D(\c{P}, r)$. Then with $\c{P}$-probability at least $1 - CN n^{-\alpha}$, we have
		\[
			\min_{x \in D(\c{P}, \tilde{r})} \min_{z \in D(\c{P}, \tilde{r})} \PP_x(X \text{ hits } \Beuc(z, \tilde{r}) \text{ before exiting } D(\c{P}, r)) > 2^{-N}.
		\]
	\end{lemma}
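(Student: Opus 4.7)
The plan is to prove this by a chaining argument along a sequence of overlapping Euclidean balls. First, I would fix a covering of $D(\c{P}, r)$ by Euclidean balls $B_i := \Beuc(y_i, s_0(r-\tilde r))$ for $i = 1, \dots, N$, where $s_0$ is a small absolute constant (to be chosen below) and the centers $y_i$ lie on or near the path $\c{P}$. Given starting and ending points $x, z \in D(\c{P}, \tilde r)$, I would select a sub-sequence of at most $N$ of these balls forming a chain $B_{i_1}, B_{i_2}, \ldots, B_{i_k}$ with $B_{i_1} \ni x$, $B_{i_k} \ni z$, and such that consecutive balls $B_{i_j}, B_{i_{j+1}}$ have substantially overlapping concentric doubles, all contained in $D(\c{P}, r)$. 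This last containment is where the choice of radius $s_0(r - \tilde r)$ and the buffer $r - \tilde r$ between $D(\c{P}, \tilde r)$ and the boundary of $D(\c{P}, r)$ matters: for $s_0$ small enough (say $s_0 < 1/10$), the doubled balls around centers within $D(\c{P}, \tilde r)$ stay inside $D(\c{P}, r)$.

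The key building block is an elliptic-type estimate: for each pair $(B_{i_j}, B_{i_{j+1}})$, I claim that with high $\bf{P}$-probability, a simple random walk on $G$ starting from any vertex whose embedded location lies in $B_{i_j}$ has probability at least $1/2$ of hitting $B_{i_{j+1}}$ before exiting a slightly enlarged Euclidean ball contained in $D(\c{P}, r)$. This follows from the effective resistance estimates of Lemma \ref{lemma: CRTresistanceBound}: on the good event that the effective resistance from the center of each ball to its boundary is comparable (up to a constant) to $\log(\text{radius ratio})$, the hitting probability of any macroscopic sub-region is bounded below by a positive constant, which we can upgrade to $1/2$ by choosing the overlap geometry appropriately. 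Each such elliptic estimate fails on an event of $\bf{P}$-probability at most $C n^{-\alpha}$ by Lemma \ref{lemma: CRTresistanceBound}, applied at scales comparable to $r \geq \tilde r \geq n/10$. A union bound over the $N$ balls then yields the error term $C N n^{-\alpha}$.

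Once all these elliptic estimates hold simultaneously, I would conclude by applying the strong Markov property iteratively along the chain: starting from $x \in B_{i_1}$, with probability $\geq 1/2$ the walk reaches $B_{i_2}$ before leaving $D(\c{P}, r)$; conditional on this, with probability $\geq 1/2$ it reaches $B_{i_3}$ without leaving; and so on. After at most $N$ such steps, the walk is in a ball containing $z$, from which one last elliptic estimate (arranging $z \in \Beuc(z, \tilde r) \supset $ the last small ball up to a constant, or directly invoking a ball-to-ball hitting bound) ensures a hit of $\Beuc(z, \tilde r)$ before exit. Multiplying the probabilities gives the lower bound $2^{-N}$.

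The main obstacle will be the precise form of the elliptic estimate, in particular ensuring it is uniform over all pairs of overlapping balls, and that the constants $s_0$ and the overlap factor can be chosen so that each step gives probability at least $1/2$ (rather than some smaller constant, which would only yield $c^{N}$ for $c < 1/2$). Getting exactly $2^{-N}$ requires balancing the overlap: large overlap makes each step easy (probability close to 1) but potentially requires more balls, while small overlap reduces $N$ but makes each step harder. One also needs to ensure that the random walk estimate from resistance bounds transfers to uniform starting points in each ball, which may require an additional Harnack-type argument (e.g., appealing to the results of \cite{GwynneMiller2020} ensuring that the walk mixes rapidly within a ball before exit) and explains the role of the lower bound $\tilde r \geq n/10$ (giving macroscopic scales on which the quenched estimates are strong).
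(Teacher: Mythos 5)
First, a point of reference: the paper does not actually prove this lemma --- its ``proof'' is a one-line citation of a proposition of Gwynne--Miller--Sheffield (and the whole statement sits in an excluded block of the source). Your chaining strategy is indeed the standard route by which such statements are established in that literature, so the overall architecture (cover by balls of radius $s_0(r-\tilde r)$, chain through at most $N$ of them, pay a factor $1/2$ per step) is the right one.

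There is, however, a genuine gap in the justification of the per-step elliptic estimate. You derive it from Lemma \ref{lemma: CRTresistanceBound}, but that lemma is anchored at the root $o$, concerns a single scale, and its failure probability is only $\log(n)^{-\alpha}$, which cannot produce the $C N n^{-\alpha}$ error term. More importantly, knowing that the resistance from the centre of each ball to its boundary is comparable to a logarithm ``up to a constant'' does \emph{not} by itself imply that the walk hits a prescribed macroscopic sub-region before exiting with probability bounded below: the relevant inequality is $\PP_x[\tau_A<\tau_C]\ge 1-\Reff(x\leftrightarrow A)/\Reff(x\leftrightarrow C)$, and if both resistances are only known up to the same unknown constant $C$ their ratio could lie anywhere in $[C^{-2},C^2]$. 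What actually makes the step work is a separation of scales: one needs (a) an \emph{upper} bound of order a constant on $\Reff(x\leftrightarrow B_{i_{j+1}})$ for every vertex $x$ embedded in $B_{i_j}$ (the target ball is at distance comparable to its own radius), together with (b) a \emph{lower} bound of order $C^{-1}\log(1/s_0)$ on the resistance from $x$ to the complement of the enlarged ball $\Beuc(y_{i_j},r-\tilde r)\subset D(\c{P},r)$; choosing $s_0$ small as a function of the constant $C$ then forces the ratio below $1/2$. Both bounds must hold \emph{uniformly over all vertices of all $N$ covering balls simultaneously}, each with polynomially small failure probability, which requires the quenched, uniform resistance estimates of \cite{GwynneMiller2020} rather than the root-anchored bound you cite. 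Your closing paragraph correctly senses that the choice of $s_0$ and the overlap geometry is where the difficulty lies, but the proposal as written does not supply the two-sided, uniform resistance input that the argument actually runs on.
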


	\begin{proof}
		An immediate consequence of Proposition ... in \cite{GwynneMillerSheffield-CRTHarmonics}.
	\end{proof}
	\end{wrong-old}
	
	\begin{lemma} \label{lemma: CRTharnack}
		There exists a $C = C(\gamma) < \infty$ and $\alpha = \alpha(\gamma) > 0$ such that with $\bf{P}$-probability at least $1 - n^{-\alpha}$, for all $x \in \Beuc(3n)$ and all $s \in [1/3, 1]$
		\[
			\max_{z \in \partial \Beuc(x, sn)} h(z) \leq C \min_{x \in \Beuc(x, sn)} h(z)
		\]
		whenever $h: \Beuc(x, 3n) \cup \partial \Beuc(x, 3n) \to \RR_+$ is harmonic outside of possibly $x$ and $\partial \Beuc(x, 3n)$.
	\end{lemma}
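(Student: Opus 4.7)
My plan is to reduce this anchored statement to a standard elliptic Harnack inequality for genuinely harmonic functions on the mated-CRT map, via a chaining argument along $\partial \Beuc(x, sn)$. The key geometric observation is that, although $h$ may have a singularity at $x$, for any $y \in \partial \Beuc(x, sn)$ with $s \ge 1/3$ the Euclidean distance from $y$ to $x$ is $sn \ge n/3$; in particular the ball $\Beuc(y, sn/10)$ avoids $x$ entirely and is contained in $\Beuc(x, 3n)$, so that $h$ is genuinely harmonic there.

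The first step is to invoke, as a black box, the following elliptic Harnack inequality for truly harmonic functions on mated-CRT maps: there exist constants $C_0, \alpha_0 > 0$ such that with $\bf{P}$-probability at least $1 - n^{-\alpha_0}$, for every $y \in \Beuc(10n)$ and every $r \in [1, n]$, every non-negative $\tilde{h}$ harmonic on $\Beuc(y, 3r)$ satisfies $\max_{\Beuc(y, r)} \tilde{h} \le C_0 \min_{\Beuc(y, r)} \tilde{h}$. A statement of this form can be extracted by combining the resistance bound of Lemma \ref{lemma: CRTresistanceBound} with the random walk crossing/connectivity estimates developed in \cite{GwynneMiller2020} and related work of Gwynne--Miller--Sheffield on harmonic functions on mated-CRT maps.

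Given this input, the main argument runs as follows. Fix $x \in \Beuc(3n)$ and $s \in [1/3, 1]$, and let $y, z$ be two points in $\partial \Beuc(x, sn)$. Since the natural embedding of $\partial \Beuc(x, sn)$ is contained in a topological annulus of Euclidean diameter at most $2sn$, I would construct a chain of vertices $w_0 = y, w_1, \ldots, w_K = z$ on $\partial \Beuc(x, sn)$ with consecutive points at Euclidean distance at most $sn/100$ and with $K \le K_0$ for some universal constant $K_0$ (uniformly in $s \in [1/3,1]$). Applying the single-ball Harnack inequality at each $w_i$ with $r = sn/30$ (note that $\Beuc(w_i, 3r) \subset \Beuc(x, 3n) \setminus \{x\}$ by the geometric observation above, so $h$ is genuinely harmonic there), and using that consecutive balls overlap, yields $h(w_i) \le C_0 h(w_{i+1})$. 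Iterating gives $h(y) \le C_0^{K_0} h(z)$, and setting $C = C_0^{K_0}$ produces the claimed inequality. A union bound over the (polynomial-in-$n$ many) vertices $x \in v(G) \cap \Beuc(3n)$ and over a suitable discretization of $s \in [1/3, 1]$, at the cost of slightly shrinking $\alpha$, gives the stated probability estimate.

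The main obstacle is quantitatively establishing the single-ball Harnack inequality from Step~1 with a polynomial error probability: this is a genuine input about the random geometry of mated-CRT maps, and requires going beyond the resistance bounds of Lemma \ref{lemma: CRTresistanceBound} to also control crossing probabilities of the random walk across Euclidean annuli. Once this is available, the rest of the argument is routine Euclidean chaining and union-bounding.
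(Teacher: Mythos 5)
The paper does not prove this lemma at all: its ``proof'' is the single line ``This is the content of Proposition 3.8 in \cite{NathanaelEwainCRT}.'' Your proposal therefore cannot be compared to an argument in the paper; it has to be judged on its own. As a reduction it is sound: the observation that every $y \in \partial \Beuc(x, sn)$ with $s \ge 1/3$ is at Euclidean distance at least $n/3$ from the pole, so that balls $\Beuc(w_i, 3r)$ with $r = sn/30$ avoid $x$ and sit well inside $\Beuc(x,3n)$, is correct, and the chaining along the sphere with a bounded number $K_0$ of steps is the standard way such statements are derived (it is, in spirit, how \cite{NathanaelEwainCRT} proves its Proposition 3.8, and your outer union bound over $x$ and over a discretization of $s$ is actually redundant once your Step~1 black box is stated uniformly over centers $y$ and scales $r$). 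One small caveat: the $\min$ in the displayed inequality must be read as a minimum over $\partial\Beuc(x,sn)$ (the paper's formula contains a typo), since a minimum over the full ball would include the pole $x$ where a nonnegative function harmonic off $x$ can vanish; your argument correctly only ever compares values on the sphere, so it proves the intended statement.

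The genuine gap is exactly where you flag it: Step~1, the single-ball elliptic Harnack inequality for truly harmonic functions with polynomial failure probability, uniformly over centers and scales. This is not a routine consequence of the resistance bound of Lemma \ref{lemma: CRTresistanceBound}; it requires quantitative control of random-walk crossing and exit estimates in Euclidean annuli of the embedded map (this is the content of the Harnack-type estimates of \cite{NathanaelEwainCRT}, building on \cite{GwynneMiller2020}). Since that input carries essentially all of the difficulty of the lemma, your proposal does not constitute an independent proof but rather a (correct) reduction of the cited result to itself. In the context of this paper that is acceptable --- the authors themselves import the statement wholesale --- but you should be explicit that Step~1 is being cited, not derived.
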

	\begin{proof}
		This is the content of Proposition 3.8 \cite{NathanaelEwainCRT}.
	\end{proof}

	\begin{lemma} \label{lemma: CRTeffres lowerbound}
		There exist $C = C(\gamma) < \infty$ and $\alpha = \alpha(\gamma)$ such that with $\bf{P}$-probability at least $1 - n^{-\alpha}$, for all $x \in \Beuc(3n)$,
		\[
			\Reff(x \leftrightarrow \Beuc(x, n)) \geq \frac{1}{C} \log(n).
		\]
	\end{lemma}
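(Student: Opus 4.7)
The strategy is to upgrade the single-base-point estimate of Lemma \ref{lemma: CRTresistanceBound} into a uniform lower bound over all $x \in \Beuc(3n)$ by combining stationarity of the mated-CRT map under re-rooting with a union bound, and, if necessary, a Harnack-type propagation argument. Note that the displayed ball in the statement should be $\partial \Beuc(x,n)$ rather than $\Beuc(x,n)$, since otherwise the resistance is zero.

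First, I would invoke stationarity of $\bf{P}$. After degree-biasing, $\bf{P}$ is unimodular, so for any deterministic measurable procedure that picks a vertex $x$ from the map, the rerooted graph $(G, x)$ has a law mutually absolutely continuous with $(G, o)$ on local events, up to a factor involving $\deg(x) / \deg(o)$. Applying the polynomial-probability version of the effective resistance lower bound from \cite{GwynneMiller2020} (which is actually what is proved there, stronger than the $1 - \log(n)^{-\alpha}$ we quoted in Lemma \ref{lemma: CRTresistanceBound}) to $(G, x)$ gives
\[
\bf{P}\bigl( \Reff(x \leftrightarrow \partial \Beuc(x, n)) \ge \tfrac{1}{C} \log n \bigr) \ge 1 - n^{-\alpha''},
\]
for some large exponent $\alpha'' = \alpha''(\gamma)$.

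Second, I would control the cardinality of $\Beuc(3n)$. Standard volume estimates for mated-CRT maps (for instance Lemma A.1 of \cite{NathanaelEwainCRT}) yield $|\Beuc(3n)| \le n^K$ off an event of $\bf{P}$-probability at most $n^{-\beta}$, for some $K = K(\gamma)$ and $\beta = \beta(\gamma)>0$. Choosing $\alpha''$ sufficiently larger than $K$ and taking a union bound over the (at most $n^K$) vertices in $\Beuc(3n)$ produces the stated uniform bound with overall failure probability $n^{-\alpha}$ for some $\alpha = \alpha(\gamma) > 0$.

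The main obstacle is ensuring the single-vertex estimate genuinely has \emph{polynomial} failure probability, since a $1/\log^{\alpha} n$ decay cannot survive a union bound over polynomially many vertices. If only the weaker form stated in Lemma \ref{lemma: CRTresistanceBound} is at our disposal, the alternative is Harnack propagation: writing $\Reff(x \leftrightarrow \partial\Beuc(x,n))^{-1} = \deg(x)\, \PP_x(T_{\partial\Beuc(x,n)} < T_x^+)$ and noting that $y \mapsto \PP_y(T_{\partial\Beuc(x,n)} < T_x)$ is harmonic on $\Beuc(x,n) \setminus \{x\}$, one can use Lemma \ref{lemma: CRTharnack} to compare this escape probability at $x$ to its value at nearby ``good'' centres. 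Covering $\Beuc(3n)$ by a sparse net of such centres (for which the weaker single-vertex estimate already succeeds simultaneously with sufficient probability) then transfers the lower bound to every $x \in \Beuc(3n)$, at the cost of absorbing a multiplicative factor into $C$.
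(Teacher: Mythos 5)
The paper does not actually prove this lemma; it is quoted verbatim from Lemma 4.2 of \cite{NathanaelEwainCRT}, so your attempt is being measured against the argument in that reference rather than against anything in the present text. Your general shape --- uniformise a pointwise resistance bound over $\Beuc(3n)$ via a union bound over polynomially many locations --- is indeed the right shape, but both of the routes you offer for the key input have genuine gaps.

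First, the whole argument hinges on a single-centre lower bound with \emph{polynomial} failure probability, and you simply assert that this ``is actually what is proved'' in \cite{GwynneMiller2020}. The version recorded in Lemma \ref{lemma: CRTresistanceBound} has failure probability $\log(n)^{-\alpha}$, which, as you note, cannot survive a union bound over $n^{K}$ points; so the load-bearing step of your proof is an unverified citation. (There is also a smaller issue with the union bound itself: the per-vertex estimate obtained by re-rooting applies to a \emph{deterministic} vertex index, whereas the set of vertices lying in $\Beuc(3n)$ is random; one must union-bound over a deterministic net of cells or of points in the plane and then transfer to arbitrary $x$ by ball containment and Rayleigh monotonicity.) Second, the Harnack fallback does not work as described. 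The quantity you need to control is $\deg(x)\,\PP_x(T_{\partial\Beuc(x,n)}<T_x^+)=\sum_{y\sim x}h(y)$ with $h(y)=\PP_y(T_{\partial\Beuc(x,n)}<T_x)$, i.e.\ the values of $h$ \emph{at the neighbours of the pole} $x$. Lemma \ref{lemma: CRTharnack} only compares values of such an $h$ on spheres $\partial\Beuc(x,sn)$ with $s\geq 1/3$, macroscopically far from the pole, so it gives no control over the escape probability from $x$ itself; transferring a lower bound from a nearby good centre $x'$ to $x$ would instead require the resistance triangle inequality $\Reff(x\leftrightarrow A)\geq\Reff(x'\leftrightarrow A)-\Reff(x\leftrightarrow x')$ together with a uniform \emph{upper} bound on $\Reff(x\leftrightarrow x')$ at the net scale, which you do not supply. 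The proof in the cited reference avoids both problems by a multi-scale argument: with polynomially high probability every point of $\Beuc(3n)$ is surrounded by a positive fraction of ``good'' dyadic annuli, each contributing a constant to the resistance across it, and the series law then yields the $\frac{1}{C}\log n$ lower bound without any reference to the microscopic structure at $x$; the per-annulus events have polynomially small failure probability and are union-bounded over polynomially many centres and $O(\log n)$ scales.
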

	\begin{proof}
		This follows from Lemma 4.2 in \cite{NathanaelEwainCRT}.
	\end{proof}

	\begin{proposition} \label{prop: PK on CRT bounds root}
		Let $(G, o)$ have the law of the mated-CRT map with parameter $\gamma$. There exist constants $C = C(\gamma)$ and $\alpha  = \alpha(\gamma) > 0$ such that
		\[
			\bf{P}\Big(\frac{1}{C} \log(n) \leq a(x, o) \leq C \log(n) \text{ for all } x \in \Beuc(o, 2n) \setminus \Beuc(o, n) \Big) \geq 1 - \frac{1}{\log(n)^\alpha}.
		\]
	\end{proposition}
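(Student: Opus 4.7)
The plan is to leverage the representation $a(x, o) = \hm_{x, o}(x)\Reff(x \leftrightarrow o)$ from Corollary \ref{cor: the harmonic measure for two points is well-defined} together with the quantitative inputs recalled in Section \ref{sec: CRT prelim}. I would first define a good event $\mathcal{E}$ as the intersection of the events in Lemmas \ref{lemma: CRTresistanceBound}, \ref{lemma: CRTharnack}, and \ref{lemma: CRTeffres lowerbound} applied at a bounded family of dyadic scales between $n$ and $100n$. A union bound gives $\bf{P}(\mathcal{E}) \geq 1 - \log(n)^{-\alpha'}$ for some $\alpha' > 0$, and all subsequent estimates are performed on $\mathcal{E}$.

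For the lower bound, any path in $G$ from $o$ to $x \in \Beuc(o, 2n) \setminus \Beuc(o, n)$ must cross $\partial \Beuc(o, n)$ by definition of the outer boundary, so Rayleigh monotonicity gives $\Reff(x \leftrightarrow o) \geq \Reff(o \leftrightarrow \partial \Beuc(o, n)) \geq \log(n)/C$ on $\mathcal{E}$. It then remains to show $\hm_{x, o}(x) \geq c > 0$. I would apply Lemma \ref{lemma: CRTharnack} to the harmonic function $\phi(y) = \PP_y(T_x < T_o)$ (harmonic off $\{x, o\}$ with $\phi(x) = 1, \phi(o) = 0$): by chaining Harnack across $O(1)$ overlapping Euclidean balls of radius $n/3$ covering $\partial \Beuc(o, 3n)$, $\phi$ takes values of the same order on this sphere, and this common value is bounded below by a positive constant via a capacity comparison (using Lemma \ref{lemma: CRTeffres lowerbound} to ensure $\PP_x(T_{\partial \Beuc(o, 3n)} < T_o) \asymp 1$). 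The same lower bound then propagates outward to $z$ at scale $100n$ and, passing to the limit, to $\hm_{x, o}(x) = \lim_{z \to \infty}\phi(z)$.

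For the upper bound, apply Lemma \ref{lem: green function finite set and PK} with the finite set $A = \Beuc(o, 10n)$ to obtain
\[
  a(x, o) \leq \E_x\big[a(X_{T_{\partial \Beuc(o, 10n)}}, o)\big] \leq \max_{y \in \partial \Beuc(o, 10n)} a(y, o) \leq \max_{y \in \partial \Beuc(o, 10n)} \Reff(y \leftrightarrow o),
\]
where the final bound uses $a(y, o) = \hm_{y, o}(y)\Reff(y \leftrightarrow o) \leq \Reff(y\leftrightarrow o)$. To bound this maximum, apply Lemma \ref{lemma: CRTharnack} to the voltage $y \mapsto \PP_y(T_o < T_{\partial \Beuc(o, 100 n)})$ and chain across a cover of $\partial \Beuc(o, 10n)$ by $O(1)$ overlapping balls: this shows the values $\Reff(y \leftrightarrow o)$ for $y \in \partial \Beuc(o, 10n)$ are mutually comparable, with common order equal to $\Reff(o \leftrightarrow \partial \Beuc(o, 10n)) \leq C\log n$ by Lemma \ref{lemma: CRTresistanceBound}.

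The main obstacle is the Harnack-chaining argument across a Euclidean sphere: because the Euclidean embedding of $G$ fluctuates, covering $\partial \Beuc(o, kn)$ by $O(1)$ balls of radius $n/3$ on which Lemma \ref{lemma: CRTharnack} applies is a deterministic geometric statement that must be absorbed into $\mathcal{E}$. Such Euclidean-regularity inputs are standard in the study of mated-CRT maps (see e.g.~\cite{NathanaelEwainCRT, GwynneMillerSheffieldTutteCRT}) and have high probability, so they can be included in $\mathcal{E}$ without affecting the final probability estimate.
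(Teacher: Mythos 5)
The central difficulty is that your proof splits $a(x,o)=\hm_{x,o}(x)\,\Reff(x\leftrightarrow o)$ and tries to control each factor at \emph{individual} points $x$ (or $y$ on the outer sphere). Neither control is available. For the lower bound you need $\hm_{x,o}(x)\ge c>0$ uniformly over the annulus, but this is essentially the lower bound in Conjecture~\ref{Conj} (see \eqref{Conj:hmbounds}), which the paper explicitly leaves open; the Corollary after Theorem~\ref{theorem: CRT hm bounds} only rules out a \emph{positive density} of bad points, not individual ones. The sketched ``capacity comparison'' does not produce a pointwise lower bound on $\phi(z)=\PP_z(T_x<T_o)$ from $\PP_x(T_{\partial\Beuc(o,3n)}<T_o)\asymp 1$: these two quantities are not related by a simple inequality. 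For the upper bound you ask for $\Reff(y\leftrightarrow o)\le C\log n$ for every $y\in\partial\Beuc(o,10n)$. But if $\hm_{y,o}(y)$ were very small for some such $y$ (which you have not excluded), $\Reff(y\leftrightarrow o)=a(y,o)/\hm_{y,o}(y)$ would be much larger than $\log n$ even when the conclusion of the proposition holds. Moreover, the voltage $y\mapsto\PP_y(T_o<T_{\partial\Beuc(o,100n)})$ and the point-to-point resistance $\Reff(y\leftrightarrow o)$ are distinct objects; Harnack comparability of the former does not transfer to the latter. So both halves of the decomposition ask for strictly stronger facts than the statement itself, and they are not consequences of the cited lemmas.

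The paper avoids this by never isolating $\hm$ or $\Reff(\cdot\leftrightarrow o)$ at a point. Applying Lemma~\ref{lem: green function finite set and PK} with $A=\Beuc(o,2n)$ and $x=y=o$ gives the identity
\[
 \Reff\big(o\leftrightarrow \partial\Beuc(o,2n)\big)=\E_o\big[a(X_{T_{2n}},o)\big],
\]
which controls the \emph{average} of $a(\cdot,o)$ over the exit distribution by the resistance to the boundary, precisely the quantity estimated in Lemma~\ref{lemma: CRTresistanceBound}. On the event where this resistance estimate and Lemma~\ref{lemma: CRTharnack} (applied to the harmonic function $a(\cdot,o)$ in the annulus) hold, the Harnack inequality forces $a(x,o)\asymp a(X_{T_{2n}},o)$ for all $x\in\Beuc(o,2n)\setminus\Beuc(o,n)$; averaging yields $a(x,o)\asymp\log n$. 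This is both shorter and avoids the two unavailable pointwise estimates that your argument hinges on. You should replace your two separate bounds with this single identity plus a Harnack step.
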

	\begin{proof}[Proof of Proposition \ref{prop: PK on CRT bounds root}]
		By Lemma \ref{lem: green function finite set and PK} we know that for each $n \in \NN$
		\[
			\Reff(o \leftrightarrow \Beuc(o, 2n)) = \E_o[a(X_{T_{2n}}, o)]
		\]
		(where we recall that $\E_o$ is the expectation solely on the random walk). Now, fix $n$ and let $\c{E}_n$ be the intersection of both events in Lemmas \ref{lemma: CRTresistanceBound} and \ref{lemma: CRTharnack}, which are properties of the graph only. Note that  $\cE_n$ holds with high probability over the mated-CRT maps (possibly by suitably changing the values of the constants).

		Then, as $x \mapsto a(x, o)$ is harmonic outside $o$, conditional on $\c{E}_{2n}$, we know that whenever $x \in \Beuc(o, 2n) \setminus \Beuc(o, n)$ ,
		\[
			\frac{1}{C} a(X_{T_{2n}}, o) \leq a(x, o) \leq C a(X_{T_{2n}}, o),
		\]
		so that taking (random walk) expectations,
		\[
			\frac{1}{C^2}\log(n) \leq a(x, o) \leq C^2\log(n).
		\]
		This is the desired result.
	\end{proof}

	\subsection{Proof of Theorem \ref{theorem: CRT hm bounds}.}
	
	\begin{wrong-old}
	 Define next for all values $l \in \RR$ the half-line $L(l) = \{x + il: x \in \RR\}$. Let $\Beuc^+(l)$ denote the ``positive'' part of the ball $\Beuc(l)$ (the vertices in $\Beuc(l)$ for which $\eta$ is in the upper half-plane: $\Beuc^+(l) = \{x \in \Beuc(l): \eta(x) \in \mathbb{H}_+\}$. Define $\Beuc^-$ similarly. The following lemma says that the random walk can go quite far ``down'' before ever hitting $(\Beuc^+(\ph(n))^c$.
	
	\begin{lemma} \label{lemma: CRT RW does goes down}
		There exists a $p = p_0 > 0$ and a $\tilde{C} < \infty$, such that the following holds. With $\bf{P}$-probability at least $1 - \tilde{C}n^{-2}$ we have that
		\[
			\PP_o(X \text{ hits } L(-3\ph(n)) \text{ before } (\Beuc^+(\ph(n - 1)))^c) \geq p_0.
		\]
	\end{lemma}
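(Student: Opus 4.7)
The plan is to reduce the random walk estimate to a Brownian motion estimate via the Harnack/Brownian coupling framework for mated-CRT maps developed in \cite{NathanaelEwainCRT} and \cite{GwynneMillerSheffield-CRTHarmonics}, and already exploited in Section \ref{sec: CRT prelim}. First, I would define the good environment event $\c{E}_n$ as the intersection of the events from Lemmas \ref{lemma: CRTresistanceBound}, \ref{lemma: CRTharnack} and \ref{lemma: CRTeffres lowerbound} applied at all the relevant spatial scales between $\varphi(n)$ and $\varphi(n-1)$ (a bounded number of scales, by the assumed growth of $\varphi$); a union bound gives $\bf{P}(\c{E}_n) \geq 1 - \tilde C n^{-2}$, which accounts for the quantitative probability bound in the statement. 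All further arguments take place conditionally on $\c{E}_n$.

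Second, I would handle the Brownian motion analog. For planar Brownian motion started at $0$, the probability of hitting the horizontal line $L(-3\varphi(n))$ before exiting the ``positive'' half of $\Beuc(\varphi(n-1))$ equals a constant $p_{\mathrm{BM}}$ depending only on the aspect ratio $\varphi(n-1)/\varphi(n)$, by Brownian scaling; since this ratio is bounded (above and away from $1$) under the paper's convention for $\varphi$, we obtain $p_{\mathrm{BM}} \geq 2p_0$ for some absolute constant $p_0 > 0$. I would then transfer this bound to the random walk by a chain-of-Harnack argument: the function
\[
u(x) := \PP_x\bigl( X \text{ hits } L(-3\varphi(n)) \text{ before exiting } \Beuc^+(\varphi(n-1)) \bigr)
\]
is non-negative and harmonic on $\Beuc^+(\varphi(n-1)) \setminus L(-3\varphi(n))$. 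Pick a chain of $k = O(1)$ overlapping Euclidean balls $B^{(1)}, \ldots, B^{(k)}$ of radius of order $\varphi(n)$, each contained in $\Beuc^+(\varphi(n-1))$, whose union forms a ``tube'' from a neighborhood of $o$ down to a vertex $z^*$ at Euclidean distance at most $\varphi(n)/4$ from $L(-3\varphi(n))$ but still well inside $\Beuc^+(\varphi(n-1))$. Iterating the elliptic Harnack inequality of Lemma \ref{lemma: CRTharnack} along this chain gives $u(o) \geq C^{-k} u(z^*)$.

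Third, for the base of the chain: at the vertex $z^*$, a direct application of Lemma \ref{lemma: CRTeffres lowerbound} together with Lemma \ref{lemma: CRTharnack} inside a single Euclidean ball of radius $\varphi(n)/2$ around $z^*$ shows that the simple random walk started at $z^*$ exits this ball through a vertex lying on the $L(-3\varphi(n))$ side with probability bounded below by a constant $c_0 > 0$ (a standard ``two-arm'' type lower bound: the harmonic measure of a macroscopic portion of the boundary at resistance distance $\gtrsim \log \varphi(n)$ is bounded below). In particular $u(z^*) \geq c_0$, so combining the two steps yields $u(o) \geq c_0 C^{-k} =: p_0 > 0$, which is the claim.

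The main obstacle is the base step at $z^*$: the Harnack inequality only lets us compare values of positive harmonic functions on the \emph{interior} of balls, so one must verify rigorously that among the neighbors/nearby vertices $z^*$ has in $v(G)$ within a Euclidean ball of size $\varphi(n)/2$, a positive proportion (in harmonic-measure sense) lie in $L(-3\varphi(n))$ rather than on the artificial vertical-axis boundary of $\Beuc^+$. This is where one uses that $L(-3\varphi(n))$ is at distance $\leq \varphi(n)/4$ from $z^*$ while the vertical axis is at distance of order $\varphi(n)$ (by choosing $z^*$ with $\Re \eta(z^*)$ comparable to $\varphi(n)$), so the Harnack inequality suffices to compare hitting probabilities. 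A secondary technical point is verifying that the chain of $O(1)$ Euclidean balls can actually be constructed on $\c{E}_n$, which requires knowing that the embedding $\eta$ of $G$ is sufficiently regular at scale $\varphi(n)$; this is built into the events defining $\c{E}_n$ via the results recalled in Section \ref{sec: CRT prelim}.
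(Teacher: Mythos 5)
Your high-level architecture (a good environment event controlled by a union bound, plus a chain of Euclidean balls guiding the walk from $o$ down to the line) is the same as the paper's, but the engine you use to push the walk through each ball is not, and this is where the proof has a genuine gap. The paper's argument runs entirely on the Gwynne--Miller--Sheffield input recalled in the draft (the statement that, with high probability over the map, a walk started anywhere in a tube $D(\c{P},\tilde r)$ around a \emph{prescribed planar path} $\c{P}$ reaches a prescribed target ball before leaving the larger tube $D(\c{P},r)$ with probability at least $2^{-N}$); applied along a dyadic chain of balls centred at $z_i=(0,-2^{i-1}\varphi(n-1))$ with geometrically growing radii, this directly produces the constant $p_0$. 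You replace this by the elliptic Harnack inequality and the resistance bounds of Lemmas \ref{lemma: CRTresistanceBound}, \ref{lemma: CRTharnack} and \ref{lemma: CRTeffres lowerbound}, and these tools cannot deliver your base step at $z^*$: Harnack only \emph{compares} values of a positive harmonic function at interior points and can never yield an absolute lower bound $u(z^*)\ge c_0$, while the effective resistance estimates control Green's functions, not the direction in which the walk exits a ball. The assertion that "the harmonic measure of a macroscopic portion of the boundary at resistance distance $\gtrsim \log\varphi(n)$ is bounded below" is exactly the directional exit estimate that is needed and that does not follow from the quoted lemmas; a priori the harmonic measure from $z^*$ of the vertices whose cells meet $L(-3\varphi(n))$ could be tiny compared to the rest of $\partial\Beuc(z^*,\varphi(n)/2)$. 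Proving it requires a quantitative comparison with Brownian motion at scale $\varphi(n)$ (which is what the Gwynne--Miller--Sheffield estimate provides), so in effect your proposal assumes the key input rather than supplying it.

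There is a secondary geometric problem. The target line $L(-3\varphi(n))$ is at Euclidean distance $3\varphi(n)$ from the root, while the admissible region has width of order $\varphi(n-1)$ near the root, so a chain of $O(1)$ balls "of radius of order $\varphi(n)$, each contained in $\Beuc^+(\varphi(n-1))$" cannot be constructed near $o$, and your claim that the aspect ratio $\varphi(n-1)/\varphi(n)$ is "bounded above and away from $1$" is not substantiated by the paper's convention for $\varphi$. The paper instead takes balls whose radii grow geometrically with depth, so the number of scales is $\log_2\bigl(3\varphi(n)/\varphi(n-1)\bigr)+O(1)$, and the uniform boundedness of this quantity is itself something that has to be checked against the definition of $\varphi$. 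A further (fixable) detail: Lemma \ref{lemma: CRTharnack} requires harmonicity on the concentric ball of three times the radius, so every ball of your chain must have its triple contained in the region where $u$ is harmonic, i.e.\ away from both $L(-3\varphi(n))$ and the boundary of $\Beuc^+(\varphi(n-1))$, which your construction does not guarantee.
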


	\begin{proof}		
		We will rely on Lemma \ref{lemma: CRT RW close to curve}, which tells us that the random walk will stay close to a deterministic path $\c{P}$ in the plane with high probability.
		
		Fix $n > 2$ an integer. Let $z_i = (0, -2^{i - 1}\ph(n - 1))$, with the convention that $z_0 = 0$. Consider the path $\c{P}_i$ connecting $z_{i}$ with $z_{i + 1}$ in a straight line. Take $\tilde{r}_i = 2^{i - 1}\ph(n - 1) / 4$ and $r_i = 2^{i - 1}\ph(n - 1)$. Then we can cover the set $B_{\tilde{r}_i}(\c{P}_i)$ with say $C_1$ balls of radius $s_0r_i$ (where $C_1$ does not depend on $i$ or $n$). Letting $\c{E}_i$ be the event that, uniformly in $x \in \Beuc(z_i, \tilde{r})$,
		\[
			\PP_x \big(X \text{ hits } \Beuc(z_{i + 1}, \tilde{r}_i) \text{ before hitting } (\Beuc^+(\ph(n - 1)))^c\big) \geq 2^{-C_1},
		\]
		we know that $\bf{P}(\c{E}_i) \geq 1 - C_1\left(\frac{1}{\ph(n - 1)}\right)^\alpha$ uniformly in $i$ and $n$. Moreover, there exists a constant $C_2 \in \NN$ depending only on $\alpha$, such that
		\[
			\log\left(\frac{3\ph(n)}{(\ph(n - 1))}\right) \leq C_2
		\]
		uniformly in $n$. Thus, if the events $\c{E}_i$ occur for each $i = 0, \ldots, C_2$, then actually
		\[
			\PP_o \big(X \text{ hits } L(-3\ph(n)) \text{ before } (\Beuc^+(\ph(n - 1)))^c \big) \geq 2^{-C},
		\]
		with $C = (C_2 + 1)C_1$ since clearly, $\Beuc(z_{i + 1}, \tilde{r}_i) \subset \Beuc(z_{i + 1}, \tilde{r}_{i + 1})$ by definition.
		
		On the other hand, by a simple union bound, we have
		\[
			\bf{P}\left(\bigcap_{i = 1}^{C_2} \c{E}_i\right) \geq 1 - C_2C_1\left(\frac{1}{\ph(n - 1)}\right)^\alpha \geq 1 - \tilde{C}\frac{1}{n^2},
		\]
		showing the desired result.
	\end{proof}
	\end{wrong-old}
	Take throughout the proof the constants $C, \alpha$ such that Lemmas \ref{lemma: CRTeffres lowerbound}, \ref{lemma: CRTharnack} and \ref{lemma: CRTresistanceBound} and Proposition \ref{prop: PK on CRT bounds root} hold simultaneously with the same constants.
	\begin{proof}
		The second statement follows immediately from the first statement, from the identity
		\[
			\limsup_{y \to \infty} \wh{q}(y) = \limsup_{y \to \infty} \hm_{y, o}(y),
		\]
		in Corollary \ref{cor: hm and q are equivalent}, and from the fact that for each $\epsilon > 0$, there are infinitely many $(\frac{1}{2}-\epsilon)$-good vertices by Lemma \ref{lemma: reversible graphs are delta-good}. We are thus left to prove the first statement.
		
		To that end, fix $N_0$ so large that for all $n \geq N_0$,
		\[
			\frac{n^{2/\alpha}}{(n - 1)^{2/\alpha}} \leq 3.
		\]
		Define next for $m \geq 1$ the event
		\begin{equation}
			E_m \text{ the event that } a(x, o) \leq C\log(m) \text{ for all } x \in \Beuc(m).
		\end{equation}
		By Proposition \ref{prop: PK on CRT bounds root}, we know that $\bf{P}(E_m^c) \leq \log(m)^{-\alpha}$ and therefore,
		\[
			\sum_{n = 1}^\infty \bf{P}(E_{e^{n^{2/ \alpha }}}^c) < \infty.
		\]
		By Borel-Cantelli, this implies that there is some (random) $N_1 = N_1(G, o) < \infty$ such that $E_{e^{n^{2 / \alpha}}}$ occurs for all $n \geq N_1$. Suppose without loss of generality that $N_1 \geq N_0$ almost surely. In this case, it follows that
		\begin{equation} \label{E: a(x) bound all x}
			a(x, o) \leq C \log|x| \qquad \text{for all} \quad x \notin \Beuc(o, N_1).
		\end{equation}
		Next, define the events
		\begin{equation*}
			\begin{gathered}
				H_m \text{ the event that for all } x \in \Beuc(3m) \setminus \Beuc(m) \text{ and for all } \\
				h:v(G) \to \RR_+ \text{ harmonic outside of } x,\\
				\max_{z \in \partial \Beuc(x, |x|)} h(z) \leq C \min_{z \in \partial \Beuc(x, |x|)} h(z)
			\end{gathered}
		\end{equation*}
		and
		\begin{equation*}
			R_m \text{ the event that for all } x \in \Beuc(3m), \Reff(x \leftrightarrow \partial \Beuc(x, m)) \geq \frac{1}{C} \log(m).
		\end{equation*}
		By Lemmas \ref{lemma: CRTharnack} and \ref{lemma: CRTeffres lowerbound} respectively, it holds that $\bf{P}(H_m^c) \leq m^{-\alpha}$ and $\bf{P}(R_m) \leq m^{-\alpha}$. Therefore, using again a Borel-Cantelli argument, there exists some (random) $N_2 \geq N_1 \geq N_0$ such that almost surely, for all $n \geq N_2$ the events $H_{n^{2/\alpha}}$ and $R_{n^{2/\alpha}}$ occur. In particular, we know that almost surely,
		\begin{equation} \label{E: CRT R(x-o) lower bound}
			\Reff(x \leftrightarrow \Beuc(x, |x|)) \geq \frac{1}{C} \log|x| \qquad \text{for all} \quad x \notin \Beuc(o, N_2)
		\end{equation}
		and almost surely
		\begin{equation} \label{E: CRT Harnack at x}
			\begin{gathered}
				\text{For all } x \notin \Beuc(o, N_2), \text{ for all } h: v(G) \to \RR_+ \text{ harmonic outside of } x \\
				\max_{z \in \partial \Beuc(x, |x|)} h(z) \leq C \min_{z \in \partial \Beuc(x, |x|)} h(z).
			\end{gathered}
		\end{equation}
		
		Take $x \notin \Beuc(o, N_2)$. Assume without loss of generality that $\hm_{o, x}(x) \leq \frac{1}{2}$, as otherwise we are done. Then
		\begin{equation} \label{E: CRT R(o-x) upper bound}
			\Reff(o \leftrightarrow x) \leq 2\hm_{o, x}(x)\Reff(o \leftrightarrow x) = 2a(x, o) \leq 2C\log|x|,
		\end{equation}
		where we used Corollary \ref{cor: the harmonic measure for two points is well-defined} in the equality and \eqref{E: a(x) bound all x} in the last inequality.
		
		Furthermore, as $z \mapsto a(z, x)$ is harmonic outside of $x$, applying \eqref{E: CRT Harnack at x} first and then \eqref{E: CRT R(x-o) lower bound} gives
		\[
			a(o, x) \geq \frac{1}{C} \E_x[a(X_{T_{\Beuc(x, |x|)}}, x)] = \frac{1}{C} \Reff(x \leftrightarrow \Beuc(x, |x|)) \geq \frac{1}{C^2} \log|x|.
		\]
		Combining the last equation with \eqref{E: CRT R(o-x) upper bound}, we find
		\[
			\hm_{o, x}(o) = \frac{a(o, x)}{\Reff(o \leftrightarrow x)} \geq \frac{1}{2C^3},
		\]
		which shows the final result.
	\end{proof}

	\begin{wrong-old}
		\DE{Some ideas on ``symmetry''.
	We establish next a general result concerning unimodular environments, which shows that it is enough to find a non-trivial upper bound on the map $x \mapsto \hm_{o, x}(x)$ in order to find non-trivial upper \emph{and} lower bounds. Since the mated-CRT map is unimodular, we can apply this proposition.
	\begin{proposition}
		Let $(G, o)$ be unimodular random environment for which the harmonic measure from infinity is a.s. well defined. If there exists an $n \in \NN \cup \{\infty\}$ and a $\delta > 0$ such that almost surely $|\{x \in B(o, n): \hm_{x, o}(x) > 1 - \delta\}| = 0$, then also almost surely $|\{x \in B(o, n): \hm_{o, x}(x) < \delta\}| = 0$.
	\end{proposition}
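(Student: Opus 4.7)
The plan is a direct application of the mass transport principle, exploiting the fact that the harmonic measure from infinity on a pair $\{o,x\}$ is a function of the unordered set. I will define the mass transport
$$f(G, o, x) := \id_{\{x \in B(o, n)\}} \cdot \id_{\{\hm_{x, o}(x) > 1 - \delta\}}.$$
The hypothesis of the proposition is exactly that $\sum_{x \in v(G)} f(G, o, x) = 0$ almost surely, and hence $\bf{E}\big[\sum_{x \in v(G)} f(G, o, x)\big] = 0$.

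Next, I will compute $f(G, x, o)$, the same quantity with the roles of $o$ and $x$ swapped. The ball condition is symmetric, since $o \in B(x, n) \iff d(x, o) \leq n \iff x \in B(o, n)$. For the harmonic measure term, I use that $\hm_{o, x}$ and $\hm_{x, o}$ both denote the harmonic measure from infinity on the same two-point set $\{o, x\}$, and that this is a probability measure on that set, so $\hm_{o, x}(o) + \hm_{o, x}(x) = 1$. Thus the condition $\hm_{o, x}(o) > 1 - \delta$ is equivalent to $\hm_{o, x}(x) < \delta$, giving
$$f(G, x, o) = \id_{\{x \in B(o, n)\}} \cdot \id_{\{\hm_{o, x}(x) < \delta\}}.$$

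Now the mass transport principle gives
$$\bf{E}\Big[\sum_{x \in v(G)} f(G, o, x)\Big] = \bf{E}\Big[\sum_{x \in v(G)} f(G, x, o)\Big],$$
so the right-hand side also vanishes. Since $\sum_{x} f(G, x, o)$ is a nonnegative integer-valued sum of indicators, it equals zero almost surely, which is precisely the conclusion $|\{x \in B(o, n): \hm_{o, x}(x) < \delta\}| = 0$ almost surely.

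There is essentially no obstacle beyond recognizing the correct mass transport: the hypothesis provides an upper bound on the harmonic measure weight at the ``far'' vertex $x$ in the pair $\{o, x\}$, and transporting the mass so that $o$ plays the role of this far vertex converts the upper bound into a lower bound, via the conservation law $\hm_{o,x}(o) + \hm_{o,x}(x) = 1$. The unimodularity assumption is what licenses this swap and is used in precisely one place, the equality of the two transport integrals.
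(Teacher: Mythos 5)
Your proof is correct and is essentially identical to the paper's own argument: the same mass transport $f(G,o,x)=\id_{\{x\in B(o,n)\}}\id_{\{\hm_{x,o}(x)>1-\delta\}}$, the same symmetry of the ball condition, the same use of $\hm_{o,x}(o)+\hm_{o,x}(x)=1$ to convert the upper bound into the lower bound, and the same conclusion from vanishing expectation of a nonnegative sum. The only detail the paper adds that you omit is a remark that $(G,o,x)\mapsto \hm_{x,o}(x)$ is measurable so that $f$ is a legitimate mass transport; this is routine.
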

	\begin{proof}
		Let $\delta > 0$ as in the statement of this proposition and fix $n \in \NN$ accordingly. Define the function
		\[
			f_n(G, o, x) = \id_{\hm_{x, o}(x) > 1 - \delta} \id_{x \in B(o, n)}.
		\]
		This map is clearly positive, but also measurable as we can see that $(G, o, x) \mapsto \hm_{x, o}(x)$ is continuous (by definition of the harmonic measure from infinity), so it is a mass transport. Moreover, we have
		\[
			\sum_{x \in v(G)} f_n(G, o, x) = \sum_{x \in B_n(o)} \id_{\hm_{x, o}(x) > 1 - \delta} = |\{x \in B(o, n): \hm_{x, o}(x) > 1 - \delta\}|.
		\]
		Since $x \in B(o, n)$ precisely when $o \in B(x, n)$, we get
		\[
			f_n(G, x, o) = \id_{\hm_{x, o}(o) > 1 - \delta}\id_{x \in B(o, n)},
		\]
		from which it follows that
		\[
			\sum_{x \in v(G)} f_n(G, o, x) = |\{x \in B(o, n): \hm_{x, o}(o) > 1 - \delta\}| = |\{x \in B(o, n): \hm_{x, o}(x) < \delta\}|.
		\]
		By the mass transport principle, and by assumption on the number of vertices which have $\hm_{x, o}(x) > 1 - \delta$, we deduce
		\[
			0 = \bf{E}\left[ \sum_{x \in v(G)}f_n(G, o, x) \right] = \bf{E} \left[ \sum_{x \in v(G)} f_n(G, x, o) \right].
		\]
		This implies the result since the expected value of a non-negative function can only equal zero if the function is a.s. equal to zero.
	\end{proof}
	}
	\end{wrong-old}

\begin{wrong-old}	
	\appendix
	\section{Adaption of Theorem \ref{theorem: Benjamini et al - no linear harmonics}.}

	Let $(G, o)$ denote a stationary environment, where we will denote $\mathbf{P}, \mathbf{E}$ the probability respectively expectation w.r.t. the total environment. Conditionally on a realization $(G, o)$, define the entropy of the random walk started at $o$ through
	\[
		H_n(G, o) = \sum_{x \in v(G)} \phi(\PP_o(X_n = x)),  \qquad \phi(t) = -t \log(t)
	\]
	with the convention that $\phi(0) = 0$. Denote the mean entropy through
	\[
		\mathbf{H}_n = \mathbf{E}[H_n(G, o)].
	\]
	We also need to introduce a way to measure the distance between two probability measures $\mu, \nu$ on the same set $\Omega$. In \cite{benjamini2015}, the following way to measure distances between two measures was introduced:
	\[
		\Delta(\mu, \nu) = \left[\sum_{\omega \in \Omega} \frac{(\mu(x) - \nu(x))^2}{\mu(x) + \nu(x)}\right]^{\frac{1}{2}}.
	\]
	The total variation distance then satisfies $\|\mu - \nu\|_{TV} \leq \sqrt{2}\Delta(\nu, \mu)$. This implies that for any $f: \Omega \to \RR$ one has
	\begin{equation} \label{eq: bounding integrals w.r.t. distance in measures}
		|\nu(f) - \mu(f)| \leq \Delta(\mu, \nu)(\mu(f^2) + \nu(f^2))^{\frac{1}{2}}
	\end{equation}
	by Cauchy-Schwartz.
	
	We will follow the notation in \cite{benjamini2015}. For a measurable event $C$, let $\mathscr{L}(X_n | C)$ denote the law of the random walk after $n$ steps, conditioned on $C$. We will write
	\begin{align*}
		\Delta_n(x, y):&= \Delta \big( \mathscr{L}(X_n | X_0 = x), \mathscr{L}(X_{n - 1} | X_0 = y)\big) \\
		&= 	\Delta \big( \mathscr{L}(X_n | X_0 = x), \mathscr{L}(X_{n} | X_1 = y)  \big).
	\end{align*}
	We recall the following result of \cite{benjamini2015}, their Theorem 7.
	\begin{theorem} \label{theorem 7 in Benjamini et al}
		Let $(G, o)$ be a stationary environment. For every $n \geq 1$, we have
		\[
			\mathbf{E}\big[ \Delta_n(o, X_1)^2 \big] \leq 2( \mathbf{H}_n - \mathbf{H}_{n - 1}).
		\]
	\end{theorem}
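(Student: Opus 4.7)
The plan is to express $\mathbf{H}_n - \mathbf{H}_{n-1}$ as a Jensen defect for the convex function $\psi(t) = t \log t = -\phi(t)$, and then to control this defect pointwise by the summand appearing in $\Delta_n(o,X_1)^2$. By the Markov property at time $1$,
\[
p_n(x) := \PP_o(X_n = x) \;=\; \sum_y \PP_o(X_1=y)\, \PP_y(X_{n-1}=x) \;=\; \sum_y p_1(y)\, p_{n-1}^y(x),
\]
so that for each $x$ the value $p_n(x)$ is a convex combination of the values $p_{n-1}^y(x)$ with weights $p_1(y) := \PP_o(X_1 = y)$. Writing the quenched entropy as $H_n(G,o) = -\sum_x \psi(p_n(x))$, one obtains the identity
\[
H_n(G,o) - \sum_y p_1(y)\, H_{n-1}(G,y) \;=\; \sum_x \Bigl(\sum_y p_1(y)\, \psi(p_{n-1}^y(x)) - \psi(p_n(x))\Bigr),
\]
which is nonnegative by ordinary Jensen. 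Taking $\mathbf{E}$ and using stationarity of $(G,o,X_1)$, i.e.\ that $(G,X_1)$ has the same law as $(G,o)$, gives $\mathbf{E}[\sum_y p_1(y) H_{n-1}(G,y)] = \mathbf{H}_{n-1}$, so the left hand side above averages to exactly $\mathbf{H}_n - \mathbf{H}_{n-1}$.

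The heart of the argument is then the following quantitative Jensen inequality: for any probability weights $(\lambda_i)$ and nonnegative reals $(a_i)$ with mean $\bar a = \sum_i \lambda_i a_i$,
\[
\sum_i \lambda_i \psi(a_i) - \psi(\bar a) \;\geq\; \frac{1}{2} \sum_i \lambda_i \frac{(a_i - \bar a)^2}{a_i + \bar a}.
\]
Using $\sum_i \lambda_i (a_i - \bar a) = 0$, the left hand side equals $\sum_i \lambda_i \bigl[a_i \log(a_i/\bar a) - (a_i - \bar a)\bigr]$, so it suffices to prove termwise (setting $u = a_i/\bar a$) the one-variable inequality
\[
u \log u - u + 1 \;\geq\; \frac{(u-1)^2}{2(u+1)}, \qquad u > 0.
\]
This follows from elementary calculus: the function $F(u) = 2(u+1)(u \log u - u + 1) - (u-1)^2$ satisfies $F(1) = F'(1) = 0$ and $F''(u) = 4\log u + 2/u$, which one checks is strictly positive on $(0,\infty)$ (for instance by noting that $F''$ tends to $+\infty$ at $0$ and $\infty$, and that its unique critical point is $u = 1/2$, where $F''(1/2) = 4(1 - \log 2) > 0$). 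Hence $F \geq 0$.

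To conclude, apply the quantitative Jensen inequality with $a_y = p_{n-1}^y(x)$ and $\lambda_y = p_1(y)$, then sum over $x$ and recognise $\sum_x (p_{n-1}^y(x) - p_n(x))^2/(p_{n-1}^y(x) + p_n(x)) = \Delta(\mathscr{L}(X_{n-1} \mid X_0=y), \mathscr{L}(X_n \mid X_0=o))^2$, to obtain
\[
H_n(G,o) - \sum_y p_1(y)\, H_{n-1}(G,y) \;\geq\; \frac{1}{2} \sum_y p_1(y)\, \Delta(p_{n-1}^y, p_n)^2.
\]
Taking expectation over the environment produces the claimed bound $\mathbf{E}[\Delta_n(o, X_1)^2] \leq 2(\mathbf{H}_n - \mathbf{H}_{n-1})$. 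The only real obstacle is the quantitative Jensen inequality above; everything else is bookkeeping with Markov and stationarity. The constant $2$ in the conclusion is precisely inherited from the factor $\tfrac{1}{2}$ in that inequality, which is optimal at the diagonal since $\psi''(\bar a)/2 = 1/(2\bar a)$ matches the leading order of the right hand side as $a \to \bar a$.
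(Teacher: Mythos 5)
Your proof is correct. Note that the paper itself does not prove this statement: it is quoted verbatim as Theorem 7 of \cite{benjamini2015}, so there is no in-paper argument to compare against, and your write-up in effect supplies the missing proof. In substance it is the standard entropy argument from that reference: express $\mathbf{H}_n-\mathbf{H}_{n-1}$ as an averaged Jensen defect for the convex function $\psi(t)=t\log t$ via the Markov decomposition $p_n(\cdot)=\sum_y p_1(y)\,p^y_{n-1}(\cdot)$ together with stationarity (which identifies $\mathbf{E}\bigl[\sum_y p_1(y)H_{n-1}(G,y)\bigr]$ with $\mathbf{H}_{n-1}$ since $(G,X_1)$ has the law of $(G,o)$), and then lower-bound the defect by the triangular discrimination $\Delta^2$ through a quantitative Jensen inequality. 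All steps check out: the reduction of that inequality to the one-variable bound $u\log u-u+1\ge (u-1)^2/(2(u+1))$ is exact, and your convexity verification $F''(u)=4\log u+2/u>0$ (minimum at $u=1/2$, value $4(1-\log 2)>0$) is right, as is the identification of $\sum_x (p^y_{n-1}(x)-p_n(x))^2/(p^y_{n-1}(x)+p_n(x))$ with $\Delta_n(o,y)^2$ using the symmetry of $\Delta$. The only cosmetic caveats are the degenerate cases $\bar a=0$ or $a_i+\bar a=0$ (both sides vanish) and the implicit assumption $\mathbf{H}_{n-1}<\infty$ needed for the subtraction on the right-hand side to be meaningful; neither affects the argument.
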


	\begin{proof}[Proof of Theorem \ref{theorem: Benjamini et al - no linear harmonics}. ]
		We stay very close to the proof in \cite{benjamini2015}. Suppose that $(G, o)$ is strictly subdiffusive, that is, there is some $\beta > 2$ for which a.s.
		\[
			\E[d(o, X_n)^\beta] \leq Cn.
		\]
		We need to prove that $h(o) = h(X_1)$ a.s., whenever $h$ is a (sub)linear harmonic function, in which case stationarity implies that $h(X_n) = h(X_{n + 1})$ for all $n$, and hence (as the underlying graph is connected), then $h$ is constant a.s.
		
		Let $h$ be harmonic w.r.t. $(G, o)$, then
		\[
			h(o) = \E_o[h(X_n)] \quad \text{and} \quad h(X_1) = \E_{X_1}[h(X_{n - 1})].
		\]
		Thus, by \eqref{eq: bounding integrals w.r.t. distance in measures} we get
		\begin{align} \label{subeq: proof of benj theorem 1}
			|h(o) - h(X_1)| &= \big| \E_o[h(X_n)] - \E_{X_1}[h(X_{n-1})] \big| \nonumber \\
			&\leq \Delta_n(o, X_1)\sqrt{\E_o[h^2(X_n)] + \E_{X_1}[h^2(X_{n - 1})]}.
		\end{align}
		In particular, taking the average over $X_1$, we deduce by Cauchy-Schwartz
		\[
			\E_o[|h(o) - h(X_1)|] \leq \sqrt{2\E_o[\Delta_n(o, X_1)^2]\E_o[h^2(X_n)]},
		\]
		where it is used that $\E_{o}[\E_{X_1}[h^2(X_{n - 1})]] = \E_o[h^2(X_n)]$. Assume that $h$ is at most linear. Then, for $\beta > 2$ as assumed and each $\epsilon > 0$, there exists a constant $K$ such that for all $x \in v(G)$ we have
		\begin{equation} \label{subeq: proof of benj theorem 2}
			h^2(x) \leq \epsilon d(o, x)^{\beta} + K.
		\end{equation}
		Since the chain is also assumed to have annealed polynomial growth, the entropy satisfies:
		\[
			\mathbf{H}_n \leq \mathbf{E}[\log|B(o, n)|] \leq \log \mathbf{E}[|B(o, n)|] \leq \log[Cn^d]
		\]
		for some $d$. Thus $\mathbf{H}_n - \mathbf{H}_{n-1} \leq c / n$ for infinitely many $n$. Using Theorem \ref{theorem 7 in Benjamini et al} we the subdiffusive behavior, we get
		\[
			\mathbf{E}[n\Delta_n(o, X_1)^2] + \mathbf[n^{-1}d(o, X_n)^{\beta}] \leq \mathbf{C}
		\]
		for infinitely many $n$. Using Fatou's Lemma, we deduce that for almost each environment, there must exist a (random) $c_1 < \infty$ such that
		\begin{equation} \label{subeq: proof of benj theorem 3}
			\E_o[n\Delta_n(o, X_1)^2] + \E_o[n^{-1}d(X_n, o)^{\beta}] < c_1
		\end{equation}
		for infinitely many $n$, where we have used the (strict) subdiffusivity assumption. Putting \eqref{subeq: proof of benj theorem 2} and \eqref{subeq: proof of benj theorem 3} into \eqref{subeq: proof of benj theorem 1}, together with the trivial bound $\sqrt{2ab} \leq |a + b|$, we get for almost every environment and $h$ harmonic and (sub)linear on it that
		\[
			\E_o[|h(o) - h(X_1)|] \leq c_2 \epsilon^{\frac{1}{2}}.
		\]
		Letting $\epsilon \downarrow 0$ we deduce that $h(o) = h(X_1)$ a.s., showing the desired result.

	\end{proof}
\end{wrong-old}

	\bibliography{bibliography/biblio}

\begin{thebibliography}{BDCKY15}

\bibitem[AHNR18]{AHNR}
Omer Angel, Tom Hutchcroft, Asaf Nachmias, and Gourab Ray.
\newblock Hyperbolic and parabolic unimodular random maps.
\newblock {\em Geometric and Functional Analysis}, 28(4):879--942, 2018.

\bibitem[AL07]{AldousLyonsUnimod2007}
David Aldous and Russell Lyons.
\newblock Processes on unimodular random networks.
\newblock {\em Electr. J. Probab.}, 54:1454--1508, 2007.

\bibitem[Anc78]{Ancona01}
Alano Ancona.
\newblock Harnack's principle at the frontier and fatou's theory for an
  elliptic operator in a lipschitzian domain.
\newblock {\em Annales of the Fourier Institute}, 28(4):169--213, 1978.

\bibitem[Bar17]{barlow_2017}
Martin~T. Barlow.
\newblock {\em Random walks and heat kernels on graphs}.
\newblock London Mathematical Society Lecture Note Series. Cambridge University
  Press, 2017.

\bibitem[BC12]{BenjaminiCurien2012}
Itai Benjamini and Nicolas Curien.
\newblock Ergodic theory on stationary random graphs.
\newblock {\em Electronic Journal of Probability}, 17:1 -- 20, 2012.

\bibitem[BC13]{BenjCurienSubdiffusivity}
Itai Benjamini and Nicolas Curien.
\newblock Simple random walk on the uniform infinite planar quadrangulation:
  subdiffusivity via pioneer points.
\newblock {\em Geometric and Functional Analysis}, 23(2):501--531, 2013.

\bibitem[BCG12]{BenCurienGeorga2012}
Itai Benjamini, Nicolas Curien, and Agelos Georgakopoulos.
\newblock The {L}iouville and the intersection properties are equivalent for
  planar graphs.
\newblock {\em Electron. Commun. Probab.}, 17:5 pp., 2012.

\bibitem[BDCKY15]{benjamini2015}
Itai Benjamini, Hugo Duminil-Copin, Gady Kozma, and Ariel Yadin.
\newblock Disorder, entropy and harmonic functions.
\newblock {\em Ann. Probab.}, 43(5):2332--2373, 2015.

\bibitem[BG20]{NathanaelEwainCRT}
Nathana\"el Berestyki and Ewain Gwynne.
\newblock Random walks on mated-{CRT} maps and {L}iouville {B}rownian motion.
\newblock {\em arXiv}, 2020.
\newblock \url{https://arxiv.org/abs/2003.10320}.

\bibitem[BJKS08]{BJKS}
Martin~T Barlow, Antal~A J{\'a}rai, Takashi Kumagai, and Gordon Slade.
\newblock Random walk on the incipient infinite cluster for oriented
  percolation in high dimensions.
\newblock {\em Communications in mathematical physics}, 278(2):385--431, 2008.

\bibitem[BLPS01]{BLPS}
Itai Benjamini, Russell Lyons, Yuval Peres, and Oded Schramm.
\newblock {Uniform spanning forests}.
\newblock {\em The Annals of Probability}, 29(1):1 -- 65, 2001.

\bibitem[BM18]{BarlowMurugan}
Martin~T Barlow and Mathav Murugan.
\newblock Stability of the elliptic {H}arnack inequality.
\newblock {\em Annals of Mathematics}, 187(3):777--823, 2018.

\bibitem[CPV15]{PopovEtal2015}
Francis Comets, Serguei Popov, and Marina Vachkovskaia.
\newblock Two-dimensional random interlacements and late points for random
  walks.
\newblock {\em Communications in Mathematical Physics}, 343(1):129–164, 2015.

\bibitem[CSC95]{CoulhonSaloffCoste}
Thierry Coulhon and Laurent Saloff-Coste.
\newblock Vari{\'e}t{\'e}s riemanniennes isom{\'e}triques {\`a} l'infini.
\newblock {\em Revista Matem{\'a}tica Iberoamericana}, 11(3):687--726, 1995.

\bibitem[GM21]{GwynneMiller2020}
Ewain Gwynne and Jason Miller.
\newblock Random walk on random planar maps: spectral dimension, resistance,
  and displacement.
\newblock {\em Ann. Probab.}, 49(3):1097--1128, 2021.

\bibitem[GMS21]{GwynneMillerSheffieldTutteCRT}
Ewain Gwynne, Jason Miller, and Scott Sheffield.
\newblock The {T}utte embedding of the mated-{CRT} map converges to {L}iouville
  quantum gravity.
\newblock {\em Ann. Probab.}, 49(4):1677--1717, 2021.

\bibitem[GPV19]{PopovEtal2019}
Nina Gantert, Serguei Popov, and Marina Vachkovskaia.
\newblock On the range of a two-dimensional conditioned simple random walk.
\newblock {\em Annales Henri Lebesgue}, 2:349--368, 2019.

\bibitem[Gri91]{Grigoryan}
Alexander~Asaturovich Grigor'yan.
\newblock The heat equation on noncompact {R}iemannian manifolds (in russian).
\newblock {\em Matematicheskii Sbornik}, 182(1):55--87, 1991.

\bibitem[HP15]{HutchcroftPeres}
Tom Hutchcroft and Yuval Peres.
\newblock Collisions of random walks in reversible random graphs.
\newblock {\em Electronic Communications in Probability}, 20:1--6, 2015.

\bibitem[Hut16]{Hutchcroft2}
Tom Hutchcroft.
\newblock Wired cycle-breaking dynamics for uniform spanning forests.
\newblock {\em The Annals of Probability}, 44(6):3879--3892, 2016.

\bibitem[Hut18]{Hutchcroft1}
Tom Hutchcroft.
\newblock Interlacements and the wired uniform spanning forest.
\newblock {\em The Annals of Probability}, 46(2):1170--1200, 2018.

\bibitem[Hut20]{Hutchcroft3}
Tom Hutchcroft.
\newblock Universality of high-dimensional spanning forests and sandpiles.
\newblock {\em Probability Theory and Related Fields}, 176(1):533--597, 2020.

\bibitem[KN09]{KozmaNachmias}
Gady Kozma and Asaf Nachmias.
\newblock The alexander-orbach conjecture holds in high dimensions.
\newblock {\em Inventiones mathematicae}, 178(3):635, 2009.

\bibitem[Kum14]{KumagaiSaintFlour}
Takashi Kumagai.
\newblock {\em Random walks on disordered media and their scaling limits}.
\newblock {\'E}cole d'{\'E}t{\'e} de probabilit{\'e}s de {S}aint-{F}lour {XL} -
  2010. Springer, 2014.

\bibitem[Lee17]{Lee2017}
James~R. Lee.
\newblock Conformal growth rates and spectral geometry on distributional limits
  of graphs.
\newblock {\em arXiv}, 2017.
\newblock \url{https://arxiv.org/abs/1701.01598}.

\bibitem[Lee20]{Lee_scaling}
James~R Lee.
\newblock Relations between scaling exponents in unimodular random graphs.
\newblock {\em arXiv preprint arXiv:2007.06548}, 2020.

\bibitem[LL10]{LawlerLimicRandomWalks}
G.F. Lawler and V.~Limic.
\newblock {\em Random Walk: A Modern Introduction}.
\newblock Cambridge University Press, 2010.

\bibitem[LMS08]{LyonsMorrisSchramm2008}
Russell Lyons, Benjamin Morris, and Oded Schramm.
\newblock Ends in uniform spanning forests.
\newblock {\em Electronic Journal of Probability}, 13:1702--1725, 2008.

\bibitem[LP16]{LyonsPeresProbNetworks}
Russell Lyons and Yuval Peres.
\newblock {\em Probability on Trees and Networks}, volume~42 of {\em Cambridge
  Series in Statistical and Probabilistic Mathematics}.
\newblock Cambridge University Press, New York, 2016.
\newblock Available at \url{http://rdlyons.pages.iu.edu/}.

\bibitem[LPS03]{Lyons_2003}
Russell Lyons, Yuval Peres, and Oded Schramm.
\newblock Markov chain intersections and the loop-erased walk.
\newblock {\em Annales de l'Institut Henri Poincare (B) Probability and
  Statistics}, 39(5):779–791, 2003.

\bibitem[Pem91]{Pemantle}
Robin Pemantle.
\newblock Choosing a spanning tree for the integer lattice uniformly.
\newblock {\em The Annals of Probability}, pages 1559--1574, 1991.

\bibitem[Pop21]{PopovRW}
Serguei Popov.
\newblock {\em Two-Dimensional Random Walk: From Path Counting to Random
  Interlacements}.
\newblock Institute of Mathematical Statistics Textbooks. Cambridge University
  Press, 2021.

\bibitem[PRU20]{PopovEtal2020}
Serguei Popov, Leonardo~T. Rolla, and Daniel Ungaretti.
\newblock {Transience of conditioned walks on the plane: encounters and speed
  of escape}.
\newblock {\em Electronic Journal of Probability}, 25(none):1 -- 23, 2020.

\bibitem[SBS15]{HarnackUniquenessPotential2015}
Mustapha Sami, Aymen Bouaziz, and Mohamed Sifi.
\newblock Discrete harmonic functions on an orthant in $\mathbb{Z}^d$.
\newblock {\em Electron. Commun. Probab.}, 20:13 pp., 2015.

\bibitem[SC92]{SaloffCoste}
L~Saloff-Coste.
\newblock A note on {P}oincar\'e, {S}obolev and {H}arnack inequalities.
\newblock {\em Inter. Math. Res. Notices}, 2:27–38, 1992.

\bibitem[Tro85]{Trofimov}
Vladimir~Ivanovich Trofimov.
\newblock Graphs with polynomial growth.
\newblock {\em Mathematics of the USSR-Sbornik}, 51(2):405, 1985.

\bibitem[Var91]{Varopoulos}
Nicholas Varopoulos.
\newblock Analysis and geometry on groups.
\newblock In {\em Proceedings of the International Congress of Mathematicians
  1990 Kyoto}, volume~1, pages 951--957, 1991.

\bibitem[VdHJ04]{vdHJarai}
Remco Van~der Hofstad and Antal~A J{\'a}rai.
\newblock The incipient infinite cluster for high-dimensional unoriented
  percolation.
\newblock {\em Journal of statistical physics}, 114(3):625--663, 2004.

\bibitem[VSCC08]{VaropoulosSaloffCosteCoulhon}
Nicholas~T Varopoulos, Laurent Saloff-Coste, and Thierry Coulhon.
\newblock {\em Analysis and geometry on groups}.
\newblock Number 100. Cambridge university press, 2008.

\bibitem[Wil96]{WilsonAlgorithm}
David~Bruce Wilson.
\newblock Generating random spanning trees more quickly than the cover time.
\newblock In {\em Proceedings of the twenty-eighth annual ACM symposium on
  Theory of computing}, pages 296--303, 1996.

\end{thebibliography}
\end{document}